\numberwithin{equation}{section}
\newcommand{\closure}{\operatorname{cl}}
\newcommand{\var}{\operatorname{var}}
\newcommand{\dd}{\mathrm{d}}
\newcommand{\spann}{\operatorname{span}}
\newcommand{\interior}{\operatorname{int}}
\newcommand{\dist}{\operatorname{dist}}
\newcommand{\supp}{\operatorname{supp}}
\newcommand{\conv}{\operatorname{conv}}
\DeclareMathOperator*{\argmin}{arg\,min}
\DeclareMathOperator*{\esssup}{ess\,sup}
\newcommand{\strict}{\mathrm{str}}
\newcommand{\crit}{\mathrm{crit}}
\newcommand{\red}{\mathrm{red}}
\newcommand{\rad}{\mathrm{rad}}
\renewcommand{\tan}{\mathrm{tan}}
\newcommand{\ptw}{\mathrm{ptw}}
\renewcommand{\AA}{\mathcal{A}}
\newcommand{\BB}{\mathcal{B}}
\newcommand{\DD}{\mathcal{D}}
\newcommand{\FF}{\mathcal{F}}
\newcommand{\HH}{\mathcal{H}}
\newcommand{\II}{\mathcal{I}}
\newcommand{\JJ}{\mathcal{J}}
\newcommand{\KK}{\mathcal{K}}
\newcommand{\NN}{\mathcal{N}}
\newcommand{\PP}{\mathcal{P}}
\renewcommand{\SS}{\mathcal{S}}
\newcommand{\ZZ}{\mathcal{Z}}
\newcommand{\N}{\mathbb{N}}
\newcommand{\R}{\mathbb{R}}
\newcommand{\veps}{{\varepsilon}}
\newcommand{\weakly}{\rightharpoonup}
\newtheorem{theorem}{Theorem}[section]
\newtheorem{corollary}[theorem]{Corollary}
\newtheorem{lemma}[theorem]{Lemma}
\newtheorem{assumption}[theorem]{Assumption}
\newtheorem{example}[theorem]{Example}
\newtheorem{proposition}[theorem]{Proposition}
\newtheorem{definition}[theorem]{Definition}
\newtheorem{remark}[theorem]{Remark}
\newcommand{\dimm}{\operatorname{dim}}
\begin{document}

\title{Directional differentiability for solution operators of sweeping processes
with convex polyhedral admissible sets}
\runningtitle{Directional differentiability for sweeping processes}

\author{Martin Brokate}\address{Technische Universit\"{a}t M\"{u}nchen, CIT, 
Department of Mathematics,
Boltzmannstra{\ss}e 3,
85748 Garching, Germany;
Weierstrass Institute for Applied Analysis and Stochastics, 
Anton-Wilhelm-Amo-Stra{\ss}e 39,
10117 Berlin, Germany;
Faculty of Civil Engineering, Czech Technical University in Prague, 
Th\'{a}kurova 7, 16629 Praha 6, Czech Republic, 
\email{brokate@ma.tum.de}}
\author{Constantin Christof}\address{
Technische Universit\"{a}t Darmstadt, Department of Mathematics, 
Dolivostra{\ss}e 15,
64293 Darmstadt, Germany,\\ \email{christof@mathematik.tu-darmstadt.de}}


\begin{abstract}
We study directional differentiability properties of solution operators of 
rate-independent evolution variational inequalities 
with full-dimensional convex polyhedral admissible sets. 
It is shown that, if the space of continuous functions of bounded variation 
is used as the domain of definition, 
then the most prototypical examples of such solution operators---the vector play and stop---are 
Hadamard directionally differentiable in a pointwise manner 
\emph{if and only if} the  admissible set is 
non-obtuse. 
We further prove that, 
in those cases where they exist, the directional derivatives of the vector play and stop are uniquely characterized 
by a system of projection  identities and variational inequalities
and that directional differentiability cannot be expected in the obtuse case even if 
the solution operator is restricted to the space of Lipschitz continuous functions. 
Our results can be used, for example, to 
formulate Bouligand stationarity conditions 
for optimal control problems involving sweeping processes. 
\end{abstract}
%
%
\subjclass{34C55, 47J40, 49J40, 49J52, 49K40, 90C31}
\keywords{
hysteresis, 
evolution variational inequality,
sweeping process, 
rate-independence, 
play operator, 
stop operator,
directional differentiability,
optimal control, 
Bouligand stationarity, 
strong stationarity
}
\maketitle

\section{Introduction}
This paper is concerned with 
directional 
differentiability properties of 
solution maps 
$(u, x_0) \mapsto x$
of sweeping processes of the form 
\begin{equation}
\label{eq:sweep_problem}
x(0) = x_0,
\qquad
x(t) \in Z - u(t)
~\forall t \in [0,T],
\qquad 
\dot x(t) \in - \NN_{Z - u(t)}(x(t))
\text{ for a.a.\ }t \in (0,T),
\end{equation}
involving 
a 
full-dimensional convex polyhedron $Z \subset \R^d$,
$d \in \N$,
an initial value 
$x_0 \in Z-u(0)$,
a terminal time 
$T>0$,
a forcing term 
$u\colon [0,T] \to \R^d$,
and the outward normal cone 
$\NN_{Z - u(t)}(x(t))$
of the set $Z - u(t)$ at $x(t)$. 
We study the problem \eqref{eq:sweep_problem} in 
its evolution variational inequality (EVI) formulation
\begin{equation}
\label{eq:sweep_EVI_strong}
y(0) = y_0,
\qquad
y(t) \in Z~\forall  t \in [0,T],
\qquad
\left \langle 
\dot y(t) - \dot u(t), v - y(t)
\right \rangle 
\geq 0~\forall v \in Z
\text{ for a.a.\ }t \in (0,T),
\end{equation}
that is obtained by 
introducing the variable $y := x + u$,
by defining $y_0 := x_0 + u(0) \in Z$,
and by rewriting the normal-cone inclusion 
in \eqref{eq:sweep_problem}
in terms of the Euclidean scalar product
$\langle \cdot, \cdot \rangle\colon 
\R^d \times\R^d \to \R$.
More precisely, 
we consider 
the following weak formulation 
of \eqref{eq:sweep_EVI_strong} 
that is not only sensible 
for  
absolutely continuous inputs $u$
but also for all $u$ in the 
space $CBV([0,T];\R^d)$ of 
continuous functions of bounded variation
with values in $\R^d$:
\begin{equation*}
\label{eq:EVI_again}
\tag{V}
y \in CBV([0,T]; \R^d),
~~~\quad 
y(0) = y_0,
~~~\quad 
y(t) \in Z~\forall t \in [0, T],
~~~\quad
\int_0^T \left \langle v - y, \dd (y - u) \right \rangle \geq 0~\forall v \in C([0, T]; Z).
\end{equation*}
Here, $C([0, T]; Z)$ denotes the 
set of all continuous functions 
$v\colon [0,T] \to \R^d$ with values in 
$Z$ and the integral has to be understood
in the sense of 
Kurzweil-Stieltjes (see \cref{subsec:2.4}).
Recall that, in the context of 
the EVI-formulations
\eqref{eq:sweep_EVI_strong} 
and
\eqref{eq:EVI_again},
the solution mapping 
$\SS\colon (u, y_0) \mapsto y$
is also referred to as the 
\emph{stop operator}. 
Its twin, the function 
$\PP(u, y_0) := u - \SS(u, y_0)$,
is called the \emph{play operator}.
The main result of this paper 
characterizes precisely 
for which 
convex polyhedral admissible sets $Z$
the solution map $\SS\colon (u, y_0) \mapsto y$ of \eqref{eq:EVI_again} 
possesses a pointwise 
(or, more precisely,
$BV([0,T];\R^d)$-weak-star)
directional derivative.

\subsection{Main result}

The main result of this paper is the following theorem:

\begin{theorem}
\label{th:main_summary}
Let $T>0$ be given and 
let $Z \subset \R^d$, $d \in \N$, be a 
full-dimensional convex 
polyhedron.
Then the variational inequality
\eqref{eq:EVI_again} 
possesses a unique solution 
$y \in CBV([0,T]; \R^d)$
for all $(u, y_0) \in CBV([0,T]; \R^d) \times Z$.
The associated solution operator 
$\SS\colon CBV([0,T];\R^d) \times Z \to CBV([0,T];\R^d)$,
$(u, y_0) \mapsto y$, 
is globally Lipschitz continuous 
in the sense that there exists a 
constant $L > 0$
satisfying 
\begin{equation}
\label{eq:Lipschitz_estimate_42}
\|\SS(u, y_0) - \SS(\tilde u, \tilde y_0)\|_{CBV([0,T];\R^d)}
\leq 
L
\left (
\|u - \tilde u\|_{CBV([0,T];\R^d)}
+
|y_0 - \tilde y_0|
\right )
~
\forall 
(u, y_0),(\tilde u, \tilde y_0) \in 
CBV([0,T];\R^d) \times Z.
\end{equation}
Further,  the following 
statements hold regarding the directional differentiability properties of $\SS$.
\begin{enumerate}[label=(\Roman*)]
\item\label{main_sum:item:I}
 If $Z$ is non-obtuse 
(see \cref{def:properties_descript}),
then the map $\SS$ is directionally differentiable 
in the sense that, for all 
$(u, y_0) \in CBV([0,T]; \R^d) \times Z$
and all
$(h, h_0) \in CBV([0,T]; \R^d) \times \R^d$ 
satisfying $y_0 + \tau_0 h_0 \in Z$ for 
some $\tau_0 >0$,
there exists a unique 
$\delta := \SS'((u, y_0);(h, h_0))  \in BV([0,T];\R^d)$
satisfying
\begin{equation}
\label{eq:pointwise_dir_diff}
\lim_{(0,\tau_0) \ni \tau \to 0}
\frac{\SS(u + \tau h, y_0 + \tau h_0)(t) - \SS(u, y_0)(t)}{\tau}
= \delta(t)\qquad \forall t \in [0,T].
\end{equation}
This directional derivative $\delta$
of $\SS$ at $(u, y_0)$
in direction $(h, h_0)$
is uniquely characterized by the system%
\begin{equation}
\label{eq:char_sys_1}
\begin{aligned}
\qquad\qquad\quad&\delta \in BV([0,T];\R^d),
\qquad
\delta_+ \in \KK_{G_r}^{\crit}(y,u),
\qquad
\delta(0) = h_0,\\
&\delta(t) = \pi_{\ZZ(y(t))}(\delta(t-))
~\forall t \in [0,T],
\qquad
\delta(t+) = \pi_{V^+(t)}(\delta(t))
~\forall t \in [0,T],\\
&\int_{s_1}^{s_2}
\langle  z , \dd (\delta-h) \rangle
+
\int_{s_1}^{s_2}
\langle \delta, \dd h \rangle
- \frac12 | \delta(s_2) |^2
+ \frac12 | \delta(s_1) |^2
\geq 0
\quad 
\forall 0\leq s_1 < s_2 \leq T
\quad
\forall z \in \KK_{G_r}^{\crit}(y,u).
\end{aligned}
\end{equation}
Here, 
$\delta_+$ denotes the 
right-limit function of $\delta$;
$y$ is defined by  $y:= \SS(u,y_0)$;
$\KK_{G_r}^{\crit}(y,u)$ 
is given by 
\[
\qquad\qquad\quad
\KK_{G_r}^{\crit}(y,u)
:=
\left \{
z \in G_r([0,T]; \R^d)~
\left  |~
z(t) \in \ZZ(y(t))
~\forall t \in [0,T]
\text{ and }
\int_0^T \langle z, \dd (y-u) \rangle = 0\right.
\right \};
\]
the symbol $G_r([0,T]; \R^d)$
denotes the space of right-continuous 
regulated functions $v\colon[0,T] \to \R^d$; 
$\ZZ(x)$ is the tangent cone 
of $Z$ at $x \in Z$;
$\delta(t-)$ and $\delta(t+)$
denote a left and right limit, respectively;
$\pi$ denotes a Euclidean projection;
and $V^+(t)$ is a suitably defined 
time-dependent 
subspace (see \cref{def:temporal_indices}). 
\item\label{main_sum:item:II}
If $Z$ is not non-obtuse, 
then the map
$\SS$
is not  directionally 
differentiable in the pointwise 
sense of \ref{main_sum:item:I},
not even if
its domain of definition 
$CBV([0,T]; \R^d) \times Z$ 
is replaced 
by $W^{1,\infty}((0,T);\R^d) \times Z$.
\end{enumerate}
\end{theorem}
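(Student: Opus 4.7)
My plan addresses three claims separately: well-posedness with the Lipschitz estimate \eqref{eq:Lipschitz_estimate_42}, directional differentiability in the non-obtuse case (Part (I)), and its failure in the obtuse case (Part (II)). For the well-posedness portion, I would rely on Moreau's catching-up algorithm: on a partition $0 = t_0 < \cdots < t_n = T$ define recursively $y_{k+1} := \pi_Z(y_k + u(t_{k+1}) - u(t_k))$, extend by piecewise-affine interpolation, and use the $1$-Lipschitz property of $\pi_Z$ together with rate-independence to bound the interpolants uniformly in $CBV([0,T];\R^d)$. Passing to the limit as the mesh size tends to zero yields a $y \in CBV$ solving \eqref{eq:EVI_again}; uniqueness follows by testing the EVI against two solutions, and \eqref{eq:Lipschitz_estimate_42} is obtained by propagating the discrete estimate $|y_{k+1} - \tilde y_{k+1}| \leq |y_k - \tilde y_k| + |(u-\tilde u)(t_{k+1}) - (u-\tilde u)(t_k)|$ to the continuous level.

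For Part (I), set $y := \SS(u,y_0)$ and introduce the difference quotients $\delta_\tau := \tau^{-1}(\SS(u + \tau h, y_0 + \tau h_0) - \SS(u, y_0))$ for $\tau \in (0, \tau_0)$. Estimate \eqref{eq:Lipschitz_estimate_42} gives a uniform bound of $\delta_\tau$ in $BV([0,T];\R^d)$, so Helly's selection theorem extracts a pointwise convergent subsequence $\delta_{\tau_k} \to \delta$. The core task is to verify that every such pointwise limit satisfies the system \eqref{eq:char_sys_1}. I would derive the projection identity $\delta(t) = \pi_{\ZZ(y(t))}(\delta(t-))$ by linearizing the one-step projection in the catching-up scheme: this is where non-obtuseness is decisive, since for non-obtuse $Z$ the projection $\pi_Z$ near a face $F$ coincides locally with the orthogonal projection onto the affine hull of $F$, so that the discrete projections applied to $y_\tau$ can be identified in the limit with projections onto $\ZZ(y(t))$. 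The variational inequality in \eqref{eq:char_sys_1} is obtained by testing the EVIs satisfied by $y_\tau$ and $y$ against functions from $\KK_{G_r}^{\crit}(y,u)$, subtracting, dividing by $\tau$, and using Kurzweil-Stieltjes integration by parts to generate the quadratic correction $-\tfrac12|\delta(s_2)|^2 + \tfrac12|\delta(s_1)|^2$. Uniqueness of \eqref{eq:char_sys_1} is then established by using two candidate solutions as mutual test functions; the quadratic correction produces a Gronwall-type estimate forcing $\delta_1 \equiv \delta_2$, whence the whole family $\delta_\tau$ converges pointwise and \eqref{eq:pointwise_dir_diff} holds.

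For Part (II), the plan is to exhibit an explicit counterexample in $W^{1,\infty}$. Fix $d = 2$ and choose $Z$ so that two adjacent facets meet at a corner $y^\ast$ with outward unit normals $n_1, n_2$ satisfying $\langle n_1, n_2 \rangle > 0$. Design a piecewise-affine input $u \in W^{1,\infty}((0,T);\R^2)$ and an initial state $y_0$ so that the unperturbed trajectory slides along one facet into $y^\ast$ and then slides off along the other facet. Because $\langle n_1, n_2 \rangle > 0$, the metric projection $\pi_Z$ near $y^\ast$ does not factor into independent projections onto the two facets, and the post-corner trajectory responds to perturbations in a way that fails to be positively homogeneous in $\tau$. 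I would select a perturbation $(h,h_0)$ so that, by explicit planar geometry via the catching-up scheme, the trajectories $y_\tau = \SS(u+\tau h, y_0 + \tau h_0)$ exhibit active-face switches at times whose dependence on $\tau$ is non-linear; the quotient $\tau^{-1}(y_\tau(t) - y(t))$ at a time $t$ shortly after the corner encounter then fails to converge as $\tau \downarrow 0$, showing that $\SS$ is not pointwise directionally differentiable even when restricted to $W^{1,\infty}((0,T);\R^d) \times Z$.

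The main obstacle lies in Part (I). Passing from the pointwise limit of the difference quotients to the Kurzweil-Stieltjes variational inequality in \eqref{eq:char_sys_1} is subtle because Stieltjes integrals are not continuous under mere pointwise convergence; one must control the possibly accumulating jumps of $\delta$ and coordinate the left- and right-continuous regularizations $\delta(t-)$, $\delta(t+)$ via the subspace $V^+(t)$. Additionally, the set $\KK_{G_r}^{\crit}(y,u)$ is only a convex cone, so differences of two candidate solutions are not admissible as test functions in \eqref{eq:char_sys_1}, and the coercivity required for uniqueness must be extracted solely from the quadratic correction term. For Part (II), the principal challenge is the bookkeeping involved in constructing an example transparent enough that the failure of positive homogeneity in $\tau$ can be exhibited by elementary planar geometry.
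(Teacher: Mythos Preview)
Your overall architecture for Part~(I)---Helly compactness, identify the limit via a characterizing system, prove uniqueness---matches the paper, but two load-bearing steps in your outline will not go through as stated.

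First, you propose to derive the variational inequality in \eqref{eq:char_sys_1} by ``testing the EVIs satisfied by $y_\tau$ and $y$ against functions from $\KK_{G_r}^{\crit}(y,u)$''. This fails because a critical direction $z$ only satisfies $z(t)\in\ZZ(y(t))$; it is \emph{not} in general true that $y(t)+\tau z(t)\in Z$ for any $\tau>0$, so $z$ is inadmissible in the EVI for $y+\tau\delta_\tau$. The paper closes this gap with a \emph{temporal polyhedricity} argument: every $z\in\KK_{G_r}^{\crit}(y,u)$ is approximated first pointwise by critical \emph{radial} $BV_r$-directions $z_{k,l}$ (for which $y+\tau z_{k,l}\in Z$ does hold for small $\tau$) and then uniformly; one tests the EVI for $\delta_{\tau}$ with $z_{k,l}$, passes to the limit $\tau\to 0$ via the bounded convergence theorem, integrates by parts, and only then lets $k,l\to\infty$. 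The intermediate integration by parts is where the jump condition $\delta(t)=\pi_{\ZZ(y(t))}(\delta(t-))$ is used to discard a boundary sum with the correct sign. Without this density step your limit passage has no starting point.

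Second, your mechanism for the jump conditions---``for non-obtuse $Z$ the projection $\pi_Z$ near a face $F$ coincides locally with the orthogonal projection onto the affine hull of $F$''---is not the property that drives the argument and does not obviously yield either projection identity (recall $\delta_\tau$ is continuous, so there is no jump at the discrete level to linearize). The paper instead freezes $h$ near $t$, introduces auxiliary EVIs on $[t-\varepsilon,t]$ (respectively $[t,t+\varepsilon]$) whose admissible set is $\tau^{-1}(Z-y(s))$ or $\ZZ(y(t))$, and applies vectorial Stampacchia-type truncations based on the non-obtuse facts that $x+\pi_{\ZZ(y)^\circ}(v)\in Z$ whenever $x,x+v\in Z$, and that $\pi_V(x)\in Z$ for $V=\spann\{\nu_i:i\in J\}^\perp$. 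These give $\zeta_k(t)=\pi_{\ZZ(y(t))}(\delta(t-))$ and a decomposition $\zeta_k=\xi_k+\pi_{V^+(t)}(\delta_k(t))$, from which both $\delta(t)=\pi_{\ZZ(y(t))}(\delta(t-))$ and the separate identity $\delta(t+)=\pi_{V^+(t)}(\delta(t))$ follow; your outline does not address the latter at all.

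For Part~(II), your plan fixes $d=2$, but the theorem is stated for arbitrary $d$. The paper first proves by an induction using Motzkin's alternative that any not-non-obtuse full-dimensional polyhedron has a point $w\in Z$ with $\AA(w)=\{i_1,i_2\}$ and $\langle\nu_{i_1},\nu_{i_2}\rangle>0$; only after this reduction does the two-dimensional construction apply. The actual non-differentiability mechanism is also different from what you sketch: the paper chooses a play whose density $\beta_1\in L^\infty$ is such that the averages $\tau^{-1}\int_{\varepsilon-\tau}^{\varepsilon}\beta_1$ fail to converge, and shows this average appears as a coefficient in the difference quotient after the corner.
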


For details on the unique solvability 
part of \cref{th:main_summary}
and the Lipschitz estimate \eqref{eq:Lipschitz_estimate_42},
we refer the reader to 
\cref{prop:unique_solvability_stop_play}
and \cref{th:CBV-Lipschitz}.
(These two results rely heavily on the preliminary work of \cite{Krejci1999,Krasnoselskii2012,Dupuis1991,Desch1999}.)
The proofs of the statements 
\ref{main_sum:item:I} and \ref{main_sum:item:II},
which are the main contributions of this paper, 
can be found in \cref{sec:3,sec:4}; see in particular \cref{th:dir_diff_final_contra,th:non_dir_diff}. 
Before we discuss the 
role and background of 
\cref{th:main_summary} in more detail,
we give some remarks on its mathematical scope.\medskip

\begin{remark}~
\begin{itemize}
\item The assumption that 
$Z$ is full-dimensional 
(i.e., has a nonempty interior)
can be made without loss of generality (w.l.o.g.) in \cref{th:main_summary}. 
If $Z$ is nonempty, then 
full-dimensionality can always 
be achieved by reformulating \eqref{eq:EVI_again} 
as an EVI in 
the affine hull of $Z$. 
For the same reason, 
we may assume w.l.o.g.\
that the underlying vector space is $\R^d$.
Indeed,
if we start with a problem in an arbitrary 
Hilbert space with a polyhedral admissible set $Z$, 
then we can always reformulate \eqref{eq:EVI_again} 
as a problem in the 
finite-dimensional linear hull of 
the normal vectors of $Z$ and consider this space instead; see \cite[Remark~4.5]{Krejci1999}.

\item Due to the Lipschitz continuity of 
the function 
$\SS$, the difference quotients 
in \ref{main_sum:item:I} also converge weakly-star 
in $BV([0,T];\R^d)$. 
In particular, 
the map $\SS$ is directionally differentiable 
as a function 
from $CBV([0,T]; \R^d) \times Z$
to $L^p((0,T);\R^d)$ for all $1 \leq p < \infty$ in \ref{main_sum:item:I}.
The Lipschitz continuity of 
$\SS$ also implies 
that $\SS$ is
pointwisely Hadamard directionally 
differentiable in \ref{main_sum:item:I}; see \cite[Proposition 2.49]{BonnansShapiro2000}.
\item Due to the evolutionary nature of 
\eqref{eq:EVI_again},
the nonexistence of a pointwise directional 
derivative in \ref{main_sum:item:II} implies that 
$\SS$ cannot be directionally 
differentiable as a map
from $W^{1,\infty}((0,T);\R^d) \times Z$ to $L^1((0,T);\R^d)$
(or, more generally, 
from $W^{1,\infty}((0,T);\R^d) \times Z$
to any sensible 
function space); see \cref{th:non_dir_diff}.
Note that it could still be the case here that $\SS$ is directionally differentiable 
when restricted to a suitable subset of $W^{1,\infty}((0,T);\R^d) \times Z$, e.g., to
$\{u \in W^{1,\infty}((0,T);\R^d) \mid \dot u \in BV([0,T];\R^d)\} \times Z$; cf.\ \cref{sec:4}.
\end{itemize}
\end{remark}

\subsection{Background and relations to prior work}

Sweeping processes of the type 
\eqref{eq:sweep_problem} have first been 
investigated by Moreau in the nineteen-seventies 
to study problems arising in elastoplasticity; 
see \cite{Moreau2011,Moreau1977,Moreau1973}. 
On the most fundamental level, 
they can be understood as models for the 
dynamical behavior of 
an infinitesimally small body that 
is contained in a hollow shape 
which moves
through the Euclidean space $\R^d$
and which forces the body to move along when 
it comes into contact with the shape's boundary
(similarly to a coin on a table under
an upside-down drinking glass). 
In this interpretation, 
the function $x\colon [0,T] \to \R^d$
in \eqref{eq:sweep_problem} 
describes the position of the body, 
$Z$ is the shape, 
$u\colon [0,T] \to \R^d$
models the motion of $Z$, 
the condition $x(t) \in Z - u(t)$
reflects that the body cannot leave the 
set $Z - u(t)$,
and the inclusion 
$\dot x(t) \in - \NN_{Z - u(t)}(x(t))$
expresses that the body
is ``swept''
into an inward normal direction when 
in contact with the boundary $\partial(Z-u(t))$. 
Since their initial introduction, 
sweeping processes have 
been used to model
phenomena in various application areas
(mechanics \cite{Krasnoselskii2012,Krejci1996}, 
electrical engineering \cite{Acary2010},
and 
crowd control \cite{Cao2022-2} to name a few).
They moreover often serve as
prototypical examples for the class of energetic
rate-independent systems \cite{Mielke2015}.
Due to this prominent role,
problems of the type \eqref{eq:sweep_problem}  have been subject to extensive research 
during the last fifty years; see, e.g., 
\cite{Krejci1999,Moreau2011,Siddiqi2002,Mielke2015,Krasnoselskii2012,Krejci1996,Adam2014,Desch1999,Dupuis1991}
and the references therein. 

Despite the considerable 
attention that sweeping processes have received 
in the past,
the question of whether their
solution mappings are 
directionally 
differentiable or not 
is still severely underinvestigated.
At least to the best of our knowledge, the 
only contributions on this topic currently found in the literature are 
\cite{Brokate2015}, \cite{Brokate2020},
and \cite{Brokate2021}, 
which all deal 
with the EVI \eqref{eq:sweep_EVI_strong}
in the case $d=1$ and $Z = [-r,r]$, $r>0$.
In the first of these papers, 
\cite{Brokate2015},
it is shown that the scalar 
stop operator 
$\SS\colon (u, y_0) \mapsto y$
possesses a pointwise Hadamard 
directional derivative when considered as 
a function from 
$C([0,T]) \times Z $ to $C([0,T])$;
see \cite[Proposition 5.3, Corollary 5.4]{Brokate2015}. 
The proof of this result 
relies heavily on the fact that,
in the one-dimensional setting, 
it is possible to derive a semi-explicit solution
formula for the play operator $\PP$
in terms 
of the cumulated maximum function; see 
\cite[Lemma~5.1]{Brokate2015}.
Based on the same observation, 
Newton- and Bouligand differentiability properties of 
the scalar stop and play  
are investigated in \cite{Brokate2020}. 
The third paper, \cite{Brokate2021},
finally establishes
an auxiliary variational inequality
that uniquely characterizes the 
derivatives of the scalar play and stop 
into directions $(h, h_0) \in CBV([0,T]) \times \R$;
see \cite[Theorem 2.1]{Brokate2021}. 
At least to the best of our knowledge, 
directional differentiability results 
for vectorial sweeping processes like \eqref{eq:sweep_EVI_strong} 
can currently not be found in the literature.
One of the reasons for the absence of
such results is that, 
although simple at first glance, 
the structure of \eqref{eq:sweep_EVI_strong}
makes establishing the (directional) differentiability 
of the mappings $\SS$ and $\PP$ rather
difficult.
The discontinuity of the 
normal-cone mapping in \eqref{eq:sweep_problem},
for example, causes pointwise limits of 
difference quotients
of $\SS$ and $\PP$ to be typically neither left-
nor right-continuous 
and, thus, creates serious regularity problems; 
see
 \cref{sec:3,sec:4} and \cref{ex:left_righ_jump}.

It should be noted that this lack of 
directional differentiability results for solution
operators is not limited to sweeping processes 
but also extends to the larger class of ``true'' 
evolution variational inequalities 
(where with ``true'' we mean that the 
EVI cannot be recast as a PDE by means of a viscous regularization or similar tricks; cf.\ \cite{Betz2019}). 
Even in this wider field,
we are only aware of two additional contributions 
on the directional differentiability properties of 
solution maps. 
The first one, \cite{Jarusek2003}, 
is concerned with the directional differentiability of the 
solution operator of the scalar parabolic Signorini problem. 
The main idea of this paper is to interpret the EVI
at hand as a problem in a fractional Sobolev space 
and to then employ an argumentation that 
has originally been developed in \cite{Haraux1977,Mignot1976} for elliptic obstacle
 problems. The second paper, \cite{Christof2019parob}, 
 is concerned with general parabolic 
 obstacle-type evolution variational inequalities. 
 It establishes by means of pointwise-a.e.\ 
 convexity properties that solution operators 
 of this type of EVI are directionally differentiable as functions 
 into the Lebesgue spaces and demonstrates that the 
 arguments of \cite{Jarusek2003} fail for
 all obstacle-type EVIs that involve pointwise
 constraints on a set of positive measure;
 see \cite[Section 3, Theorem 4.1]{Christof2019parob}.

That so little is currently known about the 
differential sensitivity analysis of solution 
maps of EVIs is 
rather unsatisfying---in particular since,
for elliptic variational inequalities, 
differentiability results 
have been established in a multitude of papers 
and the subject is nowadays 
almost completely understood. 
Compare, e.g., 
with the seminal works 
\cite{Zarantonello1971,Haraux1977,Mignot1976} on this topic,
with \cite{BonnansCominettiShapiro1998,FitzpatrickPhelps1982,Noll1995,Adly2018,Shapiro1994}, and with the 
results in \cite[Theorem~4.3]{Do1992}, \cite[Theorem 1.4.1]{ChristofPhd2018}, and \cite[Theorem 4.1]{ChristofWachsmuth2020}
which characterize precisely
by means of the concept of second-order epi-differentiability 
in which situations the solution map of 
an elliptic variational inequality of the first
or the second kind is directionally differentiable 
and in which situations it is not. 

We would like to emphasize at this point that the question 
of whether directional differentiability results 
can be derived for problems like \eqref{eq:sweep_EVI_strong} is
far from academic. Results on this topic are
of fundamental importance when optimal control problems 
governed by systems involving the 
play and stop operator are 
considered (e.g., PDE-systems involving hysteresis)
as they form the basis for 
the formulation of first-order necessary optimality conditions 
and the design of derivative-based solution algorithms. 
In the field of optimal control of sweeping processes,
the lack of knowledge about the 
differentiability properties of solution maps has
caused essentially all authors concerned with the matter 
to resort to approximation or regularization 
approaches that replace the original problem formulation with 
a sequence of discretized and/or regularized problems. 
Compare, e.g., with 
\cite{Adam2014,Cao2016,Colombo2020,Cao2019,Colombo2016,Henrion2023,Cao2022} in this context, 
which employ discrete approximations,
and 
with \cite{Arroud2018,dePinho2019,Stefanelli2017,Brokate2013,Herzog2022,Nour2022}, which utilize 
various mollification techniques. 
Again, this state of affairs is rather unsatisfactory, 
in particular when compared with the 
field of optimal control of elliptic variational 
inequalities for which 
the available directional differentiability results 
allow to derive so-called 
``strong stationarity''-systems that
are equivalent to the notion of 
Bouligand stationarity (i.e., the condition that the 
directional derivatives of the reduced objective function
are nonnegative in all admissible 
directions) and thus do not suffer from the loss of information that regularization and 
discretization approaches are typically subject to; 
see \cite{MignotPuel1984,Harder2017,Christof2022,Hintermueller2009}. 
That it is possible 
to obtain stationarity systems 
of the same strength for evolution variational 
inequalities once directional differentiability results 
are available has 
recently been demonstrated in \cite{BrokateChristof2023} 
for the one-dimensional play and stop based on the findings of \cite{Brokate2015,Brokate2020,Brokate2021};
see \cite[Theorem 7.1]{BrokateChristof2023}
and \cref{sec:5}. To our best knowledge,
the  result in \cite{BrokateChristof2023} 
is the first on strong stationarity conditions 
in the field of optimal control of true EVIs;
cf.\ the comments in \cite[Section 1.1]{BrokateChristof2023}. 
In particular, strong stationarity conditions 
for problems governed by sweeping processes with vector-valued in- and output functions seem to be 
unknown at the moment. 

The principal aim of the present paper is
to shed some further 
light on the directional differentiability properties of 
solution maps of sweeping processes 
and evolution variational inequalities in general.
By doing so, we also hope to lay the foundation 
for the study of stationarity conditions 
for optimal control problems governed by EVIs 
that do not rely on an approximation or regularization procedure and go beyond the one-dimensional 
setting of \cite{BrokateChristof2023}. 
\Cref{th:main_summary} serves these two goals
because it answers conclusively the question 
under which conditions the vector play and stop are directionally differentiable 
when the admissible set $Z$ is 
a convex polyhedron and 
the input space is chosen as $CBV([0,T];\R^d)$.
At least to the best of our knowledge,
\cref{th:main_summary} is the first result 
on the directional differentiability properties 
of solution maps of vectorial 
sweeping processes (and, more generally, 
vector-valued EVIs). 
It is also of broader interest 
because it highlights some
quite peculiar effects that emerge 
in the differential sensitivity analysis 
of evolution variational inequalities but are 
absent in the study of elliptic problems.  
Some of the main takeaways are the following points.\medskip

\begin{itemize}
\setlength\itemsep{0.2cm}
\item 
\Cref{th:main_summary} shows that, 
even when we restrict the attention 
to polyhedra, it is very well possible that 
some admissible sets behave completely 
differently than others in an EVI like \eqref{eq:sweep_EVI_strong}. 
In particular, whether the solution mapping of an EVI 
is directionally differentiable or not may hinge on geometric 
properties---in the situation of \cref{th:main_summary}
the interior angles of $Z$---that are completely irrelevant for
questions of well-posedness and Lipschitz stability; 
cf.\ \cref{prop:unique_solvability_stop_play,th:CBV-Lipschitz}.

\item \Cref{th:main_summary} demonstrates that 
solution maps of EVIs may fail to be directionally differentiable 
even if the admissible set and the considered input space are 
very well behaved. In particular, directional non-differentiability 
may also occur when the solution mapping of an EVI like \eqref{eq:sweep_EVI_strong} is considered
on the space $H^1((0,T);\R^d)$, which is, to our knowledge, 
the most popular choice for the control space in the 
field of optimal control of rate-independent EVIs;
see \cite[Section 4]{Herzog2022}, \cite[Section 5]{BrokateChristof2023}.

\item The system \eqref{eq:char_sys_1} 
and the fact that the directional derivatives $\delta$
in case \ref{main_sum:item:I} of \Cref{th:main_summary} can be discontinuous 
from both the left and the right at some times $t\in (0,T)$ 
(see \cref{ex:left_righ_jump})
indicate that, for the 
evolution variational inequality \eqref{eq:sweep_EVI_strong}, 
it is in general not possible to write down a ``classical''
auxiliary EVI which characterizes the 
directional derivatives of the solution map $\SS$. 
A unique characterization of the directional derivatives 
is apparently only possible in \ref{main_sum:item:I} by means of 
the  combination of 
the ``very weak'' variational inequality in \eqref{eq:char_sys_1}
and the separate jump conditions
$\delta(t) = \pi_{\ZZ(y(t))}(\delta(t-))$
and
$\delta(t+) = \pi_{V^+(t)}(\delta(t))$.
This is a major difference to both the results 
for the one-dimensional play and stop in \cite{Brokate2021}
(for which the directional derivatives of the solution map
satisfy $\delta(t) \in \{\delta(t-), \delta(t+)\}$ for all $t \in [0,T]$)
and elliptic variational inequalities; see \cite[Theorem 1.4.1]{ChristofPhd2018}
and \cite[Theorem 4.1]{ChristofWachsmuth2020}.\medskip
\end{itemize}

We would like to point out that 
\Cref{th:main_summary} contains the directional differentiability 
and characterization results of
\cite{Brokate2015,Brokate2021} as a special case
(modulo some minor differences in the regularity assumptions).
For more details on this topic, see 
\cref{subsec:3.7}. As the argumentation that we use for 
the derivation of \cref{th:main_summary} is quite different 
from that in \cite{Brokate2015,Brokate2021}, this 
also means that our analysis provides 
 alternative
proofs of the theorems in these two papers. 
Regarding the distinction between the cases \ref{main_sum:item:I} and \ref{main_sum:item:II} 
in \cref{th:main_summary}, 
it should be noted that 
non-obtuse polyhedra also play a special role in the context 
of the question of whether different techniques for
extending the solution mapping $\SS\colon (u, y_0) \mapsto y$
of \eqref{eq:sweep_EVI_strong} 
to the set $BV([0,T];\R^d) \times Z$
give rise to the same operator; see \cite{Krejci2014Compare}.
If this is just a coincidence or has some deeper mathematical 
meaning 
is an open question.

\subsection{Structure of the remainder of the paper}
\label{sec:1.3structure}

We conclude this introduction with an overview of the content and structure 
of the remainder of the paper.
\Cref{sec:preliminaries} is concerned with preliminaries. 
Here, we introduce basic notation and recall
results from the theory of metric projections, 
convex polyhedra, Kurzweil-Stieltjes integration, and 
sweeping processes that are needed for the derivation of 
\cref{th:main_summary}. 
This section also contains the proofs of the unique solvability of \eqref{eq:EVI_again}
and the Lipschitz estimate \eqref{eq:Lipschitz_estimate_42};
see \cref{prop:unique_solvability_stop_play}
and \cref{th:CBV-Lipschitz}. 
As we aim to keep our analysis as self-contained as 
possible,
we are very thorough in our discussion of preliminaries.
\Cref{sec:3} contains the proof of
statement \ref{main_sum:item:I} of \cref{th:main_summary}.
The basic idea of the analysis 
in this section is to consider 
for a fixed choice of the tuple
$(u, y_0)$ and the direction $(h, h_0)$
the set $\DD$ of all $BV([0,T];\R^d)$-weak-star limits of 
difference quotients of the mapping $\SS$
and to then derive in various steps 
more and more information about the elements 
of $\DD$ until it becomes clear that this
set is a singleton---similarly to the approaches for 
elliptic problems in \cite{ChristofPhd2018,ChristofWachsmuth2020}. 
In combination with \eqref{eq:Lipschitz_estimate_42}
and the Helly selection theorem, the directional 
differentiability of $\SS$ then follows from a 
simple contradiction argument. 
A main difficulty in this argumentation
(and also a major difference to the elliptic setting)
is that, in the case of the EVI \eqref{eq:EVI_again}, 
one has to identify precisely how the 
elements of the set $\DD$ jump at their points of discontinuity. 
We solve this problem by exploiting that 
EVIs \eqref{eq:EVI_again} with 
a non-obtuse convex polyhedral admissible set $Z$
can be studied by means of a vector-valued variant of 
Stampacchia's lemma; cf.\ \cite[Theorem~5.8.2]{Attouch2006}
and  \cite[Lemma 6.3]{ChristofQVI2023}. 
For a more detailed overview of 
the proof of statement \ref{main_sum:item:I}, see \cref{subsec:3.1}. 
At the end of \cref{sec:3}, we also demonstrate 
that part \ref{main_sum:item:I} of \cref{th:main_summary}
indeed implies the results on the existence
and characterization of directional derivatives 
of the scalar play and stop in \cite{Brokate2015,Brokate2021}. (Note that,
in the case $d=1$, all convex polyhedra $Z \subset \R^d$ 
are non-obtuse.)
In \cref{sec:4}, we prove statement \ref{main_sum:item:II} of 
\cref{th:main_summary} by explicitly constructing---for each 
convex polyhedron that is not non-obtuse---a situation
in which the difference quotients of $\SS$ diverge. 
\Cref{sec:5} of the paper finally contains some 
remarks on how \cref{th:main_summary} can be used 
in the field of optimal control of sweeping processes. 
Here, we demonstrate how  
Bouligand stationarity conditions 
for optimization problems governed by EVIs 
can be established
based on the directional differentiability 
results in \cref{th:main_summary}
and sketch some ideas on how 
the system \eqref{eq:char_sys_1}
and the analysis of \cref{sec:3}
can be used to derive strong stationarity conditions
for this problem class.

\section{Preliminaries and preparatory results}
\label{sec:preliminaries}

This section contains preliminary results 
that are needed for the proof of \cref{th:main_summary}.
In \cref{subsec:2.1,subsec:2.2},
we begin with basic notation and 
some results on metric projections. 
\Cref{subsec:2.3} is then concerned with 
preliminaries on convex polyhedra and their description. 
The results on non-obtuse and not non-obtuse 
polyhedra in \cref{subsec:2.3} may also be of independent 
interest---in particular as the property 
of non-obtuseness is also crucial in areas other than 
the differential sensitivity analysis
of EVIs; see \cite{Krejci2014Compare}.
In \Cref{subsec:2.4}, we collect preliminaries 
on the theory of Kurzweil-Stieltjes integration.
For a detailed introduction to this type of integral, 
we refer to \cite{Monteiro2019}. 
\Cref{subsec:2.5} finally contains some preliminaries 
on the vector play and stop.
Here, we in particular address the unique solvability part 
of \cref{th:main_summary} and the Lipschitz estimate 
\eqref{eq:Lipschitz_estimate_42}.

\subsection{Basic notation}
\label{subsec:2.1}

In this paper, 
we write $\R$, $\N$, and $\N_0$ for the real numbers, the natural numbers, and the nonnegative integers, respectively.
We further denote norms and inner products 
defined on 
a real vector space $X$ by $\|\cdot\|_X$ and 
$\langle \cdot, \cdot  \rangle_X$, respectively. 
For the topological dual space of a normed space 
$(X, \|\cdot\|_X)$, we use the symbol $X^*$,
and for a dual pairing, the brackets
$\langle \cdot, \cdot\rangle_{X^*, X}$.
A closed ball of radius $r>0$ in a normed space $(X, \|\cdot\|_X)$
centered at $x \in X$ is denoted by $B_r^X(x)$. 
In the special case $X = \R^d$, $d \in \N$,
we write $|\cdot|$ for the Euclidean norm, 
$\langle\cdot, \cdot \rangle$ for the Euclidean scalar product, 
$B_r(x)$ for the ball $B_r^{\R^d}(x)$ defined with respect to (w.r.t.) $|\cdot|$,
and $S^{d-1} := \{x \in \R^d \mid |x| = 1\}$ for the unit sphere.
If $D$ is a subset of a normed space,
then we denote by $\partial D$ the boundary of $D$,
by $\interior(D)$ the interior of $D$, by $\closure(D)$ the closure of $D$, by $\conv(D)$ the convex hull of $D$,
and by $\spann(D)$ the linear hull of $D$.
For the empty set, we use the 
convention $\spann(\emptyset) := \{0\}$.
Analogously, we also interpret sums over empty index sets as zero. 
Recall
that a set $D \subset X$ is called a cone
if $s x \in D$ holds for all $x \in D$ and all $s \in (0,\infty)$.
For a cone $D$ in a normed space $(X, \|\cdot\|_X)$, we define 
$D^\circ := \left \{ z^* \in X^* \mid \langle z^*, z\rangle_{X^*,X} \leq 0 ~\forall z \in D \right \}$
to be the polar cone of $D$. In the case $X = \R^d$, 
we always identify $X^*$ with $X$ in this definition, 
i.e., we use the convention
$D^\circ := \left \{ z^* \in\R^d \mid \langle z^*, z\rangle \leq 0 ~\forall z \in D \right \}$.
If $D$ is a closed convex nonempty subset of a 
normed space $(X, \|\cdot\|_X)$ and $x \in D$, then we denote by 
$K_\rad(x;D) := \{ z \in X \mid z = s(y - x), s > 0, y \in D\}$
the radial cone of $D$ at $x$,
by 
$K_\tan(x; D):= \closure(K_\rad(x;D))$ the tangent cone of 
$D$ at $x$, and 
by 
$\NN_D(x) := K_\tan(x; D)^\circ$ the (outward)
normal cone of $D$ at $x$. If 
$V$ is a closed subspace of a Hilbert space, 
then we write $V^\perp$ for the orthogonal 
complement of $V$. 
For the cardinality of a set $D$, we use the symbol $|D|$.
The modes of strong and weak convergence 
in a normed space are denoted by $\to$ and $\weakly$,
respectively. If $F\colon X \to Y$ is a mapping 
and $ \emptyset \neq D \subset X$,
then we write $F|_D$ for the restriction of $F$ to $D$
and $\mathds{1}_D\colon X \to \{0,1\}$
for the $\{0,1\}$-indicator 
function of $D$.
For set-valued mappings, we use the notation 
$F\colon X \rightrightarrows Y$.
Directional, G\^{a}teaux, and Fr\'{e}chet derivatives 
are denoted by a prime in this paper. 
If  $F$ is multivariate, 
then the symbol $\partial_k$
indicates that we take 
the partial Fr\'{e}chet derivative 
w.r.t.\ the $k$-th component of the argument of $F$. 
For weak/strong derivatives that are taken w.r.t.\ a time-variable, 
we also use a dot instead of a prime. 
Gradients are denoted by $\nabla$.

If $-\infty < a < b < \infty$ and $d \in \N$ are given,
then we use the standard symbols
$L^{p}((a,b);\R^d)$, $H^1((a,b);\R^d)$,
and $W^{1,p}((a,b);\R^d)$, $1 \leq p \leq \infty$,
for the vectorial Lebesgue and first-order Sobolev spaces,
respectively, endowed with their canonical norms,
i.e.,\vspace{-0.1cm}
\[
\|v\|_{L^{p}((a,b);\R^d)}
:=
\left (
\int_a^b |v(t)|^p  \dd t
\right )^{1/p}
\quad \forall 1 \leq p < \infty,
\qquad
\|v\|_{L^{\infty}((a,b);\R^d)}
:=
\esssup_{t \in [a,b]}
|v(t)|,
\qquad 
\text{ etc.}
\]
The space of smooth, 
real-valued functions on $\R$ is denoted by $C^\infty(\R)$
and the space of 
all functions $v \in C^\infty(\R)$
that possess a compact support  $\supp(v)$ by 
$C_c^\infty(\R)$. 
We further write $C^\infty([a,b]; \R^d)$ for
the space of all functions 
$v\colon [a,b] \to \R^d$ whose components 
can be written as restrictions of 
elements of $C^\infty(\R)$ to the interval $[a,b]$. 
For the space of all
regulated functions $v\colon [a,b] \to \R^d$,
we use the notation $G([a,b]; \R^d)$.
Recall that a function is called regulated 
if it is the uniform limit of a sequence of step functions
or, equivalently, if it possesses 
left and right limits at all 
points $t \in [a,b]$;
see \cite[Definition 4.1.1, Theorem 4.1.5]{Monteiro2019}.
The space  $G([a,b]; \R^d)$
is a Banach space when endowed with the supremum norm 
$
\|v\|_\infty
:= 
\|v\|_{G([a,b]; \R^d)}
:=
\sup_{t \in [a,b]} |v(t)|
$; see \cite[Theorem 4.2.1]{Monteiro2019}.
When we want to emphasize which interval we refer to, we also write $\|\cdot\|_{\infty, [a,b]}$
instead of $\|\cdot\|_{\infty}$. 
For the left and right limit 
of a function $v \in G([a,b]; \R^d)$ at $t \in [a,b]$,
we use the notation $v(t-)$ and $v(t+)$, respectively, 
with the usual conventions at the interval endpoints $a$ and $b$, i.e.,\vspace{-0.1cm}
\[
v(t-) :=
\lim_{(a,t) \ni s \to t} v(s)
~~
\forall t \in (a,b],
\qquad
v(a-) := v(a),
\qquad 
v(t+) :=
\lim_{(t,b) \ni s \to t} v(s)
~~
\forall t \in [a,b),
\qquad
v(b+) := v(b).
\]
We further define $v_\pm(t) := v(t\pm)$
for all $t \in [a,b]$.
If the underlying interval is important,
then we highlight this by writing 
$v_{-,[a,b]}$ and $v_{+,[a,b]}$. 
Note that, for $a < s_1 < s_2 < b$,
one typically has 
$v_{-,[s_1,s_2]} \neq (v_{-,[a,b]})|_{[s_1,s_2]}$
and
$v_{+,[s_1,s_2]} \neq (v_{+,[a,b]})|_{[s_1,s_2]}$
due to the different conventions at the 
interval endpoints. 
Recall that
the set of points of discontinuity 
$[a,b] \setminus \{t \in [a,b] \mid v(t) = v(t+) = v(t-)\}$
of a regulated function $v\colon [a,b] \to \R^d$
is at most countable;
see \cite[Theorem 4.1.8]{Monteiro2019}. 
For every 
$v \in G([a,b]; \R^d)$, 
we further have $v_-, v_+ \in G([a,b]; \R^d)$ and 
\begin{equation}
\label{eq:Monteiro4.1.9}
\begin{aligned}
&v_-(t-) = v(t-)
~\forall t \in [a,b],
\qquad
&&
v_-(t+) = v(t+)
~\forall t \in [a,b),
\\
&v_+(t-) = v(t-)
~\forall t \in (a,b],
\qquad 
&&v_+(t+) = v(t+)
~\forall t \in [a,b];
\end{aligned}
\end{equation}
see \cite[Corollary 4.1.9]{Monteiro2019}.
We define 
$G_{rl}([a,b];\R^d) := \{v \in G([a,b];\R^d) \mid 
v(t) \in \{v(t-),v(t+)\}~ \forall t \in [a,b]\}$,
$G_r([a,b];\R^d) := \{v \in G([a,b];\R^d) \mid v = v_+\}$,
and 
$
G_l([a,b];\R^d) := \{v \in G([a,b];\R^d) \mid v = v_-\}$.
Note that 
$G_r([a,b];\R^d)$
and
$G_l([a,b];\R^d)$
 are  
closed subspaces of $G([a,b];\R^d)$
and, thus, Banach.
The same is true for the space 
$C([a,b];\R^d)$ of continuous functions 
$v\colon [a,b] \to \R^d$. 
For arbitrary $v\colon [a,b] \to \R^d$,
 we denote by 
\begin{align*}
\var(v; [a,b]) 
:=
\sup_{a = t_0 \leq t_1 \leq  ... \leq t_N = b, N \in \N}
\sum_{j=1,...,N} |v(t_j) - v(t_{j - 1})| \in [0,\infty]
\end{align*}
the variation of $v$ on $[a,b]$. 
Here, the supremum 
is taken over all partitions
$a = t_0 \leq ... \leq t_N = b$, $N \in \N$, of the interval $[a,b]$.
Note that this definition implies  
$\var(v;[a,b]) = \|\dot v \|_{L^1((a,b);\R^d)}$
for all $v \in W^{1,1}((a,b);\R^d)$. 
The space of all functions
$v\colon [a,b] \to \R^d$ with finite variation
is denoted by $BV([a,b];\R^d)$. Recall 
that $BV([a,b];\R^d)$ is a Banach space
when equipped with the norm 
$\|v\|_{BV([a,b];\R^d)} := |v(a)| + \var(v; [a,b]) $
and a subspace of $G([a,b]; \R^d)$;
see \cite[Theorem 2.2.2]{Monteiro2019}. 
Further, it holds $v_+, v_- \in BV([a,b];\R^d)$
for all $v \in BV([a,b];\R^d)$. We define 
$BV_{rl}([a,b];\R^d) := BV([a,b];\R^d) \cap G_{rl}([a,b];\R^d)$,
$BV_r([a,b];\R^d) :=  BV([a,b];\R^d) \cap G_{r}([a,b];\R^d)$,
$BV_l([a,b];\R^d) :=  BV([a,b];\R^d) \cap G_{l}([a,b];\R^d)$,
and  
$CBV([a,b];\R^d) := C([a,b];\R^d) \cap BV([a,b];\R^d)$.
Note that the latter three of these sets
are  closed subspaces of
$BV([a,b];\R^d)$ and, thus, Banach spaces.
In what follows, 
we endow the space 
$CBV([a,b];\R^d)$ with the 
norm
$
	\|v \|_{CBV([a,b];\R^d)}
	:=
	\|v \|_{C([a,b];\R^d)}
	+
	\var(v; [a,b]) 
        =
        \|v\|_{\infty}
        +
        \var(v; [a,b]),
$
which is equivalent to $\|\cdot \|_{ BV([a,b];\R^d)}$. 
If $K\colon [a,b] \rightrightarrows \R^d$ is a set-valued function, then we
define
$G([a,b]; K) := \{v \in G([a,b]; \R^d) \mid v(t) \in K(t)
\text{ for all }t \in [a,b]\}$.
Analogous shorthand notations are also 
used for $C([a,b]; \R^d)$, $BV([a,b];\R^d)$, etc.
Sets $K \subset \R^d$ are interpreted as constant 
functions in this context. 
If $d=1$ holds, then we simply 
write $G([a,b])$, $L^p(a,b)$, etc.\
instead of 
$G([a,b]; \R^d)$, $L^p((a,b);\R^d)$,  etc. 

\subsection{Preliminaries on projections}
\label{subsec:2.2}
In what follows, we recall basic results on 
Euclidean projections and distance functions that are needed for the derivation of \cref{th:main_summary}.

\begin{definition}[projection and distance function]
Let $Z \subset \R^d$, $d \in \N$, be  a nonempty, convex, and closed set.

\begin{enumerate}[label=\roman*)]
\item  We denote by $\pi_Z$
the Euclidean projection onto $Z$, i.e., 
\begin{equation}
\label{eq:proj_def_problem}
\pi_Z\colon \R^d \to \R^d,
\qquad 
\pi_Z(x) :=
\argmin \left \{
\left.
\frac12  |v - x|^2
\;
\right |
\;
  v \in Z
\right \}.
\end{equation}

\item  
We denote by $\dist_Z$ the Euclidean distance function to $Z$, i.e., 
\[
\dist_Z\colon \R^d \to [0, \infty),
\qquad
\dist_Z(x) := | \pi_Z(x) - x|  = \min_{v \in Z}  |v - x|.
\]
\end{enumerate}
\end{definition}

\begin{proposition}[properties of 
$\pi_Z$ and $\dist_Z$]
\label{lem:basic_proj_properties}
Let $Z \subset \R^d$, $d \in \N$, be nonempty, convex, and closed. 
\begin{enumerate}[label=\roman*)]
\item\label{lem:basic_proj_properties:i} For every $x \in \R^d$, the projection $\pi_Z(x)$
is uniquely characterized by the variational inequality 
\begin{equation}
\label{eq:proj_VI}
\pi_Z(x) \in Z,\qquad \langle \pi_Z(x) - x, v - \pi_Z(x)\rangle \geq 0\quad \forall v \in Z.
\end{equation}
\item\label{lem:basic_proj_properties:ii} 
The projection $\pi_Z\colon \R^d \to \R^d$ is one-Lipschitz, i.e., 
\begin{equation}
\label{eq:1projLip}
| \pi_Z(x_1) - \pi_Z(x_2) | \leq |x_1 - x_2|\qquad \forall x_1, x_2 \in \R^d.
\end{equation}
\item\label{lem:basic_proj_properties:iii}
The function $F\colon \R^d \to \R$, $x \mapsto \dist_Z(x)^2$,
is continuously Fr\'{e}chet differentiable. Its
gradient 
$\nabla F(x)$ w.r.t.\ the Euclidean scalar product 
 at $x \in \R^d$ is given by 
\begin{equation}
\label{eq:F_prime_formula}
\nabla F(x) = 2(x - \pi_Z(x)). 
\end{equation}
\item\label{lem:basic_proj_properties:iv}
If $Z$ is additionally a cone, then it holds
\begin{equation}
\label{eq:Z-Zcirc-identity+hom}
\begin{gathered}
x = \pi_Z(x) + \pi_{Z^\circ}(x)
\quad \text{and}\quad
\langle \pi_Z(x), \pi_{Z^\circ}(x) \rangle = 0
\quad \forall x \in \R^d,
\\
\quad 
\pi_Z(s x) = s \pi_Z(x)
\quad \forall x \in \R^d\quad \forall s \in [0, \infty),
\end{gathered}
\end{equation}
and the function $F\colon \R^d \to \R$, $x \mapsto \dist_Z(x)^2$, in \ref{lem:basic_proj_properties:iii} satisfies
\begin{equation}
\label{eq:F_prime_cone}
\nabla F(x) = 2 \pi_{Z^\circ}(x) \quad \forall x \in \R^d.
\end{equation}
\end{enumerate}
\end{proposition}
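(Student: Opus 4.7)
The plan is to derive the four statements sequentially from the definition of $\pi_Z(x)$ as the minimizer of the strictly convex, continuous, and coercive functional $v \mapsto \tfrac12|v-x|^2$ on the nonempty closed convex set $Z$; existence and uniqueness follow from the direct method of the calculus of variations.

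For \ref{lem:basic_proj_properties:i}, I would test the minimality at the convex combinations $(1-t)\pi_Z(x) + tv \in Z$ for $v \in Z$ and $t \in (0,1)$, expand the square, and let $t \to 0^+$ to obtain \eqref{eq:proj_VI}; the converse follows from the elementary identity $\tfrac12|v - x|^2 - \tfrac12|p - x|^2 = \langle p - x, v - p\rangle + \tfrac12|v - p|^2$, which shows that the VI forces $p$ to be the minimizer. For \ref{lem:basic_proj_properties:ii}, I would test the VI from \ref{lem:basic_proj_properties:i} at $x_1$ with $v := \pi_Z(x_2)$ and at $x_2$ with $v := \pi_Z(x_1)$ and add the two resulting inequalities to deduce $|\pi_Z(x_1) - \pi_Z(x_2)|^2 \leq \langle x_1 - x_2, \pi_Z(x_1) - \pi_Z(x_2)\rangle$, after which Cauchy–Schwarz yields \eqref{eq:1projLip}.

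Part \ref{lem:basic_proj_properties:iii}, which I expect to be the only nontrivial step, I would handle by a two-sided sandwich argument. The elementary estimate $F(y) = |y - \pi_Z(y)|^2 \leq |y - \pi_Z(x)|^2$ (valid because $\pi_Z(x) \in Z$), together with a straightforward expansion of the square, produces the upper bound $F(y) - F(x) - 2\langle x - \pi_Z(x), y - x\rangle \leq |y-x|^2$. Swapping the roles of $x$ and $y$ and rearranging yields the symmetric lower bound $F(y) - F(x) - 2\langle y - \pi_Z(y), y - x\rangle \geq -|y-x|^2$, whose linear term rewrites as $2\langle x - \pi_Z(x), y - x\rangle + 2|y-x|^2 - 2\langle \pi_Z(y) - \pi_Z(x), y - x\rangle$. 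Invoking the $1$-Lipschitz property from \ref{lem:basic_proj_properties:ii} and Cauchy–Schwarz to control the last inner product by $|y-x|^2$ gives a matching $O(|y-x|^2)$ lower bound, hence Fréchet differentiability with $\nabla F(x) = 2(x - \pi_Z(x))$; continuity of the gradient is then immediate from \ref{lem:basic_proj_properties:ii}.

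For \ref{lem:basic_proj_properties:iv}, the cone and closedness assumptions force $0 \in Z$ (as the limit of $sx$ for $x \in Z$ and $s \to 0^+$), and the substitution $v = sw$ in \eqref{eq:proj_def_problem} yields the positive homogeneity for $s > 0$, with the case $s = 0$ being trivial. For the Moreau decomposition, I would test the VI from \ref{lem:basic_proj_properties:i} with the admissible choices $v = 2\pi_Z(x) \in Z$ and $v = 0 \in Z$ (both lying in $Z$ by the cone property); this sandwiches $\langle \pi_Z(x) - x, \pi_Z(x)\rangle$ between a nonnegative and a nonpositive value and hence forces it to vanish. Combining this with the VI gives $x - \pi_Z(x) \in Z^\circ$, and verifying the analogous VI characterization for $\pi_{Z^\circ}(x)$ then identifies $\pi_{Z^\circ}(x) = x - \pi_Z(x)$ and establishes the orthogonality in \eqref{eq:Z-Zcirc-identity+hom}. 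The formula \eqref{eq:F_prime_cone} is then an immediate consequence of \ref{lem:basic_proj_properties:iii}.
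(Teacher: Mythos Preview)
Your proof is correct. Parts \ref{lem:basic_proj_properties:i}, \ref{lem:basic_proj_properties:ii}, and \ref{lem:basic_proj_properties:iv} are handled essentially as in the paper (the paper omits \ref{lem:basic_proj_properties:i} and \ref{lem:basic_proj_properties:ii} as well known, and its proof of \ref{lem:basic_proj_properties:iv} tests the VI with $v = s\pi_Z(x)$ for $s \ge 0$ in place of the two choices $v=0$ and $v=2\pi_Z(x)$, which amounts to the same thing).

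The genuine difference is in \ref{lem:basic_proj_properties:iii}. The paper proceeds via G\^{a}teaux differentiability: it fixes a direction $h$, uses the Lipschitz bound to extract a convergent subsequence of the difference quotients $\tau^{-1}(\pi_Z(x+\tau h)-\pi_Z(x))$, shows via the VI that any such limit $\delta$ satisfies $\langle x-\pi_Z(x),\delta\rangle = 0$, expands $\dist_Z(x+\tau h)^2 - \dist_Z(x)^2$ using $|a|^2-|b|^2 = \langle a+b,a-b\rangle$, and passes to the limit; since the resulting G\^{a}teaux derivative $h \mapsto 2\langle x-\pi_Z(x),h\rangle$ is continuous in $x$, the mean value theorem upgrades to Fr\'{e}chet. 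Your sandwich argument bypasses this entirely and delivers the explicit two-sided bound $|F(y)-F(x)-2\langle x-\pi_Z(x),y-x\rangle| \le |y-x|^2$, which is Fr\'{e}chet differentiability with a quantitative remainder in one stroke. Your route is shorter and more elementary here; the paper's route is a template that also works in situations where such clean two-sided estimates are not available.
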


\begin{proof}
All of the presented facts are well known. 
We include proofs of \ref{lem:basic_proj_properties:iii} 
and \ref{lem:basic_proj_properties:iv} for the convenience of the reader.
We start with \ref{lem:basic_proj_properties:iii}: 
Suppose that $x,h \in \R^d$ are given. Due to \eqref{eq:proj_VI}, we have 
\begin{equation}
\label{eq:randomeq73636}
\begin{aligned}
\left \langle \pi_Z(x + \tau h) - x - \tau h, \frac{\pi_Z(x) - \pi_Z(x + \tau h)}{\tau}\right \rangle  \geq 0
\quad \text{and}\quad
\left \langle \pi_Z(x ) - x  , \frac{\pi_Z(x + \tau h) - \pi_Z(x )}{\tau}\right \rangle \geq 0
\end{aligned}
\end{equation}
for all $\tau > 0$. From \eqref{eq:1projLip}, it follows further that
the difference quotients
\[
\delta_\tau := \frac{\pi_Z(x + \tau h) - \pi_Z(x)}{\tau}
\]
are bounded by $|h|$ for all $\tau > 0$.
By compactness, this implies $\delta_k := \delta_{\tau_k} \to \delta$
for some sequence $0 < \tau_k \to 0$ and some $\delta \in \R^d$.
By passing to the limit $k \to \infty$ along $\{\tau_k\}$ in 
\eqref{eq:randomeq73636}, we obtain that 
$
0 \leq \langle \pi_Z(x ) - x  , \delta \rangle \leq 0
$, 
i.e., $\langle \pi_Z(x ) - x  , \delta \rangle = 0$. Due to the definition of $\dist_Z$
and the identity $|a|^2 - |b|^2 = \langle a + b, a - b \rangle$ for $a,b \in \R^d$,
we furthermore have 
\begin{equation*}
\begin{aligned}
\dist_Z(x + \tau h)^2 - \dist_Z(x)^2
&=
|x + \tau h - \pi_Z(x + \tau h) |^2 - |x - \pi_Z(x) |^2
\\
&=
\left \langle
x + \tau h - \pi_Z(x + \tau h) + x - \pi_Z(x) 
,
x + \tau h - \pi_Z(x + \tau h) - x + \pi_Z(x) 
\right \rangle
\\
&=
\left \langle
x + \tau h - \pi_Z(x + \tau h) + x - \pi_Z(x) 
,
\tau h - \pi_Z(x + \tau h) + \pi_Z(x) 
\right \rangle\qquad \forall \tau > 0.
\end{aligned}
\end{equation*}
By dividing by $\tau > 0$ and passing to the limit $k \to \infty$ along 
$\{\tau_k\}$ in the above, we obtain that 
\begin{equation}
\label{eq:randomeq37374}
\lim_{k \to \infty}
\frac{\dist_Z(x + \tau_k h)^2 - \dist_Z(x)^2}{\tau_k}
=
2 \left \langle
x - \pi_Z(x) 
,
h - \delta
\right \rangle
=
2 \left \langle
x - \pi_Z(x) 
,
h 
\right \rangle.
\end{equation}
Since the right-hand side of \eqref{eq:randomeq37374} does not depend on the choice of
$\{\tau_k\}$, 
it follows 
that $\R^d \ni x \mapsto \dist_Z(x)^2 \in \R$ is G\^{a}teaux differentiable.
As the G\^{a}teaux derivative depends continuously on $x$ by the continuity of $\pi_Z$,
the continuous Fr\'{e}chet differentiability now  
follows from the mean value theorem. This completes the proof of \ref{lem:basic_proj_properties:iii}.
To establish \ref{lem:basic_proj_properties:iv},
suppose that $x \in \R^d$ is given. 
From the cone property of $Z$ and \eqref{eq:proj_VI},
it follows that 
\begin{equation}
\label{eq:randomeq_z_Zcirc}
\langle \pi_Z(x) - x, \pi_Z(x)\rangle  = 0 \leq \langle \pi_Z(x) - x, v\rangle \quad \forall v \in Z.
\end{equation}
The above implies $x - \pi_{Z}(x) \in Z^\circ$.
For all $w \in Z^\circ$, we further have 
$\langle \pi_Z(x), w \rangle  \leq 0$ and, thus,
\[
0 \geq
\langle \pi_Z(x), w \rangle 
+
\langle \pi_Z(x) - x, \pi_Z(x)\rangle 
=
\left \langle
\pi_Z(x), w + \pi_Z(x) - x
\right \rangle \qquad \forall w \in Z^\circ.
\]
Rewriting this inequality gives
\[
x - \pi_{Z}(x) \in Z^\circ,
\qquad
\left \langle
(x - \pi_Z(x)) - x, w - (x - \pi_Z(x))
\right \rangle
\geq 0\qquad \forall w \in Z^\circ.
\]
Thus, $x - \pi_Z(x) = \pi_{Z^\circ}(x)$ 
holds as claimed. 
To see that $\pi_Z(s x) = s \pi_Z(x)$
holds for all $x \in \R^d$ and $s \in [0, \infty)$, 
it suffices to multiply \eqref{eq:proj_VI}
by $s^2$ and to exploit the cone property and 
\ref{lem:basic_proj_properties:i}. 
This establishes \eqref{eq:Z-Zcirc-identity+hom}.
As \eqref{eq:F_prime_cone}
is a straightforward consequence of
\eqref{eq:F_prime_formula} and \eqref{eq:Z-Zcirc-identity+hom},
\ref{lem:basic_proj_properties:iv} follows and the proof 
of the proposition is complete.
\end{proof}

\subsection{Preliminaries on convex polyhedra}
\label{subsec:2.3}

Convex polyhedra in $\R^d$, $d \in \N$, can be constructed as intersections of half-spaces of the form 
$
H = \{x \in \R^d \mid \langle \nu, x \rangle \le \alpha \},
$
where $(\nu,\alpha) \in \R^d \times \R$ is a tuple with $|\nu| = 1$, i.e., $\nu \in S^{d-1}$.

\begin{definition}[convex polyhedron]
\label{def:convex_polyhedron}
A set $Z\subset \R^d$, $d \in \N$, is called a convex polyhedron if 
$Z = \R^d$ holds or if 
there exist an index set $I=\{1,...,n\}$, $n \in \N$,
and tuples $(\nu_i, \alpha_i)  \in S^{d-1}\times \R$, $i \in I$, 
such that $Z$ satisfies
\[
Z = \bigcap_{i\in I} H_i, \qquad 
H_i := \left \{ x \in \R^d \mid \langle \nu_i, x\rangle \leq \alpha_i \right \}.
\]
In the latter case, we call the $n$-tuple $\{(\nu_i, \alpha_i)\}_{i \in I} \in (S^{d-1} \times \R)^n$
a description of $Z$.
A polyhedron $Z$ is called full-dimensional if it has a nonempty interior.
\end{definition}

\begin{remark}[conventions for the case $Z = \R^d$]
In what follows, 
some care has to be taken regarding the edge case $Z = \R^d$
in \cref{def:convex_polyhedron}.
Henceforth, 
we use the convention that the polyhedron 
$Z = \R^d$ possesses precisely one description, namely 
the $0$-tuple (a.k.a.\ the empty tuple). 
We further define the index set $I$ belonging to this description 
to be the empty set and we use the convention that 
$\{(\nu_i, \alpha_i)\}_{i \in I}$
denotes the $0$-tuple whenever $I =\emptyset$. 
Note that this means in particular that $Z = \R^d$ possesses one and only one description. 
\end{remark}

Note that every convex polyhedron $Z \neq \{0\}$ in $\R^d$, $d \in \N$, containing the origin
is a full-dimensional polyhedron in the sense of \cref{def:convex_polyhedron} 
when considered as a subset of its linear hull. The same is true for 
the case $Z = \{0\}$ if $\{0\}$ is understood as a full-dimensional polyhedron in $\R^0 := \{0\}$.

\begin{definition}[set of active/inactive indices, regular description]
Given a convex polyhedron $Z \subset \R^d$
with a description $\{(\nu_i, \alpha_i)\}_{i \in I}$
and $x \in Z$, we denote by $\AA(x) := \{i \in I \mid \langle \nu_i, x\rangle =  \alpha_i\}$
the set of active indices of  $\{(\nu_i, \alpha_i)\}_{i \in I}$ at $x$
and by $\II(x) := \{i \in I \mid \langle \nu_i, x\rangle <  \alpha_i\}$ the set of inactive indices 
of  $\{(\nu_i, \alpha_i)\}_{i \in I}$ at $x$.
A description is called regular if, for every $x \in Z$,
the vectors in the set $\{ \nu_i \mid i \in \AA(x)\}$ are linearly independent.
\end{definition}

The octahedron in $\R^3$ 
(or, more generally, the closed unit ball w.r.t.\ the $\ell^1$-norm in $\R^d$, $d \ge 3$)
is an example of a polyhedron that does not possess a regular description
since four facets meet at corner points.

Descriptions are typically not unique.
As they are defined as tuples, we can reorder a given description, for example, 
to generate a new one with different maps $\AA, \II\colon Z \rightrightarrows \mathbb{N}$.
Further, a description of a polyhedron $Z$ may contain tuples $(\nu_j,\alpha_j)$
that are redundant in the sense that their removal does not change $Z$, i.e.,
\[
Z = 
\bigcap_{i\in I}  
\{ x \in \R^d \mid  \langle \nu_i, x\rangle \leq \alpha_i \}
=
\bigcap_{i\in I, i \neq j}  
\{ x \in \R^d \mid  \langle \nu_i, x\rangle \leq \alpha_i \}.
\]
The following well-known result shows that, for a full-dimensional convex polyhedron, 
it is possible to overcome these uniqueness and redundancy issues 
(at least in a certain sense) by considering \emph{standard descriptions}.

\begin{proposition}[standard descriptions]
\label{lem:standard_description}
Let $Z \subset \R^d$ be a full-dimensional convex polyhedron.
Then there exists a
description $\HH = \{(\nu_i, \alpha_i)\}_{i \in I}$ of $Z$ that contains no redundant
half-spaces.  
This description is unique up to permutations of the tuples $(\nu_i,\alpha_i)$
and satisfies 
\begin{equation}
\label{eq:faces}
\forall i \in I~\exists x_i \in Z\colon ~ \quad
 \AA(x_i) = \{i\}\quad (\text{that is,}~~
 \langle \nu_i, x_i\rangle = \alpha_i \text{ and }
  \langle \nu_j, x_i\rangle < \alpha_j ~\forall j \neq i).
\end{equation}
Henceforth, we call a description $\HH$ with the above properties a \emph{standard description} of the polyhedron $Z$. 
\end{proposition}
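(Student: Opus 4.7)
The plan is to dispense with the edge case $Z = \R^d$ first (where the empty tuple is vacuously the unique irredundant description) and then focus on proper $Z$. For the existence of an irredundant description, I would start from any description $\{(\nu_j, \alpha_j)\}_{j \in J}$ of $Z$ and iteratively delete tuples whose associated half-space is redundant in the sense that removing it does not change the intersection. Since $|J|$ is finite, this procedure terminates after finitely many steps with a description $\HH = \{(\nu_i, \alpha_i)\}_{i \in I}$ in which no single half-space can be dropped.

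To establish property \eqref{eq:faces}, I would fix $i \in I$ and exploit the irredundancy of $(\nu_i,\alpha_i)$ to produce a witness $y \in \R^d$ with $\langle \nu_j, y\rangle \leq \alpha_j$ for every $j \neq i$ but $\langle \nu_i, y\rangle > \alpha_i$. Choosing any $z \in \interior(Z)$ (which exists by full-dimensionality and necessarily satisfies $\langle \nu_j, z\rangle < \alpha_j$ for every $j \in I$) and forming the segment $s(t) := (1-t)z + ty$ for $t \in [0,1]$, I would set $t^* := \inf\{t \in [0,1] \mid \langle \nu_i, s(t)\rangle = \alpha_i\}$. By continuity, $t^* \in (0,1)$ and $\langle \nu_i, s(t^*)\rangle = \alpha_i$; for every $j \neq i$ the strictly convex combination with coefficient $1 - t^* > 0$ of the strict inequality $\langle \nu_j, z\rangle < \alpha_j$ and the weak inequality $\langle \nu_j, y\rangle \leq \alpha_j$ yields $\langle \nu_j, s(t^*)\rangle < \alpha_j$. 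Hence $x_i := s(t^*)$ lies in $Z$ and satisfies $\AA(x_i) = \{i\}$ as required.

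The main obstacle is uniqueness. Suppose $\HH = \{(\nu_i, \alpha_i)\}_{i \in I}$ and $\tilde\HH = \{(\tilde\nu_k, \tilde\alpha_k)\}_{k \in K}$ are two irredundant descriptions of $Z$, and let $x_i$ be provided by \eqref{eq:faces} for $\HH$. Picking $\veps > 0$ small enough that $\langle \nu_j, x\rangle < \alpha_j$ holds for every $x \in B_\veps(x_i)$ and every $j \neq i$, and defining $K(x_i) := \{k \in K \mid \langle \tilde\nu_k, x_i\rangle = \tilde\alpha_k\}$, I would obtain (after possibly shrinking $\veps$) the two local representations $B_\veps(x_i) \cap Z = B_\veps(x_i) \cap \{x \mid \langle \nu_i, x - x_i\rangle \leq 0\}$ from $\HH$ and $B_\veps(x_i) \cap Z = B_\veps(x_i) \cap \bigcap_{k \in K(x_i)} \{x \mid \langle \tilde\nu_k, x - x_i\rangle \leq 0\}$ from $\tilde\HH$. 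Translating to the origin and extending globally by positive homogeneity of cones forces the half-space $\{x \mid \langle \nu_i, x\rangle \leq 0\}$ to coincide with the polyhedral cone $\bigcap_{k \in K(x_i)} \{x \mid \langle \tilde\nu_k, x\rangle \leq 0\}$. Since a polyhedral cone cut out by hyperplanes through the origin can equal a half-space only when every bounding normal is a nonnegative multiple of the defining normal, the normalization $|\tilde\nu_k| = 1$ forces $\tilde\nu_k = \nu_i$ and therefore $\tilde\alpha_k = \langle \tilde\nu_k, x_i\rangle = \alpha_i$ for every $k \in K(x_i)$. Because an irredundant description cannot contain duplicate tuples, $K(x_i)$ must be a singleton, so $i \mapsto k(i)$ defines an injection $I \to K$ with $(\nu_i, \alpha_i) = (\tilde\nu_{k(i)}, \tilde\alpha_{k(i)})$; by symmetry this map is a bijection, giving the desired permutation.
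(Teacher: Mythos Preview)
Your argument is correct. The paper does not actually prove this proposition: it simply invokes standard references from polyhedral theory (\cite[Theorem 1.6]{Bruns2009} and \cite[Theorem 12.1.5]{Berger2009}) and moves on. What you have supplied is precisely the elementary face-geometry argument underlying those references: existence by greedily dropping redundant half-spaces, the witness points $x_i$ via a segment from an interior point to a point separating the $i$-th constraint from the rest, and uniqueness by comparing local descriptions of $Z$ near each $x_i$ to force $\tilde\nu_k = \nu_i$ via the normalization $|\tilde\nu_k|=1$. Both routes are valid; yours is self-contained, while the paper's saves space by delegating a classical fact to the literature. One cosmetic remark: in your definition of $t^*$ as an infimum, the map $t\mapsto \langle \nu_i, s(t)\rangle$ is affine, so the root is in fact unique and there is no need for an infimum.
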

\begin{proof} 
This is a standard result from linear programming and polyhedral theory;
see
\cite[Theorem 1.6]{Bruns2009}
or
\cite[Theorem 12.1.5]{Berger2009}.
Note that the $0$-tuple is the unique standard description of $Z = \R^d$ by our conventions.  
\end{proof}

\begin{corollary}[standard descriptions of cones]
\label{lem:standard_description_cone}
Let $Z \subset \R^d$ be a full-dimensional convex polyhedral cone
and let $\HH$ be a standard description of $Z$. Then $\HH$ has the form $\HH = \{(\nu_i, 0)\}_{i \in I}$.
\end{corollary}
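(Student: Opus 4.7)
The plan is to exploit property \eqref{eq:faces} of the standard description together with the cone property of $Z$. Since the edge case $Z=\R^d$ corresponds to $I=\emptyset$ by the conventions of the paper, for which the claim is vacuously true, I may assume that $I\neq\emptyset$.

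Fix an arbitrary $i\in I$. By \eqref{eq:faces} in \cref{lem:standard_description}, there exists some $x_i\in Z$ with $\langle \nu_i,x_i\rangle=\alpha_i$. Since $Z$ is a cone, $s x_i\in Z$ for every $s\in(0,\infty)$, so the defining inequality of the half-space indexed by $i$ gives
\[
s\,\alpha_i \;=\; \langle \nu_i, s x_i\rangle \;\leq\; \alpha_i \qquad \forall s\in(0,\infty).
\]
Plugging in $s=2$ yields $\alpha_i\leq 0$, and plugging in $s=\tfrac12$ yields $\alpha_i\geq 0$. Hence $\alpha_i=0$, which is what had to be shown.

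There is no real obstacle here: the only thing to keep track of is the bookkeeping for the degenerate case $Z=\R^d$, which is handled by the paper's convention that the standard description is the empty tuple. Note that \eqref{eq:faces} is essential, since without it one could e.g.\ pad a standard description with redundant inequalities of the form $\langle\nu,x\rangle\le\alpha$ with $\alpha>0$ whose half-space contains $Z$; the scaling argument then no longer applies because no $x_i$ exhibiting equality would be available.
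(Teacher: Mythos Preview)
Your proof is correct and follows essentially the same approach as the paper: pick $x_i$ from \eqref{eq:faces} with $\langle \nu_i, x_i\rangle = \alpha_i$ and scale using the cone property to force $\alpha_i = 0$. The only cosmetic difference is that the paper obtains $\alpha_i \ge 0$ from $0 \in Z$ (which holds since a closed polyhedral cone contains the origin), whereas you obtain it from the scaling $s=\tfrac12$; both are equally valid.
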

\begin{proof}
Let $\HH = \{(\nu_i, \alpha_i)\}_{i \in I}$ be a standard description of $Z$ and let 
$i \in I$ be arbitrary. Since $0 \in Z$,
we have $\alpha_i \ge \langle \nu_i, 0 \rangle = 0$.
For $x_i$ from \eqref{eq:faces}, it further holds
$\langle \nu_i, 2 x_i \rangle = 2\alpha_i \le \alpha_i$ as $2 x_i\in Z$.
Thus $\alpha_i = 0$.
\end{proof}

Next, we define the \emph{linearization cone}, which provides a local approximation for a convex polyhedron.

\begin{definition}[linearization cone]
\label{def:lin_cone}
Let $Z \subset \R^d$
be a nonempty convex polyhedron with description $\{(\nu_i, \alpha_i)\}_{i \in I}$.
Then, for every $x \in Z$, we define 
$
\ZZ(x) := \{z \in \R^d \mid \langle \nu_i, z\rangle \leq 0~\forall i \in \AA(x) \}.
$
\end{definition}

Note that we have $Z - x \subset \ZZ(x)$ for all $x \in Z$ in \cref{{def:lin_cone}} since
$\langle \nu_j, y - x \rangle = \langle \nu_j, y \rangle - \alpha_j \le 0$
holds for all $y \in Z$ and all $j\in \AA(x)$. It will be shown immediately below in \cref{lem:ZZproperties}\ref{lem:ZZproperties:iii}
that the cone $\ZZ(x)$ is independent of the chosen description of $Z$.

\begin{lemma}[full-dimensionality and standard descriptions of the linearization cone]
\label{lem:ZZstandard}
Let $Z \subset \R^d$ be a full-dimensional convex polyhedron.
Let $\HH = \{(\nu_i, \alpha_i)\}_{i \in I}$ be a standard description of $Z$
and let $x \in Z$ be given.
Then $\ZZ(x)$ is full-dimensional and
$\{(\nu_i, 0)\}_{i \in \AA(x)}$ is a standard description of $\ZZ(x)$.
\end{lemma}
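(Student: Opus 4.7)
The plan is to verify both claims directly from the definition of $\ZZ(x)$ and the characterizing properties of standard descriptions given in \cref{lem:standard_description}. Note first that when $\AA(x) = \emptyset$ (which in particular covers the degenerate case $Z=\R^d$), we have $\ZZ(x) = \R^d$ by definition and the assertion reduces to the conventions adopted for the $0$-tuple. In what follows I therefore focus on the case $\AA(x) \neq \emptyset$.

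For the full-dimensionality of $\ZZ(x)$, I would fix an interior point $x_0 \in \interior(Z)$, which exists because $Z$ is full-dimensional. Since $\HH$ describes $Z$ and $x_0 \in \interior(Z)$, we have $\langle \nu_i, x_0 \rangle < \alpha_i$ for every $i \in I$ (if some $\nu_i$ were tight at $x_0$, then $x_0$ would lie on the boundary of the half-space $H_i \supset Z$, contradicting $x_0 \in \interior(Z)$). Consequently, $\langle \nu_i, x_0 - x\rangle < 0$ for every $i \in \AA(x)$, so by continuity a small Euclidean ball around $x_0 - x$ is contained in $\ZZ(x)$, proving full-dimensionality.

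For the standard description claim, observe first that $\{(\nu_i,0)\}_{i\in\AA(x)}$ is a description of $\ZZ(x)$ in the sense of \cref{def:convex_polyhedron}: the vectors $\nu_i$ inherit unit length from $\HH$, and by definition $\ZZ(x)$ equals the intersection of the half-spaces $\{z \mid \langle \nu_i, z\rangle \le 0\}$ over $i \in \AA(x)$. To upgrade this to a standard description, by \cref{lem:standard_description} it suffices to exhibit, for every $i \in \AA(x)$, a point $z_i \in \ZZ(x)$ satisfying the analogue of \eqref{eq:faces}, i.e., $\langle \nu_i, z_i\rangle = 0$ and $\langle \nu_j, z_i\rangle < 0$ for all $j \in \AA(x) \setminus \{i\}$, and then to deduce non-redundancy. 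For the existence of $z_i$, I would apply \eqref{eq:faces} for the standard description $\HH$ of $Z$ itself: this yields a point $\hat x_i \in Z$ with $\AA(\hat x_i) = \{i\}$, and then the candidate $z_i := \hat x_i - x$ lies in $\ZZ(x)$ (since $Z - x \subset \ZZ(x)$, as noted after \cref{def:lin_cone}) and satisfies $\langle \nu_i, z_i\rangle = \alpha_i - \alpha_i = 0$ together with $\langle \nu_j, z_i\rangle = \langle \nu_j,\hat x_i\rangle - \alpha_j < 0$ for every $j \in \AA(x)\setminus\{i\}$, exactly because $j \notin \AA(\hat x_i) = \{i\}$.

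To conclude that $\{(\nu_i,0)\}_{i\in\AA(x)}$ contains no redundant half-space (which is the defining property of a standard description), I would use a short perturbation argument: for each $i \in \AA(x)$ the point $z_i + t\nu_i$ violates the $i$-th inequality for any $t > 0$, while for sufficiently small $t > 0$ it still strictly satisfies every $j$-th inequality with $j \in \AA(x) \setminus \{i\}$, by the strict inequalities $\langle \nu_j, z_i\rangle < 0$ established above. Hence deletion of the $i$-th half-space enlarges the intersection, so the description is standard. The main (but mild) obstacle in the proof is simply the bookkeeping between the descriptions of $Z$ and of $\ZZ(x)$ together with the edge cases $Z = \R^d$ and $\AA(x) = \emptyset$; everything else follows from the characterization in \cref{lem:standard_description} once the translation $z_i = \hat x_i - x$ has been identified as the correct auxiliary point.
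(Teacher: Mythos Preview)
Your proof is correct and follows essentially the same strategy as the paper: both arguments use the facet witnesses $\hat x_i$ from \eqref{eq:faces} and translate by $x$ to produce a point $z_i = \hat x_i - x$ with $\langle \nu_i, z_i\rangle = 0$ and $\langle \nu_j, z_i\rangle < 0$ for $j \in \AA(x)\setminus\{i\}$, then perturb to exhibit non-redundancy. The only cosmetic difference is the perturbation direction (you push along $\nu_i$, the paper pushes along $x_i$ after arranging $0\in\interior(Z)$ so that $\alpha_i>0$), and your full-dimensionality argument spells out what the paper compresses into the one-line observation $\ZZ(x)\supset Z-x$.
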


\begin{proof}
Since $Z$ is full-dimensional, so is $\ZZ(x) \supset Z - x$.
It remains to prove the statement about the standard description. 
To this end, we assume w.l.o.g.\ that $0 \in \interior(Z)$.
(This can always be achieved by a translation.) 
Note that this inclusion implies $0 = \langle \nu_i,  0 \rangle < \alpha_i$
for all $i \in I$.
Consider now some $i \in \AA(x)$. We have to show that the half-space
$H_i = \{z \in \R^d \mid \langle \nu_i, z \rangle \leq 0 \}$
is not redundant for $\ZZ(x)$.
We choose $x_i \in Z$ according to \eqref{eq:faces} 
and set $z := (1 + \veps) x_i - x$ with $\veps > 0$.
Then $z \notin H_i$ since $\langle \nu_i, z \rangle = \veps \alpha_i > 0$.
On the other hand, for $j \in \AA(x) \setminus \{i\}$ we have
$\langle \nu_j, x_i - x \rangle < 0$, so
$\langle \nu_j, z\rangle = 
\veps \langle \nu_j, x_i \rangle + \langle \nu_j, x_i - x \rangle \le 0$ 
if $\veps$ is small enough. This shows that $H_i$ is indeed 
nonredundant for $\ZZ(x)$ and completes the proof. 
\end{proof}

\begin{lemma}[properties of the linearization cone]
\label{lem:ZZproperties}
Let $Z \subset \R^d$
be a nonempty convex polyhedron with a description $\{(\nu_i, \alpha_i)\}_{i \in I}$.
Then the following statements are
true.
\begin{enumerate}[label=\roman*)]
\item\label{lem:ZZproperties:i} 
For every $x \in Z$, there exists $\varepsilon > 0$ such that 
\[
w + s z \in Z\qquad \forall w \in B_\varepsilon (x) \cap Z\quad 
\forall s \in [0, \varepsilon]\quad  \forall z \in B_1 (0) \cap \ZZ(x). 
\]
\item\label{lem:ZZproperties:ii}
For every $x \in \R^d$,
there exists $\varepsilon > 0$ such that 
\[
\pi_Z(x + z) = \pi_Z(x) + \pi_{\ZZ(\pi_Z(x)) }(x + z - \pi_Z(x))
\qquad \forall z \in B_\varepsilon(0).
\]

\item\label{lem:ZZproperties:iii} 
For every $x \in Z$, it holds $\ZZ(x) = K_\rad(x;Z) = K_\tan(x;Z)$.
In particular, if $Z$ is a nonempty convex polyhedral cone, then 
 $\ZZ(0) = K_{rad}(0;Z) = Z$.
\item\label{lem:ZZproperties:iv} If
$\{(\nu_i, \alpha_i)\}_{i \in I}$ 
is regular, then, for every point $x \in Z$, there 
exists a unique collection of vectors
$\{e_i\}_ {i \in \AA(x)}\subset \ZZ(x)$ (empty in the case $\AA(x)  = \emptyset$) satisfying 
\begin{equation}
\label{eq:e_i-properties}
	e_i \in \spann(\{\nu_k \mid k \in \AA(x)\}),
	\quad
	|e_i| = 1,\quad\text{and}\quad\langle \nu_i, e_i \rangle < 0 = \langle \nu_j, e_i\rangle
	\quad \forall i, j \in \AA(x), i \neq j.
\end{equation}
The vectors $e_i$, $i \in \AA(x)$,  form a basis of $\spann(\{\nu_k \mid k \in \AA(x)\})$,
and it holds 
\begin{equation}
\label{eq:e_i-equivalence}
\quad\qquad z \in \ZZ(x)
~~\Leftrightarrow~~
z = z_1 + z_2 \text{ with }
z_1 \in \spann(\{\nu_i\mid i \in \AA(x)\})^\perp,
z_2 = \sum_{i \in \AA(x)} \beta_i e_i,~\beta_i \geq 0 ~\forall i \in \AA(x).
\end{equation}
\end{enumerate}
\end{lemma}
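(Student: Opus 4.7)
My plan is to prove the four assertions in the order (i), (iii), (ii), (iv). The reason for this ordering is that part (i) is a self-contained local stability statement; part (iii) follows quickly once (i) is at hand and as a by-product shows that $\ZZ(x)$ is independent of the description; part (ii) is the most delicate and will consume (i), (iii), as well as \cref{lem:basic_proj_properties}\ref{lem:basic_proj_properties:iv}; and part (iv) is a purely linear-algebraic argument that is logically independent of the others.

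For (i) I would split the constraints into active and inactive ones. For $i \in \AA(x)$ and any $w \in Z$, $s \geq 0$, $z \in \ZZ(x)$ one has $\langle \nu_i, w + sz\rangle = \langle \nu_i, w\rangle + s\langle \nu_i, z\rangle \leq \alpha_i + 0$, so the active constraints are automatic. For $j \in \II(x)$, set $\delta_j := \alpha_j - \langle \nu_j, x\rangle > 0$; using $|\nu_j| = 1$ and $|z| \leq 1$ one obtains $\langle \nu_j, w + sz\rangle \leq \alpha_j - \delta_j + \veps + s$. Choosing $\veps := \min(1,\tfrac{1}{3}\min_{j \in \II(x)}\delta_j)$ (with the convention $\veps := 1$ if $\II(x) = \emptyset$) concludes the argument. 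For (iii), I would exploit (i) to show that every $z \in \ZZ(x) \cap B_1(0)$ lies in $K_\rad(x;Z)$ (take $w = x$ and $s > 0$ small), and then extend by positive homogeneity. The reverse inclusion $K_\rad(x;Z) \subset \ZZ(x)$ is a one-line computation on $z = s(y-x)$ using $y \in Z$ and $i \in \AA(x)$. Since $\ZZ(x)$ is a closed intersection of half-spaces, $K_\tan(x;Z) = \closure(K_\rad(x;Z)) = \ZZ(x)$ follows, and so does the description-independence of $\ZZ(x)$. The cone special case then drops out of \cref{lem:standard_description_cone}, which forces $\alpha_i = 0$ for every $i \in I$ and hence $\AA(0) = I$.

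For (ii), set $\bar x := \pi_Z(x)$. The projection VI \eqref{eq:proj_VI} combined with (iii) yields $x - \bar x \in \ZZ(\bar x)^\circ$, and \cref{lem:basic_proj_properties}\ref{lem:basic_proj_properties:iv} then gives $\pi_{\ZZ(\bar x)}(x-\bar x) = 0$. Define the candidate
\[
\hat y := \bar x + \pi_{\ZZ(\bar x)}(x + z - \bar x),
\]
which by the one-Lipschitz property of $\pi_{\ZZ(\bar x)}$ tends to $\bar x$ as $z \to 0$. For $|z|$ small, $|\hat y - \bar x| \leq \veps$ with $\veps$ from (i) applied at $\bar x$, so $\hat y \in Z$ by (i) with $w = \bar x$. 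It remains to verify $\langle \hat y - (x+z), v - \hat y\rangle \geq 0$ for every $v \in Z$. Convexity of $Z$ reduces this to checking the inequality for $v \in Z$ in an arbitrarily small neighborhood of $\hat y$ (move along the segment $[\hat y,v]\subset Z$ and divide by the step size). For such $v$, one has $v \in \bar x + \ZZ(\bar x)$ because $Z - \bar x \subset \ZZ(\bar x)$, and the VI characterizing $\pi_{\ZZ(\bar x)}(x + z - \bar x)$ provides the required inequality after a translation by $\bar x$. This convexity-based passage from a local to a global VI is the main technical obstacle, since a naive argument would only yield the identity of the two projections when the linearization cone itself lies entirely in $Z$.

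For (iv), regularity gives that $\{\nu_k\}_{k \in \AA(x)}$ is linearly independent, so with $V := \spann(\{\nu_k : k \in \AA(x)\})$ the linear map
\[
T\colon V \to \R^{|\AA(x)|},\qquad v \mapsto (\langle \nu_k, v\rangle)_{k \in \AA(x)},
\]
is a bijection (any $v \in \ker T$ is in $V$ and orthogonal to $V$, hence zero). Taking $\tilde e_i := T^{-1}(-\mathbf{1}_{\{i\}})$ and $e_i := \tilde e_i / |\tilde e_i|$ produces vectors satisfying \eqref{eq:e_i-properties}, uniqueness being inherited from the bijectivity of $T$. Linear independence of $\{e_i\}_{i \in \AA(x)}$ follows by testing $\sum_i \lambda_i e_i = 0$ against each $\nu_j$, which forces $\lambda_j = 0$; since $\dim V = |\AA(x)|$, these vectors form a basis of $V$. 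The equivalence \eqref{eq:e_i-equivalence} is then immediate by decomposing $z = z_1 + z_2$ with $z_1 \in V^\perp$ and $z_2 = \sum_i \beta_i e_i \in V$ and noting that $\langle \nu_j, z\rangle = \beta_j \langle \nu_j, e_j\rangle$ is non-positive if and only if $\beta_j \geq 0$, since $\langle \nu_j, e_j\rangle < 0$.
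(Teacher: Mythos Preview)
Your proof is correct, and parts (i), (iii), and (iv) follow essentially the paper's line (your dual-basis construction via the map $T$ in (iv) is a slightly cleaner packaging of the same orthogonalization the paper does explicitly). Two small remarks: in (iii), invoking \cref{lem:standard_description_cone} for the cone special case is not quite licit, since that lemma assumes full-dimensionality and a standard description while \cref{lem:ZZproperties} does not; but the conclusion $\ZZ(0)=Z$ is immediate from the already-established $\ZZ(0)=K_\rad(0;Z)$ and the cone property of $Z$, so nothing is lost.

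Part (ii) is where you genuinely diverge from the paper. The paper argues via active sets: by Lipschitz continuity of $\pi_Z$ one has $\AA(\pi_Z(x+z))\subset\AA(\pi_Z(x))$ for small $z$, so the projection onto $Z$ coincides with the projection onto the polyhedron defined only by the constraints active at $\pi_Z(x)$, which after translation by $\bar x=\pi_Z(x)$ becomes the projection onto $\ZZ(\bar x)$. You instead verify the variational inequality for the candidate $\hat y=\bar x+\pi_{\ZZ(\bar x)}(x+z-\bar x)$ directly, using (i) to secure $\hat y\in Z$. Both work; your route avoids tracking active indices and is perhaps more transparently ``variational'', while the paper's active-set argument is shorter once one accepts the continuity of $\AA(\pi_Z(\cdot))$. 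One simplification you missed: the locality reduction in your VI step is unnecessary. The inclusion $Z-\bar x\subset\ZZ(\bar x)$ that you invoke for $v$ near $\hat y$ in fact holds for \emph{every} $v\in Z$, so the projection VI for $\pi_{\ZZ(\bar x)}$ with test vector $v-\bar x$ gives $\langle\hat y-(x+z),v-\hat y\rangle\ge 0$ globally, without any passage through local test directions.
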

\begin{proof}
We begin with \ref{lem:ZZproperties:i}:
Let $x\in Z$.
As the functions $F_i\colon \R^d \times \R^d \to \R$,  
$(w, z) \mapsto \langle \nu_i, w + z\rangle$, $i \in I$, are continuous 
and satisfy $F_i(x, 0) < \alpha_i$ for all $i \in \II(x)$, we can find 
$\varepsilon > 0$ such that $F_i(w, z) < \alpha_i$ holds for all
$w \in B_\varepsilon(x)$, $z \in B_\varepsilon (0)$, and $i \in \II(x)$.
This implies that, for all $i \in \II(x)$, we have   
$\langle \nu_i, w + s z\rangle \leq \alpha_i$ for all
$w \in B_\varepsilon(x) \cap Z$, $s \in [0, \varepsilon]$, and $z \in B_1(0) \cap \ZZ(x)$.
If, on the other hand, $i \in \AA(x)$, then 
we have $\langle \nu_i, w + s z\rangle = \langle \nu_i, w \rangle + s \langle \nu_i, z\rangle \leq \alpha_i + 0$
for all $w \in B_\varepsilon(x) \cap Z$, $s \in [0, \varepsilon]$, and $z \in B_1(0) \cap \ZZ(x)$
by the definitions of $Z$ and $\ZZ(x)$. This establishes \ref{lem:ZZproperties:i}. 

To prove \ref{lem:ZZproperties:ii},
suppose that $x \in \R^d$ is given.
From the Lipschitz continuity of 
$\pi_Z$,
we obtain that there exists $\varepsilon > 0$ such that 
$\AA(\pi_Z(x + z)) \subset \AA(\pi_Z(x))$ holds for all $z \in B_\varepsilon(0)$. 
In combination with \eqref{eq:proj_def_problem}, this yields
\[
\pi_Z(x + z)
= \argmin \left \{\left.
 \frac12 |v - x - z|^2
~\right |~ \langle \nu_i, v \rangle \leq \alpha_i ~\forall i \in \AA(\pi_Z(x))
\right \}\qquad\forall z \in B_\varepsilon(0),
\]
which, by means of the variable transformation $w = v - \pi_Z(x)$, 
can be rewritten as 
\begin{equation*}
\begin{aligned}
\pi_Z(x + z) 
&=  \pi_Z(x) +
\argmin \left \{
\left.
  \frac12 |w + \pi_Z(x) - x - z|^2
~\right |~ \langle \nu_i, w \rangle \leq 0 ~\forall i \in \AA(\pi_Z(x))
\right \}
\\
& =
\pi_Z(x) +
\pi_{\ZZ(\pi_Z(x)) }(x+ z - \pi_Z(x))\qquad \forall z \in B_\varepsilon(0).
\end{aligned}
\end{equation*}
This establishes \ref{lem:ZZproperties:ii}.

Next, we prove \ref{lem:ZZproperties:iii}:
Let $x \in Z$ be given.
Since $\ZZ(x)$ is closed, it suffices to prove that $\ZZ(x) = K_\rad(x;Z)$.
If $z = s(y - x) \in K_\rad(x;Z)$ with $y \in Z$ and $s > 0$, then
$\langle \nu_i, z \rangle = s (\langle \nu_i, y \rangle - \alpha_i) \le 0$
for all $i \in \AA(x)$.
Conversely, if $0 \neq z \in \ZZ(x)$, then from \ref{lem:ZZproperties:i}
we obtain that $x + sz = x + s |z|(z/|z|) \in Z$ if $s > 0$ is small enough.

It remains to prove \ref{lem:ZZproperties:iv}: 
Let $x \in Z$ be given. We assume w.l.o.g.\ that $ \AA(x) \neq \emptyset$. 
(For the degenerate case $\AA(x) = \emptyset$, the assertion is true 
due to the conventions $\spann(\emptyset) = \{0\}$, $\sum_{i \in \emptyset} \beta_i e_i = 0$, etc.)
Fix $i \in \AA(x)$. We set
$V := \spann(\{\nu_j\mid j \in \AA(x)\})$ and
$W := \spann(\{\nu_j \mid j \in \AA(x) \setminus \{i\}\})$.
Since the vectors $\nu_j$, $j \in \AA(x)$, are linearly independent
by our assumptions, we have $\dimm(W) = \dimm(V) - 1$ and
$0 \neq e:= \nu_i - \pi_W(\nu_i) \in W^\bot \cap V$.
Moreover, $\dimm(W^\bot) = d + 1 - \dimm(V)$
and $\dimm(W^\bot + V) = d$ since $W^\bot \supset V^\bot$.
It follows that $\dimm(W^\bot \cap V) = 1$ and 
$\langle \nu_i, e \rangle =  \langle e, e \rangle \neq 0$. 
Thus, $W^\bot \cap V$ contains precisely one vector $e_i$---a scalar multiple 
of $e$---satisfying \eqref{eq:e_i-properties}.
By repeating this construction for all other  indices in $\AA(x)$, the 
uniqueness and existence of vectors $e_i$, $i \in \AA(x)$,
with the properties in \eqref{eq:e_i-properties} follow. 
That the vectors $e_i$, $i \in \AA(x)$, are linearly independent and, consequently, form a basis 
of $V$
follows straightforwardly 
from \eqref{eq:e_i-properties}. It remains to prove \eqref{eq:e_i-equivalence}. 
The implication ``$\Leftarrow$'' in this equivalence 
is again a direct consequence of \eqref{eq:e_i-properties}.
To prove ``$\Rightarrow$'', let $z \in \ZZ(x)$ be given. 
Then there exist unique $z_1 \in  V^\perp$
and $z_2 \in V$ satisfying 
$z = z_1 + z_2$. Since the vectors $e_i$, $i \in \AA(x)$, 
form a basis of $V$, there exist unique $\beta_i \in \R$,
$i \in \AA(x)$, satisfying $z_2 = \sum_{i \in \AA(x)} \beta_i e_i$.
From $z \in \ZZ(x)$ and \eqref{eq:e_i-properties}, it follows that 
$0 \geq \langle \nu_j, z\rangle = \langle \nu_j, z_2\rangle = \beta_j \langle \nu_j, e_j\rangle$
holds for all $j \in \AA(x)$, where
$ \langle \nu_j, e_j\rangle < 0$. Thus, $\beta_j \geq 0$ for all $ j \in \AA(x)$
and \eqref{eq:e_i-equivalence} is proved. 
This establishes \ref{lem:ZZproperties:iv} and completes the proof of the lemma.
\end{proof}

\begin{lemma}[formulas for polar and normal cones]
\label{lem:normal_cone_polyhedral}
\hfill
\begin{enumerate}[label=\roman*)]
\item
Let $Z$ be a convex polyhedral cone
with a description $\{(\nu_i, 0)\}_{i \in I}$. Then it holds 
\[
Z^\circ = \left \{ \left.
\sum_{i \in I} \beta_i \nu_i ~
\right |~ \beta_i \geq 0~ \forall i \in I
\right \}.
\]
\item
Let $Z$ be a nonempty convex polyhedron with a description $\{(\nu_i, \alpha_i)\}_{i \in I}$. 
Then, for every $x \in Z$, it holds
\[
\NN_Z(x) = \ZZ(x)^\circ = \left \{ \left.
\sum_{i \in \AA(x)} \beta_i \nu_i ~
\right |~ \beta_i \geq 0~ \forall i \in \AA(x)
\right \}.
\]
\end{enumerate}
\end{lemma}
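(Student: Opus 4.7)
The plan is to derive (ii) as a fairly immediate corollary of (i) and the preceding results, so the bulk of the work concentrates on (i). For part (i), the easy inclusion ``$\supset$'' is just a direct computation: if $z = \sum_{i \in I} \beta_i \nu_i$ with $\beta_i \geq 0$, then for every $v \in Z$ one has $\langle \nu_i, v\rangle \leq 0$, hence $\langle z, v\rangle = \sum_i \beta_i \langle \nu_i, v\rangle \leq 0$, i.e., $z \in Z^\circ$. (In the edge case $I=\emptyset$, which corresponds to $Z=\R^d$, both sides of the claimed identity collapse to $\{0\}$ by our conventions.)

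For the nontrivial inclusion ``$\subset$'', I would introduce the finitely generated cone
\[
K := \left\{\left.\sum_{i\in I}\beta_i\nu_i \;\right|\; \beta_i \geq 0~\forall i \in I\right\}
\]
and proceed via the bipolar theorem. A direct calculation gives $K^\circ = Z$: indeed, $w \in K^\circ$ iff $\langle w, \sum_i \beta_i \nu_i\rangle \leq 0$ for all nonnegative $\beta_i$, which, by choosing $\beta_j = 1$ and $\beta_i = 0$ for $i \neq j$, is equivalent to $\langle w, \nu_i\rangle \leq 0$ for all $i\in I$, i.e., $w \in Z$. Since $K$ is a convex cone, the bipolar theorem yields $Z^\circ = K^{\circ\circ} = \closure(K)$, so it remains only to show that $K$ is closed.

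The closedness of $K$ is the one point requiring real work and is the main obstacle of the proof. It is a classical result (Minkowski--Weyl type) that every finitely generated convex cone in $\R^d$ is closed. I would prove it via a Carath\'{e}odory-style reduction: any element of $K$ can be represented as a nonnegative combination of a linearly independent subfamily of $\{\nu_i\}_{i\in I}$, so $K$ equals the finite union, over all subsets $J \subset I$ for which $\{\nu_i\}_{i\in J}$ is linearly independent, of the cones $K_J := \{\sum_{i\in J}\beta_i\nu_i : \beta_i \geq 0\}$; each $K_J$ is the image of the closed orthant $[0,\infty)^{|J|}$ under an injective (hence proper on closed sets when restricted to its image, by the equivalence of norms on finite-dimensional spaces) linear map, hence closed. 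A finite union of closed sets being closed, $K$ is closed as claimed, and (i) follows.

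For (ii), \cref{lem:ZZproperties}\ref{lem:ZZproperties:iii} gives $K_\tan(x;Z)=\ZZ(x)$, so by the definition of the normal cone, $\NN_Z(x)=K_\tan(x;Z)^\circ = \ZZ(x)^\circ$. By \cref{def:lin_cone}, the cone $\ZZ(x)$ is described by the half-spaces $\{(\nu_i,0)\}_{i\in\AA(x)}$, so applying part (i) to $\ZZ(x)$ with this description yields the stated formula for $\NN_Z(x)$. This completes the plan.
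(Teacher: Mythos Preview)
Your proof is correct. The paper does not actually prove this lemma at all: it simply cites \cite[Lemma~3.1]{Faigle2002}, treating the result as a standard fact from polyhedral theory (it is essentially Farkas' lemma, or equivalently a corollary of the Minkowski--Weyl theorem). Your write-up supplies a complete, self-contained argument in its place: the easy inclusion is immediate, and for the hard one you identify the key obstruction---closedness of the finitely generated cone $K$---and dispatch it cleanly with the Carath\'{e}odory reduction to linearly independent generators, after which the bipolar identity does the rest. Deriving (ii) from (i) via \cref{lem:ZZproperties}\ref{lem:ZZproperties:iii} and \cref{def:lin_cone} is exactly the right move and matches how the paper uses these pieces elsewhere. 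So your approach is more informative than the paper's bare citation, at the cost of a few lines; nothing is missing.
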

\begin{proof}
This is \cite[Lemma 3.1]{Faigle2002}.
\end{proof}

Note that the right-hand sides of the formulas in \cref{lem:normal_cone_polyhedral}
are understood as $\{0\}$ in the case that the summation index set is empty (by the convention 
for the empty sum). We are now in the position to introduce:

\begin{definition}[non-obtuse polyhedron]
\label{def:properties_descript} 
A full-dimensional convex polyhedron $Z\subset \R^d$ is called \emph{non-obtuse} 
if $\langle \nu_i, \nu_j\rangle \leq 0$  for all $i,j \in I$ with  $i \neq j$
holds for one (and thus all) of its standard descriptions $\{(\nu_i, \alpha_i)\}_{i \in I}$.
\end{definition}

The following two results collect properties of non-obtuse 
polyhedra that are essential 
for the analysis of the jumps of the directional 
derivatives of the vectorial stop operator in \cref{subsec:3.3,subsec:3.4}.

\begin{proposition}[regularity of non-obtuse polyhedra]
\label{lem:descriptions:iii}
Let $Z \subset \R^d$, $d \in \N$, be a full-dimensional convex non-obtuse
polyhedron and let $\HH = \{(\nu_i, \alpha_i)\}_{i \in I}$ be a standard description of $Z$. 
Then $\HH$ is regular.
\end{proposition}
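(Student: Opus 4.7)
The plan is to prove the contrapositive in a direct way: assume that, for some $x \in Z$, there is a nontrivial linear dependence $\sum_{i \in \AA(x)} c_i \nu_i = 0$, and derive a contradiction from the combination of non-obtuseness and the defining property \eqref{eq:faces} of a standard description. The whole argument splits into a reduction step that squeezes the dependence into a single-sign relation (this is where non-obtuseness enters), and a subsequent evaluation step that uses the ``one-active-index'' points furnished by \eqref{eq:faces}.

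\textbf{Step 1 (reduction to an all-positive dependence).} Write $P:=\{i\in\AA(x):c_i>0\}$, $N:=\{i\in\AA(x):c_i<0\}$ and set
\[
u := \sum_{i\in P} c_i \nu_i = \sum_{j \in N} (-c_j)\nu_j.
\]
Pairing the two representations of $u$ gives
\[
|u|^2 = \sum_{i\in P,\,j\in N} c_i(-c_j)\,\langle \nu_i,\nu_j\rangle.
\]
Every factor $c_i(-c_j)$ is strictly positive and every inner product $\langle \nu_i,\nu_j\rangle$ is non-positive by non-obtuseness (note that $i\in P$, $j\in N$ forces $i\neq j$), so $|u|^2\le 0$ and hence $u=0$. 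If $P=\emptyset$ we replace all coefficients by their negatives. In either case we end up with a nonempty subset $S\subset \AA(x)$ and scalars $c_i>0$, $i\in S$, such that $\sum_{i\in S}c_i\nu_i=0$.

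\textbf{Step 2 (contradiction via the face points).} Since $S\subset \AA(x)$, pairing the new dependence with $x$ yields
\[
0 = \Big\langle \sum_{i\in S}c_i\nu_i,\, x\Big\rangle = \sum_{i\in S} c_i\alpha_i.
\]
Now pick any $k\in S$. If $|S|=1$, then $c_k\nu_k=0$ with $c_k\ne 0$ and $|\nu_k|=1$, an immediate contradiction. Otherwise, choose $x_k\in Z$ according to \eqref{eq:faces}, so that $\langle \nu_k,x_k\rangle =\alpha_k$ and $\langle \nu_j,x_k\rangle <\alpha_j$ for every $j\in I\setminus\{k\}$ (in particular for every $j\in S\setminus\{k\}$). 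Pairing the dependence with $x_k$ gives $\sum_{i\in S}c_i\langle\nu_i,x_k\rangle=0$; subtracting this from $\sum_{i\in S}c_i\alpha_i=0$ and using $\alpha_k=\langle\nu_k,x_k\rangle$ leaves
\[
\sum_{i\in S\setminus\{k\}} c_i\bigl(\alpha_i - \langle \nu_i,x_k\rangle\bigr) = 0.
\]
Each summand is strictly positive (since $c_i>0$ and $i\neq k$), contradicting the equality.

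\textbf{Where the work is.} The genuinely delicate point is Step 1: non-obtuseness alone does \emph{not} imply linear independence of the $\nu_i$ (as the example $\nu_1=1$, $\nu_2=-1$ in $\R$ shows), and the standard description property on its own does not either. The Gram-matrix trick that reduces an arbitrary sign dependence to a single-sign one is precisely what makes non-obtuseness usable here. Once that reduction is accomplished, the contribution of \eqref{eq:faces} is elementary: it witnesses, for every $i\in S$, a point of $Z$ where $\nu_i$ is the unique active normal, and any nontrivial convex combination of these witnesses is incompatible with the all-positive relation.
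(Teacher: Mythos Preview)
Your proof is correct. Step~1 is exactly the paper's argument: the paper also expands $0=\bigl|\sum_i \beta_i\nu_i\bigr|^2$ and uses non-obtuseness to conclude that the positive-coefficient and negative-coefficient sums vanish separately. The only difference lies in how the single-sign relation $\sum_{i\in S}c_i\nu_i=0$ with $c_i>0$ is turned into a contradiction. You pair it with the face witnesses $x_k$ from~\eqref{eq:faces}; the paper instead picks any interior point $\tilde x\in\interior(Z)$, sets $h:=\tilde x-x$, observes that $\langle\nu_i,h\rangle<0$ for every $i\in\AA(x)$, and pairs the relation with $h$ to get $0=\sum_{i\in S}c_i\langle\nu_i,h\rangle<0$. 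The paper's finishing move is marginally cleaner (it only needs full-dimensionality rather than the specific structure of a standard description), but both are short and elementary once Step~1 is in place.
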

\begin{proof}
Let $x \in Z$ be arbitrary
and assume that $\beta_i \in \R$, $i \in \AA(x)$,
are numbers
such that  
$\sum_{i \in \AA(x)} \beta_i \nu_i = 0$ holds.
Choose $\tilde x \in \interior(Z)$.
Then 
it holds
$\langle \nu_i, \tilde x\rangle < \alpha_i$
for all $i \in I$ 
and 
$h := \tilde x -x \in \R^d$ satisfies 
$\langle \nu_i, h \rangle < 0$ for all $i \in \AA(x)$. 
Since $Z$ is non-obtuse,
we further obtain 
\begin{equation*}
\begin{aligned}
0
=
\left |
\sum_{i \in \AA(x)} \beta_i \nu_i
\right |^2
&=
\left |
\sum_{i \in \AA(x), \beta_i > 0} \beta_i \nu_i
\right |^2
+
2
\sum_{i \in \AA(x), \beta_i > 0} 
\sum_{j \in \AA(x), \beta_j < 0}
\beta_i \beta_j 
\left \langle
 \nu_i,
\nu_j
\right \rangle
+
\left |
\sum_{i \in \AA(x), \beta_i < 0} \beta_i \nu_i
\right |^2
\\
&\geq
 \left |
\sum_{i \in \AA(x), \beta_i > 0} \beta_i \nu_i
\right |^2
+
\left |
\sum_{i \in \AA(x), \beta_i < 0} \beta_i \nu_i
\right |^2.
\end{aligned}
\end{equation*}
The above implies 
\[
0
=
\left \langle 
h, \sum_{i \in \AA(x), \beta_i > 0} \beta_i \nu_i
\right \rangle 
=
 \sum_{i \in \AA(x), \beta_i > 0} 
 \beta_i \left \langle h, \nu_i\right \rangle 
\quad
\text{ and }
\quad
0=
\left \langle 
h, \sum_{i \in \AA(x), \beta_i < 0} \beta_i \nu_i
\right \rangle 
=
 \sum_{i \in \AA(x), \beta_i < 0} \beta_i
 \left \langle 
h,  \nu_i\right \rangle.
\]
As $\langle h,\nu_i \rangle < 0$ for all $i \in \AA(x)$, this
can only be true if 
there is no $i \in \AA(x)$ with $\beta_i \neq 0$.
This shows that the vectors in the set 
$\{ \nu_i \mid i \in \AA(x)\}$ are linearly independent
and that $\HH$ is indeed regular.
\end{proof}

\begin{lemma}[properties of non-obtuse polyhedra]
\label{lem:nonobtuse_properties}
Let $Z \subset \R^d$, $d \in \N$,
be a  full-dimensional  non-obtuse convex polyhedron
and let $\HH = \{(\nu_i, \alpha_i)\}_{i \in I}$ be a 
standard description of $Z$.
Then the following is true.
\begin{enumerate}[label=\roman*)]
\item\label{lem:nonobtuse_properties:i} 
It holds  
\begin{equation}
\label{eq:norm_proj}
x, y \in Z,  v \in \R^d, x + v \in Z
\qquad
\Rightarrow
\qquad
x + \pi_{\ZZ(y)^\circ}(v) \in Z.
\end{equation}

\item\label{lem:nonobtuse_properties:ii} 
For all $j \in I$, it holds 
\[
x \in \R^d,~\langle \nu_j, x \rangle = \alpha_j
\qquad\Rightarrow
\qquad\langle \nu_j, \pi_Z(x) \rangle = \alpha_j.
\]
\item\label{lem:nonobtuse_properties:iii} 
If $Z$ is additionally a cone, then it holds 
\begin{equation*}
x,y \in Z\qquad \Rightarrow \qquad 
\pi_{y + Z^\circ}(x)\in Z \text{ and } 
x - \pi_{y + Z^\circ}(x) = \pi_{Z}(x-y)\in Z. 
\end{equation*}
\item \label{lem:nonobtuse_properties:iv}
If $Z$ is additionally a cone,
$J \subset I$ an index set, and
$V := \spann(\{\nu_i \mid i \in J\})^\perp$,
then it holds 
\[
x \in Z\qquad \Rightarrow \qquad \pi_V(x) \in Z.
\]
\end{enumerate}
\end{lemma}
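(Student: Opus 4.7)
The plan is to prove the four parts in the order they are stated, using (i) as the main workhorse from which (iii) will follow, and handling (ii) and (iv) with direct structural arguments tailored to their particular geometry.

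For part (i), I would invoke \cref{lem:normal_cone_polyhedral} to write $p := \pi_{\ZZ(y)^\circ}(v) = \sum_{i \in \AA(y)} \beta_i \nu_i$ with coefficients $\beta_i \geq 0$, and then verify $\langle \nu_j, x + p \rangle \leq \alpha_j$ for each $j \in I$ by separating cases based on the relation of $j$ to $\AA(y)$. For $j \notin \AA(y)$, non-obtuseness yields $\langle \nu_j, \nu_i \rangle \leq 0$ for every $i \in \AA(y)$, so $\langle \nu_j, p \rangle \leq 0$ and the required estimate follows from $x \in Z$. For $j \in \AA(y)$, I would appeal to the KKT optimality conditions of the convex quadratic program $\min_{\beta \geq 0} \tfrac12 |\sum_i \beta_i \nu_i - v|^2$ defining $p$, which yield the complementary slackness relation $\beta_j \langle \nu_j, p - v \rangle = 0$. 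A further case distinction then closes the argument: if $\beta_j > 0$ then $\langle \nu_j, p \rangle = \langle \nu_j, v \rangle \leq \alpha_j - \langle \nu_j, x \rangle$ by the hypothesis $x + v \in Z$; if $\beta_j = 0$ the sum defining $\langle \nu_j, p \rangle$ contains only cross terms with indices $i \neq j$ in $\AA(y)$, which are again controlled by non-obtuseness.

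Part (ii) would follow by contradiction: if $j \notin \AA(\pi_Z(x))$, then $x - \pi_Z(x) \in \NN_Z(\pi_Z(x))$ is a non-negative combination of $\{\nu_i\}_{i \in \AA(\pi_Z(x))}$ via \cref{lem:normal_cone_polyhedral}, and non-obtuseness forces $\langle \nu_j, x - \pi_Z(x) \rangle \leq 0$, contradicting $\langle \nu_j, x \rangle = \alpha_j > \langle \nu_j, \pi_Z(x) \rangle$. Part (iii) reduces to (i) through the Moreau decomposition of \cref{lem:basic_proj_properties}: translation invariance of the projection combined with the identity $x - y = \pi_Z(x-y) + \pi_{Z^\circ}(x-y)$ gives $\pi_{y + Z^\circ}(x) = y + \pi_{Z^\circ}(x-y) = x - \pi_Z(x-y)$, so the second assertion $\pi_Z(x-y) \in Z$ is trivial and the inclusion $\pi_{y + Z^\circ}(x) \in Z$ follows from (i) applied with the substitution $(x, y, v) \mapsto (y, 0, x - y)$, noting that $\ZZ(0)^\circ = Z^\circ$ by \cref{lem:ZZproperties}. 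For part (iv), regularity of the standard description (\cref{lem:descriptions:iii}) combined with $\AA(0) = I$ makes $\{\nu_i\}_{i \in I}$, and hence the subfamily $\{\nu_i\}_{i \in J}$, linearly independent; the Gram matrix $M := (\langle \nu_i, \nu_j \rangle)_{i, j \in J}$ is therefore a Stieltjes matrix (symmetric positive definite with unit diagonal and non-positive off-diagonal entries), and the classical fact that such matrices have non-negative inverses yields $\lambda \leq 0$ for the coefficient vector solving $M \lambda = (\langle \nu_i, x \rangle)_{i \in J}$, whose right-hand side is non-positive since $x \in Z$. Writing $\pi_V(x) = x - \sum_{i \in J} \lambda_i \nu_i$ and checking $\langle \nu_k, \pi_V(x) \rangle \leq 0$ for each $k \in I$ then closes the argument: this value is identically zero when $k \in J$ because $\pi_V(x) \in V$, and for $k \in I \setminus J$ non-obtuseness together with $\lambda \leq 0$ makes the correction term non-positive.

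The main obstacle I expect is the subcase $j \in \AA(y)$ of part (i). A naive attempt using only $x + v \in Z$ together with the Moreau decomposition $v = p + \pi_{\ZZ(y)}(v)$ produces an inequality in the wrong direction (one only obtains $\langle \nu_j, p \rangle \geq \langle \nu_j, v \rangle$), and recognizing that the case split driven by KKT complementary slackness on $\beta_j$ is the correct resolution is the conceptual crux. The remaining items are largely bookkeeping once this is understood: (ii) and (iii) exploit non-obtuseness of the normal vectors in a transparent way, while (iv) hinges on the classical but here essential Stieltjes-matrix fact, which has no earlier incarnation in the paper.
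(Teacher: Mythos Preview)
Your proposal is correct. Parts (i)--(iii) follow essentially the same route as the paper: in (i) your KKT complementary slackness relation $\beta_j\langle\nu_j,p-v\rangle=0$ is exactly what the paper obtains from the Moreau orthogonality $\langle\pi_{\ZZ(y)^\circ}(v),\pi_{\ZZ(y)}(v)\rangle=0$ (since $v-p=\pi_{\ZZ(y)}(v)$ and each summand $\beta_i\langle\nu_i,\pi_{\ZZ(y)}(v)\rangle$ is nonpositive), and the ensuing case split is identical; (ii) and (iii) are argued the same way in both.

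The only genuine difference is in (iv). Writing $\pi_V(x)=x-\sum_{j\in J}\lambda_j\nu_j$, both arguments reduce to showing $\lambda\le 0$. You invoke the classical fact that a Stieltjes matrix has an entrywise nonnegative inverse, applied to the Gram matrix $(\langle\nu_i,\nu_j\rangle)_{i,j\in J}$. The paper instead gives a self-contained argument: setting $\tilde\lambda_j:=\max(0,\lambda_j)$ and taking the inner product of the normal equations $M^\top M\lambda=M^\top x\le 0$ with $\tilde\lambda$, non-obtuseness gives $|M\tilde\lambda|^2\le 0$, whence $\tilde\lambda=0$ by linear independence. Your route is shorter but imports an external matrix-theoretic result; the paper's truncation argument is elementary and keeps the proof self-contained within the tools already developed. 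Either is perfectly valid.
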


\begin{proof}
We begin with \ref{lem:nonobtuse_properties:i}:
Let $x$, $y$, and $v$ be as on the left-hand side of \eqref{eq:norm_proj}. 
For $\ZZ(y) = \R^d$, \eqref{eq:norm_proj} is trivially true. 
We may thus assume w.l.o.g.\ that $\ZZ(y) \neq \R^d$. 
In this case, we have $\AA(y) \neq \emptyset$. 
Applying \cref{lem:basic_proj_properties}\ref{lem:basic_proj_properties:iv} and 
\cref{lem:normal_cone_polyhedral}, we see 
that there exist $\beta_i \ge 0$, $i \in \AA(y) $, such that
\begin{equation}
\label{eq:KKT_cone}
\pi_{\ZZ(y)}(v) + \pi_{\ZZ(y)^\circ}(v) = v,
\qquad
\langle \pi_{\ZZ(y)^\circ}(v), \pi_{\ZZ(y)}(v) \rangle = 0,
\qquad
\pi_{\ZZ(y)^\circ}(v) = \sum_{i\in \AA(y) }\beta_i \nu_i.
\end{equation}
It follows that
$\sum_{i\in \AA(y) } \beta_i \langle \nu_i , \pi_{\ZZ(y)}(v) \rangle = 0.$
Since all summands are nonpositive, 
\begin{equation}\label{eq:random56321}
\beta_j \langle \nu_j , \pi_{\ZZ(y)}(v) \rangle = 0 \qquad \forall j \in \AA(y) .
\end{equation}
Consider now some $j \in I$. 
If $j \not \in \AA(y)$ or $j \in \AA(y)$ and $\beta_j = 0$, then 
the non-obtuseness of $Z$, $x \in Z$, and \eqref{eq:KKT_cone} yield
\[
\left \langle 
\nu_j, x + \pi_{\ZZ(y)^\circ}(v)
\right \rangle
=
\left \langle 
\nu_j, x 
\right \rangle 
+  
\sum_{i\in \AA(y), i \neq j} \beta_i 
\left \langle 
\nu_j,\nu_i
\right \rangle 
\leq
\left \langle 
\nu_j, x 
\right \rangle 
\leq
\alpha_j. 
\]
If, on the other hand, 
$j \in \AA(y)$ and $\beta_j > 0$, then it follows  from \eqref{eq:random56321}
and $x + v \in Z$ that 
\[
\left \langle 
\nu_j, x + \pi_{\ZZ(y)^\circ}(v)
\right \rangle
=
\left \langle 
\nu_j, x + \pi_{\ZZ(y)^\circ}(v) + \pi_{\ZZ(y)}(v)
\right \rangle
=
\left \langle 
\nu_j, x + v
\right \rangle
\leq
\alpha_j. 
\]
Thus,  $x + \pi_{\ZZ(y)^\circ}(v) \in Z$ and the proof is complete. 

Next, we prove \ref{lem:nonobtuse_properties:ii}:
Let $x \in \R^d$ with $\langle \nu_j, x \rangle = \alpha_j$ for some $j \in I$.
Since $x - \pi_Z(x)$ belongs to the normal cone to $Z$ at $\pi_Z(x)$,
it follows from \cref{lem:normal_cone_polyhedral} 
that there exist $\beta_i\ge 0$ such that
\[
x - \pi_Z(x) = \sum_{i\in \AA(\pi_Z(x))} \beta_i \nu_i.
\]
Since $\langle \nu_j, x \rangle = \alpha_j$,
\begin{equation}\label{eq:random46267}
\alpha_j - \langle \nu_j, \pi_Z(x) \rangle = \sum_{i\in \AA(\pi_Z(x))} \beta_i \langle \nu_j, \nu_i \rangle. 
\end{equation}
Suppose now that $\langle \nu_j, \pi_Z(x) \rangle < \alpha_j$.
Then the left side of \eqref{eq:random46267} is $> 0$, but the right side is $\le 0$
as $j \notin \AA(\pi_Z(x))$ and $Z$ is non-obtuse, a contradiction.
Thus, $\langle \nu_j, \pi_Z(x) \rangle = \alpha_j$
and the proof of \ref{lem:nonobtuse_properties:ii} is complete.

To prove \ref{lem:nonobtuse_properties:iii},
suppose that $Z$ is a cone 
and that $x, y \in Z$ are given.
We claim that 
$\pi_{y + Z^\circ}(x) = y + \pi_{Z^\circ}(x - y)$ holds.
To see this, we note that we trivially have 
$y + \pi_{Z^\circ}(x - y) \in y + Z^\circ$
and that
\eqref{eq:proj_VI} for $\pi_{Z^\circ}$ yields 
\begin{equation*}
\left \langle
y + \pi_{Z^\circ}(x - y) - x, 
w - y - \pi_{Z^\circ}(x - y)
\right \rangle
=
\left \langle
\pi_{Z^\circ}(x - y) - (x-y), 
(w-y) - \pi_{Z^\circ}(x - y)
\right \rangle
\geq 0
\quad
\forall w \in y + Z^\circ.
\end{equation*}
Thus, $\pi_{y + Z^\circ}(x) = y + \pi_{Z^\circ}(x - y)$ as claimed. 
From \eqref{eq:Z-Zcirc-identity+hom}, 
we may now deduce that 
$
x = y + \pi_{Z^\circ}(x-y) + \pi_{Z}(x-y)
=  \pi_{y + Z^\circ}(x)  + \pi_{Z}(x-y).
$
The assertion $x - \pi_{y + Z^\circ}(x) = \pi_{Z}(x-y)\in Z$
follows immediately from this identity. 
It remains to prove that 
$\pi_{y + Z^\circ}(x) = y + \pi_{Z^\circ}(x-y)\in Z$.
Since $\ZZ(0) =Z$ by \cref{lem:ZZproperties}\ref{lem:ZZproperties:iii}, 
setting $y = 0$ in \eqref{eq:norm_proj}
and replacing there $x$ by $y$ as well as $v$ by $x - y$,
we see that
$y + \pi_{Z^\circ}(x-y) \in Z$.
This establishes \ref{lem:nonobtuse_properties:iii}.

It remains to prove \ref{lem:nonobtuse_properties:iv}:
By \cref{lem:standard_description_cone}, we have 
$\HH = \{(\nu_i, 0)\}_{i \in I}$.
If $J = \emptyset$ holds, 
then we have $V = \R^d$ by 
our convention $\spann(\emptyset) = \{0\}$
and there is nothing to prove. 
So let us assume that $\emptyset \neq J \subset I = \{1,...,n\}$,
$n \in \N$, holds; w.l.o.g.\ $J = \{1,...,m\}$ for some $m \leq n$.
Suppose further that $x \in Z$ is given. Then there exists
$\beta := (\beta_1,...,\beta_m)^\top \in \R^m$
such that 
$\pi_V(x) + \sum_{j=1}^m \beta_j \nu_j = x$. 
Define $M := (\nu_1,...,\nu_m) \in \R^{d \times m}$. 
Then it follows from $x \in Z$
and 
$\langle \nu_i,  \pi_V(x) \rangle = 0$ for all
$i=1,...,m$ that
$
M^\top M \beta
=
M^\top \left (
\pi_V(x)
+
M\beta
\right )
=
M^\top x 
\in (-\infty, 0]^m. 
$
By taking the scalar product
with the vector 
$\tilde \beta := (\max(0,\beta_1),...,\max(0,\beta_m))^\top \in \R^m$
in the above,
by exploiting the non-obtuseness of $Z$,
and by distinguishing cases, we obtain that 
\begin{equation*}
0 \geq 
\sum_{i=1}^m \sum_{j=1}^m
\max(0,\beta_i)
\langle  \nu_i, \nu_j \rangle 
\beta_j
\geq
\sum_{i=1}^m \sum_{j=1}^m
\max(0,\beta_i)
\langle  \nu_i, \nu_j \rangle 
\max(0,\beta_j)
=
\langle 
M\tilde \beta, M \tilde \beta
\rangle 
=
  | M\tilde \beta  |^2.
\end{equation*}
Since $\HH$ is regular due to \cref{lem:descriptions:iii},
the above implies $\tilde \beta = 0$ and, 
as a consequence, $\beta_i \leq 0$ for all $i=1,...,m$.
Since $Z$ is non-obtuse and $x \in Z$,
the nonpositivity of the numbers $\beta_i$, $i=1,...,m$,
implies 
\[
\left \langle
\nu_i, \pi_V(x)
\right \rangle 
=
\left \langle
\nu_i, x - \sum_{j=1}^m \beta_j \nu_j 
\right \rangle 
=
\left \langle
\nu_i, x 
\right \rangle 
-
\sum_{j=1}^m \beta_j 
\left \langle
\nu_i,
\nu_j
\right \rangle 
\leq
\left \langle
\nu_i, x 
\right \rangle 
\leq 0\qquad \forall i \in I \setminus J. 
\]
As $\left \langle
\nu_i, \pi_V(x)
\right \rangle = 0$ holds for all $i \in J$
by the definition of $V$,
this shows $\pi_V(x) \in Z$
and completes the proof. 
\end{proof}

Next, we study the properties of 
polyhedra that are not non-obtuse. 
Our main goal is to prove that such 
polyhedra always possess 
an obtuse ridge.
This result is essential for the 
construction of the 
counterexamples in \cref{sec:4}.
We proceed in two steps, 
beginning with the special case of a cone. 

\begin{lemma}[existence 
of an obtuse ridge for a
polyhedral cone that is not non-obtuse]
\label{lem_counterexample_cone}
Let $Z \subset \R^d$, $d \in \N$, 
be a full-dimensional convex polyhedral cone
that is not non-obtuse.
Let $\HH = \{(\nu_i, 0)\}_{i \in I}$
be a standard description of $Z$. 
Then there exists $w \in Z$ 
such that $\AA(w) = \{i_1, i_2\}$ holds for some 
$i_1, i_2\in I$ satisfying $\langle \nu_{i_1}, \nu_{i_2}\rangle > 0$. 
\end{lemma}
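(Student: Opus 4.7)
I would approach this via a dimension-maximization argument on faces generated by obtuse pairs, after a preliminary reduction to the pointed case.

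The plan is first to reduce to the case where $Z$ is pointed. If the lineality space $L := \bigcap_{i \in I}\ker \nu_i$ is nontrivial, decompose $\R^d = L \oplus L^\perp$; all $\nu_i$ lie in $L^\perp$, the pairwise inner products are unchanged, and $\AA^Z(w) = \AA^{Z \cap L^\perp}(\pi_{L^\perp}(w))$. So it suffices to prove the statement for the pointed cone $Z \cap L^\perp$, whose polar $Z^\circ$ is then full-dimensional and pointed.

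Next, since $I$ is finite and the set $\mathcal{P} := \{(i,j) \in I \times I : i \neq j, \langle \nu_i, \nu_j\rangle > 0\}$ is non-empty by hypothesis, I select a pair $(i_1, i_2) \in \mathcal{P}$ such that $\dim F_{i_1 i_2}$ is \emph{maximal} over $\mathcal{P}$, where $F_{ij} := Z \cap H_i \cap H_j$. Let $w$ be any point in the relative interior of $F_{i_1 i_2}$, so that $\AA(w) = \AA(F_{i_1 i_2}) \supseteq \{i_1,i_2\}$. I then claim $\AA(w) = \{i_1, i_2\}$, which would prove the lemma.

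To prove the claim I argue by contradiction: suppose $k \in \AA(w) \setminus \{i_1,i_2\}$. Then $F_{i_1 i_2} \subset H_k$, so $F_{i_1 i_2} \subseteq F_{i_1 k}$ and $F_{i_1 i_2} \subseteq F_{i_2 k}$. The key sub-claim is that at least one of $(i_1,k)$, $(i_2,k)$ lies in $\mathcal{P}$ \emph{and} the corresponding face strictly contains $F_{i_1 i_2}$, contradicting the maximality of $\dim F_{i_1 i_2}$. To establish this, I invoke the polar-cone identity $F_{i_1 i_2}^\circ = \spann(\nu_{i_1}, \nu_{i_2}) + Z^\circ$ (valid for cones: polar of an intersection of half-spaces with a cone) which, combined with $F_{i_1 i_2} \subset H_k$, yields a representation
\[
\nu_k + a\,\nu_{i_1} + b\,\nu_{i_2} + \sum_{l \in I \setminus\{k\}} \lambda_l\, \nu_l = 0
\qquad \text{for some } a,b \in \R \text{ and } \lambda_l \geq 0.
\]
Taking the inner product against an interior point $x_0 \in \interior Z$ (for which $\langle \nu_l, x_0\rangle < 0$ for every $l \in I$) and separately against $\nu_{i_1}$ and $\nu_{i_2}$, one obtains three linear equations in $a,b$ and the coefficients $\lambda_l$; the three sign conditions coming from $\langle \nu_{i_1}, \nu_{i_2}\rangle > 0$ and the putative $\langle \nu_{i_1}, \nu_k\rangle \leq 0$, $\langle \nu_{i_2}, \nu_k\rangle \leq 0$ will be inconsistent with full-dimensionality of $Z$. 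Consequently one of $(i_1,k)$ or $(i_2,k)$ is obtuse; that the corresponding face strictly contains $F_{i_1 i_2}$ follows because, if equality $F_{i_1 k} = F_{i_1 i_2}$ held, the bijective face–active-set correspondence would force $\{i_1, i_2\} \cup \{k\} \subseteq \AA(F_{i_1 k}) = \AA(F_{i_1 i_2})$, and then one may apply \Cref{lem:descriptions:iii}-type linear-independence considerations together with the relation above to derive a contradiction.

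The main obstacle is precisely the last algebraic step: carefully combining the polar-cone representation of $-\nu_k$ with the two-signed-inner-product hypotheses and the existence of an interior point of $Z$, and then establishing strict inclusion $F_{i_1 i_2} \subsetneq F_{i_1 k}$ (or $F_{i_2 k}$). The delicate part is that the coefficients $\lambda_l$ with $l \notin \{i_1, i_2, k\}$ in the polar-cone representation may be nonzero and contribute inner products of arbitrary sign with $\nu_{i_1}$ or $\nu_{i_2}$, so the sign analysis requires carefully isolating the dominant term $(1+\lambda_k) \langle \nu_{i_1}, \nu_k\rangle$ (respectively $\langle \nu_{i_2}, \nu_k\rangle$) and pairing it with the strict inequality $\langle \nu_l, x_0\rangle < 0$ at the interior point.
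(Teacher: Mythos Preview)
Your approach via face-dimension maximization is genuinely different from the paper's, and the reduction to the pointed case is fine. However, the argument has a real gap at precisely the point you flag as ``the main obstacle,'' and the sketch you give does not close it.

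Concretely, two issues. First, the obtuseness half of your key sub-claim is asserted for an \emph{arbitrary} $k\in\AA(w)\setminus\{i_1,i_2\}$, but the polar-cone representation
\[
(1+\lambda_k)\,\nu_k + a\,\nu_{i_1} + b\,\nu_{i_2} + \sum_{l\notin\{i_1,i_2,k\}}\lambda_l\,\nu_l = 0
\]
does not yield a sign contradiction from the three inner products you take: the cross-terms $\lambda_l\langle\nu_l,\nu_{i_1}\rangle$ and $\lambda_l\langle\nu_l,\nu_{i_2}\rangle$ can have either sign (these $\nu_l$ are other normals of a non-non-obtuse polyhedron, so nothing controls them), and the interior-point equation only gives $a\mu_1+b\mu_2<0$ for suitable $\mu_1,\mu_2>0$, which is not enough to pin down the signs of $\langle\nu_{i_1},\nu_k\rangle$ and $\langle\nu_{i_2},\nu_k\rangle$. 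Second, for the strict-containment half you invoke ``\cref{lem:descriptions:iii}-type linear-independence considerations,'' but that lemma establishes regularity only for \emph{non-obtuse} polyhedra; here $Z$ is expressly not non-obtuse, so the active normals at $w$ need not be linearly independent, and nothing prevents $F_{i_1 k}=F_{i_1 i_2}$.

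The paper avoids both difficulties by a different mechanism: induction on $|I|$ together with Motzkin's theorem of the alternative. One first constructs the set $J$ of indices $j$ that are simultaneously active with $i_1$ at some nonzero point of $Z\cap\spann(\nu_1,\dots,\nu_n)$ and shows $J$ controls the face $\nu_{i_1}^\perp\cap Z$ there. Motzkin's alternative then gives a clean dichotomy: either a vector $h$ exists that directly produces $w$ with $\AA(w)=\{i_1,i_2\}$, or a positive combination identity forces $\langle\nu_k,\nu_{i_1}\rangle>0$ for some $k\in J$ \emph{and} simultaneously yields a point $x\in Z$ with $\{i_1,k\}\subset\AA(x)\subsetneq I$, so one can pass to the strictly smaller cone $\ZZ(x)$ and invoke the induction hypothesis. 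The Motzkin step is exactly what replaces your inconclusive three-equation sign analysis.
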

\begin{proof}
We use induction w.r.t.\ the 
cardinality $n \in \N_0$ of the index set $I$
of the standard description of $Z$.
For $n\in \{0,1\}$, there are no polyhedral cones 
that are not non-obtuse. For $n=2$, 
the assertion is trivial. 

Suppose now that $2 < n \in \N$ is given and 
that the assertion is proven for 
all cones with the 
properties in the statement of the lemma 
whose  standard  descriptions have at most $n-1$ elements.
Assume further that 
a cone $Z \subset \R^d$
fulfilling the assumptions of the lemma 
is given with a  standard description 
$\{(\nu_i, 0)\}_{i \in I}$ satisfying
$|I| = n$. 
As $Z$ is not non-obtuse, 
there exist $i_1, i_2 \in I$
with $i_1 < i_2$
such that 
$\langle \nu_{i_1}, \nu_{i_2} \rangle \in (0,1)$.
Define 
\[
J := 
\left \{j \in I\setminus \{i_1\} \mid \exists x \in 
Z \cap \spann(\{\nu_1,...,\nu_n\}) \setminus \{0\}
\text{ s.t.\ } 
\langle \nu_{i_1}, x\rangle = \langle \nu_{j}, x\rangle 
 = 0 \right \}.
\]

We claim that $J \neq \emptyset$. 
To see this, choose
$\tilde x$ such that 
the conditions 
in \eqref{eq:faces} hold for $i_1$.
We may assume w.l.o.g.\ 
that $\tilde x \in \spann(\{\nu_1,...,\nu_n\})$
(else we replace $\tilde x$ 
by the projection $\pi_{\spann(\{\nu_1,...,\nu_n\})}(\tilde x)$).
Choose further 
$y \in 
\spann(\{\nu_{i_1}, \nu_{i_2}\})^\perp
\cap \spann(\{\nu_1,...,\nu_n\}) \setminus \{0\}$.
(Such a vector $y$ exists since $\dim(\spann(\{\nu_1,...,\nu_n\})) \geq 3$
by our assumption $n\geq 3$
and the properties in \cref{lem:standard_description}.)
Define $x_s := (1-s) \tilde x + s y$,
$s \in [0,1]$. 
Then it holds 
\[
\langle \nu_{i_1}, x_s\rangle = 0~~
\forall s \in [0,1],
\qquad
\langle \nu_{i_2}, x_1\rangle = 0,
\qquad
\langle \nu_{i_2}, x_s\rangle < 0
~~\forall s \in [0,1).
\]
If $y = x_1 \in Z$ holds, then the above 
yields $i_2 \in J$ and we obtain $J \neq \emptyset$
as desired. 
If $y = x_1 \not \in Z$ holds, 
then we have $\bar s := \sup\{s \in [0,1] \mid x_s \in Z\} \in (0,1)$. 
This implies that there exists 
$l \in I \setminus \{i_1\}$ such that $\langle \nu_l, x_s \rangle< 0$
holds for all $s \in [0, \bar s)$
and $\langle \nu_l, x_{\bar s} \rangle = 0$.
As $\langle \nu_{i_2}, x_{\bar s} \rangle< 0$
holds and due to the properties of 
$\tilde x$, $y$, and $\bar s$, we have 
$x_{\bar s} \in Z \cap \spann(\{\nu_1,...,\nu_n\}) \setminus \{0\}$. 
Thus, $l \in J$ and $J \neq \emptyset$ as asserted.

Next, we prove that 
\begin{equation}
\label{eq:randomeq3636378}
Z \cap 
\spann(\{\nu_1,...,\nu_n\})
\cap
\spann(\{\nu_{i_1}\})^\perp
=
\{
x \in \spann(\{\nu_1,...,\nu_n\})
\mid
\langle \nu_{j}, x\rangle 
 \leq 0 =
\langle \nu_{i_1}, x\rangle \,
 \forall j \in J
\}
.
\end{equation}
To see that this assertion is true,
we first note that the definitions of the 
involved sets immediately imply that ``$\subset$''
holds in \eqref{eq:randomeq3636378}. 
To see that we also have ``$\supset$'',
we argue by contradiction. 
Suppose that there exists 
$x \in \R^d$ that is contained 
in the set on the right-hand side 
of \eqref{eq:randomeq3636378} but not 
in the set on the left. 
Then it holds 
$x \in \spann(\{\nu_1,...,\nu_n\}) \setminus \{0\}$,
$\langle \nu_{i_1}, x\rangle = 0$,
and there exists a nonempty 
index set $I' \subset I \setminus (J \cup \{i_1\})$
such that $\langle \nu_i, x\rangle > 0$
for all $i \in I'$
and $\langle \nu_i, x\rangle \leq 0$
for all $i \in I \setminus I'$. 
We again consider a point 
$\tilde x \in \spann(\{\nu_1,...,\nu_n\})$
that satisfies \eqref{eq:faces} 
for the index $i_1$
and define 
$x_s := (1-s) \tilde x + s x$,
$s \in [0,1]$.
Then it holds 
$x_s 
\in \spann(\{\nu_1,...,\nu_n\})$
for all $s \in [0,1]$ and 
\[
\langle \nu_i, x_0 \rangle < 0 
~\forall i \in I \setminus \{i_1\},
\quad
\langle \nu_{i_1}, x_s \rangle = 0 
~\forall s \in [0,1],
\quad
\langle \nu_{i}, x_s \rangle < 0 
~\forall s \in [0,1)
\,\forall i \in J,
\quad
\langle \nu_i, x_1 \rangle > 0 
~\forall i \in I'.
\]
Define 
$\bar s := \sup\{s \in [0,1] \mid x_s \in Z\}$. 
Then we again have $\bar s \in (0,1)$
and for $x_{\bar s}$
there exists 
$l \in I \setminus (J \cup \{i_1\})$ 
such that $\langle \nu_l, x_s \rangle < 0$
holds for all $s \in [0, \bar s)$
and $\langle \nu_l, x_{\bar s} \rangle = 0$.
As $J \neq \emptyset$ and 
$\langle \nu_{i}, x_s \rangle < 0$
for all $s \in [0,1)$
and all $i \in J$ and due to the definition of $\bar s$,
we have $x_{\bar s} \in Z \setminus \{ 0\}$. 
By the definition of $J$,
this yields $l \in J$, which 
creates a contradiction.
Thus, \eqref{eq:randomeq3636378} holds as claimed.

Now we can turn our attention to the 
proof of the assertion of the lemma. 
Let us first consider the special case 
$J = \{i_2\}$.
In this case, it follows from the definition of $J$
that there exists 
$x \in Z \cap \spann(\{\nu_1,...,\nu_n\}) \setminus \{0\}$
such that 
$\langle \nu_{i_1}, x\rangle = \langle \nu_{i_2}, x\rangle  = 0$
holds and it follows from $J = \{i_2\}$
that this $x$ satisfies 
$\langle \nu_j, x\rangle < 0$ for all 
$j \in I \setminus \{i_1, i_2\}$. In this situation, the 
assertion of the lemma holds with $w := x$ and we are done. 

Henceforth, we may thus assume that 
$J\setminus \{i_2\} \neq \emptyset$.
Let now $\{\mu_1,...,\mu_m\}$ be a basis of
$\spann(\{\nu_1,...,\nu_n\})^\perp$.
(If $\spann(\{\nu_1,...,\nu_n\}) = \R^d$ holds, then we set $m := 0$, $\{\mu_1,...,\mu_m\} := \emptyset$.)
We invoke Motzkin's theorem of the alternative 
\cite[Theorem 4.2, page 28]{Mangasarian1994}
to obtain that precisely one of the following statements is 
true:
\begin{itemize}
\item[(I)] There exists $h \in \R^d$ such that 
\[
\langle\nu_i, h \rangle < 0
~\forall i \in J\setminus \{i_2\},
\qquad
\langle \nu_{i_1}, h \rangle
=
\langle \nu_{i_2}, h \rangle
=
0,
\qquad
\langle \mu_i, h \rangle
= 0 \quad \forall i=1,...,m.
\]

\item[(II)]
There exist 
$\beta_i \geq 0$,
$i \in J\setminus \{i_2\}$,
not all zero, and 
numbers $\gamma_1, \gamma_2, \eta_i \in \R$,
$i=1,...,m$, such that 
\begin{equation}
\label{eq:randomeq73hu2oi3}
\sum_{i \in J\setminus \{i_2\}} \beta_i \nu_i
=
\gamma_1 \nu_{i_1}
+
\gamma_2 \nu_{i_2}
+
\sum_{i=1}^m \eta_i \mu_i.
\end{equation}
\end{itemize}

Let us first consider option (I).
In this case, 
$\langle \mu_i, h \rangle = 0$
for all $i=1,...,m$ yields 
$h \in \spann(\{\nu_1,...,\nu_n\})$,
we have $\langle \nu_{i_1}, h \rangle =
\langle \nu_{i_2}, h \rangle = 0$,
$h$ is not zero since $J\setminus \{i_2\}$ is not empty, 
and it holds $h \in Z$ by \eqref{eq:randomeq3636378}.
Due to the definition of the set $J$,
this yields that we also have 
$\langle \nu_i, h \rangle <0$
for all $i \in I \setminus (J \cup \{i_1, i_2\})$.
Thus, 
after collecting everything, 
we obtain  
$\langle \nu_{i_1}, h \rangle =
\langle \nu_{i_2}, h \rangle = 0$
and $\langle\nu_i, h \rangle < 0$
for all $i \in I \setminus \{i_1, i_2\}$. 
In this case, $w := h$ satisfies 
all of the conditions in the lemma and 
we are done. 

It remains to study option (II). 
In this case, 
by again choosing 
$\tilde x \in \spann(\{\nu_1,...,\nu_n\})$
for $i_1$ as in \eqref{eq:faces} and
by exploiting 
that the coefficients $\beta_i$
are nonnegative and not all zero,
we obtain that 
\[
0 > \sum_{i \in J\setminus \{i_2\}}
\left \langle 
 \beta_i \nu_i,
\tilde x
\right \rangle
=
\left \langle 
\gamma_1 \nu_{i_1}
+
\gamma_2 \nu_{i_2},
\tilde x
\right \rangle
=
\gamma_2 
\left \langle 
\nu_{i_2},
\tilde x
\right \rangle.
\]
Since $\left \langle 
\nu_{i_2},
\tilde x
\right \rangle < 0$,
this yields $\gamma_2 > 0$.
Analogously, we also obtain $\gamma_1 > 0$.
By taking the scalar product 
with $\nu_{i_1}$ in \eqref{eq:randomeq73hu2oi3} 
and by exploiting 
that $\langle \nu_{i_1}, \nu_{i_2}\rangle > 0$,
we now obtain 
\[
0
< 
\gamma_1
<
\langle 
\gamma_1 \nu_{i_1}
+
\gamma_2 \nu_{i_2},
\nu_{i_1}
\rangle 
=
\sum_{i \in J\setminus \{i_2\}} \beta_i 
\langle 
\nu_i,
\nu_{i_1}
\rangle.
\]
Due to $\beta_i \geq 0$,
the above can only be true 
if there exists $k \in  J \setminus \{i_2\}$
with $\langle \nu_k, \nu_{i_1} \rangle > 0$.
According to the definition of $J$,
we can find 
$x \in 
Z \cap \spann(\{\nu_1,...,\nu_n\}) \setminus \{0\}
$
for this $k$ such that 
$\langle \nu_{i_1}, x\rangle = \langle \nu_{k}, x\rangle = 0$.
Note that there exists $j \in I$
such that $\langle \nu_j, x \rangle < 0$ for this $x$.
Indeed, if this was not the case, 
then $\langle \nu_i, x \rangle = 0$ for all $i \in I$
and $x \in \spann(\{\nu_1,...,\nu_n\})$
would imply $x=0$ and yield a contradiction. 
Consider now the cone 
$\ZZ(x) = \{z \in \R^d \mid\langle \nu_i, z\rangle \leq 0~\forall i \in \AA(x) \}$. 
By \cref{lem:ZZstandard}, $\ZZ(x)$ is full-dimensional and
$\{(\nu_i, 0)\}_{i \in \AA(x)}$ is a standard description of $\ZZ(x)$.
Further, we have
$2 \leq |\AA(x)| \leq |I| - 1$ by construction
and $\ZZ(x)$ is not non-obtuse 
since $i_1, k \in \AA(x)$.
By our induction hypothesis, this yields 
that there exists 
$\tilde w \in \ZZ(x)$ 
such that 
$
\{i \in \AA(x) \mid \langle \nu_i, \tilde w\rangle = 0 \}
=
\{j_1, j_2\}$
with $\nu_{j_1}, \nu_{j_2}$
satisfying 
$\langle \nu_{j_1}, \nu_{j_2}\rangle > 0$.
Define $w_s := x + s \tilde w$, $s > 0$. 
Then
\[
\langle 
\nu_i, w_s
\rangle
=
\begin{cases}
\langle 
\nu_i, s \tilde w
\rangle = 0 &\text{ if } i \in \{j_1, j_2\},
\\
\langle 
\nu_i, s \tilde w
\rangle < 0 &\text{ if } i \in \AA(x) \setminus \{j_1, j_2\},
\\
\langle 
\nu_i, x
\rangle
+
s
\langle 
\nu_i,\tilde w
\rangle &\text{ if } i \in \II(x),
\end{cases}
\]
holds for all $s>0$.
By choosing $s>0$ small enough, 
we achieve that 
$\langle 
\nu_i, w_s
\rangle < 0$ for all $i \in \II(x)$.
The resulting $w_s \in Z$ then has all of the desired properties. 
This completes the proof. 
\end{proof}

Next, we extend \cref{lem_counterexample_cone}
to arbitrary polyhedra that are not non-obtuse. 

\begin{proposition}[existence 
of an obtuse ridge in the general case]
\label{lem_counterexample}
Let $Z \subset \R^d$, $d \in \N$, 
be a full-dimensional convex polyhedron that is not non-obtuse.
Let $\HH =\{(\nu_i, \alpha_i)\}_{i \in I}$  be 
a standard  description of $Z$.
Then there exists $w \in Z$ such that $\AA(w) = \{i_1, i_2\}$ holds for some 
$i_1, i_2\in I$ satisfying $\langle \nu_{i_1}, \nu_{i_2}\rangle > 0$. 
\end{proposition}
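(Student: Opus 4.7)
\textbf{The plan} is to reduce the polyhedral statement to the cone statement in \cref{lem_counterexample_cone} by locating a point $x^*\in Z$ whose tangent cone $\ZZ(x^*)$ is not non-obtuse, and then transferring an obtuse ridge of $\ZZ(x^*)$ back into $Z$ via a short translation. Indeed, given such an $x^*$, \cref{lem:ZZstandard} identifies $\{(\nu_j,0)\}_{j\in\AA(x^*)}$ as a standard description of $\ZZ(x^*)$, \cref{lem_counterexample_cone} delivers $\tilde w\in\ZZ(x^*)$ with active set $\AA_{\ZZ(x^*)}(\tilde w)=\{j_1,j_2\}$ and $\langle\nu_{j_1},\nu_{j_2}\rangle>0$, and $w:=x^*+s\tilde w$ with sufficiently small $s>0$ belongs to $Z$ by \cref{lem:ZZproperties}\ref{lem:ZZproperties:i}. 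A direct active/inactive bookkeeping---$\langle\nu_i,\tilde w\rangle<0$ for $i\in\AA(x^*)\setminus\{j_1,j_2\}$ forces those indices to deactivate, $\langle\nu_{j_l},\tilde w\rangle=0$ keeps $j_l$ active, and strict inactivity on $\II(x^*)$ persists for small $s$---then yields $\AA(w)=\{j_1,j_2\}$ as required.

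The construction of $x^*$ splits naturally. Writing $F_i:=\{x\in Z:\langle\nu_i,x\rangle=\alpha_i\}$, the \emph{easy} subcase occurs when some obtuse pair $(k_1,k_2)\in I\times I$---which exists by hypothesis---admits $F_{k_1}\cap F_{k_2}\neq\emptyset$; any point $x^*$ in this intersection satisfies $\{k_1,k_2\}\subseteq\AA(x^*)$, so $\ZZ(x^*)$ inherits the obtuse pair in its standard description and is not non-obtuse by \cref{def:properties_descript}. \emph{The main obstacle} is the complementary case, in which every obtuse pair of the description $\HH$ has empty face intersection; there one must produce somewhere in $Z$ a \emph{different} obtuse pair whose facets \emph{do} meet.

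To resolve the obstacle, I would adapt the Motzkin-style induction driving the proof of \cref{lem_counterexample_cone}. Fix an obtuse pair $(k_1,k_2)$; by the obstacle assumption the system $\langle\nu_{k_1},x\rangle=\alpha_{k_1}$, $\langle\nu_{k_2},x\rangle=\alpha_{k_2}$, $\langle\nu_i,x\rangle\leq\alpha_i$ for $i\in I$, is infeasible, so Motzkin's theorem of the alternative \cite[Theorem 4.2]{Mangasarian1994} yields scalars $\lambda_1,\lambda_2\in\R$ and $\mu_i\geq 0$, not all zero, satisfying $\lambda_1\nu_{k_1}+\lambda_2\nu_{k_2}+\sum_{i\in I}\mu_i\nu_i=0$. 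Testing this identity against $\nu_{k_1}$ and $\nu_{k_2}$ and exploiting $\langle\nu_{k_1},\nu_{k_2}\rangle>0$---mirroring the sign analysis of alternatives (I)/(II) in the cone proof---should force the existence of an $i_0\in I$ with $\mu_{i_0}>0$ whose $\nu_{i_0}$ forms an obtuse pair with $\nu_{k_1}$ or $\nu_{k_2}$. An induction---either on $|I|$ (as in the cone case) applied to a suitably reduced polyhedron, or on the dimension $d$ applied to a facet $F_{k_1}$ whose in-hyperplane normals $\nu_j-\langle\nu_j,\nu_{k_1}\rangle\nu_{k_1}$ inherit the obstruction---then terminates at an obtuse pair whose facets do intersect, contradicting the obstacle assumption and returning us to the easy case. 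Pinning down the correct monotone integer invariant for this iteration is the main technical hurdle; the remainder of the argument parallels \cref{lem_counterexample_cone} and the translation step from the first paragraph.
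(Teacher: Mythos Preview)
Your overall strategy---find $x^*\in Z$ with $\ZZ(x^*)$ not non-obtuse, apply \cref{lem_counterexample_cone} to $\ZZ(x^*)$, then translate back into $Z$---is exactly what the paper does. The final translation step and the bookkeeping on active/inactive indices are correct and match the paper's argument line for line.

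The gap is in your ``hard case''. You acknowledge it yourself: the Motzkin certificate for infeasibility of $F_{k_1}\cap F_{k_2}$ gives a linear relation among normals, but you have not shown how to extract from it a \emph{new} obtuse pair whose facets \emph{do} intersect, nor identified a monotone invariant making the iteration terminate. Testing the relation against $\nu_{k_1},\nu_{k_2}$ may produce an index $i_0$ with $\langle\nu_{i_0},\nu_{k_1}\rangle>0$, but nothing guarantees $F_{i_0}\cap F_{k_1}\neq\emptyset$ either; you could cycle. The induction you propose on $|I|$ or on $d$ via facet restriction is plausible in spirit but not substantiated---restricting to a facet changes the normals (you must project them into the hyperplane), and it is not clear the obtuseness survives that projection in a usable way.

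The paper sidesteps this entirely with a continuous projection argument. Fix the obtuse pair $k,l$ and take $x_k,x_l$ from \eqref{eq:faces}. Build a point $\tilde x$ on the hyperplane $\{\langle\nu_l,\cdot\rangle=\alpha_l\}$ with $\pi_Z(\tilde x)=x_k$ (obtained by pushing $x_k$ along $\nu_k$ until it hits that hyperplane; this uses $\langle\nu_k,\nu_l\rangle>0$), and let $\gamma(s):=(1-s)x_l+s\tilde x$. Then $\gamma$ stays on the hyperplane $\langle\nu_l,\cdot\rangle=\alpha_l$ throughout, $\pi_Z(\gamma(0))=x_l\in F_l$, and $\pi_Z(\gamma(1))=x_k\notin F_l$. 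At the last time $\bar s$ with $\pi_Z(\gamma(\bar s))\in F_l$, set $y:=\pi_Z(\gamma(\bar s))$. For $s_m\searrow\bar s$ one has $\langle\nu_l,\gamma(s_m)\rangle=\alpha_l$ but $\langle\nu_l,\pi_Z(\gamma(s_m))\rangle<\alpha_l$; localizing the projection via \cref{lem:ZZproperties}\ref{lem:ZZproperties:ii} shows this is exactly the failure of \cref{lem:nonobtuse_properties}\ref{lem:nonobtuse_properties:ii} for $\ZZ(y)$, so $\ZZ(y)$ is not non-obtuse. This produces your $x^*:=y$ without any case split, Motzkin alternative, or induction on the polyhedron.
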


\begin{proof}
As $Z$ is not non-obtuse, there exist $k,l \in I$ with $\langle \nu_k, \nu_l \rangle > 0$. 
From  \cref{lem:standard_description}, we obtain that, for these $k,l$,
we can find $x_k, x_l \in Z$
satisfying 
$\langle \nu_k, x_k\rangle = \alpha_k$,
$\langle \nu_i, x_k\rangle < \alpha_i ~\forall i \neq k$,
$\langle \nu_l, x_l\rangle = \alpha_l$,
$\langle \nu_i, x_l\rangle < \alpha_i ~\forall i \neq l$.
Define
\[
\tilde x := x_k - \frac{\langle \nu_l, x_k\rangle - \alpha_l}{\langle \nu_l, \nu_k \rangle}\nu_k \in \R^d,
\qquad 
h := 
\tilde x
 - x_l,
\quad \text{and}\quad
\gamma\colon [0,1] \to \R^d,
~
\gamma(s) := x_l + s h.
\]
Then we have $\gamma(0) = x_l$ and $\pi_Z(\gamma(1)) = \pi_Z(\tilde x) = x_k$, 
it holds 
\begin{equation}
\label{eq:randomeq3738i}
\left \langle
\nu_l , \gamma(s)
\right\rangle
=
(1 - s)
\left \langle
\nu_l , x_l 
\right\rangle
 +
 s
 \left \langle
\nu_l , \tilde x
\right\rangle
=
(1-s) \alpha_l
+
s
 \left \langle
\nu_l ,
 x_k - \frac{\langle \nu_l, x_k\rangle - \alpha_l}{\langle \nu_l, \nu_k \rangle}\nu_k
\right\rangle
= \alpha_l\quad \forall s \in [0,1],
\end{equation}
and there exists $\varepsilon \in (0,1)$ such that
\begin{equation}
\label{eq:randomeq36363}
\pi_Z(\gamma(s)) = \gamma(s) \qquad \forall s \in [0,\varepsilon].
\end{equation}
Define 
\[
\bar s := \sup\{s \in [0,1] \mid \langle \nu_l, \pi_Z(\gamma(r)) \rangle = \alpha_l ~\forall r \in [0, s]\}.
\]
Then \eqref{eq:randomeq3738i}, 
\eqref{eq:randomeq36363},
$\langle \nu_l, \pi_Z(\gamma(1)) \rangle = \langle \nu_l, x_k \rangle < \alpha_l$, 
and the continuity of $\pi_Z$ imply that 
$0 < \varepsilon \leq  \bar s < 1$.
Consider now the point
$y := \pi_Z(\gamma(\bar s))$. Then 
the definition of $\bar s$ implies that there exists 
a sequence $\{s_m\} \subset (\bar s, 1]$ such that $s_m \to \bar s$
holds 
and  $\langle \nu_l, \pi_Z(\gamma(s_m)) \rangle < \alpha_l  = \langle \nu_l, y\rangle$
for all $m \in \N$. In combination with 
\cref{lem:ZZproperties}\ref{lem:ZZproperties:ii},
it follows that, for all sufficiently large $m$, we have 
\[
\langle \nu_l, \pi_Z(\gamma(s_m)) \rangle = 
\langle \nu_l, y + \pi_{\ZZ(y) }(\gamma(s_m) - y) \rangle
=
\alpha_l
+
\langle \nu_l, \pi_{\ZZ(y) }(\gamma(s_m) - y) \rangle
< \alpha_l 
\]
and, thus, 
$
\langle \nu_l, \pi_{\ZZ(y) }(\gamma(s_m) - y) \rangle
< 0
$.
Due to \eqref{eq:randomeq3738i} and the definition of $y$, however, we have 
$l \in  \AA(y)$ and
$
\langle \nu_l, \gamma(s_m) - y  \rangle
 =
 \alpha_l - \alpha_l = 0. 
$
In view of \cref{lem:nonobtuse_properties}\ref{lem:nonobtuse_properties:ii}, 
this shows that $\ZZ(y)$ is not non-obtuse.
(Note that $\ZZ(y)$ is full-dimensional and
that 
$\{(\nu_i, 0)\}_{i \in \AA(y)}$ is a standard description of $\ZZ(y)$
by \cref{lem:ZZstandard}.)
Thus, \cref{lem_counterexample_cone} 
can be applied to the polyhedral cone  $\ZZ(y)$
and it follows that there exists 
$\tilde w \in \ZZ(y)$
such that 
$
\{i \in \AA(y) \mid \langle \nu_i, \tilde w\rangle = 0 \}
=
\{ i_1,  i_2\}$
holds with $\nu_{ i_1}, \nu_{ i_2}$
satisfying 
$\langle \nu_{ i_1}, \nu_{ i_2}\rangle > 0$.
Define $w_s := y + s \tilde w$, $s > 0$.
Then
it follows analogously to the last step 
in the proof of \cref{lem_counterexample_cone} 
that, for all small enough $s>0$, we have 
\[
\langle 
\nu_i, w_s
\rangle
=
\langle
\nu_i, y
\rangle
+s
\langle 
\nu_i,  \tilde w
\rangle
\begin{cases}
 = 
\alpha_i + 0 &\text{ if } i \in \{ i_1,  i_2\},
\\
 < \alpha_i 
&\text{ if } i \in \AA(y) \setminus \{ i_1, i_2\},
\\
< \alpha_i &\text{ if } i \in \II(y).
\end{cases}
\]
The above shows that we can choose $w:= w_s$
for a small enough $s>0$ to obtain a point 
$w \in Z$ with all of the desired properties. 
This completes the proof. 
\end{proof}

\begin{corollary}[equivalence of local and global non-obtuseness]
\label{cor:local_nonobtuse}
Let $Z \subset \R^d$ be a full-dimensional convex polyhedron
and let $\{(\nu_i, \alpha_i)\}_{i \in I}$ be a standard description of $Z$.
Then the following statements are equivalent.
\begin{enumerate}[label=\roman*)]
\item 
$Z$ is non-obtuse, that is, $\langle \nu_i, \nu_j \rangle \le 0$ 
for all $i,j\in I$ with $i \neq j$.
\item 
$Z$ has only non-obtuse ridges, that is, 
for all $x \in Z$ with $\AA(x) = \{i_1, i_2\}$,
it holds 
$\langle \nu_{i_1}, \nu_{i_2} \rangle \le 0$.
\end{enumerate}
\end{corollary}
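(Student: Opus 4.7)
The plan is to observe that this corollary is essentially a direct reformulation of \cref{lem_counterexample} together with the trivial implication in the other direction.

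For (i) $\Rightarrow$ (ii) I would simply note that (ii) is a restriction of (i) to the particular pairs $(i_1,i_2)$ that arise as the active index set of some ridge point, so the implication is immediate: whenever $\AA(x)=\{i_1,i_2\}$ we have in particular $i_1\neq i_2$, hence $\langle \nu_{i_1},\nu_{i_2}\rangle\le 0$ by (i).

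For (ii) $\Rightarrow$ (i) I would argue by contraposition. Suppose (i) fails, i.e., $Z$ is not non-obtuse in the sense of \cref{def:properties_descript}. Then \cref{lem_counterexample} applies and yields a point $w\in Z$ with $\AA(w)=\{i_1,i_2\}$ for some $i_1,i_2\in I$ satisfying $\langle \nu_{i_1},\nu_{i_2}\rangle>0$. This is precisely the negation of (ii), completing the contraposition.

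There is no real obstacle here since the substantive content — extracting a ridge on which the two active normals form an obtuse (in the inner-product sense positive) pair — has already been established in \cref{lem_counterexample} (via the cone reduction in \cref{lem_counterexample_cone} and the Motzkin-alternative / projection-path argument). The corollary merely repackages that existence statement as a local-to-global characterization, so the proof is essentially a one-line invocation in each direction.

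\begin{proof}
The implication from the first statement to the second is trivial, since any pair $\{i_1,i_2\}=\AA(x)$ with $x\in Z$ consists of two distinct indices in $I$. For the converse, we argue by contraposition: if $Z$ is not non-obtuse, then \cref{lem_counterexample} provides $w\in Z$ with $\AA(w)=\{i_1,i_2\}$ and $\langle \nu_{i_1},\nu_{i_2}\rangle>0$, which contradicts the second statement.
\end{proof}
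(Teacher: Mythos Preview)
Your proof is correct and matches the paper's intended approach: the corollary is stated in the paper without proof precisely because it is an immediate consequence of \cref{lem_counterexample}, exactly as you argue via the trivial forward direction and contraposition for the reverse.
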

 
\subsection{Preliminaries on the Kurzweil-Stieltjes integral}
\label{subsec:2.4}
A further important ingredient of our analysis is Kurzweil-Stieltjes integration theory.  
Let $a,b \in \R$, $a < b$, be given and fixed. 
We denote the Kurzweil-Stieltjes integral 
of an \emph{integrand} $f\colon [a,b] \to \R$ 
w.r.t.\ an \emph{integrator} $g\colon [a,b] \to \R$ 
by
\[
\int_a^b  f \,\dd g 
\qquad \text{or}
\qquad
\int_a^b  f(s)\,\dd g (s).
\]
For an in-depth introduction to the theory of Kurzweil-Stieltjes
integration for scalar functions, we refer the reader to \cite[Chapter 6]{Monteiro2019}.
Let us now additionally assume that $d \in \N$ is given and fixed.
If $f\colon [a,b] \to \R^d$ and $g\colon [a,b] \to \R^d$ are vector functions, 
then we define the Kurzweil-Stieltjes integral of $f$ w.r.t.\ $g$ via
\begin{equation}
\label{eq:KS-interal-1}
\int_a^b\langle f , \dd g\rangle := 
\sum_{j=1}^d \int_a^b f_j \,\dd g_j,
\end{equation}
provided that the scalar Kurzweil-Stieltjes integrals on the right-hand side of 
this definition all exist. 
Due to its componentwise definition,
the vectorial version of the Kurzweil-Stieltjes integral 
in \eqref{eq:KS-interal-1} inherits essentially all properties 
from its scalar counterpart studied in \cite{Monteiro2019}.
We briefly recall the most important facts and results in the following.
For more details, see \cite[Chapter 6]{Monteiro2019}.

\begin{lemma}[existence of the integral]
\label{lemma:KS_basics}
The Kurzweil-Stieltjes integral in \eqref{eq:KS-interal-1} exists 
if $f, g \in G([a,b]; \R^d)$ holds and 
at least one of the functions $f$ and $g$ is an element of $BV([a,b]; \R^d)$.
In this case, it yields a real number. 
If $f \in C([a,b]; \R^d)$ and $g \in BV([a,b]; \R^d)$, then the
Kurzweil-Stieltjes integral  
coincides with the Riemann-Stieltjes integral. In particular, we have 
\begin{equation*}
\int_a^b \langle f , \dd g\rangle = 
\int_a^b \langle f(s), \dot g(s)\rangle \dd s
\qquad \forall f \in C([a,b]; \R^d)
\quad 
\forall g \in W^{1,1}((a,b); \R^d).
\end{equation*}
\end{lemma}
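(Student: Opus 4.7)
The plan is simply to reduce every assertion to the corresponding scalar Kurzweil--Stieltjes result via the componentwise definition \eqref{eq:KS-interal-1}. Writing $f = (f_1,\dots,f_d)$ and $g = (g_1,\dots,g_d)$, the first observation is that membership of a vector-valued function in $G([a,b];\R^d)$, $BV([a,b];\R^d)$, $C([a,b];\R^d)$, or $W^{1,1}((a,b);\R^d)$ is equivalent to componentwise membership in the analogous scalar space. This is immediate from the equivalence of the Euclidean norm $|\cdot|$ with the maximum-of-coordinates norm on $\R^d$, together with the fact that variation, supremum, and weak derivatives behave coordinatewise.

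First I would handle existence. Given $f,g \in G([a,b];\R^d)$ with (say) $g \in BV([a,b];\R^d)$, each pair $(f_j,g_j)$ consists of a scalar regulated function and a scalar function of bounded variation, so the standard scalar existence theorem in \cite[Chapter 6]{Monteiro2019} yields $\int_a^b f_j \, \dd g_j \in \R$. The case where $f \in BV([a,b];\R^d)$ is handled symmetrically. Summing the $d$ real numbers via \eqref{eq:KS-interal-1} then produces a well-defined real number, establishing the first claim.

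For the identification with the Riemann--Stieltjes integral when $f \in C([a,b];\R^d)$ and $g \in BV([a,b];\R^d)$, each scalar pair $(f_j,g_j)$ lies in $C([a,b]) \times BV([a,b])$, so the scalar result from \cite[Chapter 6]{Monteiro2019} gives coincidence of the Kurzweil--Stieltjes integral with the Riemann--Stieltjes integral on the component level; summing over $j$ transfers this to the vectorial integral. To conclude with the $W^{1,1}$-case, note that $g \in W^{1,1}((a,b);\R^d)$ implies $g_j \in W^{1,1}(a,b)$ with weak derivative $\dot g_j$, and in particular $g_j$ is absolutely continuous. The classical identity $\int_a^b f_j \, \dd g_j = \int_a^b f_j(s) \dot g_j(s)\,\dd s$ for continuous $f_j$ and absolutely continuous $g_j$ (a direct consequence of the previous step together with the Lebesgue representation of a Riemann--Stieltjes integral with absolutely continuous integrator) then yields the displayed formula after summing and applying the definition of the Euclidean scalar product.

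There is no genuine obstacle here: the whole proof is a routine pushforward of the scalar Kurzweil--Stieltjes theory through the linear, coordinatewise definition \eqref{eq:KS-interal-1}. The only mild bookkeeping is to verify the equivalence between vectorial and componentwise regularity, which, as noted above, is immediate from the equivalence of norms on $\R^d$; no genuinely new estimate or construction is needed beyond what is already contained in \cite[Chapter 6]{Monteiro2019}.
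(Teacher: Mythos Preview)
Your proposal is correct and follows essentially the same approach as the paper: both reduce to the scalar Kurzweil--Stieltjes theory via the componentwise definition \eqref{eq:KS-interal-1} and cite the relevant results from \cite{Monteiro2019}. The paper's proof is simply a terse one-line citation of the specific theorems (5.6.1, 6.2.12, 6.3.11, and Corollary 5.4.4), whereas you have spelled out the reduction in more detail.
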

\begin{proof}
The assertions follow from 
\cite[Theorems 5.6.1, 6.2.12, 6.3.11, Corollary 5.4.4]{Monteiro2019} and \eqref{eq:KS-interal-1}.
\end{proof}

\begin{lemma}[linearity and additivity of the integral]
\label{lemma:KS_lin_add}
The Kurzweil-Stieltjes integral in \eqref{eq:KS-interal-1} is linear 
in both its integrand and its integrator. 
Further, for all $c \in (a,b)$ and $f,g \in G([a,b]; \R^d)$
with $f \in BV([a,b]; \R^d)$ or $g \in BV([a,b]; \R^d)$, we have
\[
\int_a^b \langle f,\dd g\rangle = \int_a^c \langle f,\dd g\rangle + \int_c^b \langle f,\dd g\rangle.
\]
\end{lemma}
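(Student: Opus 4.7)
The plan is to reduce everything to the corresponding scalar statements that are already established in the reference \cite[Chapter 6]{Monteiro2019} and then lift them to the vectorial setting via the componentwise definition \eqref{eq:KS-interal-1}. Before doing so, I would note that all integrals appearing in the statement are well defined: if $f,g \in G([a,b];\R^d)$ with (say) $g \in BV([a,b];\R^d)$, then by \cref{lemma:KS_basics} the scalar integrals $\int_a^b f_j \,\dd g_j$ exist for $j=1,\dots,d$, and the same argument applied on $[a,c]$ and $[c,b]$ shows that the integrals over the subintervals exist too (using that restrictions of regulated, resp.\ BV, functions to subintervals are again regulated, resp.\ BV).

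First I would verify linearity in the integrand. Given $f, \tilde f \in G([a,b];\R^d)$ with either $f,\tilde f \in BV([a,b];\R^d)$ or $g \in BV([a,b];\R^d)$, and scalars $\alpha, \beta \in \R$, the $j$-th component of $\alpha f + \beta \tilde f$ is $\alpha f_j + \beta \tilde f_j$, so the scalar linearity of the Kurzweil--Stieltjes integral \cite[Theorem 6.2.5]{Monteiro2019} gives
\begin{equation*}
\int_a^b (\alpha f_j + \beta \tilde f_j)\,\dd g_j
=
\alpha \int_a^b f_j\,\dd g_j
+
\beta \int_a^b \tilde f_j\,\dd g_j.
\end{equation*}
Summing over $j=1,\dots,d$ and invoking \eqref{eq:KS-interal-1} yields linearity of $\int_a^b \langle \cdot, \dd g\rangle$. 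Linearity in the integrator follows by the symmetric argument applied to the second index of each scalar integral.

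For the additivity statement, I would again argue componentwise. Under the hypotheses of the lemma, each pair $(f_j, g_j)$ satisfies the assumptions of the scalar additivity result \cite[Theorem 6.2.10]{Monteiro2019}, which yields
\begin{equation*}
\int_a^b f_j\,\dd g_j
=
\int_a^c f_j\,\dd g_j
+
\int_c^b f_j\,\dd g_j
\qquad\forall j = 1,\dots,d.
\end{equation*}
Summing from $j=1$ to $d$ and applying the definition \eqref{eq:KS-interal-1} on each of the three intervals $[a,b]$, $[a,c]$, $[c,b]$ gives the claimed identity. I do not expect any serious obstacle here; the only minor point of care is to justify existence of the integrals on subintervals before writing the additivity formula, but this is handled by \cref{lemma:KS_basics} as noted above.
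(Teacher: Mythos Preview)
Your proposal is correct and follows essentially the same approach as the paper: reduce to the scalar Kurzweil--Stieltjes results in \cite[Chapter 6]{Monteiro2019} and lift componentwise via \eqref{eq:KS-interal-1}. The paper's proof is a one-line citation of \cite[Theorems 6.2.7, 6.2.9, 6.2.10]{Monteiro2019} together with \eqref{eq:KS-interal-1}; your version is more explicit but the content is the same.
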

 \begin{proof}
 This follows from \cite[Theorems 6.2.7, 6.2.9, 6.2.10]{Monteiro2019} and \eqref{eq:KS-interal-1}.
 \end{proof}

\begin{lemma}[constant integrators and integrands]
\label{lem:KS-constant_functions}
If $f, g \in G([a,b]; \R^d)$ holds and 
$c \in \R^d$ is interpreted as a constant function, then we have
\begin{equation*} 
\int_a^b \langle c, \dd g\rangle = \langle c, g(b) - g(a) \rangle 
 \qquad \text{and} \qquad
\int_a^b \langle f, \dd c\rangle = 0. 
\end{equation*}
\end{lemma}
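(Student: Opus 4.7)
The plan is to reduce both identities to the scalar Kurzweil-Stieltjes setting via the componentwise definition \eqref{eq:KS-interal-1} and then verify each using the standard definition of the integral as a limit of tagged partition sums (or equivalently by invoking the elementary scalar facts collected in \cite[Chapter 6]{Monteiro2019}). Existence of the integrals on the right-hand side of \eqref{eq:KS-interal-1} is not an issue in either case, since a constant function is an element of $BV([a,b];\R^d)$ and hence \cref{lemma:KS_basics} applies once $f, g \in G([a,b];\R^d)$.

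For the first identity, I would fix a component index $j \in \{1,\dots,d\}$ and examine a generic gauge-fine tagged partition $a = t_0 < t_1 < \cdots < t_N = b$ with tags $\xi_k \in [t_{k-1}, t_k]$. The associated Kurzweil-Stieltjes sum for $c_j$ against $g_j$ equals
\[
\sum_{k=1}^N c_j \bigl(g_j(t_k) - g_j(t_{k-1})\bigr) = c_j\bigl(g_j(b) - g_j(a)\bigr)
\]
by telescoping, independently of the partition and tags. Passing to the limit yields $\int_a^b c_j\,\dd g_j = c_j(g_j(b) - g_j(a))$, and summing over $j$ gives the first claim via \eqref{eq:KS-interal-1}.

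For the second identity, I would observe that when the integrator is the constant function $c$, every increment $c_j(t_k) - c_j(t_{k-1})$ in a Kurzweil-Stieltjes sum vanishes, so each such sum is identically zero regardless of partition, tags, or integrand. Hence the limit exists (as already guaranteed by \cref{lemma:KS_basics}) and equals zero for each component, which in view of \eqref{eq:KS-interal-1} produces the second claim.

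There is no real obstacle here; the lemma is a direct consequence of the partition-sum definition of the Kurzweil-Stieltjes integral together with the componentwise reduction \eqref{eq:KS-interal-1}, and both statements can alternatively be cited verbatim from the preliminaries in \cite[Section 6.2]{Monteiro2019}. The reason to record the result explicitly here is that it will be used pervasively in the later sections (e.g., in manipulations of the weak formulation \eqref{eq:EVI_again} and in the jump analysis for the directional derivatives in \cref{sec:3}).
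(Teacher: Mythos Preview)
Your proposal is correct and follows essentially the same approach as the paper: both reduce to the scalar case via the componentwise definition \eqref{eq:KS-interal-1} and then invoke the elementary scalar facts from \cite{Monteiro2019}. The only difference is cosmetic---the paper cites \cite[Remark~6.3.1]{Monteiro2019} in a single line, whereas you spell out the telescoping partition-sum argument explicitly.
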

\begin{proof}
The assertions of this lemma follow from \cite[Remark 6.3.1]{Monteiro2019} and \eqref{eq:KS-interal-1}.
\end{proof}

\begin{lemma}[singular integrators and integrands]
\label{lem:KS-point-measures}
Let $f, g\in G([a,b];\R^d)$ and $c \in \R^d$  be given.
Then it holds  
\begin{equation*}
\int_a^b \langle f(s), \dd (c\mathds{1}_{\{t\}}(s)) \rangle
=
\begin{cases}
- \langle f(a), c\rangle & \text{ if } t=a, 
\\
0 &\text{ if } a < t < b,
\\
\langle f(b), c\rangle &\text{ if } t=b,
\end{cases}
\end{equation*}
and
\begin{equation*}
\int_a^b \langle c\mathds{1}_{\{t\}}(s), \dd g(s) \rangle
=
\begin{cases}
\langle c, g(a+)-g(a)\rangle & \text{ if } t=a,
\\
\langle c, g(t+)-g(t-)\rangle &\text{ if } a < t < b,
\\
\langle c, g(b)-g(b-)\rangle&\text{ if } t=b.
\end{cases}
\end{equation*}
\end{lemma}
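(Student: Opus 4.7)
The plan is to reduce the vectorial statement to scalar identities via the componentwise definition \eqref{eq:KS-interal-1} and the bilinearity from \cref{lemma:KS_lin_add}. Existence of all integrals is guaranteed by \cref{lemma:KS_basics} because $c\mathds{1}_{\{t\}}$ lies in $BV([a,b];\R^d)$ while $f,g$ lie in $G([a,b];\R^d)$. After pulling the constant $c$ out of each integral by linearity, the lemma reduces to the six scalar identities obtained by setting $c=1$ and $d=1$.

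For the first integral, I would argue directly from the Kurzweil-Stieltjes sum definition. Any sufficiently gauge-fine tagged partition of $[a,b]$ must place a tag exactly at each point of discontinuity of the integrator $\mathds{1}_{\{t\}}$ (this is the standard gauge condition for Kurzweil-Stieltjes integrators, see \cite[Chapter 6]{Monteiro2019}). In the interior case $t\in (a,b)$, the only nonzero increments $\mathds{1}_{\{t\}}(s_i)-\mathds{1}_{\{t\}}(s_{i-1})$ come from the two subintervals adjacent to $t$, and the corresponding tags both equal $t$, so that the two contributions $f(t)\cdot 1 + f(t)\cdot(-1) = 0$ cancel. In the endpoint cases $t=a$ and $t=b$, only one such subinterval is present, which gives the contributions $-f(a)$ and $f(b)$, respectively.

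For the second integral, rather than redoing a partition argument from scratch, I would decompose $\mathds{1}_{\{t\}} = \mathds{1}_{[a,t]} - \mathds{1}_{[a,t)}$ for $t \in (a,b]$ (and argue symmetrically for $t=a$) and exploit the standard Kurzweil-Stieltjes formulas
\[
\int_a^b \mathds{1}_{[a,t]}\,\dd g = g(t+)-g(a),
\qquad
\int_a^b \mathds{1}_{[a,t)}\,\dd g = g(t-)-g(a),
\]
found in \cite[Chapter 6]{Monteiro2019}. These in turn are easily verified by the same partition argument as above, using that the gauge condition forces any tag adjacent to $t$ to coincide with $t$. Subtraction yields $g(t+)-g(t-)$ in the interior case. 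The boundary cases $t=a$ and $t=b$ then follow from the conventions $g(a-):=g(a)$ and $g(b+):=g(b)$ stated at the beginning of \cref{subsec:2.1}, combined with \cref{lem:KS-constant_functions} to handle the degenerate terms that appear when $\mathds{1}_{[a,t]}$ or $\mathds{1}_{[a,t)}$ collapses to a constant.

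The proof poses no serious analytic difficulty. The main source of potential error is bookkeeping around the one-sided-limit conventions at the interval endpoints; once the interior case $t \in (a,b)$ is established, the cases $t=a$ and $t=b$ require only a careful application of these conventions and of the fact that the Kurzweil-Stieltjes gauge must isolate each discontinuity of the integrator.
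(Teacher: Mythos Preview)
Your proposal is correct and follows essentially the same approach as the paper: both reduce to the scalar case via the componentwise definition \eqref{eq:KS-interal-1}, after which the paper simply cites \cite[Lemmas 6.3.2, 6.3.3]{Monteiro2019} while you sketch the partition-based arguments that underlie those lemmas. The only difference is level of detail, not strategy.
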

\begin{proof}
The assertions of the lemma follow from 
\cite[Lemmas 6.3.2, 6.3.3]{Monteiro2019}
and \eqref{eq:KS-interal-1}.
\end{proof}

\begin{lemma}[integrator-integrand estimate]
\label{lem:KS-int-int-estimate}
For all $f \in G([a,b]; \R^d)$ and $g \in BV([a,b]; \R^d)$, it holds 
\[
    \left |
        \int_a^b \langle f , \dd g\rangle
    \right |
    \leq 
    d\, 
    \|f\|_\infty \var(g; [a,b]).
\]
\end{lemma}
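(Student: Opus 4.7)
The plan is to reduce this vectorial estimate to the well-known scalar Kurzweil–Stieltjes integrator–integrand bound via the componentwise definition \eqref{eq:KS-interal-1}. First I would invoke the scalar version of the estimate (the classical inequality $|\int_a^b \phi \,\dd\psi| \leq \|\phi\|_\infty \var(\psi;[a,b])$ for $\phi \in G([a,b])$ and $\psi \in BV([a,b])$, available in \cite{Monteiro2019}) applied to each component pair $(f_j, g_j)$, noting that $f_j \in G([a,b])$ and $g_j \in BV([a,b])$ whenever $f \in G([a,b];\R^d)$ and $g \in BV([a,b];\R^d)$.

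Then I would estimate, for each $j \in \{1,\dots,d\}$, the quantities $\|f_j\|_\infty \leq \|f\|_\infty$ (since $|f_j(t)| \leq |f(t)|$ in Euclidean norm for every $t$) and $\var(g_j;[a,b]) \leq \var(g;[a,b])$ (since $|g_j(t)-g_j(s)| \leq |g(t)-g(s)|$ for every $s,t$, so every partition sum for $g_j$ is dominated by the corresponding partition sum for $g$). Combining these bounds with the componentwise decomposition
\[
\left|\int_a^b \langle f,\dd g\rangle\right|
\;\leq\; \sum_{j=1}^d \left|\int_a^b f_j \,\dd g_j\right|
\;\leq\; \sum_{j=1}^d \|f_j\|_\infty \var(g_j;[a,b])
\;\leq\; d\,\|f\|_\infty \var(g;[a,b])
\]
yields the claim. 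There is no real obstacle here; the only thing to be mindful of is that the existence of all the scalar integrals on the right is already guaranteed by \cref{lemma:KS_basics}, so the chain of inequalities is fully justified. (A slightly sharper constant $\sqrt{d}$ could be obtained via Cauchy–Schwarz, but the constant $d$ suffices for all subsequent applications and matches the statement.)
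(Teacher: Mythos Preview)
Your proof is correct and essentially identical to the paper's own argument: the paper also applies the triangle inequality to the componentwise definition \eqref{eq:KS-interal-1}, invokes the scalar bound from \cite[Theorem~6.3.6]{Monteiro2019}, and then uses $\|f_j\|_\infty\leq\|f\|_\infty$ and $\var(g_j;[a,b])\leq\var(g;[a,b])$ to arrive at the factor $d$.
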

\begin{proof}
Using \eqref{eq:KS-interal-1}, \cite[Theorem 6.3.6]{Monteiro2019}, and 
our definitions of $\|\cdot\|_\infty$ and $\var(\cdot; [a,b])$,
we may compute that 
\begin{equation*}
    \left |
        \int_a^b \langle f , \dd g\rangle
    \right |
     \leq
    \sum_{j=1}^d 
        \left |
        \int_a^b f_j \,\dd g_j
    \right |
     \leq
      \sum_{j=1}^d \|f_j\|_\infty\var(g_j;[a,b])
    \leq
    d\, \|f\|_\infty   \var(g; [a,b]). 
\end{equation*}
This establishes the desired inequality.\end{proof}

\begin{lemma}[limit transitions with uniform convergence]
\label{lem:KS-uniform_convergence}
Let $\{g_k\} \subset BV([a,b]; \R^d)$ 
and 
$\{f_k\} \subset G([a,b]; \R^d)$ 
be sequences satisfying 
$\sup_{k \in \N}\var(g_k; [a,b]) < \infty$,
$\lim_{k \to \infty}
    \| g_k - g\|_{\infty}
    =
    0$,
    and 
$
    \lim_{k \to \infty}
    \| f_k - f\|_{\infty}
    =
    0
$
for some $g \in  BV([a,b]; \R^d)$ and $f \in G([a,b]; \R^d)$.
Then it holds 
\[
\lim_{k \to \infty}
\int_a^b \langle f_k , \dd g_k\rangle
=
\int_a^b \langle f, \dd g\rangle.
\]
\end{lemma}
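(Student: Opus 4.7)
The plan is to prove the lemma by writing
\[
\int_a^b \langle f_k, \dd g_k\rangle - \int_a^b \langle f, \dd g\rangle
= \underbrace{\int_a^b \langle f_k - f, \dd g_k\rangle}_{=: I_k}
+ \underbrace{\int_a^b \langle f, \dd (g_k - g)\rangle}_{=: II_k},
\]
where the existence of all appearing integrals follows from \cref{lemma:KS_basics} (observe that $f \in G([a,b];\R^d)$ because uniform limits of regulated functions are regulated, and $g \in BV([a,b];\R^d)$ because the variation is lower semicontinuous with respect to uniform convergence, so that $\var(g;[a,b]) \leq \liminf_k \var(g_k;[a,b]) < \infty$). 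The first term $I_k$ is easy: by \cref{lem:KS-int-int-estimate}, $|I_k| \leq d\,\|f_k - f\|_\infty \sup_{k} \var(g_k; [a,b])$, and the right-hand side tends to zero by the uniform convergence $f_k \to f$ and the uniform variation bound.

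The main obstacle is the second term $II_k$, since we cannot control $\var(g_k - g; [a,b])$ by $\|g_k - g\|_\infty$. Here, I would use the fact that, by definition of a regulated function, $f$ is the uniform limit of a sequence of step functions. So, for every $\varepsilon > 0$, I can pick a step function $\varphi_\varepsilon\colon [a,b] \to \R^d$ with $\|f - \varphi_\varepsilon\|_\infty < \varepsilon$ and split
\[
II_k = \int_a^b \langle f - \varphi_\varepsilon, \dd (g_k - g)\rangle
+ \int_a^b \langle \varphi_\varepsilon, \dd (g_k - g)\rangle.
\]
The first summand is bounded, again by \cref{lem:KS-int-int-estimate}, by $d\,\varepsilon\, (\sup_k \var(g_k; [a,b]) + \var(g; [a,b])) \leq C\varepsilon$ uniformly in $k$, where $C$ does not depend on $k$ or $\varepsilon$.

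The second summand can be made arbitrarily small by taking $k$ large for fixed $\varepsilon$: since $\varphi_\varepsilon$ is a step function, I can write it (up to rearrangement) as a finite linear combination of constants on subintervals plus finitely many singular terms $c_j \mathds{1}_{\{t_j\}}$. Applying \cref{lemma:KS_lin_add}, \cref{lem:KS-constant_functions}, and \cref{lem:KS-point-measures} turns $\int_a^b \langle \varphi_\varepsilon, \dd (g_k - g)\rangle$ into a finite linear combination of values $(g_k - g)(t_j)$, $(g_k - g)(t_j\pm)$, and $(g_k - g)(a)$, $(g_k - g)(b)$. The uniform convergence $g_k \to g$ implies that not only $g_k(t) \to g(t)$ for every $t \in [a,b]$, but also $g_k(t\pm) \to g(t\pm)$ (since $|g_k(t\pm) - g(t\pm)| \leq \|g_k - g\|_\infty$); hence this finite sum tends to zero as $k \to \infty$. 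Combining, $\limsup_{k \to \infty} |II_k| \leq C\varepsilon$, and letting $\varepsilon \to 0$ yields $II_k \to 0$, which together with $I_k \to 0$ completes the proof.
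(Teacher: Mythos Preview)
Your argument is correct. The decomposition $\int_a^b\langle f_k,\dd g_k\rangle - \int_a^b\langle f,\dd g\rangle = I_k + II_k$, the estimate for $I_k$ via \cref{lem:KS-int-int-estimate}, and the step-function approximation for $II_k$ all work as stated; in particular, your observation that uniform convergence $g_k\to g$ forces $(g_k-g)(t\pm)\to 0$ (since $|g_k(t\pm)-g(t\pm)|\le\|g_k-g\|_\infty$) is exactly what is needed to kill the finitely many endpoint contributions that arise from \cref{lemma:KS_lin_add,lem:KS-constant_functions,lem:KS-point-measures}.

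The paper, however, does not argue this way: it simply invokes \cite[Theorem~6.8.8]{Monteiro2019} componentwise via \eqref{eq:KS-interal-1} and is done in one line. Your route is genuinely different in that it is self-contained---it uses only the elementary calculus rules for the Kurzweil--Stieltjes integral already collected in \cref{subsec:2.4}, rather than appealing to an external convergence theorem. The trade-off is length versus transparency: the paper's citation is shorter, while your proof makes explicit why the uniform variation bound on $\{g_k\}$ is the crucial hypothesis (it is what allows the $\varepsilon$-approximation error $\int_a^b\langle f-\varphi_\varepsilon,\dd(g_k-g)\rangle$ to be controlled uniformly in $k$). One minor remark: the parenthetical verification that $f\in G$ and $g\in BV$ is redundant, since these regularities are part of the hypotheses of the lemma.
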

\begin{proof}
    The assertion of this lemma follows 
    immediately from \cite[Theorem 6.8.8]{Monteiro2019}
    and definition \eqref{eq:KS-interal-1}.
\end{proof}

 \begin{theorem}[bounded convergence theorem]
\label{th:KS-bounded_convergence}
Let $g \in BV([a,b]; \R^d)$ be given and 
let $\{f_k\} \subset G([a,b]; \R^d)$ 
be a sequence that satisfies
$\lim_{k \to \infty} f_k(s) = f(s)$ for all $s \in [a,b]
$ and
$
    \sup_{k \in \N}
    \| f_k\|_{\infty}
    < \infty
$
for some $f \in G([a,b];\R^d)$.
Then it holds 
\[
\lim_{k \to \infty}
\int_a^b \langle f_k , \dd g\rangle
=
\int_a^b \langle f, \dd g\rangle.
\]
\end{theorem}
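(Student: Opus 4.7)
The plan is to reduce the claim to its scalar analogue (a known result from the Kurzweil--Stieltjes theory in \cite{Monteiro2019}) by invoking the componentwise definition \eqref{eq:KS-interal-1}. Once this reduction is made, the proof is almost a matter of bookkeeping, since the hypotheses of the theorem pass from the vector setting to the component setting without any loss.

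More concretely, I would first write, for each $k \in \N$,
\[
\int_a^b \langle f_k, \dd g\rangle = \sum_{j=1}^d \int_a^b f_{k,j}\,\dd g_j,
\qquad
\int_a^b \langle f, \dd g\rangle = \sum_{j=1}^d \int_a^b f_j\,\dd g_j,
\]
where $f_{k,j}, f_j$ and $g_j$ denote the $j$-th components of $f_k, f$ and $g$, respectively. All of these scalar integrals exist by \cref{lemma:KS_basics}, since $g_j \in BV([a,b])$ and $f_{k,j}, f_j \in G([a,b])$. Next, I would verify that each coordinate sequence $\{f_{k,j}\}_{k \in \N}$ satisfies the hypotheses of the scalar bounded convergence theorem: the pointwise convergence $f_{k,j}(s) \to f_j(s)$ for every $s \in [a,b]$ is immediate from the pointwise convergence $f_k(s) \to f(s)$ in $\R^d$, and the uniform bound $\sup_{k \in \N} \|f_{k,j}\|_\infty \le \sup_{k \in \N} \|f_k\|_\infty < \infty$ follows from the definition of $\|\cdot\|_\infty$ on $\R^d$-valued functions.

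Applying the scalar bounded convergence theorem for Kurzweil--Stieltjes integrals (see \cite[Chapter 6]{Monteiro2019}, which is the counterpart of \cref{lem:KS-uniform_convergence} in which uniform convergence is replaced by pointwise convergence plus a uniform sup-norm bound) to each of the $d$ scalar integrals separately yields
\[
\lim_{k \to \infty} \int_a^b f_{k,j}\,\dd g_j = \int_a^b f_j\,\dd g_j \qquad \forall j \in \{1,\dots,d\}.
\]
Summing this identity over $j$ and applying the componentwise definition \eqref{eq:KS-interal-1} once more gives the claim.

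The only part of this argument that does real work is the scalar bounded convergence theorem itself, but since it is a cornerstone result of Kurzweil--Stieltjes theory and is already part of the body of results from \cite{Monteiro2019} that the paper systematically cites (see \cref{lemma:KS_basics,lemma:KS_lin_add,lem:KS-constant_functions,lem:KS-point-measures,lem:KS-int-int-estimate,lem:KS-uniform_convergence}), there is no obstacle at the vector level beyond the bookkeeping above. In particular, no integration-by-parts manipulation and no decomposition of $g$ into its continuous and jump parts needs to be performed within this proof; those arguments are subsumed into the scalar reference.
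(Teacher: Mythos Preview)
Your proposal is correct and follows essentially the same approach as the paper: the paper's proof simply cites the scalar bounded convergence theorem \cite[Theorem~6.8.13]{Monteiro2019} and the componentwise definition \eqref{eq:KS-interal-1}, which is exactly the reduction you carry out in detail.
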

\begin{proof}
This result follows from 
 \cite[Theorem 6.8.13]{Monteiro2019}
 and again the componentwise definition in \eqref{eq:KS-interal-1}.
\end{proof}

\begin{lemma}[integration of limits]
\label{lem:KS-int_limits}
Let $f\in G([a,b]; \R^d)$ and
$g\in CBV([a,b];\R^d)$  be given.
Then it holds  
\begin{equation*}
\int_a^b \langle f, \dd g \rangle
= 
\int_a^b \langle f_+, \dd g \rangle
=
\int_a^b \langle f_-, \dd g \rangle.
\end{equation*}
\end{lemma}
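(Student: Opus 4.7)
The plan is to exploit the continuity of $g$ to show that the values of the integrand at individual points do not contribute to the Kurzweil--Stieltjes integral, and to then turn this observation into a statement about the (at most countable) set where $f$ differs from $f_\pm$ by means of an approximation argument and the bounded convergence theorem (\cref{th:KS-bounded_convergence}).

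More concretely, recall that a regulated function has at most countably many points of discontinuity, so the set $D := \{t \in [a,b] \mid f(t) \ne f_+(t)\}$ is at most countable; enumerate it (finitely or infinitely) as $D = \{t_1, t_2, \dots\}$. I define, for each $N \in \N$, the function $f_N \in G([a,b];\R^d)$ by $f_N(s) := f_+(s)$ if $s \in \{t_1, \dots, t_N\}$ and $f_N(s) := f(s)$ otherwise; modifying a regulated function at finitely many points preserves being regulated, so indeed $f_N \in G([a,b];\R^d)$. Then $f_N - f = \sum_{k=1}^{N} (f_+(t_k) - f(t_k))\mathds{1}_{\{t_k\}}$, and by the linearity of the integral (\cref{lemma:KS_lin_add}) together with \cref{lem:KS-point-measures}, we obtain
\[
\int_a^b \langle f_N - f, \dd g \rangle
=
\sum_{k=1}^{N} \big\langle f_+(t_k) - f(t_k),\, g(t_k+) - g(t_k-)\big\rangle = 0,
\]
where the values at the endpoints use the conventions $g(a-) = g(a)$ and $g(b+) = g(b)$, and every term vanishes because $g \in CBV([a,b];\R^d)$ is continuous. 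Hence $\int_a^b \langle f_N, \dd g\rangle = \int_a^b \langle f, \dd g\rangle$ for every $N$.

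On the other hand, by construction $f_N(s) \to f_+(s)$ as $N \to \infty$ for every $s \in [a,b]$ (for any fixed $s$, either $s \notin D$, in which case $f_N(s) = f(s) = f_+(s)$ for all $N$, or $s = t_k$ for some $k$, in which case $f_N(s) = f_+(s)$ for all $N \ge k$). Moreover, the sequence is uniformly bounded since $\|f_N\|_\infty \le \max(\|f\|_\infty, \|f_+\|_\infty) < \infty$ (both $f$ and $f_+$ are regulated, hence bounded). Applying the bounded convergence theorem (\cref{th:KS-bounded_convergence}) with the fixed integrator $g \in BV([a,b];\R^d)$ therefore yields
\[
\int_a^b \langle f, \dd g\rangle = \lim_{N \to \infty} \int_a^b \langle f_N, \dd g\rangle = \int_a^b \langle f_+, \dd g\rangle.
\]
The identity $\int_a^b \langle f, \dd g\rangle = \int_a^b \langle f_-, \dd g\rangle$ is proved in exactly the same way, replacing $f_+$ by $f_-$ throughout.

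The argument is essentially routine once one has the point-mass formula in \cref{lem:KS-point-measures} and the bounded convergence theorem \cref{th:KS-bounded_convergence}. The only mildly delicate point is justifying that $f_N \in G([a,b];\R^d)$ and that the pointwise convergence $f_N \to f_+$ combined with uniform boundedness is enough to invoke \cref{th:KS-bounded_convergence}; both are handled by the observations above. No difficulty arises from $D$ being countable rather than finite, since the limit is taken after, not before, applying the point-mass formula.
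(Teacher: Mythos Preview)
Your proof is correct and follows exactly the approach sketched in the paper: the paper's proof simply cites \cref{th:KS-bounded_convergence}, \cref{lemma:KS_lin_add,lem:KS-point-measures}, and the countability of discontinuities of regulated functions, and you have filled in these details in the natural way.
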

\begin{proof}
The assertion of the lemma follows 
straightforwardly 
from \cref{th:KS-bounded_convergence},
\cref{lemma:KS_lin_add,lem:KS-point-measures}, 
and the fact that regulated functions possess 
at most countably many points of  discontinuity.
\end{proof}

\begin{lemma}[integration over subintervals]
\label{lem:KS-sub_int}
Let $f\in G([a,b]; \R^d)$,
$g\in BV([a,b];\R^d)$,
and $a \leq s <  t \leq b$ be given.
Suppose that $s$ and $t$ are points of continuity of 
$g$. 
Let $J \in \{ (s,t), (s,t], [s,t), [s,t]\}$.
Then it holds  
\begin{equation}
\label{eq:randomeq:727hd83}
\int_a^b \langle \mathds{1}_J f, \dd g \rangle
= 
\int_s^t \langle f, \dd g \rangle
= 
\int_s^t \langle f, \dd g_+ \rangle
=
\int_s^t \langle f, \dd g_- \rangle.
\end{equation}
\end{lemma}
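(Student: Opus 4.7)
My plan is to prove the three equalities separately, using additivity of the Kurzweil--Stieltjes integral together with the explicit point-mass calculus of Lemma \ref{lem:KS-point-measures}, with the assumption that $g$ is continuous at $s$ and $t$ playing the crucial role of killing every boundary contribution.

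For the first identity $\int_a^b \langle \mathds{1}_J f,\dd g\rangle = \int_s^t \langle f,\dd g\rangle$, I would split the outer integral into three pieces via Lemma \ref{lemma:KS_lin_add}: $\int_a^b = \int_a^s + \int_s^t + \int_t^b$. On $[a,s]$ the restricted integrand $\mathds{1}_J f$ is either $0$ or $f(s)\mathds{1}_{\{s\}}$, depending on whether $s\in J$, and Lemma \ref{lem:KS-point-measures} evaluates the contribution at the right endpoint $s$ of $[a,s]$ as $\langle f(s), g(s)-g(s-)\rangle = 0$ by continuity of $g$ at $s$. An entirely symmetric argument on $[t,b]$ uses continuity at $t$. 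On $[s,t]$ the residual $f - \mathds{1}_J f$ is supported in a subset of $\{s,t\}$, and the same point-mass formula, now applied at the left/right endpoints of $[s,t]$, shows that the remaining correction vanishes as well. Summing the three pieces yields the first identity for all four choices of $J$.

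For the second identity $\int_s^t \langle f,\dd g\rangle = \int_s^t \langle f,\dd g_+\rangle$ (the one involving $g_-$ being fully analogous), I set $h := g - g_+$ on $[s,t]$. Continuity of $g$ at $s$ forces $h(s)=0$, while $h(t)=0$ follows from the endpoint convention $g_{+,[s,t]}(t) = g(t)$; at interior points $h$ is supported on the at-most-countable set $\{r_k\}$ of right-discontinuities of $g$ in $(s,t)$, where it takes the value $g(r_k)-g(r_k+)$. Since $g \in BV([a,b];\R^d)$, the jump series $\sum_k |g(r_k)-g(r_k+)|$ is summable and bounded by $\var(g;[s,t])$. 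Hence the finite truncations
\[
h_N := \sum_{k=1}^N \bigl(g(r_k) - g(r_k+)\bigr)\mathds{1}_{\{r_k\}}
\]
converge uniformly to $h$ on $[s,t]$ while $\var(h_N;[s,t])$ remains bounded uniformly in $N$. Lemma \ref{lem:KS-point-measures} gives $\int_s^t \langle f,\dd(c\,\mathds{1}_{\{r_k\}})\rangle = 0$ for every interior $r_k\in(s,t)$ and every constant vector $c$, so linearity in the integrator yields $\int_s^t \langle f,\dd h_N\rangle = 0$ for every $N$. Passing to the limit via Lemma \ref{lem:KS-uniform_convergence} then gives $\int_s^t \langle f,\dd h\rangle = 0$, which is precisely the desired identity.

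The main technical obstacle, as I see it, is the careful endpoint bookkeeping: the four cases of $J$, the convention that $g_\pm$ are reset at interval endpoints, and the sensitivity of the Kurzweil--Stieltjes integral to integrand values at jump points of the integrator all force several small computations. Each of these is neutralized by the continuity of $g$ at $s$ and $t$, which is the standing assumption of the lemma, so no deeper tool than summability of BV jumps, Lemma \ref{lem:KS-point-measures}, and the uniform-convergence theorem of Lemma \ref{lem:KS-uniform_convergence} is required.
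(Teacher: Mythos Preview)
Your proposal is correct and follows essentially the same approach as the paper: the paper proves the second and third equalities exactly as you do (via \cref{lemma:KS_lin_add,lem:KS-point-measures,lem:KS-uniform_convergence} and the countability of jump points of BV functions), and for the first equality it simply cites \cite[Theorem~6.9.7]{Monteiro2019}, whereas you spell out a direct argument using additivity and the point-mass lemma. Your self-contained treatment of the first identity is a mild bonus, but otherwise the two proofs coincide.
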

\begin{proof}
The first equality in \eqref{eq:randomeq:727hd83} 
follows from 
\cite[Theorem 6.9.7]{Monteiro2019}
and \eqref{eq:KS-interal-1}.
The second and third one 
are easy consequences of 
\cref{lemma:KS_lin_add,lem:KS-point-measures,lem:KS-uniform_convergence}
and the fact that functions of bounded variation possess 
at most countably many points of discontinuity. 
\end{proof}

 \begin{lemma}[integration by parts formula]
\label{lem:KS-IBP}
Let  $f, g \in G([a,b]; \R^d)$ be given 
and suppose that 
at least one of the functions $f$ and $g$ is an element of $BV([a,b]; \R^d)$.
Then it holds 
\begin{equation*}
\begin{aligned}
   \int_a^b \langle f , \dd g\rangle 
   &=
   \langle f(b), g(b) \rangle - \langle f(a), g(a) \rangle 
   - \int_a^b \langle g , \dd f\rangle
   \\
   &\quad 
   +
   \sum_{a< s \leq b}
   \left \langle f(s) - f(s-), g(s) - g(s-)\right \rangle
   -
   \sum_{a\leq s < b}
   \left \langle f(s+) - f(s), g(s+) - g(s)\right \rangle.
   \end{aligned}
\end{equation*}
\end{lemma}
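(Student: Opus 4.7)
The plan is to reduce the vector-valued identity to the well-known scalar Kurzweil-Stieltjes integration by parts formula (see \cite[Theorem~6.4.2]{Monteiro2019} or a close variant) by exploiting the componentwise definition \eqref{eq:KS-interal-1}. Concretely, I would first observe that the hypothesis that $f$ or $g$ lies in $BV([a,b];\R^d)$ transfers to each component $f_j$ or $g_j$, $j = 1, \ldots, d$, because the total variation of a vector-valued function bounds (and is bounded by a multiple of) the sum of the variations of its components. Combined with \cref{lemma:KS_basics}, this guarantees that all scalar integrals $\int_a^b f_j\,\dd g_j$ and $\int_a^b g_j\,\dd f_j$ exist, so \eqref{eq:KS-interal-1} is applicable to both sides of the identity in question.

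Next, I would invoke the scalar integration by parts formula from \cite{Monteiro2019} componentwise: for each $j \in \{1, \ldots, d\}$,
\begin{equation*}
\int_a^b f_j\,\dd g_j = f_j(b) g_j(b) - f_j(a) g_j(a) - \int_a^b g_j\,\dd f_j + \sum_{a<s\le b}\bigl(f_j(s)-f_j(s-)\bigr)\bigl(g_j(s)-g_j(s-)\bigr) - \sum_{a\le s<b}\bigl(f_j(s+)-f_j(s)\bigr)\bigl(g_j(s+)-g_j(s)\bigr).
\end{equation*}
Summing these identities over $j = 1, \ldots, d$ and using the componentwise definition \eqref{eq:KS-interal-1} of the vector Kurzweil-Stieltjes integral, together with the identity $\langle x, y \rangle = \sum_j x_j y_j$ for the Euclidean scalar product, yields the claimed formula.

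The only point that requires a brief justification is the well-definedness of the two jump sums as real numbers (i.e.\ absolute convergence). For this I would argue that, since $f, g \in G([a,b]; \R^d)$, their sets of discontinuities are at most countable (as recalled after \eqref{eq:Monteiro4.1.9}), so the sums range over a countable index set. Moreover, the assumption that one of the functions is in $BV([a,b];\R^d)$ ensures that the corresponding jump contributions are absolutely summable: for instance, if $g \in BV([a,b];\R^d)$, then $\sum_{a<s\le b}|g(s)-g(s-)| \le \var(g;[a,b]) < \infty$ and $\|f\|_\infty < \infty$ by \cite[Theorem~4.2.1]{Monteiro2019}, so Cauchy-Schwarz in $\R^d$ yields absolute summability of the corresponding series; the symmetric argument works if $f \in BV([a,b];\R^d)$.

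There is no real obstacle in this argument; the content of the lemma is entirely contained in the scalar case, and the vector version is a purely algebraic consequence of bilinearity of $\langle \cdot, \cdot \rangle$ and linearity of the scalar integral. The only mild care point is bookkeeping of the boundary-jump conventions at $s = a$ and $s = b$, which are already encoded correctly in the scalar formula from \cite{Monteiro2019} that we are invoking.
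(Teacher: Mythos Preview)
Your proposal is correct and takes essentially the same approach as the paper, which simply states that the result is a straightforward consequence of \cite[Theorem~6.4.2]{Monteiro2019} and the componentwise definition \eqref{eq:KS-interal-1}. Your additional remarks on existence of the integrals and absolute convergence of the jump sums are reasonable elaborations but not strictly needed beyond what the cited reference already handles.
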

\begin{proof}
This result is a straightforward consequence of
 \cite[Theorem 6.4.2]{Monteiro2019} and 
 \eqref{eq:KS-interal-1}.
\end{proof}

The following lemma is a useful tool 
in \cref{sec:3}.

\begin{lemma}[criterion for nonnegativity]
\label{lemma:crit_noneg}
Let $f \in G([a,b])$ and $g \in CBV([a,b])$
be given. Suppose that $g$ is nondecreasing on $[a,b]$
and that the following is true:
\begin{equation}
\label{eq:randomeq232738i-s}
    \forall s \in [a,b] \text{ with } f(s) < 0 
    \; \exists \varepsilon_s > 0 
    \; \colon 
    g = \mathrm{const} \text{ on } (s - \varepsilon_s, s + \varepsilon_s) \cap [a,b].
\end{equation}
Then it holds 
\[
\int_a^b f \,\dd g \geq 0.
\]
\end{lemma}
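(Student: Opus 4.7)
The plan is to reduce the assertion to a uniform step-function approximation of $f$ and to exploit that, wherever $f$ is appreciably negative, the integrator $g$ cannot increase at all. First I would set $N := \{s \in [a,b] : f(s) < 0\}$ and take $U \subset [a,b]$ to be the union of all neighborhoods $(s-\varepsilon_s, s+\varepsilon_s)\cap[a,b]$, $s \in N$, supplied by the hypothesis~\eqref{eq:randomeq232738i-s}. Then $U$ is relatively open in $[a,b]$, contains $N$, and writes as a countable disjoint union $U = \bigcup_k J_k$ of its (relatively) open connected components. On each component $J_k$, the hypothesis neighborhoods $B_s = (s-\varepsilon_s, s+\varepsilon_s)\cap[a,b]$ with $s \in N\cap J_k$ form an open interval cover of $J_k$; since each such $B_s$ carries the constant value $g(s)$ and any two overlapping $B_s$'s must agree at their common points, a standard connectedness/chaining argument along this cover forces $g$ to be constant on all of $J_k$. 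By continuity of $g$, it is then constant on the closure $\overline{J_k}\cap[a,b]$ as well. In particular, $f \geq 0$ holds on $[a,b]\setminus U$.

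Next I would fix $\eta > 0$ and, using that $f$ is regulated, choose a step function $\varphi$ and a partition $a = t_0 < t_1 < \dots < t_m = b$ such that $\varphi \equiv c_i$ on $(t_{i-1},t_i)$ for each $i$ and $\|f-\varphi\|_\infty < \eta$. By \cref{lem:KS-int-int-estimate} and $\var(g;[a,b]) = g(b)-g(a)$, one has
\[
\left|\int_a^b f\,\dd g - \int_a^b \varphi\,\dd g\right| \leq \eta\bigl(g(b)-g(a)\bigr).
\]
Combining \cref{lemma:KS_lin_add,lem:KS-point-measures,lem:KS-sub_int}, the contributions of $\varphi$ at the finitely many partition points $t_i$ vanish because $g$ is continuous there, leaving
\[
\int_a^b \varphi\,\dd g = \sum_{i=1}^m c_i\bigl(g(t_i)-g(t_{i-1})\bigr).
\]

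The key step is then a case distinction for each summand. If $(t_{i-1},t_i) \not\subset U$, some $s_0 \in (t_{i-1},t_i)$ lies outside $U \supset N$, so $f(s_0) \geq 0$ and $c_i = \varphi(s_0) \geq f(s_0)-\eta \geq -\eta$. If instead $(t_{i-1},t_i) \subset U$, connectedness forces this interval into a single component $J_k$, so by the first paragraph $g$ is constant on $[t_{i-1},t_i]$, giving $g(t_i) = g(t_{i-1})$. Because $g$ is nondecreasing, $g(t_i)-g(t_{i-1}) \geq 0$ in either case, and we obtain $c_i(g(t_i)-g(t_{i-1})) \geq -\eta(g(t_i)-g(t_{i-1}))$ in both. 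Summing over $i$ yields $\int_a^b\varphi\,\dd g \geq -\eta(g(b)-g(a))$ and consequently $\int_a^b f\,\dd g \geq -2\eta(g(b)-g(a))$; letting $\eta \to 0$ concludes the proof. I expect the only genuinely delicate point to be the claim that $g$ is constant on an \emph{entire} component $J_k$ (not merely on the individual hypothesis neighborhoods), where the chaining argument and the continuity of $g$ must be carefully combined; everything else is routine bookkeeping via the Kurzweil--Stieltjes calculus laid out in \cref{subsec:2.4}.
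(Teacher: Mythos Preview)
Your proof is correct. Both you and the paper begin identically by assembling the neighborhoods from \eqref{eq:randomeq232738i-s} into a relatively open set $U$, decomposing it into connected components, and observing (via a clopen/chaining argument plus continuity of $g$) that $g$ is constant on the closure of each component. The divergence comes after this: the paper splits $f = \max(0,f) + \min(0,f)$, writes $\min(0,f) = \sum_m \min(0,f)\,\mathds{1}_{U_m}$ as a countable sum over the components, invokes the bounded convergence theorem (\cref{th:KS-bounded_convergence}) to push the sum through the integral, and kills each term via \cref{lem:KS-constant_functions,lem:KS-sub_int}; the nonnegative part is then handled by the trivial monotonicity of the Kurzweil--Stieltjes integral. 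You instead approximate $f$ uniformly by a step function and argue termwise on the resulting finite sum, with a clean dichotomy (either the subinterval meets $[a,b]\setminus U$ and the step value is $\geq -\eta$, or it lies in a single component and the $g$-increment vanishes). Your route is more elementary in that it avoids the bounded convergence theorem and the $f^{\pm}$ splitting, trading this for an $\eta$-limit at the end; the paper's route is slightly slicker and yields the sharper intermediate statement $\int_a^b \min(0,f)\,\dd g = 0$ directly.
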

\begin{proof}
Let $f$ and $g$ be as in the assertion of the lemma.
Choose for every $s \in [a,b]$
satisfying $f(s) < 0$ a number $\varepsilon_s \in (0,1)$
as in \eqref{eq:randomeq232738i-s} and define 
$
U := \bigcup_{s \in [a,b], f(s) < 0} (s - \varepsilon_s, s + \varepsilon_s) \subset \R
$.
Then $U$ is an open and bounded set and possesses at most countably many
(necessarily open and disjoint)
connected components.
Further, all of these connected components are nonempty 
open intervals. 
We denote the
components of $U$ with $U_m = (\alpha_m, \beta_m) \subset \R$,
$-\infty < \alpha_m < \beta_m < \infty$,
and the associated finite/countable index set with $M$.
Due to \eqref{eq:randomeq232738i-s} and $g \in CBV([a,b])$,
$g$ is constant on $[\alpha_m, \beta_m] \cap [a,b]$
for all $m \in M$.
Further, we have by construction that 
$\min(0,f) \in G([a,b])$ is zero on $[a,b] \setminus U$.
This implies that
\begin{equation}
\label{eq:lem_nonneg_1}
\min(0, f) = \min(0, f) \mathds{1}_U = \sum_{m \in M} 
\min(0, f)\mathds{1}_{U_m}
\end{equation}
holds in $[a,b]$ and,
due to \cref{lem:KS-constant_functions,lem:KS-sub_int}, that
\begin{equation}
\label{eq:lem_nonneg_2}
\int_{a}^b
\mathds{1}_{U_m}
\min(0,f)\dd g 
=
\int_{\max(a,\alpha_m)}^{\min(b,\beta_m)}
\min(0,f)\dd g 
= 0
\qquad \forall m \in M.
\end{equation}
In combination 
with \cref{lemma:KS_lin_add} 
and 
\cref{th:KS-bounded_convergence},
the equations 
\eqref{eq:lem_nonneg_1} and \eqref{eq:lem_nonneg_2}
yield
\[
\int_a^b \min(0,f) \dd g
=
 \int_a^b \sum_{m \in M} 
\min(0, f)\mathds{1}_{U_m} \dd g
=
\sum_{m \in M} 
 \int_a^b 
\min(0, f)\mathds{1}_{U_m} \dd g
=
0.
\]
As the definition of the Kurzweil-Stieltjes integral
implies that the integral of a 
nonnegative integrand w.r.t.\ a 
nondecreasing integrator is nonnegative, 
see \cite[Chapter 6]{Monteiro2019}, 
we may now conclude that 
\[
\int_a^b f\,\dd g
=
\int_a^b \max(0,f) \dd g
+
\int_a^b \min(0,f) \dd g
=
\int_a^b \max(0,f) \dd g
\geq 0.
\]
This completes the proof. 
\end{proof}

Next, we recall Helly's selection theorem,
which plays an important role in \cref{sec:3}.

\begin{theorem}[Helly selection theorem]
\label{th:HellySelect}
Suppose that $\{g_k\}$ is a bounded sequence in 
$BV([a,b];\R^d)$. Then there exist a 
subsequence $\{g_{k_l}\}$ and a function 
$g \in BV([a,b];\R^d)$ such that 
\[
\lim_{l \to \infty}
g_{k_l}(s)= g(s)\quad \forall s \in [a,b]
\qquad
\text{and}
\qquad
\var(g;[a,b]) \leq \sup_{k \in \N}  \var(g_k;[a,b]).
\]
\end{theorem}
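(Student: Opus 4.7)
The plan is to reduce the theorem to the one-dimensional, monotone case, where the classical diagonal argument of Helly applies, and then to recover the total variation estimate from the lower semicontinuity of $\var(\cdot;[a,b])$ under pointwise convergence. Throughout, I would exploit that boundedness in $BV([a,b];\R^d)$ together with the obvious estimate $\|v\|_\infty \leq |v(a)| + \var(v;[a,b])$ implies a uniform $\infty$-bound on the sequence, which will be crucial in the monotone sub-argument.

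First I would pass to components. Writing $g_k = (g_k^{(1)},\dots,g_k^{(d)})$, each scalar sequence $\{g_k^{(j)}\}$ is bounded in $BV([a,b])$ since the component variations are dominated by $\var(g_k;[a,b])$. A finite diagonal extraction in $j=1,\dots,d$ then reduces the problem to the scalar case $d=1$. Next I would apply the Jordan decomposition: for each scalar $g_k$ with variation function $V_k(s) := \var(g_k;[a,s])$, the functions $\phi_k := V_k$ and $\psi_k := V_k - g_k + g_k(a)$ are nondecreasing on $[a,b]$, uniformly bounded (by $\sup_k \var(g_k;[a,b])$ and $\sup_k |g_k(a)|$ plus this variation bound, respectively), and satisfy $g_k = \phi_k - \psi_k + g_k(a)$. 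It therefore suffices to prove the selection statement for a uniformly bounded sequence of nondecreasing scalar functions.

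For the monotone scalar case I would use the classical Cantor diagonal construction. Fix a countable dense subset $D \subset [a,b]$ with $\{a,b\} \subset D$. Since the sequence is bounded at each point of $D$, a diagonal argument extracts a subsequence, still denoted $\{\phi_k\}$, with $\phi_k(q) \to \Phi(q)$ for every $q \in D$; the limit $\Phi\colon D \to \R$ is nondecreasing. Define
\[
\tilde\phi(s) := \sup\{\Phi(q) : q \in D,\, q \leq s\}, \qquad s \in [a,b],
\]
which is nondecreasing on $[a,b]$. At every continuity point $s$ of $\tilde\phi$ one then checks directly, using monotonicity of the $\phi_k$ and density of $D$, that $\phi_k(s) \to \tilde\phi(s)$. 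Since $\tilde\phi$ has at most countably many discontinuities, one final diagonal extraction over this countable set (again using boundedness at each point) yields pointwise convergence on the whole of $[a,b]$ to some nondecreasing limit $\phi$. Applying this twice (to the $\phi_k$ and then to the $\psi_k$ subsequence) and recombining produces the desired $g \in BV([a,b])$ with $g_{k_l}(s) \to g(s)$ for every $s \in [a,b]$.

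Finally, for the variation estimate, fix any partition $a = t_0 \leq t_1 \leq \dots \leq t_N = b$. Pointwise convergence gives
\[
\sum_{j=1}^N |g(t_j) - g(t_{j-1})| = \lim_{l \to \infty} \sum_{j=1}^N |g_{k_l}(t_j) - g_{k_l}(t_{j-1})| \leq \sup_{k \in \N} \var(g_k;[a,b]),
\]
and taking the supremum over all partitions on the left yields $\var(g;[a,b]) \leq \sup_k \var(g_k;[a,b])$, proving in particular that $g \in BV([a,b];\R^d)$. The main technical obstacle is purely in the monotone sub-step—extending convergence from a dense set to \emph{every} point of $[a,b]$ while retaining the BV regularity of the limit—but this is handled cleanly by the extra diagonal extraction over the countable discontinuity set of $\tilde\phi$ described above.
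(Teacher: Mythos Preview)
Your proof is correct and follows the classical textbook route to Helly's theorem (componentwise reduction, Jordan decomposition into monotone pieces, diagonal extraction on a countable dense set, then a final extraction over the countable discontinuity set). The paper, by contrast, gives no argument at all: it simply cites \cite[Theorem~2.7.4]{Monteiro2019} and moves on, treating this as a known preliminary. So there is nothing to compare at the level of strategy; you have supplied a complete self-contained proof where the paper outsources the result.

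One small point worth tightening: in the recombination step $g_k = \phi_k - \psi_k + g_k(a)$ you also need the scalar sequence $\{g_k(a)\}$ to converge along the final subsequence. This is of course trivial (it is bounded in $\R$), but since you are already performing several nested extractions it is cleanest to mention it explicitly.
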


\begin{proof}
The assertion of this theorem follows 
straightforwardly 
from \cite[Theorem 2.7.4]{Monteiro2019}.
\end{proof}

We conclude this subsection by proving two 
convenient auxiliary results for $CBV([a,b]; \R^d)$-functions.

\begin{lemma}[mollification of continuous functions of bounded variation]
\label{lem:strict_mollification}
Let $v \in CBV([a,b]; \R^d)$ be given and 
let $\{\eta_\varepsilon\}_{\varepsilon > 0} \subset C^\infty(\R)$
be standard mollifiers as defined in \cite[Section C.5]{Evans2010}.
Define 
\begin{equation}
\label{eq:mollifier_def}
v_\varepsilon := (\eta_\varepsilon \star \hat v)|_{[a,b]},
~\varepsilon > 0,
\qquad
\text{where}
\qquad
\hat v(s)
:=
\begin{cases}
v(a) & \text{ if } s < a,
\\
v(s) & \text{ if } s \in [a,b],
\\
v(b)& \text{ if } s > b,
\end{cases}
\end{equation}
and where $\star$ denotes a (componentwise) convolution, i.e., 
\[
\eta_\varepsilon \star \hat v\colon \R \to \R^d,
\qquad
(\eta_\varepsilon \star \hat v)(t)
:=
\int_\R \hat v(s) \eta_\varepsilon(t-s) \dd s.
\]
Then $v_\varepsilon \in C^\infty([a,b]; \R^d)$ holds for all $\varepsilon > 0$ and 
we have 
\begin{equation}
\label{eq:mollifier_convergence}
\| v - v_\varepsilon\|_{C([a,b]; \R^d)} \to 0
\qquad
\text{ and}
\qquad
\var(v; [a,b]) \geq \var(v_\varepsilon; [a,b]) \to \var(v; [a,b])
\qquad
\text{ for }\varepsilon \to 0. 
\end{equation}
\end{lemma}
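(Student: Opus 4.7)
The first claim, that $v_\varepsilon\in C^\infty([a,b];\R^d)$, is a standard property of the convolution with a smooth mollifier. For the uniform convergence in \eqref{eq:mollifier_convergence}, I would argue as follows. Since $v$ is continuous on the compact interval $[a,b]$, the extension $\hat v$ is continuous on $\R$ (continuity at $a$ and $b$ is immediate from the constant extension) and uniformly continuous on every bounded subset. For $t\in[a,b]$, the substitution $s\mapsto t-s$ and $\int_{\R}\eta_\varepsilon = 1$ give
\begin{equation*}
v_\varepsilon(t) - v(t)
= \int_{\R}\bigl(\hat v(t-s) - \hat v(t)\bigr)\eta_\varepsilon(s)\,\dd s,
\end{equation*}
and the support of $\eta_\varepsilon$ is contained in $[-\varepsilon,\varepsilon]$, so a standard modulus-of-continuity estimate uniform in $t\in[a,b]$ yields $\|v_\varepsilon-v\|_{C([a,b];\R^d)}\to 0$.

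The heart of the proof is the two statements on the variation. For the inequality $\var(v_\varepsilon;[a,b])\leq \var(v;[a,b])$, I would take an arbitrary partition $a=t_0<t_1<\dots<t_N=b$ and compute, using the triangle inequality and a Fubini-type exchange of sum and integral,
\begin{equation*}
\sum_{j=1}^{N}|v_\varepsilon(t_j)-v_\varepsilon(t_{j-1})|
\;\leq\; \int_{\R}\Biggl(\sum_{j=1}^{N}|\hat v(t_j-s)-\hat v(t_{j-1}-s)|\Biggr)\eta_\varepsilon(s)\,\dd s
\;\leq\; \int_{\R}\var(\hat v;\R)\,\eta_\varepsilon(s)\,\dd s.
\end{equation*}
Because $\hat v$ is constant on $(-\infty,a]$ and on $[b,\infty)$, one has $\var(\hat v;\R)=\var(v;[a,b])$, and using $\int_{\R}\eta_\varepsilon=1$ and taking the supremum over partitions gives the upper bound.

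For the convergence $\var(v_\varepsilon;[a,b])\to\var(v;[a,b])$, I would combine the upper bound just established with the lower-semicontinuity of the total variation under pointwise convergence. Concretely, for any partition $a=t_0<\dots<t_N=b$, the uniform convergence proved in the second step yields
\begin{equation*}
\sum_{j=1}^{N}|v(t_j)-v(t_{j-1})|
= \lim_{\varepsilon\to 0}\sum_{j=1}^{N}|v_\varepsilon(t_j)-v_\varepsilon(t_{j-1})|
\leq \liminf_{\varepsilon\to 0}\var(v_\varepsilon;[a,b]),
\end{equation*}
and passing to the supremum over partitions produces $\var(v;[a,b])\leq\liminf_{\varepsilon\to 0}\var(v_\varepsilon;[a,b])$. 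Together with $\var(v_\varepsilon;[a,b])\leq \var(v;[a,b])$ this forces the full limit to equal $\var(v;[a,b])$.

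I expect the main (albeit mild) pitfall to be the careful treatment of the endpoint extension: one must check that the constant extension used in \eqref{eq:mollifier_def} neither creates nor hides any variation, i.e., that $\var(\hat v;\R)=\var(v;[a,b])$, and that $\hat v$ inherits enough regularity for the uniform-convergence step. Everything else reduces to standard mollifier facts, Fubini, and the lower semicontinuity of total variation.
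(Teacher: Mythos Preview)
Your proposal is correct and follows essentially the same route as the paper: the variation upper bound via the partition/Fubini argument using the constant extension, and the convergence via lower semicontinuity of the total variation combined with that upper bound. The only cosmetic difference is that the paper phrases lower semicontinuity as ``$\var(\cdot;[a,b])$ is a pointwise supremum of continuous functionals on $C([a,b];\R^d)$'' rather than fixing a partition and passing to the limit, but this is the same argument.
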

\begin{proof}
That $v_\varepsilon \in C^\infty([a,b]; \R^d)$ holds 
for all $\varepsilon > 0$ and that 
$\| v - v_\varepsilon\|_{C([a,b]; \R^d)} \to 0$ for $\varepsilon \to 0$
follows from \cite[Theorem C.5-7(i),(iii)]{Evans2010}.
To see that  $\var(v_\varepsilon;[a,b]) \leq \var(v; [a,b]) $
holds for all $\varepsilon > 0$, 
suppose that a partition $\Delta$ of the form $a = t_0 \leq t_1 \leq ... \leq t_N = b$, $N \in \N$,
of $[a,b]$ is given. The definition of $v_\varepsilon$ implies
\begin{equation}
\label{eq:randomvardef}
\begin{aligned}
\var_\Delta(v_\varepsilon;[a,b]) &:=
\sum_{j=1}^N 
|v_\varepsilon(t_j) - v_\varepsilon(t_{j - 1})|
=
\sum_{j=1}^N 
\left | \int_\R \eta_\varepsilon(s)   \hat v(t_j - s) -   \eta_\varepsilon(s)   \hat v(t_{j-1} - s) \dd s\right |
\\
&\leq
 \int_\R \eta_\varepsilon(s) \sum_{j=1}^N   \left | \hat v(t_j - s) -  \hat v(t_{j-1} - s) \right | \dd s
\leq
\int_\R \eta_\varepsilon(s) \dd s \var(v; [a,b]) 
=
 \var(v; [a,b]),
\end{aligned}
\end{equation}
where we used that $\hat v$ is constant on $(-\infty, a]$ and $[b, \infty)$,
respectively. Taking the supremum over all $\Delta$ in the above
yields $\var(v_\varepsilon; [a,b]) \leq \var(v; [a,b]) $ for all $\varepsilon > 0$
as desired. 
It remains to prove that $\var(v_\varepsilon; [a,b])$ converges to $\var(v; [a,b])$
for $\varepsilon \to 0$. 
To this end, we note that 
$\var(\cdot; [a,b])\colon C([a,b];\R^d) \to [0, \infty]$
is, by definition, the pointwise supremum of the family of functions 
$\{ \var_\Delta(\cdot; [a,b]) \colon C([a,b];\R^d) \to [0, \infty) \mid \Delta \text{ is a partition of }[a,b]\}$.
Here, $ \var_\Delta(\cdot; [a,b])$ is defined as in \eqref{eq:randomvardef}. 
As the functions $\var_\Delta(\cdot; [a,b]) \colon C([a,b];\R^d) \to [0, \infty)$
are continuous, this implies that $\var(\cdot; [a,b])\colon C([a,b];\R^d) \to [0, \infty]$
is a lower semicontinuous function. Consequently, 
\[
\var(v; [a,b]) 
\geq 
\limsup_{\varepsilon \to 0} 
\var(v_\varepsilon; [a,b])
\geq
\liminf_{\varepsilon \to 0} 
\var(v_\varepsilon; [a,b])
\geq
\var(v; [a,b]).
\]
This completes the proof. 
\end{proof}

\begin{lemma}[{fundamental theorem in $CBV([a,b];\R^d)$}]
\label{lem:CBV-Stampacchia}
Let $v \in CBV([a,b]; \R^d)$ be given and suppose that 
$F\colon \R^d \to \R$ is continuously differentiable. 
Then it holds 
\begin{equation}
\label{eq:FundThCBV}
F(v(b)) - F(v(a))
=
\int_a^b \langle \nabla F (v(s)), \dd v (s) \rangle. 
\end{equation}
\end{lemma}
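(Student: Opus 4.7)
The strategy is to reduce the identity to the classical $C^1$ chain rule via mollification, and then pass to the limit using the uniform-convergence result for Kurzweil-Stieltjes integrals (\cref{lem:KS-uniform_convergence}).

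First, I would invoke \cref{lem:strict_mollification} to obtain a family $\{v_\varepsilon\}_{\varepsilon > 0} \subset C^\infty([a,b]; \R^d)$ with $v_\varepsilon(a) = v(a)$ and $v_\varepsilon(b) = v(b)$ for all $\varepsilon$ small enough (because $\hat v$ is constant outside $[a,b]$, the boundary values are preserved), $\|v - v_\varepsilon\|_\infty \to 0$ as $\varepsilon \to 0$, and $\sup_\varepsilon \var(v_\varepsilon;[a,b]) \leq \var(v;[a,b]) < \infty$. Actually, a cleaner route is not to rely on boundary preservation: simply observe that $v_\varepsilon(a) \to v(a)$ and $v_\varepsilon(b) \to v(b)$ by uniform convergence, which is all we need to pass to the limit on the left-hand side.

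Next, for each fixed $\varepsilon > 0$, the function $v_\varepsilon$ is smooth and, in particular, lies in $W^{1,1}((a,b);\R^d)$. The classical chain rule from calculus therefore yields
\begin{equation*}
F(v_\varepsilon(b)) - F(v_\varepsilon(a)) = \int_a^b \langle \nabla F(v_\varepsilon(s)), \dot v_\varepsilon(s) \rangle \dd s.
\end{equation*}
Since $s \mapsto \nabla F(v_\varepsilon(s))$ is continuous on $[a,b]$ and $v_\varepsilon \in W^{1,1}((a,b);\R^d)$, \cref{lemma:KS_basics} identifies the Lebesgue integral on the right with the Kurzweil-Stieltjes integral $\int_a^b \langle \nabla F(v_\varepsilon), \dd v_\varepsilon \rangle$.

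It remains to pass to the limit $\varepsilon \to 0$. The left-hand side converges to $F(v(b)) - F(v(a))$ by the continuity of $F$ and the pointwise (in fact uniform) convergence $v_\varepsilon \to v$. For the right-hand side, set $g_\varepsilon := v_\varepsilon$, $f_\varepsilon := \nabla F \circ v_\varepsilon$, $g := v$, and $f := \nabla F \circ v$. Then $\{g_\varepsilon\} \subset BV([a,b]; \R^d)$ with $\sup_\varepsilon \var(g_\varepsilon;[a,b]) \le \var(v;[a,b]) < \infty$, and $g_\varepsilon \to g$ uniformly. Moreover, the set $K := \closure\!\big(\{v_\varepsilon(s) \mid s \in [a,b],\, \varepsilon > 0\} \cup v([a,b])\big)$ is bounded in $\R^d$ (indeed bounded by $\|v\|_\infty$), hence compact; on $K$ the continuous function $\nabla F$ is uniformly continuous, so the uniform convergence $v_\varepsilon \to v$ gives $\|\nabla F \circ v_\varepsilon - \nabla F \circ v\|_\infty \to 0$. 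Hence all hypotheses of \cref{lem:KS-uniform_convergence} are satisfied, and
\begin{equation*}
\int_a^b \langle \nabla F(v_\varepsilon), \dd v_\varepsilon \rangle \longrightarrow \int_a^b \langle \nabla F(v), \dd v \rangle \quad \text{as } \varepsilon \to 0.
\end{equation*}
Combining the two limits establishes \eqref{eq:FundThCBV}. The only step that requires any care is the verification of uniform convergence of the integrands $\nabla F(v_\varepsilon)$, which is why continuity of $v$ (i.e., $v \in CBV$ rather than merely $BV$) is essential: without it, the composition $\nabla F \circ v$ need not be well-approximated uniformly by $\nabla F \circ v_\varepsilon$. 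No genuine obstacle arises beyond this routine compactness/uniform continuity argument.
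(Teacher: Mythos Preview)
Your proof is correct and follows essentially the same approach as the paper's: mollify via \cref{lem:strict_mollification}, apply the classical chain rule to each $v_\varepsilon$, identify the Lebesgue integral with the Kurzweil--Stieltjes integral via \cref{lemma:KS_basics}, and pass to the limit using \cref{lem:KS-uniform_convergence}. One small remark: your initial assertion that $v_\varepsilon(a)=v(a)$ and $v_\varepsilon(b)=v(b)$ for small~$\varepsilon$ is generally false (the convolution averages values of $v$ near the endpoint with the constant extension), but you immediately supply the correct fix via uniform convergence, so no harm is done.
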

\begin{proof}
Let $\{v_\varepsilon\}$ be as in \cref{lem:strict_mollification}. 
Then, for each $\varepsilon > 0$, we have 
\begin{equation}
\label{eq:randomeq273h3}
F(v_\varepsilon(b)) - F(v_\varepsilon(a))
=
\int_a^b \langle \nabla F (v_\varepsilon (s)),  \dot v_\varepsilon (s) \rangle \dd s
=
\int_a^b \langle \nabla F (v_\varepsilon(s)), \dd v_\varepsilon (s) \rangle,
\end{equation}
where the second identity follows from \cref{lemma:KS_basics}.
Since $\| v - v_\varepsilon\|_{C([a,b]; \R^d)} \to 0$ holds and since $F$ is $C^1$, 
we have 
$\| \nabla F(v) - \nabla F(v_\varepsilon)\|_{C([a,b]; \R^d)} \to 0$.
Further, we know that $\var(v_\varepsilon; [a,b]) \leq \var(v; [a,b])$
for all $\varepsilon > 0$. In combination
with \cref{lem:KS-uniform_convergence},
this allows us to pass to the limit $\varepsilon \to 0$ in \eqref{eq:randomeq273h3} 
to arrive at \eqref{eq:FundThCBV}.
\end{proof}

\subsection{Preliminaries on the vector play and stop}
\label{subsec:2.5}

As a final preparation for our analysis, we recall 
some results on the vector play and stop. 
Throughout this subsection, we make the following assumptions:

\begin{assumption}[standing assumptions for Section 2.5]\label{ass:sec:2.5}~
\begin{enumerate}[label=\roman*)]
\item $d \in \N$ and $T>0$ are given and fixed;
\item $Z \subset \R^d$ is a full-dimensional convex polyhedron;
\item $\HH = \{(\nu_i, \alpha_i)\}_{i \in I}$ is a standard description of $Z$.
\end{enumerate}
\end{assumption}

\begin{proposition}[unique solvability of the EVI]
\label{prop:unique_solvability_stop_play}
The evolution variational inequality 
\begin{equation*}
\label{eq:EVI}
\tag{V}
\begin{aligned}
& y \in CBV([0,T]; \R^d), &&y(t) \in Z\quad \forall t \in [0, T],\qquad y(0) = y_0,
\\
&\int_0^T \left \langle v - y, \dd (y - u) \right \rangle \geq 0 &&\forall v \in C([0, T]; Z),
\end{aligned} 
\end{equation*}
has a unique solution $y \in CBV([0,T]; \R^d)$ for all 
$(u, y_0) \in CBV([0,T]; \R^d) \times Z$. If $u \in W^{1,1}((0,T); \R^d)$ holds, 
then the solution $y$ of \eqref{eq:EVI} is also the unique solution of the problem 
\begin{equation}
\label{eq:EVI-W11}
\begin{aligned}
& y \in W^{1,1}((0,T); \R^d), &&y(t) \in Z\quad \forall t \in [0, T],\qquad y(0) = y_0,
\\
&\langle \dot y(t) - \dot u(t), v -  y(t) \rangle \geq 0 &&\forall v \in Z\quad \text{for a.a.\ } t \in (0,T).
\end{aligned}
\end{equation}
\end{proposition}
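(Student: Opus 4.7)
The plan is to bootstrap from classical theory for smooth inputs, extend to $CBV$ by mollification together with a Moreau-type stability estimate, and obtain uniqueness directly from \eqref{eq:EVI} via the Kurzweil-Stieltjes fundamental theorem. The main obstacle is that the integrator in \eqref{eq:EVI} depends on the solution itself, so the approximation scheme must converge uniformly and not merely pointwise; a bare application of Helly's theorem would therefore be insufficient.

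For $u \in W^{1,1}((0,T);\R^d)$ and $y_0 \in Z$, the classical theory of sweeping processes with convex admissible sets cited in the introductory remarks delivers a unique $y \in W^{1,1}$ solving \eqref{eq:EVI-W11}, together with $\var(y;[0,T]) \leq \var(u;[0,T])$. Integrating the pointwise inequality against any $v \in C([0,T];Z)$ and reinterpreting the Lebesgue integral as a Kurzweil-Stieltjes integral via \cref{lemma:KS_basics} shows that $y$ also solves \eqref{eq:EVI}. For general $u \in CBV$, \cref{lem:strict_mollification} supplies smooth approximants $u_\varepsilon$ with $u_\varepsilon \to u$ uniformly and $\var(u_\varepsilon;[0,T]) \leq \var(u;[0,T])$; let $y_\varepsilon$ denote the corresponding $W^{1,1}$ solutions. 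Testing the pointwise inequality for $y_\varepsilon$ with $y_{\varepsilon'}(t)\in Z$ and vice versa, adding, and integrating by parts yields the classical Moreau-type stability estimate
\begin{equation*}
\tfrac{1}{2}\|y_\varepsilon - y_{\varepsilon'}\|_\infty^2 \leq \|u_\varepsilon - u_{\varepsilon'}\|_\infty \bigl(\|y_\varepsilon - y_{\varepsilon'}\|_\infty + 2\var(u;[0,T])\bigr),
\end{equation*}
so $\{y_\varepsilon\}$ is Cauchy in $C([0,T];\R^d)$ with some limit $y \in C([0,T];Z)$ satisfying $y(0)=y_0$ and $\var(y;[0,T]) \leq \var(u;[0,T])$ by lower semicontinuity of the variation. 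Since $y_\varepsilon \to y$ and $u_\varepsilon \to u$ uniformly with the variations uniformly bounded, \cref{lem:KS-uniform_convergence} permits passage to the limit in $\int_0^T \langle v - y_\varepsilon, \dd(y_\varepsilon - u_\varepsilon)\rangle \geq 0$ for every fixed $v \in C([0,T];Z)$, establishing \eqref{eq:EVI} for $y$.

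For uniqueness in $CBV$, suppose $y_1, y_2 \in CBV([0,T];Z)$ both solve \eqref{eq:EVI} with the same data. Fix $t \in (0,T]$ and a continuous cutoff $\chi_\delta\colon[0,T]\to[0,1]$ equal to $1$ on $[0,t-\delta]$ and to $0$ on $[t,T]$. Since $Z$ is convex, the combinations $v_i := \chi_\delta y_{3-i} + (1-\chi_\delta)y_i$ lie in $C([0,T];Z)$ and satisfy $v_i - y_i = \chi_\delta(y_{3-i}-y_i)$. Testing $y_i$'s inequality against $v_i$, adding the two, and sending $\delta \to 0$ by the bounded convergence theorem (\cref{th:KS-bounded_convergence}) together with \cref{lem:KS-sub_int} (applicable since $y_1 - y_2$ is continuous at $t$) yields
\begin{equation*}
\int_0^t \langle y_1 - y_2, \dd(y_1 - y_2)\rangle \leq 0,
\end{equation*}
which by \cref{lem:CBV-Stampacchia} applied to $F(x)=\tfrac{1}{2}|x|^2$ collapses to $\tfrac{1}{2}|y_1(t)-y_2(t)|^2 \leq 0$. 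Hence $y_1 \equiv y_2$. Finally, when $u \in W^{1,1}$, the $W^{1,1}$ solution of \eqref{eq:EVI-W11} constructed above is already a $CBV$ solution of \eqref{eq:EVI} and hence, by uniqueness, the only such solution; conversely, every $W^{1,1}$ solution $y$ of \eqref{eq:EVI} recovers \eqref{eq:EVI-W11} at each Lebesgue point $t_0$ of $\dot y - \dot u$ via test functions of the form $v = (1-\varphi)y + \varphi w$ with $w \in Z$ and a bump $\varphi$ centred at $t_0$, after shrinking its support.
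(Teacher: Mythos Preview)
Your proof is correct and, unlike the paper's, self-contained. The paper disposes of this proposition in two lines by citing \cite[Theorem~4.1, Proposition~4.1]{Krejci1999} (after noting that the hypothesis $0\in Z$ there can be arranged by translation). You instead reconstruct the argument from scratch using only the tools assembled in \cref{subsec:2.4}: mollification via \cref{lem:strict_mollification}, the Moreau-type $C$-stability estimate obtained by cross-testing the pointwise inequalities and integrating by parts, passage to the limit via \cref{lem:KS-uniform_convergence}, and uniqueness via the cutoff trick combined with \cref{lem:KS-sub_int} and \cref{lem:CBV-Stampacchia}. Each step checks out; in particular, your stability inequality follows exactly from $\tfrac12|\phi(s)|^2\le\int_0^s\langle\dot\psi,\phi\rangle\,\dd r=\langle\psi(s),\phi(s)\rangle-\int_0^s\langle\psi,\dot\phi\rangle\,\dd r$ together with $\var(y_\varepsilon;[0,T])\le\var(u_\varepsilon;[0,T])\le\var(u;[0,T])$, and the cutoff argument works because $y_1-y_2\in CBV$ is continuous at every $t$, so \cref{lem:KS-sub_int} applies with any of the interval types.

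What each approach buys: the citation is economical and defers to a source where the result is proved in greater generality (arbitrary closed convex $Z$ in a Hilbert space). Your route is more in keeping with the paper's declared aim of self-containedness and has the side benefit of exercising precisely the Kurzweil--Stieltjes machinery (\cref{lem:KS-uniform_convergence}, \cref{th:KS-bounded_convergence}, \cref{lem:CBV-Stampacchia}) that the later sections rely on. The final ``conversely'' clause, recovering \eqref{eq:EVI-W11} from \eqref{eq:EVI} via bump functions, is correct but redundant: once you know the classical $W^{1,1}$ solution of \eqref{eq:EVI-W11} solves \eqref{eq:EVI} and that \eqref{eq:EVI} is uniquely solvable, both the $W^{1,1}$-regularity of $y$ and the uniqueness of \eqref{eq:EVI-W11} follow immediately (any other $W^{1,1}$ solution of \eqref{eq:EVI-W11} would also solve \eqref{eq:EVI}).
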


\begin{proof}
The unique solvability of \eqref{eq:EVI} 
follows from \cite[Theorem 4.1]{Krejci1999},
and that
the solution of \eqref{eq:EVI} 
is uniquely characterized by \eqref{eq:EVI-W11} 
in the case $u \in W^{1,1}((0,T); \R^d)$
follows from \cite[Proposition 4.1]{Krejci1999}. 
(Note that the additional assumption $0 \in Z$ in \cite{Krejci1999}
is unimportant since we can 
always satisfy it by means of a
translation.)
\end{proof}

\begin{lemma}[regulated test functions]
\label{lem:EVI_G}
In the situation of \cref{prop:unique_solvability_stop_play},
the solution $y$ of \eqref{eq:EVI}  also satisfies
\begin{equation}
\label{eq:EVI_G}
\int_{s_1}^{s_2}
 \left \langle v - y, \dd (y - u) \right \rangle \geq 0\qquad \forall v \in G([s_1, s_2]; Z)
 \qquad \forall 0 \leq s_1 < s_2 \leq T. 
\end{equation}
\end{lemma}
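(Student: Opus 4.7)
The plan is to enlarge the test-function class of \eqref{eq:EVI} by two successive approximation steps: first extend the VI from $[0,T]$ to an arbitrary subinterval $[s_1,s_2]$ for continuous test functions, and then lift from continuous to regulated test functions on $[s_1,s_2]$.

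For the first step, I would use a cutoff localization. Given $v \in C([s_1,s_2];Z)$, extend $v$ continuously to $[0,T]$ by the constants $v(s_1)$ on $[0,s_1]$ and $v(s_2)$ on $[s_2,T]$, and introduce a continuous bump $\varphi_\veps\colon [0,T]\to [0,1]$ equal to one on $[s_1,s_2]$ and vanishing outside $[s_1-\veps,\, s_2+\veps]$ (with the obvious modifications if $s_1 = 0$ or $s_2 = T$). Since $Z$ is convex and $v(t), y(t)\in Z$, the function $\tilde v_\veps := \varphi_\veps v + (1-\varphi_\veps)\, y$ lies in $C([0,T]; Z)$. Plugging it into \eqref{eq:EVI} gives $\int_0^T \langle \varphi_\veps (v-y), \dd(y-u)\rangle \geq 0$. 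Letting $\veps\to 0$, the integrand tends pointwise to $\mathds{1}_{[s_1,s_2]}(v-y)$ and is uniformly bounded, and since $y-u\in CBV([0,T];\R^d)$ is continuous, \cref{th:KS-bounded_convergence} together with \cref{lem:KS-sub_int} (applied at the continuity points $s_1, s_2$ of the integrator) converts the limiting inequality into $\int_{s_1}^{s_2}\langle v-y, \dd(y-u)\rangle \geq 0$.

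For the second step, I would approximate an arbitrary $v \in G([s_1,s_2];Z)$ by continuous $Z$-valued functions through a step-function intermediary. Since $v$ is regulated, the standard construction produces step functions $\sigma_k$ converging uniformly to $v$ whose values are sampled from $v([s_1,s_2])\subset Z$, so $\sigma_k \in G([s_1,s_2];Z)$. Each $\sigma_k$ has finitely many jump points $r_1 < \dots < r_{N-1}$ in $(s_1,s_2)$; on sufficiently small, pairwise disjoint neighborhoods $(r_j - \delta_n,\, r_j + \delta_n)$ (with $\delta_n\to 0$) I would replace $\sigma_k$ by piecewise linear interpolants passing through $\sigma_k(r_j)$ at $r_j$ and through the adjacent constant values of $\sigma_k$ at the endpoints of the neighborhood (with analogous one-sided interpolations at $s_1$ and $s_2$ if necessary). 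Convexity of $Z$ ensures that the resulting $\sigma_{k,n}$ belong to $C([s_1,s_2];Z)$, and the construction gives $\sigma_{k,n}(t)\to \sigma_k(t)$ for every $t\in[s_1,s_2]$ (including at the jump points, since $\sigma_{k,n}(r_j) = \sigma_k(r_j)$ by design) together with a uniform supremum bound. Applying the continuous-$v$ VI from step~1 to $\sigma_{k,n}$ and passing to the limit $n\to\infty$ via \cref{th:KS-bounded_convergence} yields \eqref{eq:EVI_G} for each step function $\sigma_k$, and then the integrator-integrand estimate \cref{lem:KS-int-int-estimate} applied to $\sigma_k - v$ against the $BV$ integrator $y-u$ delivers \eqref{eq:EVI_G} for the original $v$ upon sending $k\to\infty$.

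The main technical point will be the construction in the second step, where the continuous approximations $\sigma_{k,n}$ of the step function $\sigma_k$ must simultaneously respect the pointwise constraint $\sigma_{k,n}(t)\in Z$ (for which convexity of $Z$ is essential) and converge pointwise at every point of $[s_1,s_2]$, including at the jump points of $\sigma_k$ themselves; the need for pointwise-everywhere convergence (rather than merely a.e.\ convergence) is precisely what forces the use of the Kurzweil-Stieltjes bounded convergence theorem \cref{th:KS-bounded_convergence} in both steps.
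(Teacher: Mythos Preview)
Your two-step approximation argument is correct and is the natural route for this result. The paper itself does not give a self-contained proof but only cites \cite[Lemma 4.2]{BrokateChristof2023} and \cite[Lemma 4.1]{Krejci1999}; your argument is the standard one and presumably coincides with what those references contain.
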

\begin{proof}
This lemma can be proved completely analogously to 
\cite[Lemma 4.2]{BrokateChristof2023};
see also \cite[Lemma 4.1]{Krejci1999}. 
\end{proof}

\begin{definition}[play and stop operator]
The solution operator $\SS\colon CBV([0,T];\R^d) \times Z \to CBV([0,T];\R^d)$,
$(u, y_0) \mapsto y$, of \eqref{eq:EVI} is called the stop operator.
The  function 
$\PP\colon CBV([0,T];\R^d) \times Z \to CBV([0,T];\R^d)$
defined by 
$\PP(u, y_0) := u - \SS(u, y_0)$ is called the play operator. 
\end{definition}

Note that, due to the above definitions, continuity and differentiability results 
for $\SS$ carry over to $\PP$ and vice versa.
This allows us to focus on $\SS$ in the 
remainder of this paper. Next, we study Lipschitz continuity properties 
of $\SS$. For $W^{1,1}((0,T); \R^d)$-inputs $u$, a Lipschitz estimate 
for $\SS$ with values in $W^{1,1}((0,T); \R^d)$
has been derived in \cite[Theorems 7.1, 7.2]{Krejci1999}.
We extend this result to $CBV([0,T];\R^d)$ with the following lemma:

\begin{lemma}[extension of Lipschitz estimates]
\label{th:CBV-Lipschitz-Extension}
Let $X$ be a normed space, let $D\subset X$ be a set, 
and let $\FF\colon CBV([0,T];\R^d) \times D \to CBV([0,T];\R^d)$ 
be given. Suppose that there exist constants $L_1,L_2>0$ satisfying 
\begin{equation}
\label{eq:F-conti_C}
\begin{aligned}
&\|\FF(u_1, x_1) - \FF(u_2,  x_2)\|_{C([0,T];\R^d)}
\\
&\quad\leq 
L_1
\left (
\|u_1 - u_2\|_{C([0,T];\R^d)}
+
\|x_1 - x_2 \|_X
\right )
\qquad
\forall
(u_1, x_1),(u_2, x_2) \in CBV([0,T];\R^d) \times D
\end{aligned}
\end{equation}
and 
\begin{equation}
\label{eq:F-conti_W11}
\begin{aligned}
&
\var \left ( \FF(u_1, x_1) - \FF(u_2,  x_2); [0,T]\right )
\\
&\qquad\leq 
L_2
\left (
\|\dot u_1 - \dot u_2\|_{L^1((0,T);\R^d)}
+
\|x_1 - x_2 \|_X
\right )
\qquad
\forall
(u_1, x_1),(u_2, x_2) \in C^\infty([0,T];\R^d) \times D.
\end{aligned}
\end{equation}
Then it holds 
\begin{equation*}
\begin{aligned}
&\|\FF(u_1, x_1) - \FF(u_2,  x_2)\|_{CBV([0,T];\R^d)}
\\
&\quad \leq 
(L_1 + L_2)
\left (
\|u_1 - u_2\|_{CBV([0,T];\R^d)}
+
\|x_1 - x_2 \|_X
\right )
\qquad
\forall
(u_1, x_1),(u_2, x_2) \in CBV([0,T];\R^d) \times D.
\end{aligned}
\end{equation*}
\end{lemma}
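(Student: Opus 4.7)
The plan is to use the $C$-estimate \eqref{eq:F-conti_C} as is, and upgrade the variation estimate \eqref{eq:F-conti_W11} from $C^\infty$-inputs to general $CBV$-inputs by a mollification-and-lower-semicontinuity argument based on \cref{lem:strict_mollification}. More precisely, I would fix $(u_1, x_1), (u_2, x_2) \in CBV([0,T];\R^d) \times D$ and define mollified inputs $u_{j,\veps} := (\eta_\veps \star \hat u_j)|_{[0,T]} \in C^\infty([0,T];\R^d)$ for $j = 1, 2$ using the construction in \cref{lem:strict_mollification}, so that $u_{j,\veps} \to u_j$ uniformly and $\var(u_{j,\veps};[0,T]) \to \var(u_j;[0,T])$ as $\veps \to 0$.

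The key step is then to apply \eqref{eq:F-conti_W11} to the tuples $(u_{1,\veps}, x_1)$ and $(u_{2,\veps}, x_2)$. Since $u_{1,\veps} - u_{2,\veps}$ is smooth, we have $\|\dot u_{1,\veps} - \dot u_{2,\veps}\|_{L^1((0,T);\R^d)} = \var(u_{1,\veps} - u_{2,\veps};[0,T])$. Moreover, because the operations of extension by constants and convolution with $\eta_\veps$ are both linear, $u_{1,\veps} - u_{2,\veps}$ is precisely the mollification of $u_1 - u_2$ in the sense of \eqref{eq:mollifier_def}, so the computation \eqref{eq:randomvardef} in the proof of \cref{lem:strict_mollification} yields the uniform bound
\[
\var(u_{1,\veps} - u_{2,\veps};[0,T]) \leq \var(u_1 - u_2; [0,T]) \qquad \forall \veps > 0.
\]
Combining this with \eqref{eq:F-conti_W11} gives
\[
\var\bigl(\FF(u_{1,\veps}, x_1) - \FF(u_{2,\veps}, x_2); [0,T]\bigr)
\leq L_2 \bigl( \var(u_1 - u_2; [0,T]) + \|x_1 - x_2\|_X \bigr)
\]
for every $\veps > 0$.

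Next I would pass to the limit $\veps \to 0$. By \eqref{eq:F-conti_C} and the uniform convergence $u_{j,\veps} \to u_j$ in $C([0,T];\R^d)$, we have $\FF(u_{j,\veps}, x_j) \to \FF(u_j, x_j)$ uniformly on $[0,T]$ for $j = 1, 2$. Since $\var(\cdot; [0,T]) \colon C([0,T];\R^d) \to [0,\infty]$ is lower semicontinuous as a pointwise supremum of continuous functionals (cf.\ the argument at the end of the proof of \cref{lem:strict_mollification}), taking the limit inferior yields
\[
\var\bigl(\FF(u_1, x_1) - \FF(u_2, x_2); [0,T]\bigr)
\leq L_2 \bigl( \var(u_1 - u_2;[0,T]) + \|x_1 - x_2\|_X \bigr).
\]
Finally, adding this bound to the $C$-estimate \eqref{eq:F-conti_C} and using $\var(u_1 - u_2; [0,T]) \leq \|u_1 - u_2\|_{CBV([0,T];\R^d)}$ and $\|u_1 - u_2\|_C \leq \|u_1 - u_2\|_{CBV}$ gives the claimed $(L_1 + L_2)$-Lipschitz estimate in $CBV$.

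The only mildly delicate point is verifying that the mollification bound $\var(u_{1,\veps} - u_{2,\veps};[0,T]) \leq \var(u_1 - u_2;[0,T])$ really does follow from the construction in \cref{lem:strict_mollification} applied to the difference; I expect this to be a routine check because $\widehat{u_1 - u_2} = \hat u_1 - \hat u_2$ under the constant extension in \eqref{eq:mollifier_def}, so $u_{1,\veps} - u_{2,\veps}$ is the mollification of $u_1 - u_2$ and the estimate \eqref{eq:randomvardef} applies verbatim. No other step involves more than combining previously established properties.
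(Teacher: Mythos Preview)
Your proposal is correct and follows essentially the same approach as the paper: mollify $u_1$ and $u_2$, use linearity of the extension-plus-convolution to identify $u_{1,\veps} - u_{2,\veps}$ with the mollification of $u_1 - u_2$ and hence bound its variation, apply \eqref{eq:F-conti_W11} to the smooth inputs, and pass to the limit via the $C$-continuity \eqref{eq:F-conti_C} together with lower semicontinuity of $\var(\cdot;[0,T])$. The paper's proof is organized identically, so there is nothing further to add.
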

\begin{proof}
Let $(u_1, x_1),(u_2, x_2) \in CBV([0,T];\R^d) \times D$ be given,
and
let $u_{1,\varepsilon}, u_{2,\varepsilon}, (u_1 - u_2)_\varepsilon 
\in C^\infty([0,T];\R^d)$, $\varepsilon > 0$,
be mollifications of $u_1$, $u_2$, and $u_1 - u_2$, respectively, 
as constructed in \cref{lem:strict_mollification}. 
Then it follows from \eqref{eq:mollifier_def}, \eqref{eq:mollifier_convergence}, \eqref{eq:F-conti_C}, and the smoothness of $u_{1,\varepsilon} - u_{2,\varepsilon}$ that 
\begin{equation}
\label{eq:randomeq3838389-42}
\FF(u_{1,\varepsilon}, x_1) - \FF(u_{2,\varepsilon}, x_2)
\to 
\FF(u_1, x_1) - \FF(u_2, x_2)
\text{ for } \varepsilon \to 0 \text{ in } C([0,T];\R^d)
\end{equation}
and 
\begin{equation}
\label{eq:randomeq3838389-43}
\left \| \dot u_{1,\varepsilon} - \dot u_{2,\varepsilon} \right \|_{L^1((0,T);\R^d)}
=
\var(u_{1,\varepsilon} -  u_{2,\varepsilon}; [0,T])
=
\var( (u_1 - u_2)_\varepsilon; [0,T])
\leq 
\var( u_1 - u_2 ; [0,T])\quad \forall \varepsilon > 0.
\end{equation}
Since $C([0,T];\R^d) \to [0, \infty]$, $v \mapsto \var(v; [0,T])$,  
is lower semicontinuous 
w.r.t.\ strong convergence in $C([0,T]; \R^d)$
(see the proof of \cref{lem:strict_mollification}), 
the results in \eqref{eq:F-conti_W11}, \eqref{eq:randomeq3838389-42}, and \eqref{eq:randomeq3838389-43} imply
\begin{equation*}
\begin{aligned}
\var(\FF(u_1, x_1) - \FF(u_2, x_2) ; [0,T])
&\leq
\liminf_{\varepsilon \to 0}
\var(\FF(u_{1,\varepsilon}, x_1) - \FF(u_{2,\varepsilon}, x_2) ; [0,T])
\\
&\leq
L_2 \limsup_{\varepsilon \to 0}
\left (
\left \| \dot u_{1,\varepsilon} - \dot u_{2,\varepsilon} \right \|_{L^1((0,T);\R^d)} 
+
\|x_1 - x_2\|_X
\right )
\\
&=
L_2
\limsup_{\varepsilon \to 0}
\left (
\var( (u_1 - u_2)_\varepsilon; [0,T])
+
\|x_1 - x_2\|_X
\right )
\\
&\leq
L_2
\left (
\var(u_1 - u_2; [0,T])
+
\|x_1 - x_2\|_X
\right ).
\end{aligned}
\end{equation*}
By combining the above with \eqref{eq:F-conti_C},
the assertion follows. 
\end{proof}

For the stop operator $\SS$, we now obtain:

\begin{theorem}[{CBV-Lipschitz continuity of $\SS$}]
\label{th:CBV-Lipschitz}
There exists a constant $L>0$ depending only on $Z$ such that
\begin{equation}
\label{eq:BV_Lipschitz}
\begin{aligned}
&\|\SS(u, y_0) - \SS(\tilde u, \tilde y_0)\|_{CBV([0,T];\R^d)}
\\
&\qquad
\leq 
L
\left (
\|u - \tilde u\|_{CBV([0,T];\R^d)}
+
|y_0 - \tilde y_0|
\right )
\qquad \forall (u, y_0),(\tilde u, \tilde y_0) \in CBV([0,T]; \R^d) \times Z.
\end{aligned}
\end{equation}
\end{theorem}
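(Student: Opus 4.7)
The approach is to invoke the extension lemma \cref{th:CBV-Lipschitz-Extension} with the choices $\FF := \SS$, $X := \R^d$, and $D := Z$, so that it will suffice to exhibit constants $L_1, L_2 > 0$ depending only on $Z$ for which the hypotheses \eqref{eq:F-conti_C} and \eqref{eq:F-conti_W11} hold; the desired estimate \eqref{eq:BV_Lipschitz} then follows with $L := L_1 + L_2$.

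For smooth inputs $u_1, u_2 \in C^\infty([0,T];\R^d)$ and $x_1, x_2 \in Z$, both estimates can be imported from the classical $W^{1,1}$-theory. Indeed, by \cref{prop:unique_solvability_stop_play} the trajectories $y_i := \SS(u_i, x_i)$ lie in $W^{1,1}((0,T);\R^d)$ and satisfy the strong EVI \eqref{eq:EVI-W11}; testing the EVI for $y_1$ at $t$ with $y_2(t) \in Z$ and vice versa and adding yields the familiar differential inequality $\tfrac{1}{2}\tfrac{\dd}{\dd t}|y_1 - y_2|^2 \leq \langle \dot u_1 - \dot u_2, y_1 - y_2 \rangle$, from which \cite[Theorems~7.1, 7.2]{Krejci1999} extracts the required $L_2$ for the variation bound \eqref{eq:F-conti_W11}. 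A refinement of the same argument (integration by parts followed by a Gronwall-type bootstrap, again contained in \cite[Theorem~7.1]{Krejci1999}) furnishes, for smooth inputs, the $C$-Lipschitz bound $\|\SS(u_1, x_1) - \SS(u_2, x_2)\|_\infty \leq L_1(\|u_1 - u_2\|_\infty + |x_1 - x_2|)$ with a constant $L_1 > 0$ depending only on $Z$.

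To extend this smooth-input $C$-Lipschitz bound to the full domain $CBV([0,T];\R^d) \times Z$ required by \cref{th:CBV-Lipschitz-Extension}, I would mollify. For $(u_i, x_i) \in CBV([0,T];\R^d) \times Z$ the approximations $u_{i,\varepsilon}$ from \cref{lem:strict_mollification} fulfill $\|u_{i,\varepsilon} - u_i\|_\infty \to 0$ and $\var(u_{i,\varepsilon};[0,T]) \leq \var(u_i;[0,T])$, so the smooth-case variation estimate, combined with a reference trajectory such as $\SS(0, x_i)$, implies that $\{\SS(u_{i,\varepsilon}, x_i)\}_\varepsilon$ is bounded in $BV([0,T];\R^d)$ uniformly in $\varepsilon$. \Cref{th:HellySelect} then extracts a pointwise subsequential limit, which can be identified as $\SS(u_i, x_i)$ by passing to the limit in the EVI \eqref{eq:EVI_G} using \cref{lem:KS-uniform_convergence} and \cref{th:KS-bounded_convergence}. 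Letting $\varepsilon \to 0$ in the smooth-case $C$-estimate then delivers \eqref{eq:F-conti_C} on all of $CBV([0,T];\R^d) \times Z$, after which \cref{th:CBV-Lipschitz-Extension} concludes the argument. The principal obstacle is precisely this extension step: \cref{th:CBV-Lipschitz-Extension} demands \eqref{eq:F-conti_C} on the \emph{entire} domain $CBV([0,T];\R^d) \times Z$ and not merely on its smooth subset, so one must justify rigorously that every Helly-selected pointwise limit of $\SS(u_{i,\varepsilon}, x_i)$ coincides with the unique stop trajectory of $u_i$ from \cref{prop:unique_solvability_stop_play}, and this is where the uniform variation bound inherited from the smooth-case analysis and the Kurzweil-Stieltjes stability results of \cref{subsec:2.4} play an essential role.
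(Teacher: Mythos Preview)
Your overall strategy---verify the two hypotheses of \cref{th:CBV-Lipschitz-Extension} for $\FF = \SS$---matches the paper exactly, and the way you import the $W^{1,1}$-variation estimate \eqref{eq:F-conti_W11} from \cite[Theorems~7.1, 7.2]{Krejci1999} is the same. The divergence is in how you obtain \eqref{eq:F-conti_C}: the paper simply cites \cite[Theorem~7.1]{Krejci1999}, which already delivers the $C$-Lipschitz estimate on the \emph{full} domain $CBV([0,T];\R^d)\times Z$ (the stop on $CBV$ is constructed in \cite{Krejci1999} precisely by the density/extension argument you are sketching). Your mollification route therefore re-proves a result that is available off the shelf; it is not wrong, but it is unnecessary.

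One small simplification if you do want to carry out the extension yourself: the Helly step is superfluous. From the smooth-case $C$-estimate alone, $\{\SS(u_{i,\varepsilon},x_i)\}_\varepsilon$ is Cauchy in $C([0,T];\R^d)$ as $u_{i,\varepsilon}\to u_i$ uniformly, so it converges uniformly without any compactness argument; combined with the uniform variation bound you already have, \cref{lem:KS-uniform_convergence} lets you pass to the limit in \eqref{eq:EVI_G} and identify the limit as $\SS(u_i,x_i)$ via the uniqueness part of \cref{prop:unique_solvability_stop_play}.
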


\begin{proof}
As in the proof of \cref{lem:ZZstandard}, 
we may assume w.l.o.g.\ that  $0 \in \interior(Z)$ holds and that $\HH = \{(\nu_i, \alpha_i)\}_{i \in I}$
satisfies $\alpha_i >0$ for all $i \in I$. For this situation, we obtain 
from \cite[Theorems 7.1, 7.2]{Krejci1999} that there exist constants $L_1,L_2>0$ 
depending only on $Z$ such that 
\begin{equation}
\label{eq:C_S_continuity}
\begin{aligned}
&\|\SS(u, y_0) - \SS(\tilde u, \tilde y_0)\|_{C([0,T];\R^d)}
\\
&\qquad\leq 
L_1
\left (
\|u - \tilde u\|_{C([0,T];\R^d)}
+
|y_0 - \tilde y_0|
\right )
\qquad \forall (u, y_0),(\tilde u, \tilde y_0) \in CBV([0,T]; \R^d) \times Z
\end{aligned}
\end{equation}
and 
\begin{equation*}
\begin{aligned}
&\left \|\dot \SS(u, y_0) - \dot \SS(\tilde u, \tilde y_0)\right \|_{L^1((0,T);\R^d)}
\\
&\qquad\leq 
L_2
\left ( 
\left \|
\dot u - \dot{\tilde u} \right \|_{L^1((0,T);\R^d)}
+
|y_0 - \tilde y_0|
\right )
\qquad \forall (u, y_0),(\tilde u, \tilde y_0) \in W^{1,1}((0,T); \R^d) \times Z.
\end{aligned}
\end{equation*}
By invoking \cref{th:CBV-Lipschitz-Extension}, the assertion now follows 
immediately. 
\end{proof}

Next, we establish a result on the 
local behavior of the play operator 
that generalizes the 
local monotonicity property 
documented for the scalar case in 
\cite[Lemma 4.3]{BrokateChristof2023}.
Recall that, for every point $x \in Z$, we have 
$\AA(x) := \{i \in I \mid \langle \nu_i, x\rangle =  \alpha_i\}$
and 
$\ZZ(x) := \{z \in \R^d \mid \langle \nu_i, z\rangle \leq 0~\forall i \in \AA(x) \}$. 

\begin{lemma}[local monotone decomposition]
\label{lemma:monotone_play}
Assume that $\HH = \{(\nu_i, \alpha_i)\}_{i \in I}$ is regular.
Let $t \in [0,T]$ be fixed
and let $(u, y_0) \in CBV([0,T]; \R^d) \times Z$ be given.
Set $y := \SS(u, y_0)$ and $w := \PP(u, y_0)$.
Then there exist a number 
$\varepsilon > 0$ and, for this $\varepsilon$, unique
 $\lambda_i \in CBV([t - \varepsilon, t + \varepsilon] \cap [0,T])$,
$i \in \AA(y(t))$, such that
\begin{equation}
\label{eq:mon_play}
\AA(y(s)) \subset \AA(y(t))
\quad
\text{and}
\quad
w(s) = 
w(\max(0,t - \varepsilon))
+
\sum_{i \in \AA(y(t))} \lambda_i(s)\nu_i \quad \forall s \in [t - \varepsilon, t + \varepsilon] \cap [0,T].
\end{equation}
Further, the functions $\lambda_i$, $i \in \AA(y(t))$,
are each nonnegative and nondecreasing
on $[t - \varepsilon, t + \varepsilon] \cap [0,T]$. 
\end{lemma}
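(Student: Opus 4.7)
My approach is to reduce to the smooth case by mollifying $u$, derive the decomposition there using \cref{prop:unique_solvability_stop_play} together with the polyhedral normal-cone formula in \cref{lem:normal_cone_polyhedral}, and then pass to the limit using the CBV-Lipschitz estimate in \cref{th:CBV-Lipschitz}. First, by continuity of $y$ and the finiteness of $I$, I fix $\varepsilon > 0$ and $\delta > 0$ such that $\langle \nu_i, y(s)\rangle \leq \alpha_i - \delta$ for all $i \in I \setminus \AA(y(t))$ and all $s \in J := [t - \varepsilon, t + \varepsilon] \cap [0,T]$; this immediately gives the inclusion $\AA(y(s)) \subset \AA(y(t))$ on $J$. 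I then mollify $u$ via \cref{lem:strict_mollification} to obtain $u^k \in C^\infty([0,T]; \R^d)$ with $u^k \to u$ in $CBV$, and set $y^k := \SS(u^k, y_0)$ and $w^k := u^k - y^k$. By \cref{th:CBV-Lipschitz}, $y^k \to y$ and $w^k \to w$ in $CBV([0,T]; \R^d)$, hence uniformly. For $k$ large enough, the uniform closeness of $y^k$ to $y$ combined with the gap $\delta$ ensures $\AA(y^k(s)) \subset \AA(y(t))$ for all $s \in J$.

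Since $u^k$ is smooth, \cref{prop:unique_solvability_stop_play} yields $y^k \in W^{1,1}((0,T); \R^d)$, and the pointwise-a.e.\ EVI \eqref{eq:EVI-W11} gives $\dot w^k(s) = \dot u^k(s) - \dot y^k(s) \in \NN_Z(y^k(s))$ for a.a.\ $s$. Applying the normal-cone formula in \cref{lem:normal_cone_polyhedral} together with the linear independence of $\{\nu_i : i \in \AA(y^k(s))\}$ (which follows from the regularity of $\HH$), I obtain unique coefficients $\beta_i^k(s) \geq 0$ with $\dot w^k(s) = \sum_{i \in \AA(y^k(s))} \beta_i^k(s)\,\nu_i$. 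Extending by zero on $\AA(y(t)) \setminus \AA(y^k(s))$ yields $\dot w^k(s) = \sum_{i \in \AA(y(t))} \beta_i^k(s)\,\nu_i$ on $J$, where the $\beta_i^k(s)$ remain uniquely determined by the linear independence of $\{\nu_i : i \in \AA(y(t))\}$. Setting $s_0 := \max(0, t - \varepsilon)$ and $\lambda_i^k(s) := \int_{s_0}^s \beta_i^k(r)\,\dd r$, the resulting functions $\lambda_i^k$ are absolutely continuous, nonnegative, and nondecreasing on $J$, and satisfy $w^k(s) - w^k(s_0) = \sum_{i \in \AA(y(t))} \lambda_i^k(s)\,\nu_i$ for all $s \in J$.

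To pass to the limit, I choose the biorthogonal basis $\{\mu_i\}_{i \in \AA(y(t))}$ of $V := \spann\{\nu_i : i \in \AA(y(t))\}$ characterized by $\langle \mu_i, \nu_j\rangle = \delta_{ij}$ for $i,j \in \AA(y(t))$ (which exists by linear independence). This provides the explicit representation $\lambda_i^k(s) = \langle \mu_i, w^k(s) - w^k(s_0)\rangle$, which is manifestly continuous in $w^k$ with respect to the uniform (and $CBV$) topology. Letting $k \to \infty$, the $\lambda_i^k$ converge uniformly on $J$ to $\lambda_i(s) := \langle \mu_i, w(s) - w(s_0)\rangle$, which is continuous, of bounded variation, nonnegative, and nondecreasing (as a pointwise limit of nonnegative nondecreasing functions). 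Since each $w^k(s) - w^k(s_0)$ lies in the closed subspace $V$, so does $w(s) - w(s_0)$, and consequently $w(s) - w(s_0) = \sum_{i \in \AA(y(t))} \lambda_i(s)\,\nu_i$ on $J$; uniqueness of the $\lambda_i$ is then immediate from the linear independence of $\{\nu_i : i \in \AA(y(t))\}$. The main obstacle in this program is controlling the limit of the decomposition at the level of only $CBV$-regular data; this is resolved by the biorthogonal-basis trick, which rewrites the coefficients $\lambda_i^k$ as linear functionals applied to $w^k$ and thereby reduces the passage to the limit to the CBV-continuity of $\SS$ supplied by \cref{th:CBV-Lipschitz}.
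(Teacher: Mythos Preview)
Your proof is correct, but it follows a genuinely different route from the paper. The paper works directly with the weak EVI: using \cref{lem:ZZproperties}\ref{lem:ZZproperties:i} it chooses $\varepsilon,\tau>0$ so that $y(r)+\tau z\in Z$ for all $r\in J$ and $z\in B_1(0)\cap\ZZ(y(t))$, plugs $v(r)=y(r)+\tau z$ into \eqref{eq:EVI_G}, and obtains $\langle z, w(\max(0,t-\varepsilon))-w(s)\rangle\ge 0$ for all $z\in\ZZ(y(t))$. This immediately places $w(s)-w(\max(0,t-\varepsilon))$ in $\ZZ(y(t))^\circ$, whence the decomposition with nonnegative coefficients follows from \cref{lem:normal_cone_polyhedral} and regularity; applying the same argument on subintervals $[s_1,s_2]$ gives monotonicity. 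The explicit formula for $\lambda_j$ comes from pairing with the vectors $e_j$ of \cref{lem:ZZproperties}\ref{lem:ZZproperties:iv}, which play exactly the role of your biorthogonal $\mu_j$ up to normalization.

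Your mollification route instead reduces to the $W^{1,1}$-setting, where the pointwise inclusion $\dot w^k(s)\in\NN_Z(y^k(s))$ is available, and then passes to the limit using \cref{th:CBV-Lipschitz}. This is perfectly valid and perhaps more transparent for readers used to the classical pointwise theory, but it costs an additional approximation layer and relies on the CBV-Lipschitz estimate as an external ingredient. The paper's argument is shorter and self-contained at the level of the Kurzweil--Stieltjes EVI, extracting the polar-cone membership of the increments of $w$ directly from the variational inequality rather than via a detour through smooth data.
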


\begin{proof}
Since $\HH$ is regular
and $y(t) \in Z$, 
we obtain from \cref{lem:ZZproperties}\ref{lem:ZZproperties:iv}
that there exist unique $e_i \in \ZZ(y(t))$, $i \in \AA(y(t))$, such that \eqref{eq:e_i-properties}
and \eqref{eq:e_i-equivalence} 
hold for $x:= y(t)$. From \cref{lem:ZZproperties}\ref{lem:ZZproperties:i} and the continuity of $y$,
it follows further that there exist $\varepsilon , \tau > 0$ such that 
$\AA(y(r)) \subset \AA(y(t))$ and 
$y(r) + \tau z  \in Z$ 
 for all $r \in [t - \varepsilon, t + \varepsilon] \cap [0,T]$
and all $z \in B_1(0) \cap \ZZ(y(t))$.
This allows us 
to consider test functions 
of the form 
$v\colon [t - \varepsilon, s] \cap [0,T] \to \R^d$, $r \mapsto y(r) + \tau z$,
with arbitrary $z \in  B_1(0) \cap \ZZ(y(t))$ 
and $s \in (t - \varepsilon, t + \varepsilon] \cap (0,T]$
in \cref{lem:EVI_G}
and to obtain 
\begin{equation}
\label{eq:randomeq288dni}
\begin{aligned}
0 &\leq 
\int_{\max(0, t - \varepsilon)}^{s}
\langle (y(r) + \tau z) - y(r)  , \dd( y(r) - u(r))\rangle
=
- \tau 
\int_{\max(0, t - \varepsilon)}^{s}
\langle z , \dd w(r) \rangle
\\
&=
\tau 
 \left \langle z ,  w(\max(0, t - \varepsilon)) - w(s) \right \rangle 
 \qquad \forall z \in B_1(0) \cap \ZZ(y(t))\qquad \forall s \in (t - \varepsilon, t + \varepsilon] \cap (0,T]
\end{aligned}
\end{equation}
and, thus, 
\begin{equation}
\label{eq:randomeq2273whdid3}
 \left \langle z ,    w(\max(0, t - \varepsilon)) - w(s) \right \rangle \geq 0
 \qquad \forall z \in \ZZ(y(t))\qquad \forall s \in (t - \varepsilon, t + \varepsilon] \cap (0,T].
\end{equation}
Here, we have used \cref{lem:KS-constant_functions}. 
Note that \eqref{eq:randomeq2273whdid3} 
is also true for $s = \max(0, t-\varepsilon)$. 
Due to \cref{lem:normal_cone_polyhedral}, the definitions of $\ZZ(y(t))$ and $\ZZ(y(t))^\circ$,
and the regularity of $\HH$, 
this allows us to conclude that, for each $ s \in [t - \varepsilon, t + \varepsilon] \cap [0,T]$,
there exist unique values $\lambda_i(s) \geq 0$, $i \in \AA(y(t))$, 
such that 
\begin{equation}
\label{eq:randomeq282z8e3h93hoi}
w(s) 
=
w(\max(0, t - \varepsilon)) 
+
\sum_{i \in \AA(y(t))} \lambda_i(s) \nu_i\qquad \forall s \in [t - \varepsilon, t + \varepsilon] \cap [0,T].
\end{equation}
If we take the scalar product with $e_j$, $j \in \AA(y(t))$,
in \eqref{eq:randomeq282z8e3h93hoi}, then it follows that 
\[
\lambda_j(s) 
=
\frac{\langle e_j, w(s) \rangle
-
\langle e_j, w(\max(0, t - \varepsilon))  \rangle}{\langle e_j,  \nu_j \rangle }
\qquad 
\forall s \in [t - \varepsilon, t + \varepsilon] \cap [0,T].
\]
Thus,
$\lambda_j \in CBV([t - \varepsilon, t + \varepsilon] \cap [0,T])$
for all $j\in \AA(y(t))$ as claimed. 
It remains to prove that the functions $\lambda_i$ are nondecreasing. 
To this end, we note 
that---completely analogously to \eqref{eq:randomeq2273whdid3}---we obtain that
\begin{equation*}
 \left \langle z ,    w(s_1) - w(s_2) \right \rangle \geq 0
 \qquad \forall z \in \ZZ(y(t))\qquad \forall \max(0, t-\varepsilon) \leq s_1 < s_2 \leq \min(T, t + \varepsilon).
\end{equation*}
Using \eqref{eq:randomeq282z8e3h93hoi} in the above yields 
\begin{equation*}
 \left \langle z ,   
 \sum_{i \in \AA(y(t))} (\lambda_i(s_1) - \lambda_i(s_2))\nu_i \right \rangle \geq 0
 \qquad \forall z \in \ZZ(y(t))\qquad \forall \max(0, t-\varepsilon) \leq s_1 < s_2 \leq \min(T, t + \varepsilon)
\end{equation*}
and, after choosing $z = e_j \in \ZZ(y(t))$,
\begin{equation*}
 \left \langle e_j ,   
 \nu_j \right \rangle (\lambda_j(s_1) - \lambda_j(s_2))\geq 0
\qquad \forall \max(0, t-\varepsilon) \leq s_1 < s_2 \leq \min(T, t + \varepsilon).
\end{equation*}
Since $\left \langle e_j ,   
 \nu_j \right \rangle < 0$ by \eqref{eq:e_i-properties}, 
it follows that $\lambda_j(s_1) \leq \lambda_j(s_2)$
for all  $\max(0, t-\varepsilon) \leq s_1 < s_2 \leq \min(T, t + \varepsilon)$
and all $j \in \AA(y(t))$. This shows that the functions $\lambda_j$
are indeed nondecreasing and completes the proof.
\end{proof}

Note that, if one changes $\varepsilon$ 
in \eqref{eq:mon_play},
then the functions $\lambda_i$ are
typically shifted by constant values due to the term $w(\max(0,t-\veps))$.
The multiplier maps  $\lambda_i$ in \cref{lemma:monotone_play} are 
thus indeed only uniquely determined 
for fixed $\varepsilon $. The next lemma builds
on \cref{lemma:monotone_play} and is needed for the discussion 
of the notion of \emph{criticality} in \cref{subsec:3.2}.

\begin{lemma}[multiplier integrals and tangent vectors]
\label{lem:mult_ints_tang}
Assume that $\HH = \{(\nu_i, \alpha_i)\}_{i \in I}$
 is regular. 
Let $t \in [0,T]$ 
and $(u, y_0) \in CBV([0,T]; \R^d) \times Z$ be given.
Let $y$, $w$, $\varepsilon$, and 
 $\lambda_i$,
$i \in \AA(y(t))$, be as in \cref{lemma:monotone_play}.
Then, for all 
$\max(0, t-\varepsilon) \leq s_1 < s_2 \leq \min(T, t + \varepsilon)$
and 
all $z \in G([s_1, s_2]; \R^d)$
satisfying $z(r) \in \ZZ(y(r))$ for all $r \in [s_1, s_2]$,
it holds
\[
\int_{s_1}^{s_2}\left \langle z(r), \nu_i  \right \rangle \dd  \lambda_i(r) \leq 0
\qquad
\forall i \in \AA(y(t)).
\]
\end{lemma}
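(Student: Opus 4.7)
The plan is to reduce the claim to the nonnegativity criterion of Lemma~\ref{lemma:crit_noneg}, applied to the integrand $f := -\langle z(\cdot), \nu_i\rangle \in G([s_1,s_2])$ and the integrator $g := \lambda_i \in CBV([s_1,s_2])$, which is nondecreasing by Lemma~\ref{lemma:monotone_play}. This will give $\int_{s_1}^{s_2}(-\langle z,\nu_i\rangle)\,\dd\lambda_i \geq 0$, i.e., exactly the assertion. The hypothesis of Lemma~\ref{lemma:crit_noneg} requires that, whenever $f(s) < 0$---equivalently, $\langle z(s), \nu_i\rangle > 0$---the function $\lambda_i$ is locally constant around $s$. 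Since $z(s) \in \ZZ(y(s))$, a positive value of $\langle z(s), \nu_i\rangle$ forces $i \notin \AA(y(s))$, so $\langle \nu_i, y(s)\rangle < \alpha_i$, and by continuity of $y$ the index $i$ remains inactive on an open neighborhood of $s$. Hence the whole argument reduces to the auxiliary claim that \emph{$\lambda_i$ is constant on any open subinterval of $[\max(0,t-\varepsilon), \min(T,t+\varepsilon)]$ on which $i$ is inactive.}

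To prove this claim, I would fix two points $r_1 < r_2$ in such a subinterval. The regularity of $\HH$ together with Lemma~\ref{lem:ZZproperties}\ref{lem:ZZproperties:iv} supplies a vector $e_i \in \ZZ(y(t))$ satisfying \eqref{eq:e_i-properties}; crucially, $\langle \nu_i, e_i\rangle < 0$ while $\langle \nu_j, e_i\rangle = 0$ for all $j \in \AA(y(t)) \setminus \{i\}$. Using the inclusion $\AA(y(s)) \subset \AA(y(t))$ from Lemma~\ref{lemma:monotone_play}, together with the compactness of $[r_1, r_2]$ and the slack $\alpha_i - \langle \nu_i, y(s)\rangle > 0$ on $[r_1, r_2]$, one can pick $\tau > 0$ so small that the perturbed functions $v_\pm := y \pm \tau e_i$ both lie in $G([r_1,r_2];Z)$: the constraints indexed by $\AA(y(t)) \setminus \{i\}$ remain untouched by the perturbation, the $i$-th constraint stays below $\alpha_i$, and the constraints in $\II(y(t))$ retain strict slack by continuity and compactness.

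Plugging $v_\pm$ into the evolution variational inequality of Lemma~\ref{lem:EVI_G} on $[r_1, r_2]$ and using Lemma~\ref{lem:KS-constant_functions} to evaluate the resulting integrals yields $\mp\tau\langle e_i, w(r_2)-w(r_1)\rangle \geq 0$, hence $\langle e_i, w(r_2)-w(r_1)\rangle = 0$. Substituting the decomposition \eqref{eq:mon_play} of $w$ and exploiting $\langle e_i, \nu_j\rangle = 0$ for $j \neq i$ collapses the sum to $\langle e_i, \nu_i\rangle(\lambda_i(r_2) - \lambda_i(r_1)) = 0$, and since $\langle e_i, \nu_i\rangle < 0$ this forces $\lambda_i(r_1) = \lambda_i(r_2)$. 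The auxiliary claim is thereby established, Lemma~\ref{lemma:crit_noneg} applies, and the assertion of the lemma follows. The main obstacle in this plan is the construction of the admissible test functions $v_\pm$: it relies essentially on the regularity of $\HH$---which provides the ``dual'' direction $e_i$ that decouples the constraints in $\AA(y(t))$---and on the nontrivial structural information from Lemma~\ref{lemma:monotone_play} that the active set cannot grow inside the local window around $t$.
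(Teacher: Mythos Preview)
Your proposal is correct and follows essentially the same approach as the paper: both arguments show that whenever $\langle z(s),\nu_i\rangle>0$ the index $i$ must be inactive near $s$, then use the dual basis vector $e_i$ from \cref{lem:ZZproperties}\ref{lem:ZZproperties:iv} together with the test functions $y\pm\tau e_i$ in \cref{lem:EVI_G} and the decomposition \eqref{eq:mon_play} to conclude that $\lambda_i$ is locally constant, and finally invoke \cref{lemma:crit_noneg}. The only cosmetic difference is that the paper works pointwise around each such $s$, while you first isolate the auxiliary claim that $\lambda_i$ is constant on intervals of inactivity and then apply it; the substance is the same.
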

\begin{proof}
Let $e_i \in \ZZ(y(t))$, $i \in \AA(y(t))$,
again denote the vectors that satisfy  \eqref{eq:e_i-properties}
and \eqref{eq:e_i-equivalence} for $x:= y(t)$.
Due to \cref{lem:KS-sub_int}, it suffices to prove the 
assertion of the lemma for the case $s_1 = \max(0, t-\varepsilon)$,
$s_2 = \min(T, t + \varepsilon)$. 
Suppose that a function
 $z \in G( [t - \varepsilon, t + \varepsilon] \cap [0,T]; \R^d)$
satisfying $z(r) \in \ZZ(y(r))$ for all $r \in  [t - \varepsilon, t + \varepsilon] \cap [0,T]$
is given and fix $j \in \AA(y(t))$. Assume further that 
$s \in [t - \varepsilon, t + \varepsilon] \cap [0,T]$ is a point 
such that $\left \langle z(s), \nu_j  \right \rangle > 0$ holds. 
Then $z(s) \in \ZZ(y(s))$ implies $j \not \in \AA(y(s))$
and we obtain from the inclusion 
$\AA(y(s)) \subset \AA(y(t))$ in \eqref{eq:mon_play}
and \eqref{eq:e_i-properties} that $\pm e_j  \in \ZZ(y(s))$.
Due to \cref{lem:ZZproperties}\ref{lem:ZZproperties:i}
and the continuity of $y$, it follows that there exist $\tau, \gamma > 0$
such that $y(r) \pm \tau e_j  \in Z $
holds for all $r \in [s - \gamma, s + \gamma] \cap [0,T]$. 
Analogously to \eqref{eq:randomeq288dni},
we may now deduce from \cref{lem:EVI_G},  \eqref{eq:mon_play},
\eqref{eq:e_i-properties}, and \cref{lem:KS-constant_functions}
that 
\begin{equation*}
\begin{aligned}
0 &\leq \int_{\max(0, t - \varepsilon, s - \gamma)}^{\min(T, t + \varepsilon, s + \gamma)}
\langle (y(r) \pm \tau e_j) - y(r)  , \dd( y(r) - u(r))\rangle
\\
& =
- \tau 
\int_{\max(0, t - \varepsilon, s - \gamma)}^{\min(T, t + \varepsilon, s + \gamma)}
\langle \pm e_j , \dd w(r) \rangle
\\
& =
- \tau 
\int_{\max(0, t - \varepsilon, s - \gamma)}^{\min(T, t + \varepsilon, s + \gamma)}
\langle \pm e_j ,\nu_j \rangle  \dd   \lambda_j(r)
 \\
 & 
 = \pm \langle  e_j ,\nu_j \rangle \tau \left (
 \lambda_j(\max(0, t - \varepsilon, s - \gamma))
 -
\lambda_j(\min(T, t + \varepsilon, s + \gamma))
 \right ).
\end{aligned}
\end{equation*}
As $\langle  e_j ,\nu_j \rangle \neq 0$ holds by \eqref{eq:e_i-properties} and since 
$\lambda_j$ is nondecreasing by \cref{lemma:monotone_play},
it follows that $\lambda_j = \mathrm{const}$ holds 
on $ (s - \gamma, s + \gamma) \cap [\max(0, t - \varepsilon), \min(T, t + \varepsilon)]$.
As $s \in [t - \varepsilon, t + \varepsilon] \cap [0,T]$ was an arbitrary 
point satisfying $\left \langle z(s), \nu_j  \right \rangle > 0$,
we may now invoke \cref{lemma:crit_noneg}
(with $g:= \lambda_j$,
$f := - \left \langle z, \nu_j  \right \rangle$,
$a := \max(0, t - \varepsilon)$,
and
$b:=\min(T, t + \varepsilon)$)
to obtain that 
\[
\int_{\max(0, t - \varepsilon)}^{\min(T, t + \varepsilon)}\left \langle z(r), \nu_j  \right \rangle \dd  \lambda_j(r) \leq 0.
\]
As $j \in \AA(y(t))$ was arbitrary, this completes the proof. 
\end{proof}

As an immediate consequence of \cref{lem:mult_ints_tang},
we obtain the following refined version of \cref{lem:EVI_G}.

\begin{lemma}[regulated tangential test functions]
\label{lem:nonneg_tang_play}
Assume that $\HH = \{(\nu_i, \alpha_i)\}_{i \in I}$
is regular 
and let $(u, y_0) \in CBV([0,T]; \R^d) \times Z$ be given.
Define $y := \SS(u, y_0)$.
Then it holds 
\begin{equation}
\label{eq:EVI_G_ZZ}
0 \leq s_1 < s_2 \leq T,
~~
z \in G([s_1, s_2]; \R^d),
~~
z(r) \in \ZZ(y(r))
~\forall r \in [s_1, s_2]
\qquad \Rightarrow
\qquad
\int_{s_1}^{s_2}
 \left \langle z, \dd (y - u) \right \rangle \geq 0.
\end{equation}
\end{lemma}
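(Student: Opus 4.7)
The plan is to reduce the statement to the local monotone decomposition and criticality result just established. Writing $w := \PP(u,y_0) = u - y \in CBV([0,T];\R^d)$, we have $y - u = -w$, so the inequality to prove is equivalent to
\[
\int_{s_1}^{s_2} \langle z, \dd w\rangle \leq 0
\]
for every $z \in G([s_1,s_2];\R^d)$ satisfying $z(r) \in \ZZ(y(r))$ for all $r \in [s_1,s_2]$. The Kurzweil--Stieltjes integral is well defined here by \cref{lemma:KS_basics} since $z$ is regulated and $w \in BV([0,T];\R^d)$.

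Next I invoke \cref{lemma:monotone_play} at every point $t \in [s_1,s_2]$: this yields a radius $\varepsilon_t > 0$ and nonnegative, nondecreasing multipliers $\lambda_i^{(t)} \in CBV([t-\varepsilon_t,t+\varepsilon_t] \cap [0,T])$, $i \in \AA(y(t))$, such that
\[
w(s) \;=\; w(\max(0,t-\varepsilon_t)) + \sum_{i \in \AA(y(t))} \lambda_i^{(t)}(s)\,\nu_i
\qquad \forall s \in [t-\varepsilon_t,t+\varepsilon_t] \cap [0,T].
\]
The open intervals $(t-\varepsilon_t,t+\varepsilon_t)$, $t \in [s_1,s_2]$, cover the compact set $[s_1,s_2]$; extract a finite subcover indexed by $t_1,\dots,t_N$ and, by routine rearrangement (splitting at midpoints of consecutive overlaps), choose a partition $s_1 = r_0 < r_1 < \dots < r_N = s_2$ with the property that each subinterval $[r_{j-1},r_j]$ is contained in $[t_j - \varepsilon_{t_j}, t_j + \varepsilon_{t_j}] \cap [0,T]$.

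On each such subinterval the local decomposition above is available, so by the linearity of the integral in its integrator (\cref{lemma:KS_lin_add}) together with \cref{lem:KS-constant_functions} (the constant summand $w(\max(0,t_j-\varepsilon_{t_j}))$ contributes nothing) we obtain
\[
\int_{r_{j-1}}^{r_j} \langle z, \dd w\rangle
\;=\;
\sum_{i \in \AA(y(t_j))} \int_{r_{j-1}}^{r_j} \langle z, \nu_i\rangle\, \dd \lambda_i^{(t_j)}.
\]
Each summand is $\leq 0$ by \cref{lem:mult_ints_tang} (applied to the restriction of $z$, which is regulated on $[r_{j-1},r_j]$ and still takes values in $\ZZ(y(r))$ there). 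Hence $\int_{r_{j-1}}^{r_j} \langle z, \dd w\rangle \leq 0$ for each $j$, and summing over $j=1,\dots,N$ via additivity (\cref{lemma:KS_lin_add}) produces $\int_{s_1}^{s_2} \langle z, \dd w\rangle \leq 0$, which is \eqref{eq:EVI_G_ZZ}. There is no real obstacle here; the only mildly delicate point is ensuring the chosen partition points $r_j$ lie in the overlap of two covering intervals so that each $[r_{j-1},r_j]$ sits inside a single closed interval on which \cref{lemma:monotone_play,lem:mult_ints_tang} apply, and this is straightforward from the finite subcover.
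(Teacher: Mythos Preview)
Your proof is correct and follows essentially the same strategy as the paper: localize via \cref{lemma:monotone_play}, use compactness to reduce to finitely many local pieces, and then apply \cref{lem:mult_ints_tang} on each piece. The only difference is the localization device: the paper uses a smooth partition of unity $\{\psi_m\}$ subordinate to the finite cover and works with $z_m := \psi_m z$ (which stays in $\ZZ(y(\cdot))$ by the cone property), whereas you refine the cover to a partition $s_1 = r_0 < \dots < r_N = s_2$ and use additivity of the integral directly. Both are standard and lead to the same conclusion; your version is slightly more elementary but requires the small bookkeeping step you mention (arranging that each $[r_{j-1},r_j]$ lies inside a single covering interval), which is indeed routine via a Lebesgue-number argument.
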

\begin{proof}
Due to \cref{lem:KS-sub_int},
it suffices to consider the case $s_1 = 0$, $s_2 = T$.
So let us assume that a function $z \in G([0, T]; \R^d)$
satisfying $z(r) \in  \ZZ(y(r))$
for all $r \in [0,T]$ is given. 
Choose for all $t \in [0,T]$ a number $\varepsilon_t > 0$
with the properties in \cref{lemma:monotone_play}.
Since $[0,T]$ is compact, we 
can find $t_m$, $m=1,...,M$, $M \in \N$, such that 
the collection of intervals  $(t_m - \varepsilon_{t_m}, t_m + \varepsilon_{t_m})$,
$m=1,...,M$, covers $[0,T]$.
Let $\{\psi_m\}_{m=1}^M$ be a 
smooth partition of unity on $[0,T]$ subordinate 
to this  cover in the sense of \cite{Evans2010},
i.e., a collection of functions  satisfying 
\begin{equation}
\label{eq:POU_properties}
\begin{gathered}
\psi_m \in C_c^\infty(\R),\quad 0 \leq \psi_m(r) \leq 1~\forall r \in \R,
\quad 
\supp(\psi_m) \subset (t_m - \varepsilon_{t_m}, t_m + \varepsilon_{t_m}) \quad\forall m=1,...,M,
\\
\sum_{m=1}^M \psi_m(r) = 1~\forall r \in [0,T].
\end{gathered}
\end{equation}
Define $z_m := \psi_m z \in G([0,T]; \R^d)$, $m=1,...,M$.
Then we clearly have 
\begin{equation}
\label{eq:randomeq_72he9}
\int_{0}^{T}
 \left \langle z, \dd (y - u) \right \rangle
 =
 -  \int_{0}^{T}
\left \langle z, \dd w \right \rangle
 =
 - \sum_{m=1}^M
  \int_{0}^{T}
 \left \langle z_m, \dd w \right \rangle,
\end{equation}
where we again write $w := u - y =  \PP(u, y_0)$ for the play.
Note that, for each of the integrals in the sum on the right-hand side of
\eqref{eq:randomeq_72he9}, we have 
\begin{equation}
\label{eq:randomeq_72he9233}
   \int_{0}^{T}
 \left \langle z_m, \dd w \right \rangle
 =
  \int_{\max(0, t_m - \varepsilon_{t_m})}^{\min(T, t_m + \varepsilon_{t_m})}
 \left \langle z_m, \dd w \right \rangle
 =
 \sum_{i \in \AA(y(t_m))}
  \int_{\max(0, t_m - \varepsilon_{t_m})}^{\min(T, t_m + \varepsilon_{t_m})}
 \left \langle z_m, \nu_i \right \rangle \dd  \lambda_i^m
\end{equation}
by \cref{lem:KS-sub_int}, where
 $\lambda_i^m \in CBV([t_m - \varepsilon_{t_m}, t_m + \varepsilon_{t_m}] \cap [0,T])$,
 $i \in \AA(y(t_m))$,
 denote the functions from \cref{lemma:monotone_play}
 associated with $t_m$. 
 As the cone property of the linearization cone and the 
 sign of $\psi_m$ imply that 
 $z_m(r) \in \ZZ(y(r))$ holds for all 
 $r \in [t_m - \varepsilon_{t_m}, t_m + \varepsilon_{t_m}] \cap [0,T]$,
 we obtain from \cref{lem:mult_ints_tang} that all of the 
 integrals in the sum on the right-hand side of \eqref{eq:randomeq_72he9233} 
 are nonpositive. In combination with \eqref{eq:randomeq_72he9},
 this establishes the assertion of the lemma and completes the proof. 
\end{proof}
Note that, while trivial in the $W^{1,1}((0,T);\R^d)$-case due to 
\eqref{eq:EVI-W11}, the implication \eqref{eq:EVI_G_ZZ} is not 
obvious for $CBV([0,T];\R^d)$-solutions $y$ since 
for this type of regularity one cannot argue in a pointwise manner 
and since 
for functions $z \in G([0,T]; \R^d)$
satisfying $z(r) \in \ZZ(y(r))$
for all $r \in [0,T]$ there does not necessarily exist 
a number $\tau > 0$ satisfying $y(r) + \tau z(r) \in Z$
for all $r \in [0,T]$. In the proof of \cref{lem:nonneg_tang_play}, 
we have overcome this problem by means of
the decomposition in \cref{lemma:monotone_play} 
and \cref{lemma:crit_noneg}.

\section{Proof of \ref{main_sum:item:I}: directional differentiability in the non-obtuse case}
\label{sec:3}

With the necessary preliminaries in place, we can turn 
our attention to the 
main results of this paper, 
namely, points \ref{main_sum:item:I} and \ref{main_sum:item:II}
of \cref{th:main_summary}. 
The present section is devoted to 
the proof of
\ref{main_sum:item:I}, i.e., 
the pointwise directional differentiability of 
the vectorial stop operator
$\SS\colon CBV([0,T];\R^d) \times Z \to CBV([0,T];\R^d)$
in the case of a
full-dimensional 
non-obtuse convex polyhedral 
admissible set $Z$. The structure of this section is as follows:
\Cref{subsec:3.1} introduces basic concepts and results
that are needed for our analysis. This subsection 
also contains our standing assumptions
and a rough overview of our method of proof. 
\Cref{subsec:3.2} is concerned with the 
notion of \emph{criticality} which expresses that 
directional derivatives of $\SS$ have to satisfy the 
inequality on the right-hand side of \eqref{eq:EVI_G_ZZ}
with equality. In 
\cref{subsec:3.3,subsec:3.4}, we identify how the 
directional derivatives jump at their points of discontinuity. 
At this point of our analysis, 
the non-obtuseness of the admissible set $Z$
enters in a crucial manner;
see the proofs of 
\cref{lem_left_jump_proto,lem:3.18-2,lemma:the_little_box}.
\Cref{subsec:3.5} introduces the 
concept of \emph{temporal polyhedricity}---an approximation 
property that is  needed to derive the system \eqref{eq:char_sys_1} 
that uniquely characterizes the directional derivatives of $\SS$. 
In \cref{subsec:3.6}, we 
finally 
put all of the pieces of our proof together to  
arrive at  \ref{main_sum:item:I} as desired;
see \cref{th:dir_diff_final_contra}. 
\Cref{subsec:3.7} concludes our discussion of 
the non-obtuse case by demonstrating 
that point \ref{main_sum:item:I} 
of \cref{th:main_summary}
indeed 
contains the 
 directional differentiability 
and characterization results 
proved for the case $d=1$ and $Z = [-r,r]$, $r>0$,
in \cite{Brokate2015,Brokate2021} as a special case.

\subsection{Standing assumptions and basic idea of the proof}
\label{subsec:3.1}

In all of  \cref{sec:3}, 
we consider the following situation,
unless explicitly stated otherwise. 

\begin{assumption}[standing assumptions for \cref{sec:3}]\label{ass:sec:3}~
\begin{enumerate}[label=\roman*)]
\item $d \in \N$ and $T>0$ are given and fixed;
\item $Z \subset \R^d$ is a 
full-dimensional 
non-obtuse convex polyhedron;
\item $\HH = \{(\nu_i, \alpha_i)\}_{i \in I}$  is a
standard description of $Z$;
\item $y_0 \in Z$ is a given initial value and $u \in CBV([0,T]; \R^d)$ is a given input function;
\item $y:= \SS(u,y_0)$ is the stop associated with $(u,y_0)$ and  $w:= \PP(u,y_0)$ is the play associated with $(u,y_0)$;
\item $h_0 \in \ZZ(y_0)$ is a given perturbation of $y_0$
and $h \in CBV([0,T]; \R^d)$ is a given perturbation of $u$;
\item $\tau_0 > 0$ is a number satisfying  $y_0 + \tau_0 h_0 \in Z$.
\end{enumerate}
\end{assumption}

Recall that, by
\cref{def:properties_descript,lem:standard_description,lem:descriptions:iii},
the standard description $\HH$ in \cref{ass:sec:3} is unique up to permutations, 
regular, satisfies $\langle \nu_i, \nu_j\rangle \leq 0$  for all $i,j \in I$ with  $i \neq j$, 
and possesses the property 
\eqref{eq:faces}. We introduce:

\begin{definition}[difference quotients]
We denote by $\{\delta_\tau\} \subset CBV([0,T]; \R^d)$ the family of difference quotients 
\begin{equation}
\label{eq:DQ-def}
\delta_\tau := \frac{\SS(u + \tau h, y_0 + \tau h_0) - \SS(u, y_0)}{\tau},\qquad 0 < \tau < \tau_0.
\end{equation}
\end{definition}

As a direct consequence of \cref{th:CBV-Lipschitz},
\cref{lem:EVI_G},
and the EVI \eqref{eq:EVI}, we obtain:

\begin{lemma}[EVI for the difference quotients]
\label{lem:EVI_diff_quot}
The family $\{\delta_\tau\}$ is bounded in 
$CBV([0,T]; \R^d)$.
Further, for every $\tau \in (0, \tau_0)$, $\delta_\tau$
satisfies 
\begin{equation}
\label{eq:DQ-EVI}
\begin{aligned}
&\delta_\tau(t) \in \frac{1}{\tau}\left (Z - y(t) \right ) \forall t \in [0,T],
\qquad
\delta_\tau(0) = h_0,
\\
&\int_{s_1}^{s_2}
 \left \langle z - \delta_\tau, \dd (\delta_\tau - h) \right \rangle 
 - \frac{1}{\tau}
 \int_{s_1}^{s_2}
 \left \langle z - \delta_\tau, \dd w \right \rangle 
 \geq 0
\\
&\hspace{0.4cm}\forall z \in G([s_1, s_2]; \R^d)\colon 
 z(t) \in \frac{1}{\tau}\left (Z - y(t) \right )~\forall t \in [s_1, s_2]\qquad\forall\, 0 \leq s_1 < s_2\leq T,
\end{aligned}
\end{equation}
and 
\begin{equation}
\label{eq:proto_criticality}
\begin{aligned}
\int_{s_1}^{s_2}
 \left \langle \delta_\tau, \dd w \right \rangle \leq 0
 \qquad \forall\,0 \leq s_1 < s_2\leq T.
\end{aligned}
\end{equation}
\end{lemma}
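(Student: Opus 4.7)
The plan is to derive all three assertions as direct consequences of the $CBV$-Lipschitz estimate (\cref{th:CBV-Lipschitz}), the regulated-test-function EVI for the stop (\cref{lem:EVI_G}), and the tangential-test-function inequality (\cref{lem:nonneg_tang_play}), with no additional approximation or compactness argument required. Setting $y_\tau:=\SS(u+\tau h,y_0+\tau h_0)$, so that $y_\tau = y + \tau \delta_\tau$, the estimate \eqref{eq:BV_Lipschitz} immediately yields
\[
\|\delta_\tau\|_{CBV([0,T];\R^d)} \;=\; \tfrac{1}{\tau}\|y_\tau - y\|_{CBV([0,T];\R^d)} \;\le\; L\bigl(\|h\|_{CBV([0,T];\R^d)}+|h_0|\bigr)\qquad \forall \tau\in(0,\tau_0),
\]
which is uniform in $\tau$ and proves the boundedness claim. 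The initial condition $\delta_\tau(0)=h_0$ is immediate, and the pointwise inclusion $\delta_\tau(t)\in \tau^{-1}(Z-y(t))$ follows from $y_\tau(t)\in Z$.

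For \eqref{eq:DQ-EVI}, I would apply \cref{lem:EVI_G} to $y_\tau$ (which solves \eqref{eq:EVI} for the input $u+\tau h$ and initial value $y_0+\tau h_0$) and plug in test functions of the form $\tilde v := y + \tau z$ with $z \in G([s_1,s_2];\R^d)$ satisfying $z(t)\in \tau^{-1}(Z-y(t))$ for all $t\in[s_1,s_2]$. Admissibility $\tilde v\in G([s_1,s_2];Z)$ is built into this choice, one has $\tilde v - y_\tau = \tau(z-\delta_\tau)$, and the integrator decomposes as $y_\tau - (u+\tau h) = (y-u) + \tau(\delta_\tau - h)$. By the linearity of the Kurzweil--Stieltjes integral in its integrator (\cref{lemma:KS_lin_add}) and the identity $w = u - y$, this gives
\[
0 \;\le\; -\tau \int_{s_1}^{s_2}\langle z - \delta_\tau, \dd w\rangle \;+\; \tau^2 \int_{s_1}^{s_2}\langle z - \delta_\tau, \dd(\delta_\tau - h)\rangle,
\]
and dividing by $\tau^2 > 0$ yields exactly \eqref{eq:DQ-EVI}.

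For \eqref{eq:proto_criticality}, I would exploit the cone property of the linearization cone: by \cref{lem:ZZproperties}\ref{lem:ZZproperties:iii} we have $\ZZ(y(t))=K_\rad(y(t);Z)$, so $\ZZ(y(t))$ is a cone containing $Z-y(t)$. Hence
\[
\delta_\tau(t) \;\in\; \tau^{-1}\bigl(Z-y(t)\bigr) \;\subset\; \tau^{-1}\ZZ(y(t)) \;=\; \ZZ(y(t)) \qquad \forall t\in[0,T].
\]
Since $\delta_\tau \in CBV([0,T];\R^d) \subset G([s_1,s_2];\R^d)$, the hypotheses of \cref{lem:nonneg_tang_play} are satisfied with $z:=\delta_\tau$, which gives $\int_{s_1}^{s_2}\langle\delta_\tau,\dd(y-u)\rangle\ge 0$. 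Rewriting this in terms of $w = u-y$ yields \eqref{eq:proto_criticality}. No step in this argument appears to be a serious obstacle; the only point demanding a moment of care is the observation that the linearization cone absorbs the scaling by $\tau^{-1}$, which is what allows \cref{lem:nonneg_tang_play} to be invoked with $\delta_\tau$ itself as the tangential test function.
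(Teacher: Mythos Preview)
Your argument is correct. The derivations of boundedness, of the pointwise inclusion and initial condition, and of \eqref{eq:DQ-EVI} are essentially identical to the paper's own proof.

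For \eqref{eq:proto_criticality} you take a slightly different route: you invoke \cref{lem:nonneg_tang_play} with $z=\delta_\tau$, relying on the cone property $\tau^{-1}\ZZ(y(t))=\ZZ(y(t))$ to verify its hypothesis. This is valid under the standing assumptions of \cref{sec:3} (non-obtuseness of $Z$ makes the standard description regular by \cref{lem:descriptions:iii}, which is what \cref{lem:nonneg_tang_play} needs). The paper instead proceeds more directly: it plugs $v=\SS(u+\tau h,y_0+\tau h_0)=y+\tau\delta_\tau$ as an admissible test function into \eqref{eq:EVI_G} for $y$, giving $0\le \int_{s_1}^{s_2}\langle\tau\delta_\tau,\dd(y-u)\rangle=-\tau\int_{s_1}^{s_2}\langle\delta_\tau,\dd w\rangle$ without appealing to tangent cones or regularity of the description. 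Both approaches are fine here; the paper's is marginally more elementary and does not rely on the polyhedral structure at all.
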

\begin{proof}
Since $Z$ is full-dimensional,
\cref{th:CBV-Lipschitz} is applicable
and the boundedness of $\{\delta_\tau\}$ in $CBV([0,T]; \R^d)$
follows straightforwardly from \eqref{eq:DQ-def}. 
The same is true for the first line in \eqref{eq:DQ-EVI}. 
To obtain the variational inequality in \eqref{eq:DQ-EVI},
we note that  \eqref{eq:DQ-def} implies 
$\SS(u + \tau h, y_0 + \tau h_0)  = y + \tau \delta_\tau$ for all $0 < \tau < \tau_0$.
By plugging this identity into the EVI \eqref{eq:EVI_G} satisfied 
by $\SS(u + \tau h, y_0 + \tau h_0)$, by 
choosing test functions of the form 
$v = y + \tau z$ with $z$ as in \eqref{eq:DQ-EVI},
and by using that $w = \PP(u, y_0) = u - \SS(u, y_0) = u - y$, we obtain that 
\begin{equation*}
\begin{aligned}
0 &\leq \int_{s_1}^{s_2}
 \left \langle v - \SS(u + \tau h, y_0 + \tau h_0) , \dd ( \SS(u + \tau h, y_0 + \tau h_0) - u - \tau h) \right \rangle 
 \\
 &= 
  \tau^2
 \int_{s_1}^{s_2}
 \left \langle z - \delta_\tau , \dd ( \delta_\tau -  h) \right \rangle  
 - \tau
 \int_{s_1}^{s_2}
 \left \langle z - \delta_\tau , \dd w  \right \rangle 
\end{aligned}
\end{equation*}
holds for all $z$, $s_1$, and $s_2$ as in \eqref{eq:DQ-EVI}. 
Dividing by $\tau^2$ now yields the desired variational inequality. 
It remains to prove \eqref{eq:proto_criticality}. 
To this end, we choose $v = \SS(u + \tau h, y_0 + \tau h_0)$ in \eqref{eq:EVI_G}
for $y$. In combination with the definition of $w$, this gives
\[
0 \leq
\int_{s_1}^{s_2}
 \left \langle \SS(u + \tau h, y_0 + \tau h_0) - y, \dd (y - u) \right \rangle 
 =
-  \tau 
\int_{s_1}^{s_2}
 \left \langle \delta_\tau , \dd w \right \rangle  
 \qquad \forall 0 \leq s_1 < s_2 \leq T. 
\]
The above establishes \eqref{eq:proto_criticality} and completes the proof. 
\end{proof}

A central object of the analysis of this section
is the following set:

\begin{definition}[limits of difference quotients]
\label{def:DD}
We define 
\begin{equation}
\label{eq:DD_def}
\DD :=
\left \{
\left.
\delta \in BV([0,T]; \R^d)
~\right |~
\exists \{\tau_k\} \subset (0, \tau_0)
\text{ s.t.\ } \tau_k \to 0
\text{ and }
\delta_{\tau_k}(t) \to \delta(t)~\forall t \in [0,T] 
\text{ for }k \to \infty
\right \}.
\end{equation}
\end{definition}

Note that, from the boundedness of 
$\{\delta_\tau\}$ in $CBV([0,T]; \R^d)$
and \cref{th:HellySelect}, we obtain:

\begin{lemma}[nontriviality of $\DD$]
\label{lem:DD_nonempty}
The set $\DD$ is nonempty. 
\end{lemma}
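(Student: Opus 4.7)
The plan is to invoke the Helly selection theorem essentially off-the-shelf. I would first fix any sequence $\{\tau_k\} \subset (0,\tau_0)$ with $\tau_k \to 0$ (for concreteness, say $\tau_k := \tau_0 /(k+1)$). By Lemma \ref{lem:EVI_diff_quot}, the entire family $\{\delta_\tau\}_{0<\tau<\tau_0}$ is bounded in $CBV([0,T];\R^d)$ with respect to the norm $\|\cdot\|_{\infty} + \var(\cdot;[0,T])$, which is equivalent to the $BV([0,T];\R^d)$-norm $|\cdot(0)| + \var(\cdot;[0,T])$. In particular, $\{\delta_{\tau_k}\}$ is a bounded sequence in $BV([0,T];\R^d)$.

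Next, I would apply Theorem \ref{th:HellySelect} to this sequence to extract a subsequence $\{\delta_{\tau_{k_l}}\}$ and a function $\delta \in BV([0,T];\R^d)$ such that $\delta_{\tau_{k_l}}(t) \to \delta(t)$ for every $t \in [0,T]$ and $\var(\delta;[0,T]) \leq \sup_{l}\var(\delta_{\tau_{k_l}};[0,T]) < \infty$. Setting $\tilde{\tau}_l := \tau_{k_l}$, the sequence $\{\tilde\tau_l\}$ still lies in $(0,\tau_0)$ and satisfies $\tilde\tau_l \to 0$, so by Definition \ref{def:DD} the limit function $\delta$ belongs to $\DD$. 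This yields $\DD \neq \emptyset$.

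There is no real obstacle here: the only two ingredients used are the uniform $BV$-bound supplied by the Lipschitz estimate built into Lemma \ref{lem:EVI_diff_quot} and the pointwise compactness statement of Helly's theorem. The more substantial work of extracting structural information about elements of $\DD$ (their jump behavior, the characterizing system \eqref{eq:char_sys_1}, and eventually the uniqueness that will reduce $\DD$ to a singleton) is deferred to the subsequent subsections and is not needed for mere nonemptiness.
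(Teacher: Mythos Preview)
Your proof is correct and takes essentially the same approach as the paper: the paper simply notes that the lemma follows from the boundedness of $\{\delta_\tau\}$ in $CBV([0,T];\R^d)$ established in Lemma~\ref{lem:EVI_diff_quot} together with the Helly selection theorem~\ref{th:HellySelect}, and you have spelled out exactly this argument.
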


The main goal 
of the next five subsections
is to prove that
the set $\DD$ is a singleton. 
If this is established, then 
we obtain from a trivial contradiction argument that 
$\SS$ is directionally differentiable 
at $(u, y_0)$ in direction $(h, h_0)$ 
in a pointwise manner 
and point \ref{main_sum:item:I} of \cref{th:main_summary}
follows;
see \cref{th:dir_diff_final_contra}. 
To show that $|\DD| = 1$ holds,
we will successively establish 
more and more 
properties of the elements of $\DD$ 
until only one element remains. 
More precisely,
the plan for \cref{subsec:3.2,subsec:3.3,subsec:3.4,subsec:3.5,subsec:3.6} is as follows:
\begin{itemize}
\item 
In \cref{subsec:3.2},
we use  \eqref{eq:DQ-EVI} and \eqref{eq:proto_criticality}
to show 
that all $\delta \in \DD$ satisfy the inequality 
on the right-hand side of \eqref{eq:EVI_G_ZZ}
with equality. 
By means of \cref{lemma:monotone_play,lem:mult_ints_tang} 
and results from \cref{subsec:2.4}, we 
then transform 
the resulting notion of \emph{criticality} to obtain information about the pointwise 
behavior of the left-/right-limit functions 
of the elements of $\DD$;
see \cref{th_pointwise_criticality}.

\item In \cref{subsec:3.3,subsec:3.4},
we identify precisely how the 
elements of $\DD$ jump at their points of 
discontinuity. We remark that this step of the proof 
is the most intricate one as the 
identification of the jump conditions 
has to be achieved solely based on the 
study of the difference quotients $\delta_\tau$
which are elements of the space
$CBV([0,T];\R^d)$ and, thus, do not possess
any jumps themselves. We overcome this difficulty by exploiting several auxiliary EVIs and 
vectorial Stampacchia truncation 
arguments that
rely crucially on the non-obtuseness of $Z$ and 
the properties in 
\cref{lem:nonobtuse_properties};
see \cref{th:left_jumps,th:jump_right}.

\item 
In \cref{subsec:3.5},
we combine the information about the pointwise 
behavior of the elements of 
$\DD$ obtained in  \cref{subsec:3.2,subsec:3.3,subsec:3.4} 
to define 
a \emph{critical cone}
that contains all right-limit functions 
$\delta_+$ of elements $\delta$ of the set $\DD$.
The main feature of this critical 
cone is that 
it possesses a 
\emph{temporal polyhedricity} property. 
Roughly speaking,
this property makes it possible 
to approximate critical directions with 
critical radial directions which are admissible in 
\eqref{eq:DQ-EVI} and 
for which the passage to the 
limit $\tau \to 0$ can be performed in 
the EVI satisfied by $\delta_\tau$. 
For the main result on this topic, 
see \cref{theorem:tempoly}. 
\item 
In \cref{subsec:3.6}, we 
carry out the limit transition $\tau \to 0$
in \eqref{eq:DQ-EVI} 
to show that all elements $\delta$ of $\DD$
are solutions of the system 
\eqref{eq:char_sys_1}. Using the results 
from \cref{subsec:2.4},
we then show that this system can have at most one 
solution, thus proving that $\DD$ is indeed a singleton. 
By putting all of 
the results 
of \cref{subsec:3.2,subsec:3.3,subsec:3.4,subsec:3.5,subsec:3.6}
together, statement \ref{main_sum:item:I} 
of \cref{th:main_summary} then follows as desired.
\end{itemize}

We remark that the  above argumentation 
roughly follows well-known approaches to the sensitivity analysis of elliptic systems;
see \cite{ChristofPhd2018,ChristofWachsmuth2020}.
Due to the loss of regularity that 
occurs when passing to the limit 
$\tau \to 0$ with the difference 
quotients $\delta_\tau$
(namely, the transition from 
$ CBV([0,T];\R^d)$ to $BV([0,T];\R^d)$),
however,
our analysis is completely beyond the scope of standard results. In particular the techniques that we develop
for the analysis of the pointwise properties 
of the elements of the set $\DD$ seem to be entirely new. 

\subsection{Criticality}
\label{subsec:3.2}

The first thing that we establish about the elements of 
the set $\DD$ is that they are \emph{critical}.

\begin{definition}[criticality]
\label{def:critical_direction}
A function $z \in G([0,T];\R^d)$ is called a 
\emph{critical direction} or simply \emph{critical} if 
\begin{equation}
\label{eq:critical_def}
z(t) \in \ZZ(y(t))\quad\forall t \in [0,T]
\qquad
\text{and}
\qquad
\int_{0}^{T}
\left \langle z, \dd w \right \rangle = 0.
 \end{equation}
\end{definition}
\begin{lemma}[criticality of limits of difference quotients]
\label{lem:delta_critical}
If $\delta \in \DD$ holds, then $\delta$ is a
critical direction.
\end{lemma}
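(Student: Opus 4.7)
The plan is to verify the two defining conditions of criticality from \cref{def:critical_direction} separately, using the characterization of $\DD$ from \cref{def:DD}. Throughout, fix $\delta \in \DD$ and a sequence $\{\tau_k\} \subset (0,\tau_0)$ with $\tau_k \to 0$ and $\delta_{\tau_k}(t) \to \delta(t)$ for every $t \in [0,T]$. Note that $\delta \in BV([0,T];\R^d) \subset G([0,T];\R^d)$ by construction, so $\delta$ lies in the ambient space required by \cref{def:critical_direction}.

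The first condition, namely $\delta(t) \in \ZZ(y(t))$ for every $t \in [0,T]$, follows from a pure pointwise passage to the limit. Indeed, \cref{lem:EVI_diff_quot} gives $\delta_{\tau_k}(t) \in \tau_k^{-1}(Z - y(t))$, i.e., $y(t) + \tau_k \delta_{\tau_k}(t) \in Z$, so that for every $i \in \AA(y(t))$ one has $\alpha_i \geq \langle \nu_i, y(t) + \tau_k \delta_{\tau_k}(t)\rangle = \alpha_i + \tau_k \langle \nu_i, \delta_{\tau_k}(t)\rangle$, hence $\langle \nu_i, \delta_{\tau_k}(t)\rangle \leq 0$. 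Sending $k \to \infty$ and recalling \cref{def:lin_cone} yields $\delta(t) \in \ZZ(y(t))$ as desired.

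The second condition, $\int_0^T \langle \delta, \dd w\rangle = 0$, is obtained by a sandwich argument. The upper bound $\int_0^T \langle \delta_{\tau_k}, \dd w\rangle \leq 0$ is simply \eqref{eq:proto_criticality} on $[0,T]$. For the matching lower bound, I would take the admissible test function $z \equiv 0$ in the variational inequality \eqref{eq:DQ-EVI} on $[0,T]$ and rearrange to get
\begin{equation*}
\tfrac{1}{\tau_k}\int_0^T \langle \delta_{\tau_k}, \dd w\rangle \; \geq \; \int_0^T \langle \delta_{\tau_k}, \dd \delta_{\tau_k}\rangle - \int_0^T \langle \delta_{\tau_k}, \dd h\rangle.
\end{equation*}
The first integral on the right is evaluated via the $CBV$ chain rule of \cref{lem:CBV-Stampacchia} applied to $F(x) = \tfrac12|x|^2$, giving $\tfrac12|\delta_{\tau_k}(T)|^2 - \tfrac12|h_0|^2$; the second is controlled by $d\|\delta_{\tau_k}\|_{\infty}\var(h;[0,T])$ thanks to \cref{lem:KS-int-int-estimate}. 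Since $\{\delta_{\tau_k}\}$ is bounded in $CBV([0,T];\R^d)$ by \cref{lem:EVI_diff_quot}, both terms are bounded uniformly in $k$, so there exists $C > 0$ with $\int_0^T \langle \delta_{\tau_k}, \dd w\rangle \geq -C\tau_k$. Combining with the upper bound and applying the bounded convergence theorem \cref{th:KS-bounded_convergence} (valid because $\delta_{\tau_k} \to \delta$ pointwise on $[0,T]$, $\sup_k\|\delta_{\tau_k}\|_\infty < \infty$, and $w \in CBV \subset BV$) yields $\int_0^T \langle \delta, \dd w\rangle = \lim_{k\to\infty}\int_0^T \langle \delta_{\tau_k}, \dd w\rangle = 0$.

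The only nontrivial step is the production of the matching lower bound, which rests on exploiting the particular test function $z = 0$ in \eqref{eq:DQ-EVI} together with the fundamental theorem for $CBV$-functions in \cref{lem:CBV-Stampacchia}; this is what allows the difference of the two sign-matched inequalities to collapse to zero in the limit and thus forces equality rather than just a one-sided estimate.
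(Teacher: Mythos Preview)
Your proof is correct and follows essentially the same approach as the paper: the tangentiality $\delta(t)\in\ZZ(y(t))$ is obtained by passing to the pointwise limit in the admissibility constraint of \eqref{eq:DQ-EVI}, and the integral identity is established by sandwiching $\int_0^T\langle\delta_{\tau_k},\dd w\rangle$ between $0$ (from \eqref{eq:proto_criticality}) and a term of order $\tau_k$ obtained by testing \eqref{eq:DQ-EVI} with $z\equiv 0$, evaluating $\int_0^T\langle\delta_{\tau_k},\dd\delta_{\tau_k}\rangle$ via \cref{lem:CBV-Stampacchia}, and bounding the $h$-term via \cref{lem:KS-int-int-estimate}, before passing to the limit with \cref{th:KS-bounded_convergence}. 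The only cosmetic difference is that the paper writes the two bounds as a single chain of inequalities rather than treating them separately.
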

\begin{proof}
Let $\delta \in \DD$ be given and let $\{\tau_k\} \subset (0, \tau_0)$ be a sequence for $\delta$
as in the definition 
of $\DD$. Then it follows from 
\eqref{eq:DQ-EVI} that $\delta_{\tau_k}(t) \in K_\rad(y(t);Z)$ for all $k$
and all $t \in [0,T]$. Due to \cref{lem:ZZproperties}\ref{lem:ZZproperties:iii}, 
this yields  $\delta_{\tau_k}(t) \in \ZZ(y(t))$ and, 
since $\ZZ(y(t))$ is closed, $\delta_{\tau_k}(t)  \to \delta(t) \in \ZZ(y(t)) $
for all $t \in [0,T]$.
From 
\eqref{eq:proto_criticality}, 
\eqref{eq:DQ-EVI} with $z \equiv 0$,
\cref{lem:CBV-Stampacchia},
and \cref{lem:KS-int-int-estimate}, we further obtain that 
\begin{equation*}
\begin{aligned}
0 &\geq 
\int_{0}^{T}\left \langle \delta_{\tau_k}, \dd w \right \rangle
\geq 
\tau_k
\int_{0}^{T}
 \left \langle  \delta_{\tau_k}, \dd (\delta_{\tau_k} - h) \right \rangle 
\geq
 \tau_k\left ( 
 \frac{1}{2}|\delta_{\tau_k}(T)|^2 - \frac{1}{2}|\delta_{\tau_k}(0)|^2
 - d\,\|\delta_{\tau_k}\|_{C([0,T]; \R^d)} \var(h; [0,T])
 \right ).
\end{aligned}
\end{equation*}
Due to the boundedness of 
$\{\delta_\tau\}$  in 
$CBV([0,T]; \R^d)$, the right-hand side of the last inequality 
goes to zero for $k \to \infty$. In combination with 
\cref{th:KS-bounded_convergence}
and again the boundedness of $\{\delta_\tau\}\subset CBV([0,T]; \R^d)$,
this yields
\[
0 \geq 
\lim_{k\to \infty}
\int_{0}^{T}\left \langle \delta_{\tau_k}, \dd w \right \rangle
=
\int_{0}^{T}\left \langle \delta, \dd w \right \rangle
\geq 0. 
\]
This completes the proof.
\end{proof}
We remark that the concept of criticality is well known 
in the field of sensitivity analysis of elliptic 
variational inequalities of the first and the second kind.
Analogously to 
the conditions in \eqref{eq:critical_def},
in the elliptic setting, it characterizes 
tangential directions that are contained in the kernel 
of the 
multiplier that is associated with the solution of 
the variational inequality under consideration.
For details on this topic
and 
additional relations to the field of 
second-order necessary and sufficient optimality conditions 
for optimization problems, we refer the reader to 
\cite{Harder2017,ChristofPhd2018,Christof2018SSC,BonnansShapiro2000,Haraux1977,Mignot1976} and the references therein.
A feature that distinguishes \eqref{eq:critical_def} 
from its elliptic counterpart is that the 
evolutionary nature of \eqref{eq:EVI} makes it possible 
to localize the property of being an element of 
the kernel of the multiplier in time. 
Indeed, from \cref{lem:nonneg_tang_play}, 
we readily obtain:

\begin{lemma}[criticality on subintervals]
\label{lem:crit_subinterval}
A function  $z \in G([0,T];\R^d)$ is critical if and 
only if 
\begin{equation}
\label{eq:randomeq236364-43}
z(t) \in \ZZ(y(t))\quad\forall t \in [0,T]
\qquad
\text{and}
\qquad
\int_{s_1}^{s_2}
\left \langle z, \dd w \right \rangle = 0
\quad\forall\,0 \leq s_1 < s_2 \leq T.
\end{equation}
\end{lemma}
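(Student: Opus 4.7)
The plan is to prove the equivalence by noting that one direction is trivial and that the other follows from combining the global criticality condition with the sign information supplied by \cref{lem:nonneg_tang_play} together with the additivity of the Kurzweil-Stieltjes integral.

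First, the implication ``$\Leftarrow$'' is immediate: specializing the identity on the right-hand side of \eqref{eq:randomeq236364-43} to the choice $s_1 = 0$, $s_2 = T$ recovers exactly the definition \eqref{eq:critical_def} of criticality.

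For the nontrivial direction ``$\Rightarrow$'', suppose $z \in G([0,T];\R^d)$ is critical, i.e., $z(t) \in \ZZ(y(t))$ for all $t \in [0,T]$ and $\int_0^T \langle z, \dd w \rangle = 0$. Fix arbitrary $0 \leq s_1 < s_2 \leq T$. Since $w = u - y = \PP(u,y_0)$ satisfies $\dd w = -\dd(y-u)$, \cref{lem:nonneg_tang_play} applied on each of the three subintervals $[0,s_1]$ (if $s_1 > 0$), $[s_1,s_2]$, and $[s_2,T]$ (if $s_2 < T$) yields
\begin{equation*}
\int_{0}^{s_1} \langle z, \dd w\rangle \leq 0,
\qquad
\int_{s_1}^{s_2} \langle z, \dd w\rangle \leq 0,
\qquad
\int_{s_2}^{T} \langle z, \dd w\rangle \leq 0,
\end{equation*}
where the integrals over empty ranges are understood as zero. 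By the additivity of the Kurzweil-Stieltjes integral (\cref{lemma:KS_lin_add}) and the assumed criticality of $z$, the sum of these three nonpositive quantities equals $\int_0^T \langle z, \dd w\rangle = 0$, so each of them must vanish. In particular, $\int_{s_1}^{s_2}\langle z, \dd w\rangle = 0$, which is the desired conclusion.

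There is no substantive obstacle: the whole argument rests on \cref{lem:nonneg_tang_play}, whose proof already carried out the non-trivial work (namely handling $CBV$-regularity via the local monotone decomposition of \cref{lemma:monotone_play} and the nonnegativity criterion of \cref{lemma:crit_noneg}). Once that sign result is in hand, criticality on subintervals is nothing more than ``three nonpositive numbers sum to zero, so all three are zero'', which is why the statement is separated out as its own short lemma rather than being folded into the preceding analysis.
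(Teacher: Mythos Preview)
The proposal is correct and takes essentially the same approach as the paper: the paper's own proof simply says that the backward implication is trivial and that the forward implication follows from \cref{lemma:KS_lin_add} and \cref{lem:nonneg_tang_play}, which is exactly the ``three nonpositive numbers sum to zero'' argument you spell out in detail.
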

\begin{proof}
The implication ``$\Leftarrow$'' is trivial. 
The implication ``$\Rightarrow$'' follows 
from \cref{lemma:KS_lin_add,lem:nonneg_tang_play}.
\end{proof}

If we combine the localization property
in \cref{lem:crit_subinterval} with 
the decomposition
in \cref{lemma:monotone_play} and 
\cref{lem:mult_ints_tang}, then we get:

\begin{lemma}[criticality and limits]
\label{lem:nice_lemma}
Let $t \in [0,T]$ be given. Let 
$\varepsilon > 0$ and
$\lambda_i \in CBV([t - \varepsilon, t + \varepsilon] \cap [0,T])$,
$i \in \AA(y(t))$, be as in \cref{lemma:monotone_play}. 
Then, for all critical directions 
$z \in G([0,T];\R^d)$, the following is true.
\begin{enumerate}[label=\roman*)]
\item\label{nice_lemma:i} If $t < T$ holds, then,
for every $i \in \AA(y(t))$, there exists $0 < \tilde \varepsilon \leq \min(\varepsilon, T - t)$
such that  
\begin{equation}
\label{eq:item:i:3.9}
	\int_{t}^{t + \tilde \varepsilon} \langle z(t+), \nu_i \rangle \dd\lambda_i(s) = 0. 
\end{equation}
\item\label{nice_lemma:ii} If $t > 0$ holds, then,
for every $i \in \AA(y(t))$, there exists $0 < \tilde \varepsilon \leq \min(\varepsilon, t)$
such that  
\begin{equation}
\label{eq:item:ii:3.9}
\int_{t - \tilde \varepsilon}^{t} \langle z(t-), \nu_i \rangle \dd\lambda_i(s) = 0.
\end{equation}
\end{enumerate}
\end{lemma}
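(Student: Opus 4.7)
The plan is to work entirely on the subinterval $[t-\varepsilon,t+\varepsilon]\cap[0,T]$ and to combine three ingredients: (a) the decomposition $w(s)=w(\max(0,t-\varepsilon))+\sum_{i\in\AA(y(t))}\lambda_i(s)\nu_i$ from Lemma~2.24, (b) the sign information $\int_{s_1}^{s_2}\langle z(r),\nu_i\rangle\,\dd\lambda_i(r)\le 0$ from Lemma~2.25 valid for every subinterval, and (c) the criticality of $z$ on subintervals recorded in Lemma~3.8. Substituting the decomposition of $w$ into $\int_{s_1}^{s_2}\langle z,\dd w\rangle=0$ and using linearity of the Kurzweil--Stieltjes integral (Lemma~2.16) yields
\[
\sum_{i\in\AA(y(t))}\int_{s_1}^{s_2}\langle z(r),\nu_i\rangle\,\dd\lambda_i(r)=0
\quad\forall \max(0,t-\varepsilon)\le s_1<s_2\le\min(T,t+\varepsilon).
\]
Because each summand is nonpositive by Lemma~2.25, each summand must vanish individually. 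Thus, for both statements I obtain the \emph{sharper} identity $\int_{s_1}^{s_2}\langle z(r),\nu_i\rangle\,\dd\lambda_i(r)=0$ on every subinterval; it remains only to swap $z(r)$ for the constant $z(t+)$ (respectively $z(t-)$) at the price of shrinking $\varepsilon$ to some $\tilde\varepsilon$.

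For \ref{nice_lemma:i}, I would fix $i\in\AA(y(t))$ and split into cases according to the sign of $\langle z(t+),\nu_i\rangle$. If $\langle z(t+),\nu_i\rangle=0$, then $\int_t^{t+\tilde\varepsilon}\langle z(t+),\nu_i\rangle\,\dd\lambda_i=0$ for any $\tilde\varepsilon$ and we are done. Suppose now $\langle z(t+),\nu_i\rangle\neq 0$. The key observation is that $\lambda_i\in CBV$, so by Lemma~2.22 we may replace the integrand $\langle z,\nu_i\rangle$ by $\langle z_+,\nu_i\rangle$; since $z\in G$, the right-limit function $z_+$ is right-continuous at $t$ with $z_+(t)=z(t+)$, and consequently there exists $\tilde\varepsilon\in(0,\min(\varepsilon,T-t)]$ such that $\langle z_+(r),\nu_i\rangle$ has the same strict sign as $\langle z(t+),\nu_i\rangle$, say bounded away from $0$ by $c>0$ in absolute value, on $[t,t+\tilde\varepsilon]$. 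Combining $\int_t^{t+\tilde\varepsilon}\langle z_+(r),\nu_i\rangle\,\dd\lambda_i(r)=0$ with the bound (a direct consequence of the monotonicity of $\lambda_i$ and Lemma~2.17)
\[
\operatorname{sign}\langle z(t+),\nu_i\rangle\int_t^{t+\tilde\varepsilon}\langle z_+(r),\nu_i\rangle\,\dd\lambda_i(r)\ge c\bigl(\lambda_i(t+\tilde\varepsilon)-\lambda_i(t)\bigr)\ge 0
\]
forces $\lambda_i(t+\tilde\varepsilon)=\lambda_i(t)$, whence the claimed identity \eqref{eq:item:i:3.9} holds because the integrand is then a constant and the integrator does not move.

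Statement \ref{nice_lemma:ii} follows by the symmetric argument, using the left-limit function $z_-$ which satisfies $z_-(t)=z(t-)$ and is left-continuous at $t$, together with the same sign dichotomy on a suitably small interval $[t-\tilde\varepsilon,t]$. The only technical subtlety, and the step requiring the most care, is the passage from ``integrand with $z$'' to ``integrand with $z_+$ (or $z_-$)'': it is precisely the continuity of $\lambda_i$ at $t$ that rules out a spurious endpoint contribution from the possibly wild value $z(t)$, allowing the regulated limit of $z$ to dominate the local analysis. No further ingredients are needed beyond the Kurzweil--Stieltjes tools already recalled in Section~2.4.
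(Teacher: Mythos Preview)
Your proposal is correct and follows essentially the same route as the paper: you use the decomposition of $w$ from \cref{lemma:monotone_play}, combine criticality on subintervals (\cref{lem:crit_subinterval}) with the sign information from \cref{lem:mult_ints_tang} to conclude that each $\int_{s_1}^{s_2}\langle z,\nu_i\rangle\,\dd\lambda_i$ vanishes individually, and then pass from $z$ to the one-sided limit $z(t\pm)$. The only difference is in this last step: the paper argues by contradiction, using \cref{lem:KS-int-int-estimate} to bound $\bigl|\int_t^{t+\tilde\varepsilon}\langle z(t+)-z(s),\nu_i\rangle\,\dd\lambda_i\bigr|$ by $\sup_{t<s\le t+\tilde\varepsilon}|z(t+)-z(s)|\cdot(\lambda_i(t+\tilde\varepsilon)-\lambda_i(t))$ and letting $\tilde\varepsilon\to 0$, whereas you invoke \cref{lem:KS-int_limits} to replace $z$ by $z_+$ and then use a direct sign/monotonicity argument to force $\lambda_i$ constant on $[t,t+\tilde\varepsilon]$. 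Both variants yield the same dichotomy ($\langle z(t+),\nu_i\rangle=0$ or $\lambda_i$ locally constant to the right of $t$) and are equally valid.
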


\begin{proof}
We only prove \ref{nice_lemma:i}. 
The proof of \ref{nice_lemma:ii} is analogous.
Let $t \in [0,T)$, 
$\varepsilon > 0$, 
and 
$\lambda_i \in CBV([t - \varepsilon, t + \varepsilon] \cap [0,T])$,
$i \in \AA(y(t))$, be as in the assertion of the lemma
and let 
$z\in G([0,T];\R^d)$ be a critical direction. 
From \eqref{eq:randomeq236364-43},
\eqref{eq:mon_play},  \eqref{eq:KS-interal-1},
and \cref{lemma:KS_lin_add},
we obtain that 
\begin{equation}
\label{eq:randomeq32673zdh}
0
=
\int_{s_1}^{s_2}
\left \langle z, \dd w \right \rangle
=
\int_{s_1}^{s_2}\left \langle z(s), \dd 
\left (\sum_{i \in \AA(y(t))} \lambda_i(s)\nu_i \right )\right \rangle
=
\sum_{i \in \AA(y(t))}
\int_{s_1}^{s_2}\left \langle z(s), \nu_i  \right \rangle \dd  \lambda_i(s)
\end{equation}
holds for all $\max(0, t-\varepsilon) \leq s_1 < s_2 \leq \min(T, t + \varepsilon)$. 
Since $z(s)$ is an element of $\ZZ(y(s))$ for all $s \in [0,T]$,
\cref{lem:mult_ints_tang}
implies that the integrals in the sum on the right-hand side of  \eqref{eq:randomeq32673zdh} are nonpositive each. 
Thus, 
\begin{equation}
\label{eq:randomeq38383-33}
\int_{s_1}^{s_2}\left \langle z(s), \nu_i  \right \rangle \dd  \lambda_i(s)
=
0
\qquad
\forall\,
\max(0, t-\varepsilon) \leq s_1 < s_2 \leq \min(T, t + \varepsilon)
\qquad
\forall 
i \in \AA(y(t)).
\end{equation}
Let us now suppose 
that a number $\tilde \varepsilon > 0$ with the 
properties in \ref{nice_lemma:i} does not exist for some $i \in \AA(y(t))$. 
Then it follows from 
\cref{lem:KS-constant_functions,lem:KS-sub_int,lem:KS-int-int-estimate},
\eqref{eq:randomeq38383-33},
and the monotonicity and continuity of $\lambda_i$ that 
\begin{equation*}
\begin{aligned}
0
&< 
\left | \langle z(t+), \nu_i \rangle\right |
 \left (
\lambda_i(t + \tilde \varepsilon )
- \lambda_i(t)
\right )
=
\left |
\int_{t}^{t + \tilde \varepsilon} \langle z(t+), \nu_i \rangle \dd\lambda_i(s)
\right |
=
\left |
\int_{t}^{t + \tilde \varepsilon} \langle z(t+) -z(s),  \nu_i \rangle \dd\lambda_i(s)
\right |
\\
&\leq
\sup_{t < s \leq t + \tilde \varepsilon}
 | z(t+) -z(s)| 
 \var(\lambda_i ;[t, t + \tilde \varepsilon])
=
\sup_{t < s \leq t + \tilde \varepsilon}
 | z(t+) -z(s)| 
\left (
\lambda_i(t + \tilde \varepsilon )
- \lambda_i(t)
\right )\quad \forall\, 0 < \tilde \varepsilon \leq \min(\varepsilon, T - t).
\end{aligned}
\end{equation*}
Since we necessarily have 
$\lambda_i(t + \tilde \varepsilon )
- \lambda_i(t) > 0$ in the above, it follows that 
\[
0
<
\left | \langle z(t+), \nu_i \rangle\right |
\leq
\sup_{t < s \leq t + \tilde \varepsilon}
 | z(t+) -z(s)| 
 \qquad \forall\, 0 < \tilde \varepsilon \leq \min(\varepsilon, T - t).
\]
This produces a contradiction 
with the definition of $z(t+)$.
Thus, a number $\tilde \varepsilon$ with the 
properties in \ref{nice_lemma:i} has to exist
for all $i \in \AA(y(t))$ and the proof is complete. 
\end{proof}

By
exploiting 
\cref{lem:KS-constant_functions}
and the monotonicity of the multiplier maps 
$\lambda_i$, we can transform
the identities
in \eqref{eq:item:i:3.9}
and \eqref{eq:item:ii:3.9} into conditions 
on the left and the right limits
of $z$. To be able to formulate the resulting conditions concisely, we introduce:

\begin{definition}[directionally active, strictly active, and biactive indices] 
\label{def:temporal_indices}
Let $t \in [0,T]$ be given. Let 
$\varepsilon > 0$ and
$\lambda_i \in CBV([t - \varepsilon, t + \varepsilon] \cap [0,T])$,
$i \in \AA(y(t))$, be as in \cref{lemma:monotone_play}. We define
\begin{equation*}
\begin{aligned}
\AA^+(t) &:= 
\begin{cases}
\left \{
i \in \AA(y(t))
\mid
\exists \{s_k\} \subset (t, T]
\text{ s.t. } s_k \to t \text{ and } i \in \AA(y(s_k))~\forall k \in \N
\right \} & \text{if } t \in [0,T),
\\
\emptyset &\text{if } t = T,
\end{cases}
\\
\AA^-(t) &:= 
\begin{cases}
\left \{
i \in \AA(y(t))
\mid
\exists \{s_k\} \subset [0, t)
\text{ s.t. } s_k \to t \text{ and } i \in \AA(y(s_k))~\forall k \in \N
\right \} & \text{if } t \in (0,T],
\\
\emptyset &\text{if } t = 0,
\end{cases} 
\\
\AA_\strict^+(t) &:= 
\begin{cases}
\{i \in \AA(y(t)) \mid \lambda_i|_{[t, t+s]} \neq \mathrm{const}~ \forall s \in (0, \min(T-t,\varepsilon))\} & \text{if } t \in [0,T),
\\
\emptyset &\text{if } t = T,
\end{cases} 
\\
\AA_\strict^-(t) &:= 
\begin{cases}
\{i \in \AA(y(t)) \mid \lambda_i|_{[t-s, t]} \neq \mathrm{const}~ \forall s \in (0, \min(t,\varepsilon))\} & \text{if } t \in (0,T],
\\
\emptyset &\text{if } t = 0,
\end{cases} 
\\
\BB^+(t) &:=
 \AA(y(t)) \setminus \AA_\strict^+(t),
\\
\BB^-(t) &:=
 \AA(y(t)) \setminus \AA_\strict^-(t),
\\
V^+(t) &:= \spann(\{ \nu_i \mid i \in \AA_\strict^+(t)\})^\perp, &&
\\
V^-(t) &:= \spann(\{ \nu_i \mid i \in \AA_\strict^-(t)\})^\perp. &&
\end{aligned}
\end{equation*}
\end{definition}

Note that the index sets in \cref{def:temporal_indices} 
do not depend on the choice of $\varepsilon$;
see the comments after \cref{lemma:monotone_play}.
Using the quantities introduced in \cref{def:temporal_indices},
we can reformulate 
the integral notion of criticality in \eqref{eq:critical_def}
in an equivalent, purely pointwise manner.

\begin{theorem}[pointwise reformulation of criticality]
\label{th_pointwise_criticality}
A function $z \in G([0,T];\R^d)$ is critical if and only if 
\begin{equation}
\label{eq:pointwise_criticality}
z(t) \in \ZZ(y(t)),
\quad
z(t+) \in V^+(t),
\quad
\text{and}
\quad
z(t-) \in V^-(t)
\qquad \forall t \in [0,T].
\end{equation}
\end{theorem}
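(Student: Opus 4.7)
I would prove the two implications separately.

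For the forward direction ($\Rightarrow$), suppose $z$ is critical. The inclusion $z(t) \in \ZZ(y(t))$ is part of the very definition of criticality. Fix $t \in [0,T)$ and $i \in \AA_\strict^+(t)$. The first part of \cref{lem:nice_lemma} supplies some $\tilde \veps > 0$ with
\begin{equation*}
\int_t^{t+\tilde \veps} \langle z(t+), \nu_i \rangle \, \dd \lambda_i = 0.
\end{equation*}
Since the integrand is constant in $s$, \cref{lem:KS-constant_functions} reduces this to $\langle z(t+), \nu_i \rangle \, (\lambda_i(t + \tilde \veps) - \lambda_i(t))$, and the very definition of $\AA_\strict^+(t)$ combined with the monotonicity of $\lambda_i$ from \cref{lemma:monotone_play} makes the second factor strictly positive. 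Hence $\langle z(t+), \nu_i \rangle = 0$, i.e., $z(t+) \in V^+(t)$. The symmetric use of the second part of \cref{lem:nice_lemma} yields $z(t-) \in V^-(t)$.

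For the reverse direction ($\Leftarrow$), assume the pointwise conditions. From $z(t) \in \ZZ(y(t))$ and \cref{lem:nonneg_tang_play} (using $w = u - y$ so that $\dd w = -\dd(y-u)$), one has $\int_0^T \langle z, \dd w \rangle \leq 0$. To obtain the reverse inequality I would imitate the partition-of-unity construction from the proof of \cref{lem:nonneg_tang_play}: choose a finite cover of $[0,T]$ by intervals $(t_m - \veps_m, t_m + \veps_m)$ on which \cref{lemma:monotone_play} yields the local decomposition $w = w(\max(0, t_m - \veps_m)) + \sum_{i \in \AA(y(t_m))} \lambda_i^m \nu_i$, and pick an associated smooth partition of unity $\{\psi_m\}$. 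Expanding $\int_0^T \langle z, \dd w \rangle$ via these decompositions reduces the problem to showing that each local integral $\int_J f\, \dd \lambda$ vanishes, where $f := \psi_m \langle z, \nu_i \rangle \in G(J)$ and $\lambda := \lambda_i^m \in CBV(J)$ is nondecreasing.

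The key claim I would establish is: if $\lambda \in CBV(J)$ is nondecreasing and $f \in G(J)$ satisfies $f_+(s) = 0$ or $f_-(s) = 0$ at every $s \in J$ where $\lambda$ is not constant on any two-sided open neighborhood of $s$, then $\int_J f \, \dd \lambda = 0$. To prove it, let $K$ be the (closed) set of points where $\lambda$ fails to be locally two-sided constant. At any continuity point $s \in K$ of $f$ one has $f(s) = f_+(s) = f_-(s)$, and by hypothesis at least one of these vanishes, so $f(s) = 0$. Modifying $f$ at its countably many discontinuity points to be zero produces a regulated $\tilde f$ with $\tilde f = 0$ on $K$, $\tilde f = f$ off a countable set, and $\tilde f_\pm = f_\pm$; \cref{lem:KS-int_limits} then gives $\int_J f \, \dd \lambda = \int_J \tilde f \, \dd \lambda$. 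Since $\{ \tilde f \neq 0 \} \subset J \setminus K$, where $\lambda$ is locally two-sided constant, \cref{lemma:crit_noneg} applied to both $\tilde f$ and $-\tilde f$ forces $\int_J \tilde f \, \dd \lambda = 0$. Applying the claim with $f = \psi_m \langle z, \nu_i \rangle$ and $\lambda = \lambda_i^m$, the hypothesis is verified because at any growth point of $\lambda_i^m$ either $i \in \AA_\strict^+(s)$ (giving $\langle z(s+), \nu_i \rangle = 0$) or $i \in \AA_\strict^-(s)$ (giving $\langle z(s-), \nu_i \rangle = 0$) by definition. The main obstacle is precisely the mismatch between the one-sided constancy information encoded by $\AA_\strict^\pm$ and the two-sided constancy required by \cref{lemma:crit_noneg}, which blocks a direct application of that lemma to $\langle z, \nu_i \rangle$ itself and forces the detour through the countable-modification argument above.
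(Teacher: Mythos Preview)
Your forward direction matches the paper's proof essentially verbatim.

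For the reverse direction, your route is correct but genuinely different from the paper's. The paper does not try to show each local integral vanishes directly. Instead, for an arbitrary $\gamma>0$, it chooses the $\varepsilon_{t}$'s so small that (i) the step function $v_t$ taking the values $z(t-), z(t), z(t+)$ satisfies $\|z - v_t\|_\infty \leq \gamma$ on the local interval, and (ii) $\lambda_i^t$ is already constant on $[t-\varepsilon_t, t]$ for all $i \in \BB^-(t)$ and on $[t, t+\varepsilon_t]$ for all $i \in \BB^+(t)$. Setting $v := \sum_m \psi_m v_m$, condition (ii) reduces the sums over $\AA(y(t_m))$ to sums over $\AA_\strict^\pm(t_m)$, and then the pointwise hypothesis $z(t_m\pm) \in V^\pm(t_m)$ kills each surviving term, giving $\int_0^T \langle v, \dd w \rangle = 0$ exactly. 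Condition (i) then yields $|\int_0^T \langle z, \dd w\rangle| \leq d\gamma\, \var(w;[0,T])$, and one lets $\gamma \to 0$. Your approach trades this approximation layer for the ``key claim'', which is correct and whose countable-modification proof via \cref{lem:KS-int_limits} and \cref{lemma:crit_noneg} is clean. The one place where you should be more careful is the verification of its hypothesis: asserting that $i \in \AA_\strict^+(s)$ or $i \in \AA_\strict^-(s)$ at a growth point $s$ of $\lambda_i^m$ ``by definition'' glosses over two steps. First, one needs $i \in \AA(y(s))$ at such $s$ (otherwise $\AA_\strict^\pm(s)$ cannot contain $i$); this follows from the argument in the proof of \cref{lem:mult_ints_tang}, which shows $\lambda_i^m$ is locally constant near any $s$ with $i \notin \AA(y(s))$. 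Second, the sets $\AA_\strict^\pm(s)$ are defined via multipliers $\lambda_i^s$ centered at $s$, not at $t_m$; one needs that $\lambda_i^m - \lambda_i^s$ is constant on the overlap, which follows from the regularity of $\HH$ and linear independence of the $\nu_j$. The paper's step-function approach sidesteps both issues by anchoring everything at the centers $t_m$, at the cost of the extra $\gamma$-limit.
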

\begin{proof}
We begin with the implication ``$\Rightarrow$''.
Suppose that a critical direction 
$z \in G([0,T];\R^d)$ is given and let 
$t \in [0,T]$ be fixed. 
Then $z(t) \in \ZZ(y(t))$ holds by \eqref{eq:critical_def}.
In the case $t = T$, we further have 
$\AA_\strict^+(t) = \emptyset$ and 
$V^+(t) = \R^d$ by 
\cref{def:temporal_indices},
and the inclusion 
$z(t+) \in V^+(t)$ is trivial. 
If $t \in [0,T)$ holds, then 
we obtain from 
\cref{lem:KS-constant_functions,lem:nice_lemma} 
that, for every $i \in \AA_\strict^+(t)$, 
there exists a number 
$0 < \tilde \varepsilon \leq \min(\varepsilon, T - t)$
satisfying 
\[
0
=
\int_{t}^{t + \tilde \varepsilon} \langle z(t+), \nu_i \rangle \dd\lambda_i(s)
=
\langle z(t+), \nu_i \rangle
 \left (
\lambda_i(t + \tilde \varepsilon )
- \lambda_i(t)
\right ),
\]
where $\varepsilon > 0$ and
$\lambda_i \in CBV([t - \varepsilon, t + \varepsilon] \cap [0,T])$ 
are again as in \cref{lemma:monotone_play}. 
By the definition of $ \AA_\strict^+(t)$ and the monotonicity of $\lambda_i$, 
we have 
$\lambda_i(t + \tilde \varepsilon )
- \lambda_i(t) > 0$. 
Thus, $\langle z(t+), \nu_i \rangle = 0$
for all $i \in \AA_\strict^+(t)$
and it follows that 
$z(t+) \in V^+(t)$ holds as desired.
To check the inclusion
$z(t-) \in V^-(t)$, we can proceed along the exact same lines.
This shows that every critical direction $z$ satisfies 
\eqref{eq:pointwise_criticality}
and completes the first half of the proof. 

It remains to prove the implication ``$\Leftarrow$''.
To this end, let us assume that 
a function $z \in G([0,T];\R^d)$ 
satisfying \eqref{eq:pointwise_criticality}
is given. That $z(t) \in \ZZ(y(t))$ holds for all 
$t\in [0,T]$ in this case 
as required in \eqref{eq:critical_def} is obvious. 
To establish the integral identity in \eqref{eq:critical_def},
let us assume that an arbitrarily small 
$\gamma > 0$ is given. 
As $z$  
possesses left and right limits at all points 
$t \in [0,T]$ and due to 
\cref{lemma:monotone_play} and \cref{def:temporal_indices}, for every $t \in [0,T]$,
we can find a number $\varepsilon_t > 0$ such that 
the step function 
\[
v_t\colon [0,T] \to \R^d,
\qquad
v_t(s)
:=
\begin{cases}
z(t-) & \text{ if } s \in [0,t),
\\
z(t) & \text{ if } s = t,
\\
z(t+) & \text{ if } s \in (t,T],
\end{cases}
\]
satisfies 
$\| z - v_t\|_{\infty, [t-\varepsilon_t, t + \varepsilon_t] \cap [0,T]} \leq \gamma$, such that the 
assertions of \cref{lemma:monotone_play}
hold for $t$ on 
$[t-\varepsilon_t, t + \varepsilon_t] \cap [0,T]$ with functions 
$\lambda_i^t$, $i \in \AA(y(t))$,
and such that 
$\lambda_i^t = \mathrm{const}$ holds on $[t-\varepsilon_t, t]\cap [0,T]$ for all $i \in \BB^-(t)$
and
$\lambda_i^t = \mathrm{const}$ holds on $[t, t+ \varepsilon_t]\cap [0,T]$ for all $i \in \BB^+(t)$. 
As $[0,T]$ is compact,
we can cover it by finitely many 
of the intervals $(t - \varepsilon_t, t + \varepsilon_t)$.
We denote the times $t$ of this cover and 
their $\varepsilon_t$, $v_t$, and $\lambda_i^t$ by 
$0\leq t_1 \leq ... \leq t_M \leq T$
and $\varepsilon_m$, $v_m$, $\lambda_i^m$,
$i \in \AA(y(t_m))$, $m=1,...,M$, 
$M \in \N$,
respectively. Let $\{\psi_m\}_{m=1}^M$ be a 
smooth partition of unity on $[0,T]$ subordinate 
to the cover 
$\{(t_m - \varepsilon_m, t_m + \varepsilon_m)\}_{m=1}^M$, 
i.e., a collection of functions 
satisfying  \eqref{eq:POU_properties}.
We define 
\[
v(s) := \sum_{m=1}^M \psi_m(s) v_{m}(s),\qquad s \in [0,T].
\]
Then we clearly have 
$v \in G([0,T]; \R^d)$ and, by construction and 
\cref{lemma:KS_lin_add,lem:KS-constant_functions,lem:KS-sub_int}, 
\begin{equation*}
\begin{aligned}
\int_0^T \langle v, \dd w \rangle
&=
\sum_{m=1}^M
\int_{\max(0, t_m - \varepsilon_m)}^{\min(T, t_m + \varepsilon_m)} \langle \psi_m(s) v_{m}(s), \dd w(s) \rangle
\\
&=
\sum_{m=1}^M
\int_{\max(0, t_m - \varepsilon_m)}^{\min(T, t_m + \varepsilon_m)} 
\left \langle \psi_m(s) v_{m}(s), \dd
\left ( \sum_{i \in \AA(y(t_m))} \lambda_i^m(s)\nu_i \right )\right \rangle
\\
&=
\sum_{m\colon t_m \neq 0} 
\sum_{i \in \AA(y(t_m))}
\int_{\max(0, t_m - \varepsilon_m)}^{t_m} \langle \psi_m(s) z(t_m-), \nu_i \rangle \dd
\lambda_i^m(s) 
\\
&\qquad\qquad
+
\sum_{m \colon t_m \neq T}
\sum_{i \in \AA(y(t_m))}
\int_{t_m}^{\min(T, t_m + \varepsilon_m)} \langle \psi_m(s) z(t_m+), \nu_i \rangle \dd
\lambda_i^m(s) 
\\
&=
\sum_{m\colon t_m \neq 0} 
\sum_{i \in \AA_\strict^-(t_m)}
\int_{\max(0, t_m - \varepsilon_m)}^{t_m} \langle \psi_m(s) z(t_m-), \nu_i \rangle \dd
\lambda_i^m(s) 
\\
&\qquad\qquad
+
\sum_{m\colon t_m \neq T} 
\sum_{i \in \AA_\strict^+(t_m)}
\int_{t_m}^{\min(T, t_m + \varepsilon_m)} \langle \psi_m(s) z(t_m+), \nu_i \rangle \dd
\lambda_i^m(s) 
\\
&=0.
\end{aligned}
\end{equation*}
Here, we have used the inclusions
$z(t_m +) \in V^+(t_m)$
and 
$z(t_m-) \in V^-(t_m)$.
On the other hand, we also have 
\[
\sup_{s \in [0,T]}
|v(s) - z(s)|
\leq
\sup_{s \in [0,T]}
\sum_{m=1}^M
 \psi_m(s) |v_m(s) - z(s)|
\leq
\sup_{s \in [0,T]}
\sum_{m=1}^M \psi_m(s) \gamma
= \gamma
\]
and, thus, 
\[
\left |
\int_0^T \langle z, \dd w \rangle
\right |
=
\left |
\int_0^T \langle z - v, \dd w \rangle
\right |
\leq
d \gamma \var(w;[0,T])
\]
by \cref{lem:KS-int-int-estimate}.
Since $\gamma>0$ was arbitrary, the above
shows that $z$ satisfies the 
integral identity in 
\eqref{eq:critical_def} and, 
as a consequence, that $z$ is critical.
This completes the proof. 
\end{proof}

The pointwise reformulation 
of the notion of criticality in 
\cref{th_pointwise_criticality}
will be an essential ingredient for the 
analysis of the concept of 
\emph{temporal polyhedricity}
in \cref{subsec:3.5} and,
ultimately, the derivation of 
the system 
\eqref{eq:char_sys_1} that uniquely 
characterizes the directional derivatives of
$\SS$ in \ref{main_sum:item:I}. Before we can approach this topic, 
however, we require more information 
about the pointwise behavior of the 
elements of the set $\DD$. More precisely, 
we have to study in detail 
how the elements of $\DD$
jump at their points of discontinuity.
As we will see in \cref{subsec:3.5},
analyzing the discontinuities 
of the elements of the set 
$\DD$ will allow us to show that 
 not only 
all elements $\delta$ of $\DD$
are critical directions,
but also all right-limit functions
$\delta_+$ for $\delta \in \DD$.
Note that this is not obvious at all 
since it is a priori completely 
unclear whether $\delta(t+) \in \ZZ(y(t))$
holds for all $t \in [0,T]$. 
To prepare the proof of the inclusion $\delta(t+) \in \ZZ(y(t))$ 
and our study of the 
jump directions of the 
elements of $\DD$, we prove:
\begin{lemma}[tangentiality of left and right limits]
\label{lemma:rl-active-indices}
Let $z \in G([0,T];\R^d)$ be critical 
and let $t \in [0,T]$ be given.
\begin{enumerate}[label=\roman*)]
\item\label{lemma:rl-active-indices:i} If $t < T$ holds, then there exists $0 < \varepsilon < T - t$ such that 
\[
	 z(t+) \in \ZZ(y(s))\qquad\forall s \in (t, t+\varepsilon].
\]
\item\label{lemma:rl-active-indices:ii} If $t > 0$ holds, then there exists $0 < \varepsilon < t$ such that 
\[
	 z(t-) \in \ZZ(y(s))\qquad\forall s \in [t-\varepsilon, t).
\]
\end{enumerate}
\end{lemma}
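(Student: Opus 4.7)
The plan is to prove part \ref{lemma:rl-active-indices:i}; part \ref{lemma:rl-active-indices:ii} follows by an entirely symmetric argument with $\AA^-(t)$ and $V^-(t)$ replacing $\AA^+(t)$ and $V^+(t)$, and left-limits replacing right-limits. The strategy is to show that for $s$ sufficiently close to $t$ from the right, every active index of $y(s)$ already lies in $\AA^+(t)$, after which the inequality $\langle \nu_i, z(t+) \rangle \le 0$ for $i \in \AA^+(t)$ follows by a direct passage to the limit in the pointwise tangency condition from \eqref{eq:pointwise_criticality} (equivalently, $z(s) \in \ZZ(y(s))$ for all $s$).

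First, I would apply \cref{lemma:monotone_play} at $t$ to obtain $\varepsilon_0 > 0$ satisfying $\AA(y(s)) \subset \AA(y(t))$ for all $s \in (t, t+\varepsilon_0] \cap [0,T]$. This finite containment reduces the problem to establishing $\langle \nu_i, z(t+) \rangle \le 0$ for every index $i \in \AA(y(t))$ that actually arises as an active index for some $s$ arbitrarily close to $t$ from the right.

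Second, I would establish the following elimination property by contradiction: for every $i \in \AA(y(t)) \setminus \AA^+(t)$, there exists $\varepsilon_i > 0$ such that $i \notin \AA(y(s))$ for all $s \in (t, t+\varepsilon_i] \cap [0,T]$. Indeed, were this false one could extract a sequence $s_k \to t$ with $s_k \in (t,T]$ and $i \in \AA(y(s_k))$ for every $k$, contradicting $i \notin \AA^+(t)$ according to \cref{def:temporal_indices}. Since $\AA(y(t)) \subset I$ is finite, the number
\[
\varepsilon := \min\Bigl(\varepsilon_0,\; \min_{i \in \AA(y(t)) \setminus \AA^+(t)} \varepsilon_i\Bigr)
\]
is strictly positive (with the usual convention that the inner minimum equals $+\infty$ when the set is empty), and by construction $\AA(y(s)) \subset \AA^+(t)$ for every $s \in (t, t+\varepsilon]$.

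Third, for each $i \in \AA^+(t)$, the definition of $\AA^+(t)$ supplies a sequence $\{s_k\} \subset (t,T]$ with $s_k \to t$ and $i \in \AA(y(s_k))$ for all $k$. The tangency $z(s_k) \in \ZZ(y(s_k))$ then yields $\langle \nu_i, z(s_k) \rangle \le 0$, and since $z \in G([0,T];\R^d)$ we have $z(s_k) \to z(t+)$, so $\langle \nu_i, z(t+) \rangle \le 0$. Combining this with the inclusion $\AA(y(s)) \subset \AA^+(t)$ for $s \in (t, t+\varepsilon]$ yields $z(t+) \in \ZZ(y(s))$ on this interval, which is the claim. I do not anticipate any genuine obstacle: the only nontrivial ingredient is the finiteness of $\AA(y(t))$ that rules out an accumulation of infinitely many ``transient'' active indices as $s \downarrow t$. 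In particular, neither the non-obtuseness of $Z$ nor the integral criticality condition enters here; only the pointwise tangency $z(s) \in \ZZ(y(s))$ and the regulated-function structure of $z$ are used.
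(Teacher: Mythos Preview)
Your proposal is correct and follows essentially the same route as the paper: establish $\AA(y(s)) \subset \AA^+(t)$ for $s$ in a right neighborhood of $t$ by a finiteness/contradiction argument, then pass to the limit along the defining sequences for $\AA^+(t)$ in the pointwise tangency $z(s)\in\ZZ(y(s))$. The paper compresses your first two steps into a single ``simple contradiction argument'' (without invoking \cref{lemma:monotone_play}), but the substance is identical; your remark that neither non-obtuseness nor the integral criticality is needed here is also accurate (and the mention of $V^\pm(t)$ in the symmetry remark is superfluous but harmless).
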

\begin{proof}
We prove \ref{lemma:rl-active-indices:i}.
The proof of \ref{lemma:rl-active-indices:ii} is analogous. 
Let $z$ be critical and let $t \in [0,T)$ be given.
A simple contradiction argument shows that
we can find 
 $0 < \varepsilon < T - t$ 
 such that $\AA(y(s)) \subset \AA^+(t)$ holds for all $s \in (t, t+\varepsilon]$. 
 If $i \in \AA^+(t)$ holds and $\{s_k\}$ is a sequence for this $i$ as in the definition 
 of $\AA^+(t)$, then we further obtain from \eqref{eq:critical_def},
 the definition of $z(t+)$, and $ (t, T] \ni s_k \to t$ that 
$0 \geq \langle \nu_i, z(s_k) \rangle \to \langle \nu_i, z(t+) \rangle$
for $k \to \infty$.
 Thus, $\langle \nu_i, z(t+) \rangle \leq 0$ for all $i \in \AA^+(t)$ 
 and, consequently, 
 $\langle \nu_i, z(t+) \rangle \leq 0$ for all $i \in \AA(y(s))$ and $s \in (t, t+\varepsilon]$.
\end{proof}

\subsection{Identification of  the jumps from the left}
\label{subsec:3.3}

We now turn our attention to the jumps $\delta(t) - \delta(t-)$
of the elements $\delta$ of the set $\DD$. To analyze these jumps from the left, 
we consider the following situation:

\begin{assumption}[standing assumptions for \cref{subsec:3.3}]\label{ass:subsec:3.3}~
\begin{enumerate}[label=\roman*)]
\item $\delta \in \DD$ is arbitrary but fixed;
\item $t \in (0,T]$ is arbitrary but fixed;
\item $\gamma > 0$ is an arbitrary but fixed given small number;
\item\label{ass:subsec:3.3:itemiv}
 $\varepsilon > 0$ is chosen such that all of the following conditions hold:
\begin{enumerate}
\item $\varepsilon < t$ (trivially achievable);
\item $\II(y(t)) \subset \II(y(s))$ for all $s \in [t-\varepsilon, t]$ (achievable by continuity of $y$);
\item $\delta(t-) \in \ZZ(y(s))$ for all $s \in [t-\varepsilon, t)$ (achievable by \cref{lem:delta_critical,lemma:rl-active-indices});
\item the assertions in \cref{lemma:monotone_play} hold 
with functions $\lambda_i \in CBV([t - \varepsilon, t + \varepsilon] \cap [0,T])$,
$i \in \AA(y(t))$;
\item $\lambda_i = \mathrm{const}$ holds on $[t-\varepsilon, t]$ for all $i \in \BB^-(t)$
(achievable by \cref{def:temporal_indices});
\item\label{ass:subsec:3.3:itemiv:f}
$\|  h  -  h(t) \|_{CBV([t-\varepsilon, t];\R^d)} + |\delta(t - \varepsilon) - \delta(t-)|\leq  \gamma/L$,
where $ L >0$ is as in \eqref{eq:BV_Lipschitz} (trivially achievable);
\end{enumerate}
\item $\{\tau_k\}$ is a sequence as 
in the definition of $\DD$ for $\delta$
with associated difference quotients 
$\delta_k := \delta_{\tau_k}$ and $y(t-\varepsilon) + \tau_k \delta(t-) \in Z$
holds 
for all $k$ (achievable due to  $ \delta(t-) \in \ZZ(y(t-\varepsilon))$
and \cref{lem:ZZproperties}\ref{lem:ZZproperties:iii}).
\end{enumerate}
\end{assumption}

Note that, 
in \cref{ass:subsec:3.3}\ref{ass:subsec:3.3:itemiv:f}, the $CBV$-regularity of $h$
enters crucially. 
To study the jump $\delta(t) - \delta(t-)$, we consider
auxiliary variational inequalities of the following type:
\begin{equation*}
\label{eq:EVI_aux_left}
\tag{AUX$_1$}
\begin{aligned}
&
\zeta_{k} \in CBV([t-\varepsilon, t]; \R^d),
\qquad
\zeta_k(s) \in \frac{1}{\tau_k}\left (Z - y(s) \right ) \forall s \in [t-\varepsilon, t],
\qquad
\zeta_k(t-\varepsilon) = \zeta_0,
\\
&\int_{t-\varepsilon}^{t}
 \left \langle z - \zeta_k, \dd  \zeta_k \right \rangle 
 - \frac{1}{\tau_k}
 \int_{t-\varepsilon}^{t}
 \left \langle z - \zeta_k, \dd w \right \rangle 
 \geq 0
\qquad \forall z \in G([t-\varepsilon, t]; \R^d)\colon 
 z(s) \in \frac{1}{\tau_k}\left (Z - y(s) \right )~\forall s \in [t-\varepsilon, t].
\end{aligned}
\end{equation*}

\begin{lemma}[unique solvability of \eqref{eq:EVI_aux_left}]
\label{lem:aux1_solv}
The EVI
\eqref{eq:EVI_aux_left} has a unique 
solution $\zeta_k \in CBV([t-\varepsilon, t]; \R^d)$ for all $\zeta_0$
satisfying $y(t-\varepsilon) + \tau_k \zeta_0 \in Z$
and all $k$.
Further, for all such $\zeta_0$ and $k$, it holds
\begin{equation}
\label{eq:randomeq36737dhu}
\| \delta_k  -  \zeta_k \|_{CBV([t-\varepsilon, t];\R^d)} \leq
L
\left (
\|  h  -  h(t) \|_{CBV([t-\varepsilon, t];\R^d)}
  +
|\delta_k(t-\varepsilon) - \zeta_0|
\right ),
\end{equation}
where $L$ is the Lipschitz constant in \eqref{eq:BV_Lipschitz} associated with $Z$.
\end{lemma}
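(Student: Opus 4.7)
The plan is to reduce \eqref{eq:EVI_aux_left} to a standard sweeping-process EVI of the form \eqref{eq:EVI} via the affine substitution $\tilde y_k := y + \tau_k \zeta_k$, and then to derive \eqref{eq:randomeq36737dhu} by applying the Lipschitz estimate \eqref{eq:BV_Lipschitz} on the subinterval $[t-\varepsilon, t]$ together with the constant-shift invariance of \eqref{eq:EVI} and the causality (semigroup) property of the stop operator.

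First I would perform the substitution $\tilde y_k := y + \tau_k \zeta_k$ in \eqref{eq:EVI_aux_left}. Admissible test functions $z \in G([t-\varepsilon, t]; \R^d)$ satisfying $z(s) \in \tau_k^{-1}(Z - y(s))$ are in bijection with $v \in G([t-\varepsilon, t]; Z)$ via $v = y + \tau_k z$. After substitution, multiplication by $\tau_k^2$, and cancellation of the terms involving $\dd y$ on both sides, the variational inequality in \eqref{eq:EVI_aux_left} becomes
\[
\int_{t-\varepsilon}^{t}\langle v - \tilde y_k, \dd(\tilde y_k - u) \rangle \geq 0 \qquad \forall v \in G([t-\varepsilon, t]; Z),
\]
paired with $\tilde y_k(s) \in Z$ for all $s$ and $\tilde y_k(t-\varepsilon) = y(t-\varepsilon) + \tau_k \zeta_0 \in Z$. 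In view of \cref{lem:EVI_G} (which shows that the continuous-test-function formulation \eqref{eq:EVI} automatically implies the regulated version), this problem is equivalent to \eqref{eq:EVI} posed on the subinterval $[t-\varepsilon, t]$ with input $u$ and initial value $y(t-\varepsilon) + \tau_k \zeta_0$. Hence \cref{prop:unique_solvability_stop_play}, applied on $[t-\varepsilon, t]$, yields a unique $CBV$-solution, and we may identify $\tilde y_k = \SS_{[t-\varepsilon,t]}(u, y(t-\varepsilon) + \tau_k \zeta_0)$, where $\SS_{[t-\varepsilon,t]}$ denotes the stop operator on that subinterval.

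For the Lipschitz bound I would exploit two structural facts. First, since $\dd c = 0$ for every constant $c \in \R^d$ by \cref{lem:KS-constant_functions}, the EVI \eqref{eq:EVI} is invariant under adding a constant vector to the input, and hence equivalently
\[
y + \tau_k \zeta_k = \SS_{[t-\varepsilon,t]}\bigl(u + \tau_k h(t),\, y(t-\varepsilon) + \tau_k \zeta_0\bigr).
\]
Second, by \cref{lem:EVI_G} and the uniqueness in \cref{prop:unique_solvability_stop_play}, the restriction of $\SS(u + \tau_k h, y_0 + \tau_k h_0)$ to $[t-\varepsilon, t]$ solves the EVI on that subinterval with initial value $y(t-\varepsilon) + \tau_k \delta_k(t-\varepsilon)$, giving the semigroup identity
\[
y + \tau_k \delta_k = \SS_{[t-\varepsilon,t]}\bigl(u + \tau_k h,\, y(t-\varepsilon) + \tau_k \delta_k(t-\varepsilon)\bigr)\quad\text{on }[t-\varepsilon,t].
\]
Applying \cref{th:CBV-Lipschitz} on $[t-\varepsilon, t]$ to these two stops, noting that the constant $L$ depends only on $Z$, and then dividing through by $\tau_k > 0$ yields precisely \eqref{eq:randomeq36737dhu}.

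The main (mild) obstacle is bookkeeping: one has to verify that the affine substitution sends admissible $z$ to admissible $v$ and vice versa, that the $CBV$ regularity is preserved along the way, and that one may freely pass between continuous and regulated test functions on the subinterval by invoking \cref{lem:EVI_G}. Everything else is a straightforward combination of the constant-shift invariance of \eqref{eq:EVI}, the semigroup causality of $\SS$, and the previously established Lipschitz estimate \eqref{eq:BV_Lipschitz}.
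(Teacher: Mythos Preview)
Your proposal is correct and follows essentially the same route as the paper: both recast \eqref{eq:EVI_aux_left} via the affine substitution $\tilde y_k = y + \tau_k \zeta_k$ as a standard stop EVI on $[t-\varepsilon,t]$, invoke \cref{prop:unique_solvability_stop_play} for existence and uniqueness, and then obtain \eqref{eq:randomeq36737dhu} by applying \eqref{eq:BV_Lipschitz} to the pair $(u+\tau_k h(t),\,y(t-\varepsilon)+\tau_k\zeta_0)$ and $(u+\tau_k h,\,y(t-\varepsilon)+\tau_k\delta_k(t-\varepsilon))$ and dividing by $\tau_k$. The only cosmetic difference is that the paper inserts the constant $\tau_k h(t)$ into the forcing term from the outset (justified by \cref{lem:KS-constant_functions}), whereas you first obtain the EVI with forcing $u$ and then invoke constant-shift invariance separately; the content is identical.
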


\begin{proof}
To see that 
\eqref{eq:EVI_aux_left} admits a unique solution $\zeta_k \in CBV([t-\varepsilon, t]; \R^d)$, 
we note that,
by performing the steps 
leading to \eqref{eq:DQ-EVI} in reverse 
and by exploiting \cref{lem:KS-constant_functions},
\eqref{eq:EVI_aux_left} can be recast as 
\begin{equation*}
\begin{aligned}
&
y + \tau_k \zeta_{k} \in CBV([t-\varepsilon, t]; \R^d),
\qquad
y(s) + \tau_k\zeta_k(s) \in Z~\forall s \in [t-\varepsilon, t],
\\
&y(t - \varepsilon )+ \tau_k\zeta_k(t-\varepsilon) = y(t - \varepsilon )+ \tau_k \zeta_0,
\\
&\int_{t-\varepsilon}^{t}
 \left \langle v(s) - y(s) - \tau_k \zeta_k(s), 
 \dd \big ( y(s) + \tau_k \zeta_k(s) - u(s) - \tau_k h(t) \big )\right \rangle 
 \geq 0
\qquad \forall v \in G([t-\varepsilon, t]; Z).
\end{aligned}
\end{equation*}
The above problem has a unique solution $y + \tau_k \zeta_{k} \in CBV([t-\varepsilon, t]; \R^d)$
for all $\zeta_0 $ satisfying 
$y(t - \varepsilon )+ \tau_k \zeta_0 \in Z$
by the existence result for the 
stop
on $[t-\varepsilon, t]$ with forcing term $u + \tau_k h(t)$
and initial value 
$y(t - \varepsilon )+ \tau_k \zeta_0 \in Z$; cf.\ \cref{prop:unique_solvability_stop_play,lem:EVI_G}.
This proves the unique solvability of \eqref{eq:EVI_aux_left}.
By considering the EVI satisfied by $y + \tau_k \delta_k$ on $[t-\varepsilon, t]$
and by invoking \eqref{eq:BV_Lipschitz}, 
we further obtain that 
\begin{equation}
\label{eq:randomeq3636478398-2}
\begin{aligned}
\|(y + \tau_k \delta_k) - (y + \tau_k \zeta_k)\|_{CBV([t-\varepsilon, t];\R^d)}
\leq 
L
&\left (
\|(u + \tau_k h) - (u + \tau_k h(t))\|_{CBV([t-\varepsilon, t];\R^d)}
\right.
\\
&\left. +
|(y(t - \varepsilon )+ \tau_k \delta_k(t-\varepsilon)) - (y(t - \varepsilon )+ \tau_k \zeta_0)|
\right ).
\end{aligned}
\end{equation}
The above implies \eqref{eq:randomeq36737dhu} as desired and 
completes the proof. 
\end{proof}

The main advantage of the auxiliary EVI \eqref{eq:EVI_aux_left} is
 that, in contrast to the variational inequality \eqref{eq:DQ-EVI} satisfied by 
the difference quotient $\delta_k$, it is independent of
the perturbation $h$
of the forcing term. This property makes it possible 
to derive formulas for the value $\zeta_k(t)$, as the following lemma shows. 

\begin{lemma}[terminal condition for \eqref{eq:EVI_aux_left}]
\label{lem_left_jump_proto}
For all large enough $k$,
the solution $\zeta_k$ of the EVI  \eqref{eq:EVI_aux_left}
with initial value  $\zeta_0 := \delta(t-)$
satisfies
$
\zeta_k(t) = \pi_{\ZZ(y(t))}(\delta(t-))
$.
\end{lemma}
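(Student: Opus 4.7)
The plan is to exhibit an explicit candidate solution of \eqref{eq:EVI_aux_left} with terminal value $p:=\pi_{\ZZ(y(t))}(\delta(t-))$ and then invoke the uniqueness assertion of \cref{lem:aux1_solv}. Set $q:=\delta(t-)-p$; by \cref{lem:basic_proj_properties}\ref{lem:basic_proj_properties:iv} and \cref{lem:normal_cone_polyhedral}, $q\in\ZZ(y(t))^\circ$ satisfies $\langle p,q\rangle=0$ and admits the representation $q=\sum_{i\in\AA(y(t))}\beta_i\nu_i$ with $\beta_i\geq 0$. Using the substitution $\tilde y_k:=y+\tau_k\zeta_k$ as in the proof of \cref{lem:aux1_solv} and the invariance of the stop under a constant shift of its forcing, proving the lemma amounts to showing that the sweeping-process solution on $[t-\varepsilon,t]$ with forcing $u$ and initial value $y(t-\varepsilon)+\tau_k\delta(t-)\in Z$ (which lies in $Z$ for large $k$ by \cref{ass:subsec:3.3}) reaches $y(t)+\tau_k p$ at time $t$.

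The candidate is built from the monotone decomposition of the play provided by \cref{lemma:monotone_play}: on $[t-\varepsilon,t]$ one has $w(s)-w(t-\varepsilon)=\sum_{i\in\AA(y(t))}(\lambda_i(s)-\lambda_i(t-\varepsilon))\nu_i$ with nondecreasing multipliers $\lambda_i$ that are moreover constant for $i\in\BB^-(t)$ by \cref{ass:subsec:3.3}. I would choose the candidate so that the tangential part $\tau_k p$ of the initial perturbation persists throughout $[t-\varepsilon,t]$ while the normal part $\tau_k q=\sum_i \tau_k\beta_i\nu_i$ is progressively absorbed by the growth of the multipliers and has vanished by time $t$. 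The translate $y(s)+\tau_k p$ remains in $Z$ because $p\in\ZZ(y(t))\subset\ZZ(y(s))$---the latter inclusion following from $\AA(y(s))\subset\AA(y(t))$---and from \cref{lem:ZZproperties}\ref{lem:ZZproperties:i} applied for large $k$. Admissibility of the transition piece depends crucially on the non-obtuseness of $Z$ through \cref{lem:nonobtuse_properties}\ref{lem:nonobtuse_properties:i},\ref{lem:nonobtuse_properties:iii}, which guarantee that adding to a point of $Z$ the residual nonnegative combination of the $\nu_i$ does not take us out of the polyhedron.

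To verify the variational inequality in \eqref{eq:EVI_aux_left} for the candidate, I would decompose an arbitrary admissible test function $z\in G([t-\varepsilon,t];\R^d)$ as $z=\pi_{\ZZ(y(t))}(z)+\pi_{\ZZ(y(t))^\circ}(z)$; a vectorial Stampacchia truncation based on \cref{lem:nonobtuse_properties}\ref{lem:nonobtuse_properties:i} ensures that both pieces are themselves admissible test functions on $[t-\varepsilon,t]$. The tangential piece is then handled by \cref{lem:nonneg_tang_play}, while the normal piece captures the absorption of $\tau_k q$ and contributes the required sign. The uniqueness in \cref{lem:aux1_solv} finally identifies the candidate with $\zeta_k$ and yields $\zeta_k(t)=p$ for all sufficiently large $k$. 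The hardest step will be the simultaneous control of admissibility and of the variational inequality along the transition piece of the candidate; this is where non-obtuseness of $Z$ enters essentially, as it is precisely the property that allows the orthogonal splitting of $\delta(t-)$ into $p\in\ZZ(y(t))$ and $q\in\ZZ(y(t))^\circ$ to be carried out inside the polyhedron rather than only in the ambient space $\R^d$.
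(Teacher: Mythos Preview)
Your high-level plan---decompose $\delta(t-)=p+q$ via Moreau, build a candidate terminating at $p$, and invoke uniqueness from \cref{lem:aux1_solv}---matches the paper's strategy, and you correctly identify non-obtuseness as the crucial structural ingredient. However, the proposal has two genuine gaps.

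First, your candidate is never actually defined. Saying that ``the normal part $\tau_k q$ is progressively absorbed by the growth of the multipliers and has vanished by time $t$'' is precisely the conclusion to be proved, not something you can arrange by construction. The paper resolves this by not attempting an explicit formula: it first solves \eqref{eq:EVI_aux_left} with the \emph{polar} initial value $\zeta_0=q=\pi_{\ZZ(y(t))^\circ}(\delta(t-))$ (call the solution $\xi_k$) and then proves $\xi_k(t)=0$ via a Stampacchia argument. The specific test function is $z=\pi_{\ZZ(y(t))^\circ}(\xi_k)$, admissible by \cref{lem:nonobtuse_properties}\ref{lem:nonobtuse_properties:i}; together with \cref{lem:CBV-Stampacchia} and \cref{lem:nonneg_tang_play} this yields $\dist_{\ZZ(y(t))^\circ}(\xi_k(t))=0$, and combining with $\xi_k(t)\in\ZZ(y(t))$ forces $\xi_k(t)=0$. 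Only then does one set $\eta_k:=\xi_k+p$ as the candidate for the original initial value $\delta(t-)$.

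Second, you omit an ingredient that is essential for verifying that the translated function $\eta_k=\xi_k+p$ still satisfies the variational inequality: the criticality of $\delta$. By \cref{lem:delta_critical} and \cref{th_pointwise_criticality}, $\delta(t-)\in V^-(t)$, i.e.\ $\langle\nu_i,\delta(t-)\rangle=0$ for every $i\in\AA_\strict^-(t)$. Non-obtuseness via \cref{lem:nonobtuse_properties}\ref{lem:nonobtuse_properties:ii} then gives $\langle\nu_i,p\rangle=0$ for those indices, and since the multipliers $\lambda_i$ with $i\in\BB^-(t)$ are constant on $[t-\varepsilon,t]$ by \cref{ass:subsec:3.3}, the cross term $\tau_k^{-1}\int_{t-\varepsilon}^{t}\langle p,\dd w\rangle$ in the translated EVI vanishes exactly. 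Without this observation, your verification that the constant shift by $p$ preserves the variational inequality cannot close. Your proposed route---decomposing an arbitrary test function $z$ into its $\ZZ(y(t))$- and $\ZZ(y(t))^\circ$-components---is different and, while the admissibility of each piece can be argued, it does not by itself replace the criticality input needed to kill the $\dd w$-term.
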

\begin{proof}
That  
$y(t-\varepsilon) + \tau_k  \delta(t-) \in Z$
holds 
follows from \cref{ass:subsec:3.3}.
Choosing $\delta(t-)$ as $\zeta_0 $ in \eqref{eq:EVI_aux_left}
is thus allowed 
by  \cref{lem:aux1_solv}.
Note that 
$y(t-\varepsilon) + \tau_k \delta(t-) \in Z$
also implies 
$y(t-\varepsilon) + \tau_k \pi_{\ZZ(y(t))^\circ}(\delta(t-))  \in Z$
by \cref{lem:nonobtuse_properties}\ref{lem:nonobtuse_properties:i}  and \eqref{eq:Z-Zcirc-identity+hom}.
Thus, we may also consider \eqref{eq:EVI_aux_left}
with $\zeta_0$ chosen as $\pi_{\ZZ(y(t))^\circ}(\delta(t-))$.

We begin with the latter case, i.e., 
\eqref{eq:EVI_aux_left} with $\zeta_0 := \pi_{\ZZ(y(t))^\circ}(\delta(t-))$,
and denote the associated solution of \eqref{eq:EVI_aux_left}
by $\xi_k$. Define 
$z  := \pi_{\ZZ(y(t))^\circ}(\xi_k ) \in G([t-\varepsilon, t]; \R^d)$.
Since 
$y(s) + \tau_k\xi_k(s) \in Z$ holds for all 
$s \in [t-\varepsilon, t]$, 
we again obtain from  \cref{lem:nonobtuse_properties}\ref{lem:nonobtuse_properties:i}  and \eqref{eq:Z-Zcirc-identity+hom}
that
$y(s) + \tau_k z(s) \in Z$
for all $s \in [t-\varepsilon, t]$.
Thus, $z$ 
is an admissible test function in \eqref{eq:EVI_aux_left},
and we may deduce that
\begin{equation*}
\begin{aligned} 
0
&\leq  
\int_{t-\varepsilon}^{t}
 \left \langle \pi_{\ZZ(y(t))^\circ}(\xi_k) - \xi_k, \dd  \xi_k \right \rangle 
 - \frac{1}{\tau_k}
 \int_{t-\varepsilon}^{t}
 \left \langle \pi_{\ZZ(y(t))^\circ}(\xi_k) - \xi_k, \dd w \right \rangle 
 \\
 &=
 -
 \int_{t-\varepsilon}^{t}
 \left \langle \pi_{\ZZ(y(t))}(\xi_k), \dd  \xi_k \right \rangle 
 + \frac{1}{\tau_k}
 \int_{t-\varepsilon}^{t}
 \left \langle \pi_{\ZZ(y(t))}(\xi_k), \dd w \right \rangle 
 \\
 &\leq
  -
 \int_{t-\varepsilon}^{t}
 \left \langle \pi_{\ZZ(y(t))}(\xi_k), \dd  \xi_k \right \rangle 
 \\
 &=
  -
  \frac12
\dist_{\ZZ(y(t))^\circ}( \xi_k(t))^2
 +
 \frac12
\dist_{\ZZ(y(t))^\circ}(\xi_k(t-\varepsilon))^2
  \\
 &=
  -
  \frac12
\dist_{\ZZ(y(t))^\circ}(\xi_k(t))^2.
\end{aligned}
\end{equation*}
Here, we have used \cref{lem:basic_proj_properties},
$w = u - y$, 
the inclusion
$\ZZ(y(t)) \subset \ZZ(y(s))$ for all $s \in [t-\varepsilon, t]$
obtained from 
\eqref{eq:mon_play},
\cref{lem:nonneg_tang_play},
\cref{lem:CBV-Stampacchia}, 
and
$\xi_k(t-\varepsilon) = \pi_{\ZZ(y(t))^\circ}(\delta(t-))$.
The above shows that $\xi_k(t) \in \ZZ(y(t))^\circ$ 
which, in combination with \eqref{eq:EVI_aux_left},
yields $ \xi_k(t) \in \ZZ(y(t)) \cap \ZZ(y(t))^\circ$
and, thus, $\xi_k(t)  = 0$.

 Let us now consider the functions 
 $\eta_k(s) := \xi_k(s) + \pi_{\ZZ(y(t))}(\delta(t-))$,
 $s \in [t-\varepsilon, t]$. For these functions, it clearly 
 holds 
 $\eta_k \in CBV([t-\varepsilon, t]; \R^d)$
 and
 $\eta_k(t-\varepsilon) = \pi_{\ZZ(y(t))^\circ}(\delta(t-)) +
 \pi_{\ZZ(y(t))}(\delta(t-)) = \delta(t-)$ by \eqref{eq:Z-Zcirc-identity+hom}
 and the initial value condition for $\xi_k$.
 For all $i \in \AA(y(t))$, we further have 
 \begin{equation*}
 \begin{aligned}
 \langle \nu_i, y(s) + \tau_k  \eta_k(s) \rangle 
 &=
  \langle \nu_i, y(s) + \tau_k  \xi_k(s) + \tau_k \pi_{\ZZ(y(t))}(\delta(t-))  \rangle 
  \leq
  \langle \nu_i, y(s) + \tau_k \xi_k(s) \rangle 
  \leq \alpha_i\qquad \forall s \in [t-\varepsilon, t],
 \end{aligned}
 \end{equation*}
 and,
 from our assumption 
 $\II(y(t)) \subset \II(y(s))$ for all $s \in [t-\varepsilon, t]$,
  the fact that $\{\eta_k\}$ and $\{\xi_k\}$ are bounded in $CBV([t-\varepsilon, t]; \R^d)$
 by \eqref{eq:randomeq36737dhu} and \cref{lem:EVI_diff_quot}, and $\tau_k \to 0$,
 we obtain
 that $\langle \nu_i, y(s) + \tau_k  \eta_k(s) \rangle \leq \alpha_i$
 holds 
for all $i \in \II(y(t))$,
all $s \in [t-\varepsilon, t]$,
and all sufficiently large $k$. 
 In total, this shows that $y(s) + \tau_k  \eta_k(s) \in Z$
 for all $s \in [t-\varepsilon, t]$ and all large enough $k$. 
 Suppose now that a function $z \in G([t-\varepsilon, t]; \R^d)$
 satisfying $z(s) \in (1/ \tau_k)\left (Z - y(s) \right ) $ for all $s \in [t-\varepsilon, t]$
 is given. Then we may compute that 
 \begin{equation*}
 \begin{aligned}
 &\int_{t-\varepsilon}^{t}
 \left \langle z - \eta_k, \dd \eta_k \right \rangle 
 - \frac{1}{\tau_k}
 \int_{t-\varepsilon}^{t}
 \left \langle z - \eta_k, \dd w \right \rangle 
 \\
 &=
 \int_{t-\varepsilon}^{t}
 \left \langle z -  \xi_k  - \pi_{\ZZ(y(t))}(\delta(t-)), 
 \dd ( \xi_k  + \pi_{\ZZ(y(t))}(\delta(t-)) ) \right \rangle 
 - \frac{1}{\tau_k}
 \int_{t-\varepsilon}^{t}
 \left \langle z -  \xi_k  - \pi_{\ZZ(y(t))}(\delta(t-)), \dd w \right \rangle
  \\
 &=
 \int_{t-\varepsilon}^{t}
 \left \langle z -  \xi_k, 
 \dd \xi_k \right \rangle 
 - \frac{1}{\tau_k}
 \int_{t-\varepsilon}^{t}
 \left \langle z -  \xi_k, \dd w \right \rangle  
 -
  \int_{t-\varepsilon}^{t}
 \left \langle  \pi_{\ZZ(y(t))}(\delta(t-)), 
 \dd \xi_k \right \rangle 
  + \frac{1}{\tau_k}
 \int_{t-\varepsilon}^{t}
 \left \langle  \pi_{\ZZ(y(t))}(\delta(t-)), \dd w \right \rangle  
 \\
 &\geq
 - \left \langle  \pi_{\ZZ(y(t))}(\delta(t-)), \xi_k(t) \right \rangle 
 +
 \left \langle  \pi_{\ZZ(y(t))}(\delta(t-)), \xi_k(t-\varepsilon) \right \rangle 
 + \frac{1}{\tau_k}
 \int_{t-\varepsilon}^{t}
 \left \langle  \pi_{\ZZ(y(t))}(\delta(t-)), \dd \left ( \sum_{i \in \AA(y(t))} \lambda_i \nu_i \right ) \right \rangle  
  \\
 &=
 \left \langle  \pi_{\ZZ(y(t))}(\delta(t-)), \pi_{\ZZ(y(t))^\circ}(\delta(t-)) \right \rangle 
 + \frac{1}{\tau_k}\sum_{i \in \AA(y(t))}
 \left ( \lambda_i(t) - \lambda_i(t-\varepsilon)  \right )
 \left \langle  \pi_{\ZZ(y(t))}(\delta(t-)),   \nu_i  \right \rangle
   \\
 &=
 \frac{1}{\tau_k}\sum_{i \in \AA_\strict^-(t)}
 \left ( \lambda_i(t) - \lambda_i(t-\varepsilon)  \right )
 \left \langle  \pi_{\ZZ(y(t))}(\delta(t-)),   \nu_i  \right \rangle
 \\
 &= 0.
 \end{aligned}
 \end{equation*}
 Here, we have used \cref{lem:KS-constant_functions},
 the EVI satisfied by $\xi_k$, 
 $\xi_k(t) = 0$,
 $\xi_k(t-\varepsilon) = \pi_{\ZZ(y(t))^\circ}(\delta(t-))$, \cref{ass:subsec:3.3},
 the fact that $\left \langle  \pi_{\ZZ(y(t))}(x), \pi_{\ZZ(y(t))^\circ}(x) \right \rangle = 0$ holds 
 for all $x\in \R^d$ by 
 \eqref{eq:Z-Zcirc-identity+hom} and
 \cref{lem:basic_proj_properties}, 
 and the fact that $\langle \delta(t-), \nu_i \rangle = 0$ holds 
 for all $i \in \AA_\strict^-(t)$ by \cref{lem:delta_critical,th_pointwise_criticality},
which implies 
 $\langle  \pi_{\ZZ(y(t))}(\delta(t-)), \nu_i \rangle = 0$
 for all $i \in \AA_\strict^-(t)$ by \cref{lem:nonobtuse_properties}\ref{lem:nonobtuse_properties:ii}.
 If we combine all of the above, then it follows that, for all sufficiently 
 large $k$, 
 the solution $\zeta_k$ of 
 \eqref{eq:EVI_aux_left}
 with initial value $\zeta_0 = \delta(t-)$
  is given by $\zeta_k(s) = \eta_k(s) = \xi_k(s) + \pi_{\ZZ(y(t))}(\delta(t-))$.
 In particular, we have
 $\zeta_k(t) = \xi_k(t) + \pi_{\ZZ(y(t))}(\delta(t-)) = \pi_{\ZZ(y(t))}(\delta(t-))$
and the assertion of the lemma follows.
 \end{proof}
 
 Note that the proof of \cref{lem_left_jump_proto} makes extensive use of the 
 non-obtuseness of $Z$.
 With \cref{lem_left_jump_proto} and the stability estimate \eqref{eq:randomeq36737dhu}
 at hand, we can now determine the jump $\delta(t)-\delta(t-)$.
 For convenience, we formulate the resulting theorem 
 such that it is independent of  \cref{ass:subsec:3.3}. 
 
 \begin{theorem}[jumps from the left]
 \label{th:left_jumps}
 For all $t \in [0,T]$ and all $\delta \in \DD$,
 it holds $\delta(t) = \pi_{\ZZ(y(t))}(\delta(t-))$.
 \end{theorem}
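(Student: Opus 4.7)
The plan is to reduce the theorem to \cref{lem_left_jump_proto} via the stability estimate \eqref{eq:randomeq36737dhu} and the defining pointwise convergence of the sequence $\{\delta_{\tau_k}\}$ toward $\delta$. First, I dispose of the edge case $t=0$: here our conventions give $\delta(0-) = \delta(0)$, and since $\delta_\tau(0) = h_0 \in \ZZ(y_0)$ for every $\tau$ we get $\delta(0) = h_0$, so that $\pi_{\ZZ(y(0))}(\delta(0-)) = \pi_{\ZZ(y_0)}(h_0) = h_0 = \delta(0)$ as required.

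For fixed $t \in (0,T]$ and fixed $\delta \in \DD$, I would fix an arbitrary $\gamma > 0$ and then choose $\varepsilon > 0$ as well as the sequence $\{\tau_k\}$ precisely as in \cref{ass:subsec:3.3} (invoking \cref{lem:delta_critical} and \cref{lemma:rl-active-indices} for the inclusion $\delta(t-) \in \ZZ(y(s))$ on $[t-\varepsilon, t)$ so that $y(t-\varepsilon) + \tau_k \delta(t-) \in Z$ holds via \cref{lem:ZZproperties}\ref{lem:ZZproperties:iii}). I then denote by $\zeta_k$ the unique solution of the auxiliary EVI \eqref{eq:EVI_aux_left} with initial value $\zeta_0 := \delta(t-)$, whose existence is supplied by \cref{lem:aux1_solv}.

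The crucial inputs are now: (a) the identity $\zeta_k(t) = \pi_{\ZZ(y(t))}(\delta(t-))$ for all sufficiently large $k$, which is exactly \cref{lem_left_jump_proto}; and (b) the Lipschitz estimate \eqref{eq:randomeq36737dhu} of \cref{lem:aux1_solv}, which here specializes to
\begin{equation*}
|\delta_k(t) - \zeta_k(t)|
\le \| \delta_k - \zeta_k\|_{CBV([t-\varepsilon,t];\R^d)}
\le L\bigl(\|h - h(t)\|_{CBV([t-\varepsilon,t];\R^d)} + |\delta_k(t-\varepsilon) - \delta(t-)|\bigr).
\end{equation*}
Passing to the limit $k \to \infty$ and using the pointwise convergence $\delta_k(t-\varepsilon) \to \delta(t-\varepsilon)$ and $\delta_k(t) \to \delta(t)$ guaranteed by the membership $\delta \in \DD$, together with the constancy of $\zeta_k(t)$ in $k$ from \cref{lem_left_jump_proto}, I obtain
\begin{equation*}
|\delta(t) - \pi_{\ZZ(y(t))}(\delta(t-))|
\le L\bigl(\|h - h(t)\|_{CBV([t-\varepsilon,t];\R^d)} + |\delta(t-\varepsilon) - \delta(t-)|\bigr)
\le \gamma,
\end{equation*}
where the last step uses condition \ref{ass:subsec:3.3:itemiv:f} of \cref{ass:subsec:3.3}. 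Since $\gamma > 0$ was arbitrary, $\delta(t) = \pi_{\ZZ(y(t))}(\delta(t-))$.

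No serious obstacles remain at this stage: all heavy lifting has been done in \cref{lem_left_jump_proto}, and the role of this theorem is essentially bookkeeping---transferring the terminal-value identity for the auxiliary EVI to the true difference quotient via the Lipschitz stability of the stop in $CBV$. The only point to be careful about is that the pointwise convergence $\delta_k(t-\varepsilon) \to \delta(t-\varepsilon)$ and $\delta_k(t) \to \delta(t)$ holds by definition of $\DD$, and that the choice of $\varepsilon$ allowed by \cref{ass:subsec:3.3}\ref{ass:subsec:3.3:itemiv:f} (which crucially exploits the $CBV$-regularity of $h$) absorbs the remaining error into $\gamma$.
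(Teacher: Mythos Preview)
Your proposal is correct and follows essentially the same approach as the paper's own proof: apply the stability estimate \eqref{eq:randomeq36737dhu} with $\zeta_0 = \delta(t-)$, substitute the terminal-value identity from \cref{lem_left_jump_proto}, pass to the limit $k \to \infty$ using pointwise convergence, and then let $\gamma \to 0$; the case $t=0$ is handled separately via the convention $\delta(0-) = \delta(0)$ and criticality. The only cosmetic difference is that the paper treats $t=0$ at the end rather than at the beginning.
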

 
 \begin{proof}
 In the situation of \cref{ass:subsec:3.3}, 
 we obtain from \cref{lem:aux1_solv,lem_left_jump_proto}
 that, for all sufficiently large $k$, 
  we have 
  \[
 	|\delta_k(t) - \pi_{\ZZ(y(t))}(\delta(t-))|
 	\leq
 	L
\left (
\|  h  -  h(t) \|_{CBV([t-\varepsilon, t];\R^d)}
  +
|\delta_k(t-\varepsilon) - \delta(t-)|
\right ).
 \]
 After passing to the limit $k \to \infty$, this yields 
  \[
 	|\delta(t) - \pi_{\ZZ(y(t))}(\delta(t-))|
 	\leq
 	L
	\left (
	\|  h  -  h(t) \|_{CBV([t-\varepsilon, t];\R^d)}
  	+
	|\delta(t-\varepsilon) - \delta(t-)|
	\right )
	\leq
	\gamma,
 \]
 where $\gamma > 0$ is the arbitrarily small number from \cref{ass:subsec:3.3}.
 Letting $\gamma$ go to zero proves the claim for all $t \in (0,T]$. 
 For $t = 0$, the assertion follows 
 from the convention $\delta(0) = \delta(0-)$
 and \cref{lem:delta_critical}.
 \end{proof}

\subsection{Identification of  the jumps from the right}
\label{subsec:3.4}

Next, we study the jumps $\delta(t+) - \delta(t)$ of the elements $\delta$ of $\DD$.
To this end, we consider the following situation
(similarly to the last subsection).

\begin{assumption}[standing assumptions for \cref{subsec:3.4}]\label{ass:subsec:3.4}~
\begin{enumerate}[label=\roman*)]
\item $\delta \in \DD$ is arbitrary but fixed;
\item $t \in [0,T)$ is arbitrary but fixed;
\item $\gamma > 0$ is an arbitrary but fixed given small number;
\item $\varepsilon > 0$ is chosen such that all of the following conditions hold:
\begin{enumerate}
\item $\varepsilon < T-t$ (trivially achievable);
\item $\II(y(t)) \subset \II(y(s))$ for all $s \in [t, t+\varepsilon]$ (achievable by continuity of $y$);
\item the assertions in \cref{lemma:monotone_play} hold 
with functions $\lambda_i \in CBV([t - \varepsilon, t + \varepsilon] \cap [0,T])$,
$i \in \AA(y(t))$;
\item $\lambda_i = \mathrm{const}$ holds on $[t, t +\varepsilon]$ for all $i \in \BB^+(t)$
(achievable by \cref{def:temporal_indices});
\item 
\label{ass:subsec:3.4:iv_e}
$\|  h  -  h(t) \|_{CBV([t, t + \varepsilon];\R^d)} \leq  \gamma/L $ holds,
where $ L >0$ is as in \eqref{eq:BV_Lipschitz} (trivially achievable);
\end{enumerate}
\item $\{\tau_k\}$ is a sequence as 
in the definition of $\DD$ for $\delta$
with associated difference quotients 
$\delta_k := \delta_{\tau_k}$.
\end{enumerate}
\end{assumption}

Note that 
\cref{ass:subsec:3.4}\ref{ass:subsec:3.4:iv_e}
again relies crucially on the 
$CBV$-regularity of $h$.
To identify the jump
$\delta(t+) - \delta(t)$
in the situation of \cref{ass:subsec:3.4},
we proceed analogously to \cref{subsec:3.3}
and consider suitably defined 
auxiliary EVIs. 
The first one reads as follows:
\begin{equation*}
\label{eq:EVI_aux_right_1}
\tag{AUX$_2$}
\begin{aligned}
&
\zeta_{k} \in CBV([t, t + \varepsilon]; \R^d),
\qquad
\zeta_k(s) \in \frac{1}{\tau_k}\left (Z_\AA - y(s) \right ) \forall s \in [t, t + \varepsilon],
\qquad
\zeta_k(t) = \delta_k(t),
\\
&\int_{t}^{t + \varepsilon}
 \left \langle z - \zeta_k, \dd  \zeta_k \right \rangle 
 - \frac{1}{\tau_k}
 \int_{t}^{t + \varepsilon}
 \left \langle z - \zeta_k, \dd w \right \rangle 
 \geq 0
\qquad \forall z \in G([t, t + \varepsilon]; \R^d)\colon 
 z(s) \in \frac{1}{\tau_k}\left (Z_\AA - y(s) \right )~\forall s \in [t, t + \varepsilon].
\end{aligned}
\end{equation*}
Here and in what follows, $Z_\AA$ is defined by 
\[
Z_\AA := 
\{x \in \R^d \mid \langle \nu_i, x\rangle \leq \alpha_i~\forall i \in \AA(y(t)) \}.
\]

\begin{lemma}[unique solvability of \eqref{eq:EVI_aux_right_1}]
\label{lem_close_right_jump}
For all large enough $k$, 
\eqref{eq:EVI_aux_right_1}
is uniquely solvable and 
its solution $\zeta_k$ satisfies
\begin{equation}
\label{eq:random_Lipschitz_6376}
\| \delta_k  -  \zeta_k \|_{CBV([t, t + \varepsilon];\R^d)} \leq \gamma.
\end{equation}
\end{lemma}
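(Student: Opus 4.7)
The plan is to recast \eqref{eq:EVI_aux_right_1} as a stop problem on $[t, t+\varepsilon]$ with the larger admissible set $Z_\AA$ and a time-constant shift of the forcing. Setting $y_k := y + \tau_k \zeta_k$, \eqref{eq:EVI_aux_right_1} is equivalent to requiring that $y_k$ solves the $Z_\AA$-EVI on $[t, t+\varepsilon]$ with forcing $u + \tau_k h(t)$ and initial value $y(t) + \tau_k \delta_k(t)$. Since $\delta_k(t) \in (1/\tau_k)(Z - y(t))$ we have $y(t) + \tau_k \delta_k(t) \in Z \subset Z_\AA$, and because $Z_\AA$ is a full-dimensional convex polyhedron, \cref{prop:unique_solvability_stop_play} applied to $Z_\AA$ yields the unique solvability of \eqref{eq:EVI_aux_right_1}. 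Denote the stop operator for $Z_\AA$ by $\SS_\AA$, and note that \cref{th:CBV-Lipschitz} for $Z_\AA$ furnishes a Lipschitz constant, which (taking a maximum over the finitely many sets of the form $Z_\AA$ and the original $Z$) we may still call $L$.

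For the bound \eqref{eq:random_Lipschitz_6376}, the strategy is to write both $\delta_k$ and $\zeta_k$ as $\SS_\AA$-outputs for the same initial value and then apply the Lipschitz estimate in the forcing variable. By the restart property of the $Z$-stop,
\begin{equation*}
(y + \tau_k \delta_k)|_{[t,t+\varepsilon]} = \SS\bigl(u + \tau_k h,\, y(t) + \tau_k \delta_k(t)\bigr)\big|_{[t,t+\varepsilon]}.
\end{equation*}
The key identification step is to show that, for all sufficiently large $k$, the right-hand side coincides with $\SS_\AA(u + \tau_k h, y(t) + \tau_k \delta_k(t))|_{[t,t+\varepsilon]}$. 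Once this is in place, the $Z_\AA$-Lipschitz estimate gives
\begin{equation*}
\|\tau_k \delta_k - \tau_k \zeta_k\|_{CBV([t,t+\varepsilon];\R^d)} \leq L\,\tau_k\,\|h - h(t)\|_{CBV([t,t+\varepsilon];\R^d)},
\end{equation*}
and dividing by $\tau_k$ together with \cref{ass:subsec:3.4}\,(iv)(e) yields $\|\delta_k - \zeta_k\|_{CBV([t,t+\varepsilon];\R^d)} \leq \gamma$.

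The main obstacle is the identification $\SS \equiv \SS_\AA$ on $[t, t+\varepsilon]$ for the shifted data and large $k$. I would proceed in two steps. First, $y|_{[t,t+\varepsilon]}$ already solves both the $Z$-EVI and the $Z_\AA$-EVI: by \cref{lemma:monotone_play} the play increment of $y$ on $[t, t+\varepsilon]$ has the form $\sum_{i \in \AA(y(t))} \lambda_i(s)\nu_i$, which is a valid normal-cone element for $Z_\AA$ at $y(s)$ because the choice of $\varepsilon$ forces $\AA_Z(y(s)) \subset \AA(y(t))$ via $\II(y(t)) \subset \II(y(s))$. Second, the Lipschitz continuity of $\SS$ (\cref{th:CBV-Lipschitz}) implies that $\SS(u + \tau_k h, y(t) + \tau_k \delta_k(t))$ converges to $y|_{[t,t+\varepsilon]}$ uniformly on $[t, t+\varepsilon]$, so for large $k$ its trajectory stays inside the open set $\{x \in \R^d \mid \langle \nu_i, x\rangle < \alpha_i \;\forall i \in \II(y(t))\}$, on which the constraints defining $Z$ and $Z_\AA$ agree. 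Applying the uniqueness in \cref{prop:unique_solvability_stop_play} to both EVIs (which then have identical active constraint structure along the trajectory) yields the desired identification, and the argument is complete.
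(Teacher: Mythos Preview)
Your approach is essentially the same as the paper's: recast \eqref{eq:EVI_aux_right_1} as a $Z_\AA$-stop on $[t,t+\varepsilon]$, show that both $y$ and $y+\tau_k\delta_k$ solve the $Z_\AA$-EVI there for large $k$, and then apply the $CBV$-Lipschitz estimate for $\SS_\AA$ with inputs $u+\tau_k h$ and $u+\tau_k h(t)$.

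The one step that needs tightening is your final identification. Saying that the trajectory $\tilde y := \SS(u+\tau_k h,\,y(t)+\tau_k\delta_k(t))$ has ``identical active constraint structure'' for $Z$ and $Z_\AA$ and then invoking uniqueness in \cref{prop:unique_solvability_stop_play} is not by itself a proof that $\tilde y$ satisfies the $Z_\AA$-EVI: in the $CBV$-setting you cannot argue via pointwise normal-cone equalities, and uniqueness only helps once you already know $\tilde y$ solves the $Z_\AA$-problem. The paper closes this gap by citing \cref{lem:nonneg_tang_play}: once $\tilde y(s)$ lies in the open slab $\{x:\langle\nu_i,x\rangle<\alpha_i\ \forall i\in\II(y(t))\}$, every $v\in G([t,t+\varepsilon];Z_\AA)$ satisfies $(v-\tilde y)(s)\in\ZZ(\tilde y(s))$, and \cref{lem:nonneg_tang_play} then gives $\int\langle v-\tilde y,\dd(\tilde y-u-\tau_k h)\rangle\ge 0$ directly. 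Your monotone-decomposition argument for the unperturbed $y$ is fine but needs the same complementarity input (that $\dd\lambda_i$ is supported where $i$ is active), which is exactly what underlies \cref{lem:nonneg_tang_play}.
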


\begin{proof}
From our assumption 
$\II(y(t)) \subset \II(y(s))$ for all $s \in [t, t+\varepsilon]$, the convergence $\tau_k \to 0$,
the Lipschitz estimate \eqref{eq:BV_Lipschitz},
the continuity of $y$,
and \cref{lem:nonneg_tang_play}, 
we obtain 
that, for all large enough $k$, the set $Z$ 
in the EVIs satisfied by 
$y = \SS(u, y_0)$
and $\SS(u + \tau_k h, y_0 + \tau_k h_0)$
on the interval $[t, t + \varepsilon]$
can be replaced by $Z_\AA$ without changing 
the solutions of these problems. 
This implies that the 
difference quotients $\delta_k$
are, for all large enough $k$, 
on $[t, t + \varepsilon]$
also the (necessarily unique) solutions 
of the following EVIs
(cf.\ the derivation of \eqref{eq:DQ-EVI}):
\begin{equation*}
\begin{aligned}
&
\delta_{k} \in CBV([t, t + \varepsilon]; \R^d),
\qquad
\delta_k(s) \in \frac{1}{\tau_k}\left (Z_\AA - y(s) \right ) \forall s \in [t, t + \varepsilon],
\qquad
\delta_k(t) = \delta_k(t),
\\
&\int_{t}^{t + \varepsilon}
 \left \langle z - \delta_k, \dd  (\delta_k - h) \right \rangle 
 - \frac{1}{\tau_k}
 \int_{t}^{t + \varepsilon}
 \left \langle z - \delta_k, \dd w \right \rangle 
 \geq 0
\\
&\hspace{2cm}\forall z \in G([t, t + \varepsilon]; \R^d)\colon 
 z(s) \in \frac{1}{\tau_k}\left (Z_\AA - y(s) \right )~\forall s \in [t, t + \varepsilon].
\end{aligned}
\end{equation*}
Note that, in the case 
$h|_{[t, t + \varepsilon]} = \mathrm{const}$, the above problem  
is identical to \eqref{eq:EVI_aux_right_1}
by \cref{lem:KS-constant_functions}.
To establish the unique solvability of \eqref{eq:EVI_aux_right_1},
we can thus argue along the exact same lines 
as in the proof of \cref{lem:aux1_solv}. 
The same is true for the proof of  \eqref{eq:random_Lipschitz_6376};
cf.\ the arguments in \eqref{eq:randomeq3636478398-2}.
\end{proof}

\begin{lemma}[reformulation of \eqref{eq:EVI_aux_right_1}]
\label{lem:3.18-2}
For all large enough $k$, 
the solution $\zeta_k$ of \eqref{eq:EVI_aux_right_1}
is also the (necessarily unique)
solution of the following EVI:
\begin{equation*}
\label{eq:EVI_aux_right_2}
\tag{AUX$_3$}
\begin{aligned}
&
\zeta_{k} \in CBV([t, t + \varepsilon]; \R^d),
\qquad
\zeta_k(s) \in \ZZ(y(t))~\forall s \in [t, t + \varepsilon],
\qquad
\zeta_k(t) = \delta_k(t),
\\
&\int_{t}^{t + \varepsilon}
 \left \langle z - \zeta_k, \dd  \zeta_k \right \rangle 
 - \frac{1}{\tau_k}
 \int_{t}^{t + \varepsilon}
 \left \langle z - \zeta_k, \dd w \right \rangle 
 \geq 0
\qquad \forall z \in G([t, t + \varepsilon]; \R^d)\colon 
 z(s) \in \ZZ(y(t))~\forall s \in [t, t + \varepsilon].
\end{aligned}
\end{equation*}
\end{lemma}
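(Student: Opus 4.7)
The plan is to establish the lemma by constructing the solution of \eqref{eq:EVI_aux_right_2} directly and showing it also solves \eqref{eq:EVI_aux_right_1}; the uniqueness result in \cref{lem_close_right_jump} then forces $\zeta_k$ to coincide with this solution. Existence and uniqueness of a solution $\hat\zeta_k \in CBV([t, t+\varepsilon]; \R^d)$ to \eqref{eq:EVI_aux_right_2} is straightforward: the EVI is of stop form on the full-dimensional convex polyhedral cone $\ZZ(y(t))$ (full-dimensional by \cref{lem:ZZstandard}), with $CBV$ forcing $\tau_k^{-1} w$ and initial value $\delta_k(t) \in \ZZ(y(t))$; the inclusion follows from $y(t) + \tau_k\delta_k(t) \in Z$ together with $\langle\nu_i, y(t)\rangle = \alpha_i$ for $i \in \AA(y(t))$, which forces $\langle\nu_i, \delta_k(t)\rangle \leq 0$. \Cref{prop:unique_solvability_stop_play} and \cref{lem:EVI_G} then yield $\hat\zeta_k$.

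Admissibility of $\hat\zeta_k$ in \eqref{eq:EVI_aux_right_1} is immediate: for each $i \in \AA(y(t))$, the bound $\langle\nu_i, \hat\zeta_k(s)\rangle \leq 0$ combined with $y(s) \in Z$ gives $\langle\nu_i, y(s) + \tau_k\hat\zeta_k(s)\rangle \leq \alpha_i$, hence $\hat\zeta_k(s) \in \tau_k^{-1}(Z_\AA - y(s))$, and the initial condition is inherited. The substantive step is to verify the variational inequality in \eqref{eq:EVI_aux_right_1} for every $z \in G([t, t+\varepsilon]; \R^d)$ with $z(s) \in \tau_k^{-1}(Z_\AA - y(s))$, that is,
\[
\int_t^{t+\varepsilon} \langle z - \hat\zeta_k, d\hat\zeta_k\rangle - \tau_k^{-1}\int_t^{t+\varepsilon} \langle z - \hat\zeta_k, dw\rangle \geq 0.
\]
The tactic is to introduce the tangential approximant $\tilde z(s) := \hat\zeta_k(s) + \pi_{\ZZ(y(t))}(z(s) - \hat\zeta_k(s))$, which lies in the convex cone $\ZZ(y(t))$ and is therefore a valid test function in \eqref{eq:EVI_aux_right_2}. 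Applying the VI of \eqref{eq:EVI_aux_right_2} to $\tilde z$ and subtracting from the target inequality reduces the problem to showing nonnegativity of the residual
\[
R := \int_t^{t+\varepsilon} \langle \pi_{\ZZ(y(t))^\circ}(z - \hat\zeta_k), d(\hat\zeta_k - \tau_k^{-1}w) \rangle.
\]

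Estimating $R$ is the main obstacle. On $[t, t+\varepsilon]$, \cref{lemma:monotone_play} together with \cref{ass:subsec:3.4} gives $dw = \sum_{i \in \AA_\strict^+(t)} \nu_i\, d\lambda_i$ with nondecreasing $\lambda_i$, while \cref{lem:standard_description_cone} combined with \cref{lemma:monotone_play} applied to the EVI of \eqref{eq:EVI_aux_right_2} produces an analogous local decomposition of the play $\hat\zeta_k - \tau_k^{-1}w$, whose (nondecreasing) multipliers are supported on the faces of $\ZZ(y(t))$ visited by $\hat\zeta_k$. Writing $\pi_{\ZZ(y(t))^\circ}(z - \hat\zeta_k) = \sum_j \beta_j \nu_j$ with $\beta_j \geq 0$ via \cref{lem:normal_cone_polyhedral}, and gluing the local decompositions by a partition-of-unity argument in the spirit of the proof of \cref{lem:nonneg_tang_play}, $R$ splits into contributions of the form $\int \beta_j \langle \nu_j, \nu_i\rangle\, d\mu_i$ with $\beta_j, d\mu_i \geq 0$. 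The non-obtuseness of $Z$---inherited by $\ZZ(y(t))$ through \cref{def:properties_descript,lem:standard_description_cone}---enforces $\langle\nu_j, \nu_i\rangle \leq 0$ for $i \neq j$, and combined with \cref{lem:mult_ints_tang,lem:nonobtuse_properties} to handle the remaining terms yields $R \geq 0$. This proves the VI; \cref{lem_close_right_jump} then forces $\zeta_k = \hat\zeta_k$, and the lemma follows.
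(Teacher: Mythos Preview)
Your strategy---constructing the solution $\hat\zeta_k$ of \eqref{eq:EVI_aux_right_2} and then verifying that it also solves \eqref{eq:EVI_aux_right_1}---is the reverse of the paper's, and it forces you to prove a variational inequality against the \emph{larger} test class of \eqref{eq:EVI_aux_right_1}. The reduction to the residual $R=\int\langle \pi_{\ZZ(y(t))^\circ}(z-\hat\zeta_k),\,\dd(\hat\zeta_k-\tau_k^{-1}w)\rangle$ is fine, and the off-diagonal terms $\int\beta_j\langle\nu_j,\nu_i\rangle\,\dd\mu_i$ with $j\neq i$ do have the right sign by non-obtuseness. The problem is the diagonal terms: after the sign flip from $\dd(\hat\zeta_k-\tau_k^{-1}w)=-\dd(\text{play})$, the diagonal contribution to $R$ is $-\int\beta_i\,\dd\mu_i\le 0$, which is the \emph{wrong} sign. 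Your appeal to \cref{lem:mult_ints_tang,lem:nonobtuse_properties} does not explain how these terms vanish or are dominated. Indeed, \cref{lem:mult_ints_tang} applies to test functions tangential to the linearization cone of the stop, whereas $p\in\ZZ(y(t))^\circ$; and for non-orthogonal (but non-obtuse) normals one can easily produce $v=z-\hat\zeta_k$ with $\langle\nu_i,v\rangle\le 0$ yet $\beta_i>0$ in the decomposition $\pi_{\ZZ(y(t))^\circ}(v)=\sum_j\beta_j\nu_j$, so there is no pointwise reason for $\beta_i$ to vanish where $\mu_i$ grows. Closing this gap would require a substantial further argument linking the multipliers $\mu_i$ of \eqref{eq:EVI_aux_right_2} to the multipliers $\lambda_i$ of the original problem and the admissibility constraint of $z$ in \eqref{eq:EVI_aux_right_1}; you have not supplied it.

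By contrast, the paper works in the other direction: it starts from the solution $\zeta_k$ of \eqref{eq:EVI_aux_right_1} and shows $\zeta_k(s)\in\ZZ(y(t))$ for all $s$ via a Stampacchia-type truncation. One tests \eqref{eq:EVI_aux_right_1} with $z=\pi_{\ZZ(y(t))}(\zeta_k)$, which is admissible by \cref{lem:nonobtuse_properties}\ref{lem:nonobtuse_properties:i}, and combines \cref{lem:basic_proj_properties}\ref{lem:basic_proj_properties:iv}, \cref{lem:CBV-Stampacchia}, and \eqref{eq:EVI_G} for $y$ (on $Z_\AA$) to obtain $\dist_{\ZZ(y(t))}(\zeta_k(s))^2\le 0$. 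Once $\zeta_k\in\ZZ(y(t))$ is known, the inequality in \eqref{eq:EVI_aux_right_2} is inherited from \eqref{eq:EVI_aux_right_1} because $\ZZ(y(t))\subset\tau_k^{-1}(Z_\AA-y(s))$, and uniqueness of \eqref{eq:EVI_aux_right_2} is standard. This avoids the difficult residual estimate entirely.
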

\begin{proof}
For every $z \in G([t, t + \varepsilon]; \R^d)$
satisfying $z(s) \in \ZZ(y(t))$
for all $s \in [t, t + \varepsilon]$, 
it holds 
\begin{equation}
\label{eq:randomeq2673738}
\langle \nu_i, y(s) + \tau_k z(s) \rangle
\leq \alpha_i + 0 \qquad \forall s \in [t, t + \varepsilon]
\qquad \forall i \in \AA(y(t)).
\end{equation}
That $\zeta_k$ satisfies the 
variational inequality
in \eqref{eq:EVI_aux_right_2} thus follows 
immediately from \eqref{eq:EVI_aux_right_1}. 
The same is true for the regularity statement 
and the initial condition in \eqref{eq:EVI_aux_right_2}.
It remains to prove that 
$\zeta_k(s) \in \ZZ(y(t))$ for all $s \in [t, t+\varepsilon]$. 
To this end, 
we note that, 
for all $s \in [t, t+\varepsilon]$,
we have 
$y(s) + \tau_k \pi_{\ZZ(y(t))}( \zeta_k(s)) \in Z_\AA$
(by the same estimate as in \eqref{eq:randomeq2673738})
and 
$y(s) + \tau_k \pi_{\ZZ(y(t))^\circ}( \zeta_k(s)) \in Z_\AA$
by \cref{lem:nonobtuse_properties}\ref{lem:nonobtuse_properties:i}  and \eqref{eq:Z-Zcirc-identity+hom}. 
This allows us to obtain the following from 
\eqref{eq:EVI_aux_right_1}
for all large enough $k$:
\begin{equation*}
\begin{aligned}
0
&\leq
\int_{t}^{s}
 \left \langle \pi_{\ZZ(y(t))}(\zeta_k) - \zeta_k, \dd  \zeta_k  \right \rangle 
 - \frac{1}{\tau_k}
 \int_{t}^{s}
 \left \langle \pi_{\ZZ(y(t))}(\zeta_k) - \zeta_k, \dd w \right \rangle 
 \\
 &=
\int_{t}^{s}
 \left \langle - \pi_{\ZZ(y(t))^\circ}(\zeta_k), \dd   \zeta_k \right \rangle 
 + \frac{1}{\tau_k^2}
 \int_{t}^{s}
 \left \langle \tau_k\pi_{\ZZ(y(t))^\circ}(\zeta_k), \dd w \right \rangle 
  \\
 &=
- \frac12 \dist_{\ZZ(y(t))}(\zeta_k(s))^2
 + \frac12 \dist_{\ZZ(y(t))}(\zeta_k(t))^2
 - \frac{1}{\tau_k^2}
 \int_{t}^{s}
 \left \langle (y + \tau_k\pi_{\ZZ(y(t))^\circ}(\zeta_k)) - y, \dd (y - u) \right \rangle 
 \\
 &\leq 
 - \frac12 \dist_{\ZZ(y(t))}(\zeta_k(s))^2
 \qquad \forall s \in (t, t+\varepsilon].
\end{aligned}
\end{equation*}
Here, we have used \eqref{eq:Z-Zcirc-identity+hom},
\eqref{eq:F_prime_cone}, \cref{lem:CBV-Stampacchia},
$\zeta_k(t) = \delta_k(t) \in \ZZ(y(t))$,
and the fact that 
$y$ satisfies 
\eqref{eq:EVI_G}
on $[t,t+\varepsilon]$
with $Z$ replaced by $Z_\AA$;
cf.\ the proof
of \cref{lem_close_right_jump}. The above shows 
that $\zeta_k(s) \in \ZZ(y(t))$ holds
for all $s \in (t, t + \varepsilon]$.
Since this inclusion is trivially true 
for $s=t$, this shows 
that $\zeta_k$ indeed solves
\eqref{eq:EVI_aux_right_2}.
That \eqref{eq:EVI_aux_right_2}
can have at most one solution 
follows from the 
standard existence and uniqueness result
for the stop operator;
cf.\
\cref{prop:unique_solvability_stop_play}.
(Note that the admissible set of \eqref{eq:EVI_aux_right_2} does not vary with time.)
\end{proof}

Next, we consider the auxiliary problem
\begin{equation*}
\label{eq:EVI_aux_right_3}
\tag{AUX$_4$}
\begin{aligned}
&
\xi_{k} \in CBV([t, t + \varepsilon]; \R^d),
\qquad
\xi_k(s) \in \tilde \ZZ ~\forall s \in [t, t + \varepsilon],
\qquad
\xi_k(t) = \xi_0,
\\
&\int_{t}^{t + \varepsilon}
 \left \langle z - \xi_k, \dd  \xi_k \right \rangle 
 - \frac{1}{\tau_k}
 \int_{t}^{t + \varepsilon}
 \left \langle z - \xi_k, \dd w \right \rangle 
 \geq 0
\qquad \forall z \in G([t, t + \varepsilon]; \R^d)\colon 
 z(s) \in \tilde \ZZ~\forall s \in [t, t + \varepsilon]
\end{aligned}
\end{equation*}
with 
\[
\tilde \ZZ:= 
\{z \in \R^d \mid \langle \nu_i, z\rangle \leq 0~\forall i \in \AA_\strict^+(t) \}.
\]

\begin{lemma}[unique solvability of \eqref{eq:EVI_aux_right_3}]
\label{lem:aux-sol_again_42}
Problem \eqref{eq:EVI_aux_right_3} 
has a unique solution 
$\xi_k$ for all $\xi_0 \in \tilde \ZZ$
and $k$.
\end{lemma}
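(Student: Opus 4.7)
The plan is to recognize that \eqref{eq:EVI_aux_right_3} is nothing other than a standard stop-operator EVI of the form \eqref{eq:EVI} in disguise, with a time-independent admissible set $\tilde{\ZZ}$, forcing term $\tau_k^{-1} w$, initial value $\xi_0$, and time interval $[t,t+\varepsilon]$. Indeed, since $\tilde{\ZZ}$ does not depend on $s$ and $\dd(\tau_k^{-1} w) = \tau_k^{-1}\dd w$, the variational inequality in \eqref{eq:EVI_aux_right_3} is exactly what one obtains by inserting $y \rightsquigarrow \xi_k$ and $u\rightsquigarrow \tau_k^{-1} w$ into the regulated-test-function EVI from \cref{lem:EVI_G}. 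The proof then reduces to checking that the hypotheses of \cref{prop:unique_solvability_stop_play} are satisfied for this reformulation.

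First I would verify that $\tilde{\ZZ}$ is a full-dimensional convex polyhedron. By construction $\tilde{\ZZ}$ is a polyhedral cone, and since $\HH$ is regular by \cref{lem:descriptions:iii}, the subfamily $\{\nu_i\}_{i \in \AA_\strict^+(t)} \subset \{\nu_i\}_{i\in \AA(y(t))}$ is linearly independent. By choosing $z_0 := \sum_{i \in \AA_\strict^+(t)} e_i$ with the vectors $e_i$ from \eqref{eq:e_i-properties} applied at $x = y(t)$, one obtains $\langle \nu_i, z_0\rangle < 0$ for every $i \in \AA_\strict^+(t)$, so $z_0 \in \interior(\tilde{\ZZ})$. (If $\AA_\strict^+(t) = \emptyset$, then $\tilde{\ZZ} = \R^d$ by the conventions introduced in \cref{subsec:2.1}, and the statement is trivial.) Next, $w = u - y \in CBV([0,T];\R^d)$ by our standing assumptions, so $\tau_k^{-1} w|_{[t,t+\varepsilon]}$ is an admissible $CBV$-input, and the initial condition $\xi_0 \in \tilde{\ZZ}$ is assumed.

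After a trivial time translation mapping $[t,t+\varepsilon]$ to $[0,\varepsilon]$, \cref{prop:unique_solvability_stop_play}---applied with $Z \rightsquigarrow \tilde{\ZZ}$, $u \rightsquigarrow \tau_k^{-1} w$, $y_0 \rightsquigarrow \xi_0$, and $T \rightsquigarrow \varepsilon$---produces a unique function $\xi_k \in CBV([t,t+\varepsilon];\R^d)$ solving the corresponding EVI with continuous test functions $v \in C([t,t+\varepsilon]; \tilde{\ZZ})$. By \cref{lem:EVI_G}, the same $\xi_k$ then satisfies the inequality for all regulated test functions $z \in G([t,t+\varepsilon]; \tilde{\ZZ})$, so $\xi_k$ solves \eqref{eq:EVI_aux_right_3}. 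Conversely, every solution of \eqref{eq:EVI_aux_right_3} in particular satisfies the continuous-test-function form, so uniqueness follows from \cref{prop:unique_solvability_stop_play} as well.

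I do not expect any substantive obstacle here; the essential observation is that $\tilde{\ZZ}$ is constant in time, so \eqref{eq:EVI_aux_right_3} falls directly within the scope of the well-posedness theory already available for sweeping processes with full-dimensional polyhedral admissible sets. Notably, the non-obtuseness of $Z$ is \emph{not} used at this stage---it will only play a role later when the geometric structure of $\xi_k$ is exploited to identify the right-hand jumps of $\delta\in\DD$.
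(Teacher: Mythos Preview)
Your proposal is correct and follows the same approach as the paper: both recognize that \eqref{eq:EVI_aux_right_3} is simply a stop-operator EVI with time-independent admissible set $\tilde\ZZ$ and forcing term $\tau_k^{-1}w$, so that \cref{prop:unique_solvability_stop_play} applies directly. The paper dispatches this in one line, while you spell out the auxiliary verifications (full-dimensionality of $\tilde\ZZ$, passage to regulated test functions via \cref{lem:EVI_G}), but the substance is identical.
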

\begin{proof}
The assertion follows from the standard 
existence result for the stop operator;
cf.\
\cref{prop:unique_solvability_stop_play}.
(Note that the admissible set is time-independent 
in \eqref{eq:EVI_aux_right_3}.)
\end{proof}

\begin{lemma}[order property of \eqref{eq:EVI_aux_right_3}]
\label{lemma:the_little_box}
Let 
$\xi_0 \in \tilde \ZZ$ be given
and let $\xi_k$ be the corresponding solution
of \eqref{eq:EVI_aux_right_3}.
Then it holds
\[
\xi_k(s) \in 
\tilde \ZZ \cap (\xi_0 + \tilde \ZZ^\circ) 
\quad\text{and}\quad
\langle \nu_i, \xi_k(s) \rangle \leq 
\langle \nu_i, \xi_0 \rangle
\quad \forall s \in [t, t+\varepsilon]
\quad \forall i \in I\setminus \AA_\strict^+(t).
\]
\end{lemma}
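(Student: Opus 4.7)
The inclusion $\xi_k(s) \in \tilde{\ZZ}$ is part of the very definition of a solution of \eqref{eq:EVI_aux_right_3}, so the bulk of the work consists in establishing $\xi_k(s) \in \xi_0 + \tilde{\ZZ}^\circ$; the componentwise estimate against $\xi_0$ will then follow from non-obtuseness of $\HH$ by a short algebraic calculation. I first observe that $\tilde{\ZZ}$ is itself a full-dimensional non-obtuse convex polyhedral cone: it contains $\ZZ(y(t))$ (which is full-dimensional by \cref{lem:ZZstandard}), and after passing to a standard description by removing any redundant indices from $\AA_\strict^+(t)$, the remaining normals form a subset of $\{\nu_i\}_{i \in I}$ and thus inherit pairwise non-obtuse angles. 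Consequently, \cref{lem:nonobtuse_properties} is applicable to $\tilde{\ZZ}$.

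The key idea is to insert $z := \pi_{\xi_0 + \tilde{\ZZ}^\circ}(\xi_k)$ as a test function into \eqref{eq:EVI_aux_right_3}. Since $\xi_0, \xi_k(s) \in \tilde{\ZZ}$, \cref{lem:nonobtuse_properties}\ref{lem:nonobtuse_properties:iii} applied to $\tilde{\ZZ}$ yields both
\[
z(s) = \pi_{\xi_0 + \tilde{\ZZ}^\circ}(\xi_k(s)) \in \tilde{\ZZ}
\qquad\text{and}\qquad
\xi_k(s) - z(s) = \pi_{\tilde{\ZZ}}(\xi_k(s) - \xi_0) \in \tilde{\ZZ},
\]
so $z$ is admissible in \eqref{eq:EVI_aux_right_3} (its $CBV$-regularity follows from that of $\xi_k$ via the Lipschitz continuity of projections). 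Setting $\tilde{F}(x) := \tfrac{1}{2}\dist_{\xi_0 + \tilde{\ZZ}^\circ}(x)^2$, \cref{lem:basic_proj_properties}\ref{lem:basic_proj_properties:iii} gives $\tilde{F} \in C^1(\R^d)$ with $\nabla \tilde{F}(\xi_k) = \xi_k - z = \pi_{\tilde{\ZZ}}(\xi_k - \xi_0)$.

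Testing \eqref{eq:EVI_aux_right_3} with $z$ on the subinterval $[t,s]$ (which is legitimate by concatenating $z$ with $\xi_k$ on $[s, t+\varepsilon]$, in the same spirit as the localization in the proof of \cref{lem:3.18-2}), applying the chain rule \cref{lem:CBV-Stampacchia} to $\tilde{F}\circ\xi_k$, and using the initial condition $\xi_k(t)=\xi_0 \in \xi_0 + \tilde{\ZZ}^\circ$ yields
\[
0 \leq -\tilde{F}(\xi_k(s)) + \frac{1}{\tau_k}\int_t^s \langle \pi_{\tilde{\ZZ}}(\xi_k - \xi_0), \dd w\rangle.
\]
Invoking the local monotone decomposition of \cref{lemma:monotone_play} together with the fact that $\lambda_i$ is constant on $[t, t+\varepsilon]$ for $i \in \BB^+(t)$ (cf.\ \cref{ass:subsec:3.4}), the remaining integral collapses to
\[
\sum_{i \in \AA_\strict^+(t)} \int_t^s \langle \pi_{\tilde{\ZZ}}(\xi_k - \xi_0), \nu_i\rangle \dd\lambda_i,
\]
which is nonpositive because $\pi_{\tilde{\ZZ}}(\xi_k - \xi_0) \in \tilde{\ZZ}$ forces $\langle \nu_i, \pi_{\tilde{\ZZ}}(\xi_k - \xi_0)\rangle \le 0$ for every $i \in \AA_\strict^+(t)$, while each $\lambda_i$ is nondecreasing. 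Hence $\tilde{F}(\xi_k(s)) \leq 0$, so $\xi_k(s) \in \xi_0 + \tilde{\ZZ}^\circ$.

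For the final estimate, \cref{lem:normal_cone_polyhedral} represents $\xi_k(s) - \xi_0 \in \tilde{\ZZ}^\circ$ as $\sum_{j \in \AA_\strict^+(t)}\beta_j \nu_j$ with $\beta_j \geq 0$. Pairing with $\nu_i$ for $i \in I\setminus \AA_\strict^+(t)$ and using non-obtuseness of $\HH$ (which gives $\langle \nu_i, \nu_j\rangle \leq 0$ since $i\neq j$) immediately produces $\langle \nu_i, \xi_k(s) - \xi_0\rangle \le 0$. The main obstacle is the careful interplay between the projection-based test function, the fundamental theorem for Kurzweil–Stieltjes integrals of $CBV$-compositions, and the decomposition of $\dd w$; non-obtuseness of $\HH$ plays a crucial role both in ensuring admissibility of the test function (via \cref{lem:nonobtuse_properties}) and in sign-controlling the $\dd w$-term.
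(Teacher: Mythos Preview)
Your proof is correct and follows essentially the same approach as the paper: both test \eqref{eq:EVI_aux_right_3} on $[t,s]$ with $z = \pi_{\xi_0 + \tilde\ZZ^\circ}(\xi_k)$, use \cref{lem:nonobtuse_properties}\ref{lem:nonobtuse_properties:iii} to verify admissibility of $z$ and the inclusion $\xi_k - z \in \tilde\ZZ$, combine the chain rule \cref{lem:CBV-Stampacchia} for the squared distance with the decomposition of $\dd w$ from \cref{lemma:monotone_play} (where only the $\AA_\strict^+(t)$-terms survive), and then close with the same non-obtuseness argument for the componentwise estimate. The only cosmetic differences are that you justify the non-obtuseness of $\tilde\ZZ$ explicitly and conclude the sign of the $\dd\lambda_i$-integrals directly from the everywhere-nonpositive integrand, whereas the paper cites \cref{lemma:crit_noneg} at that point.
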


\begin{proof}
Consider the function 
$z := \pi_{\xi_0 + \tilde \ZZ^\circ}(\xi_k) \in G([t,t+\varepsilon];\R^d)$.
Then it follows 
from 
\cref{lem:nonobtuse_properties}\ref{lem:nonobtuse_properties:iii}
that we have 
$z(s) \in \tilde \ZZ$
and
$\xi_k(s) - z(s) \in \tilde \ZZ$
for all $s \in [t, t + \varepsilon]$.
In particular, $z$ is an admissible test 
function in \eqref{eq:EVI_aux_right_3}, 
and we may deduce that 
\begin{equation*}
\begin{aligned}
0
&\leq
\int_{t}^{s}
 \left \langle \pi_{\xi_0 + \tilde \ZZ^\circ}(\xi_k) - \xi_k, \dd  \xi_k \right \rangle 
 - \frac{1}{\tau_k}
 \int_{t}^{s}
 \left \langle z - \xi_k, \dd w \right \rangle 
 \\
 &=
 - \frac12 \dist_{\xi_0 + \tilde \ZZ^\circ}(\xi_k(s))^2
 + \frac12 \dist_{\xi_0 + \tilde \ZZ^\circ}(\xi_0)^2
 - \frac{1}{\tau_k}
 \int_{t}^{s}
 \left \langle z- \xi_k, \dd \left (
 \sum_{i \in \AA(y(t))} \lambda_i\nu_i
 \right ) \right \rangle 
  \\
 &=
 - \frac12 \dist_{\xi_0 + \tilde \ZZ^\circ}(\xi_k(s))^2
 + \frac{1}{\tau_k}
 \sum_{i \in \AA_\strict^+(t)}
 \int_{t}^{s}
  \langle \xi_k - z, \nu_i \rangle \dd \lambda_i 
  \\
  &\leq
  - \frac12 \dist_{\xi_0 + \tilde \ZZ^\circ}(\xi_k(s))^2
  \qquad \forall s \in (t, t + \varepsilon].
\end{aligned}
\end{equation*}
Here, we have used 
\eqref{eq:F_prime_formula}, 
\cref{lem:CBV-Stampacchia},
\eqref{eq:mon_play},
$\xi_0 \in \xi_0 + \tilde \ZZ^\circ$,
our assumption that 
$\lambda_i = \mathrm{const}$ holds on 
$[t, t +\varepsilon]$ for all $i \in \BB^+(t)$,
\cref{lem:KS-constant_functions},
the fact that
$\lambda_i$ is nondecreasing
for all $i \in \AA_\strict^+(t)$, 
$\xi_k(s) - z(s) \in \tilde \ZZ$ for all 
$s \in [t, t+\varepsilon]$, 
the definition of $\tilde \ZZ$,
and
\cref{lemma:crit_noneg}. 
In combination with the admissibility 
of $\xi_k$ in \eqref{eq:EVI_aux_right_3}
and the trivial inclusion 
$\xi_0 \in \tilde \ZZ \cap (\xi_0 + \tilde \ZZ^\circ)$,
the above yields 
$\xi_k(s) \in \tilde \ZZ \cap (\xi_0 + \tilde \ZZ^\circ) $ for all $s \in [t, t+\varepsilon]$
as desired. Note that, 
due to the definition of $\tilde \ZZ$
and \cref{lem:normal_cone_polyhedral},
the latter inclusion implies that, 
for every $s \in [t, t + \varepsilon]$,
there exist numbers $\beta_j \geq 0$,
$j \in \AA_\strict^+(t)$,
satisfying 
\[
\xi_k(s) = \xi_0 + \sum_{j \in \AA_\strict^+(t)} \beta_j \nu_j.
\]
As $Z$ is non-obtuse, the above yields
\[
\langle \nu_i, \xi_k(s) \rangle
=
\left \langle \nu_i, \xi_0 + \sum_{j \in \AA_\strict^+(t)} \beta_j \nu_j\right \rangle
=
\langle \nu_i, \xi_0\rangle
+ \sum_{j \in \AA_\strict^+(t)}\beta_j
\langle \nu_i, \nu_j \rangle
\leq
\langle \nu_i, \xi_0\rangle
\qquad \forall i \in I\setminus \AA_\strict^+(t).
\]
This completes the proof of the lemma. 
\end{proof}

Note that the proofs of 
\cref{lem:3.18-2,lemma:the_little_box}
again rely crucially on the non-obtuseness of 
$Z$.
Next, we 
establish a connection
between 
\eqref{eq:EVI_aux_right_1}
and
\eqref{eq:EVI_aux_right_3}.
Recall that 
$V^+(t) := \spann(\{ \nu_i \mid i \in \AA_\strict^+(t)\})^\perp$
and, consequently, 
$V^+(t)^\perp = \spann(\{ \nu_i \mid i \in \AA_\strict^+(t)\})$;
see \cref{def:temporal_indices}.

\begin{lemma}[relationship between 
the auxiliary problems
\eqref{eq:EVI_aux_right_1}
and
\eqref{eq:EVI_aux_right_3}]
\label{lem:superposition_shift}
Let
$\xi_k$
be the solution of \eqref{eq:EVI_aux_right_3}
with initial value 
$\xi_0 := \pi_{V^+(t)^\perp}(\delta_k(t))$.
Then, for all sufficiently large $k$,
it holds 
$\zeta_k  = \xi_k + \pi_{V^+(t)}(\delta_k(t))$,
where $\zeta_k$ is the solution of 
the EVI \eqref{eq:EVI_aux_right_1}.
\end{lemma}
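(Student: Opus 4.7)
The plan is to define $\eta_k(s) := \xi_k(s) + \pi_{V^+(t)}(\delta_k(t))$ and verify that $\eta_k$ satisfies the problem \eqref{eq:EVI_aux_right_2}, from which $\zeta_k = \eta_k$ follows by the uniqueness statement at the end of \cref{lem:3.18-2}. Since the shift $\pi_{V^+(t)}(\delta_k(t))$ is constant in $s$, we have $\dd \eta_k = \dd \xi_k$ and $\eta_k \in CBV([t,t+\varepsilon];\R^d)$ automatically, so the only nontrivial things to check are the initial condition, the pointwise constraint $\eta_k(s) \in \ZZ(y(t))$, and the variational inequality.

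The initial condition is immediate from $\xi_k(t) = \pi_{V^+(t)^\perp}(\delta_k(t))$ together with the orthogonal decomposition $\delta_k(t) = \pi_{V^+(t)^\perp}(\delta_k(t)) + \pi_{V^+(t)}(\delta_k(t))$. For the admissibility constraint, I would first observe that $\delta_k(t) \in \ZZ(y(t))$ (since $y(t) + \tau_k \delta_k(t) \in Z$). Because $\ZZ(y(t))$ is a non-obtuse polyhedral cone with standard description $\{(\nu_i,0)\}_{i\in\AA(y(t))}$ (\cref{lem:ZZstandard,lem:descriptions:iii}) and $\AA_\strict^+(t) \subset \AA(y(t))$, \cref{lem:nonobtuse_properties}\ref{lem:nonobtuse_properties:iv} yields $\pi_{V^+(t)}(\delta_k(t)) \in \ZZ(y(t))$. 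This then makes it easy to verify $\eta_k(s) \in \ZZ(y(t))$ by splitting indices: for $i \in \AA_\strict^+(t)$ we have $\nu_i \in V^+(t)^\perp$, so $\langle \nu_i, \pi_{V^+(t)}(\delta_k(t))\rangle = 0$ and $\langle \nu_i, \xi_k(s)\rangle \le 0$ since $\xi_k(s) \in \tilde\ZZ$; for $i \in \BB^+(t) = \AA(y(t)) \setminus \AA_\strict^+(t)$, I would invoke \cref{lemma:the_little_box} to obtain $\langle \nu_i, \xi_k(s)\rangle \le \langle \nu_i, \xi_0\rangle = \langle \nu_i, \pi_{V^+(t)^\perp}(\delta_k(t))\rangle$ and then recombine to bound $\langle \nu_i, \eta_k(s)\rangle \le \langle \nu_i, \delta_k(t)\rangle \le 0$.

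To derive the variational inequality of \eqref{eq:EVI_aux_right_2} for $\eta_k$, I would take an arbitrary admissible test function $z \in G([t,t+\varepsilon];\R^d)$ with $z(s) \in \ZZ(y(t))$ and plug $\tilde z := z - \pi_{V^+(t)}(\delta_k(t))$ into the VI of \eqref{eq:EVI_aux_right_3} for $\xi_k$. The function $\tilde z$ is an admissible test function for \eqref{eq:EVI_aux_right_3} since $\langle \nu_i, \tilde z(s) \rangle = \langle \nu_i, z(s)\rangle - 0 \le 0$ for each $i \in \AA_\strict^+(t)$. Because $\xi_k - z + \pi_{V^+(t)}(\delta_k(t)) = \eta_k - z$ and $\dd \xi_k = \dd \eta_k$, the VI for $\xi_k$ translates verbatim into the VI in \eqref{eq:EVI_aux_right_2} for $\eta_k$. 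Uniqueness for \eqref{eq:EVI_aux_right_2}, stated at the end of \cref{lem:3.18-2}, then gives $\zeta_k = \eta_k$ for all sufficiently large $k$.

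The main obstacle in this strategy is the verification of the pointwise inclusion $\eta_k(s) \in \ZZ(y(t))$: the constraint is not preserved by generic additive shifts, and it is precisely here that non-obtuseness enters twice—once through \cref{lem:nonobtuse_properties}\ref{lem:nonobtuse_properties:iv} to ensure $\pi_{V^+(t)}(\delta_k(t)) \in \ZZ(y(t))$, and once through the comparison estimate of \cref{lemma:the_little_box} which controls the indices $i \in \BB^+(t)$. Once these two facts are in hand, the remaining computations are formal manipulations with the Kurzweil–Stieltjes integral and the orthogonal decomposition $\R^d = V^+(t) \oplus V^+(t)^\perp$.
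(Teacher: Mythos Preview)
Your proof is correct and follows the same overall structure as the paper: define the shifted function, verify it solves \eqref{eq:EVI_aux_right_2}, and invoke uniqueness. The admissibility check is essentially identical (split into $\AA_\strict^+(t)$ and $\BB^+(t)$, using \cref{lemma:the_little_box} for the latter). The one notable difference is in the verification of the variational inequality: you shift the \emph{test function} by the constant $c:=\pi_{V^+(t)}(\delta_k(t))$ and observe that $\tilde z - \xi_k = z - \eta_k$ and $\dd\xi_k = \dd\eta_k$, which immediately transfers the VI from \eqref{eq:EVI_aux_right_3} to \eqref{eq:EVI_aux_right_2}. The paper instead proves the stronger intermediate statement that $\eta_k$ solves \eqref{eq:EVI_aux_right_3} with initial value $\delta_k(t)$ for \emph{all} test functions in $\tilde\ZZ$, which requires the additional computations $\int_t^{t+\varepsilon}\langle c,\dd w\rangle=0$ (using $\lambda_i=\mathrm{const}$ for $i\in\BB^+(t)$) and $\langle c,\xi_k(t+\varepsilon)-\xi_k(t)\rangle=0$ (using $\xi_k(s)\in V^+(t)^\perp$ from \cref{lemma:the_little_box}). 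Your route avoids these calculations and is more economical; the paper's route yields a slightly stronger intermediate fact that is, however, not needed for the conclusion. One small remark: your appeal to \cref{lem:nonobtuse_properties}\ref{lem:nonobtuse_properties:iv} to get $\pi_{V^+(t)}(\delta_k(t))\in\ZZ(y(t))$ is correct but not actually used in your subsequent index-splitting argument, so you could drop it.
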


\begin{proof}
Note that $\delta_k(t) \in \ZZ(y(t))$
and the definition of $V^+(t)$ yield
$\langle \nu_i, \xi_0 \rangle = \langle \nu_i, 
\pi_{V^+(t)^\perp}(\delta_k(t)) \rangle 
=
\langle \nu_i, 
\delta_k(t)  \rangle \leq 0$ for all $i \in \AA_\strict^+(t)$.
Thus, $\xi_0 \in \tilde \ZZ$  and $\xi_0$ 
is an admissible initial value in \eqref{eq:EVI_aux_right_3}.
Define $\hat \xi_k := \xi_k + \pi_{V^+(t)}(\delta_k(t))$.
We claim that this function solves \eqref{eq:EVI_aux_right_3}
with initial value $\hat \xi_0 := \delta_k(t)\in \ZZ(y(t)) \subset \tilde \ZZ$. 
To see that this is true, we note 
that $\hat \xi_k$ trivially satisfies 
$\hat \xi_k \in CBV([t, t + \varepsilon]; \R^d)$
and 
\[
\hat \xi_k(t) = 
\xi_k(t) + \pi_{V^+(t)}(\delta_k(t))
=
\pi_{V^+(t)^\perp}(\delta_k(t)) + \pi_{V^+(t)}(\delta_k(t))
=
\delta_k(t)
=
\hat \xi_0.
\]
As $\langle \nu_i, \pi_{V^+(t)}(\delta_k(t))\rangle = 0$
holds for all $i \in \AA_\strict^+(t)$
and due to $\xi_k \in G([t, t+\varepsilon]; \tilde \ZZ)$,
we further have 
\[
\langle \nu_i, \hat \xi_k(s) \rangle 
=
\langle \nu_i, \xi_k(s) + \pi_{V^+(t)}(\delta_k(t)) \rangle 
=
\langle \nu_i, \xi_k(s) \rangle 
\leq
0\qquad \forall i \in \AA_\strict^+(t)\qquad \forall s \in [t, t+\varepsilon]
\]
and, thus, $\hat \xi_k(s) \in \tilde \ZZ$ for 
all $s \in [t, t+\varepsilon]$.
To prove that $\hat \xi_k$ indeed solves \eqref{eq:EVI_aux_right_3}
with  $\hat \xi_0 := \delta_k(t)$, it remains 
to check the variational inequality in \eqref{eq:EVI_aux_right_3}. 
So let $z \in G([t, t + \varepsilon]; \R^d)$
satisfying $z(s) \in \tilde \ZZ$ for all $s \in [t, t + \varepsilon]$
be given. From \cref{lemma:the_little_box} and \cref{lem:normal_cone_polyhedral}, 
we obtain that 
$\xi_k(s) \in (\xi_0 + \tilde \ZZ^\circ) \subset 
\spann(\{ \nu_i \mid i \in \AA_\strict^+(t)\}) = V^+(t)^\perp$
holds for all $s \in [t, t+\varepsilon]$,
and from \eqref{eq:mon_play},
\cref{lem:KS-constant_functions},
and our assumption that $\lambda_i = \mathrm{const}$ holds on 
$[t, t +\varepsilon]$ for all $i \in \BB^+(t)$,
it follows that 
\begin{equation*}
\begin{aligned}
\int_{t}^{t + \varepsilon}
 \left \langle \pi_{V^+(t)}(\delta_k(t)) , \dd w \right \rangle 
 &=
 \sum_{i \in \AA(y(t))}
 \int_{t}^{t + \varepsilon}
 \left \langle \pi_{V^+(t)}(\delta_k(t)) , \nu_i\right \rangle \dd \lambda_i
 \\
 &=
 \sum_{i \in \AA(y(t))}
 \langle \nu_i, \pi_{V^+(t)}(\delta_k(t)) \rangle(\lambda_i(t+\varepsilon) - \lambda_i(t))
 \\
 &=
  \sum_{i \in \AA_\strict^+(t)}
 \langle \nu_i, \pi_{V^+(t)}(\delta_k(t)) \rangle(\lambda_i(t+\varepsilon) - \lambda_i(t))
 =
 0.
\end{aligned}
\end{equation*}
In combination with the EVI for $\xi_k$ and again 
\cref{lem:KS-constant_functions}, this yields
\begin{equation*}
\begin{aligned}
&\int_{t}^{t + \varepsilon}
 \left \langle z - \hat \xi_k, \dd  \hat \xi_k \right \rangle 
 - \frac{1}{\tau_k}
 \int_{t}^{t + \varepsilon}
 \left \langle z - \hat \xi_k, \dd w \right \rangle 
 \\
 &=
 \int_{t}^{t + \varepsilon}
 \left \langle z -  \xi_k - \pi_{V^+(t)}(\delta_k(t)), \dd  
 ( \xi_k + \pi_{V^+(t)}(\delta_k(t))) \right \rangle 
 - \frac{1}{\tau_k}
 \int_{t}^{t + \varepsilon}
 \left \langle z -  \xi_k - \pi_{V^+(t)}(\delta_k(t)), \dd w \right \rangle 
 \\
  &=- \int_{t}^{t + \varepsilon}
 \left \langle \pi_{V^+(t)}(\delta_k(t)), \dd  
  \xi_k  \right \rangle 
  +
  \int_{t}^{t + \varepsilon}
 \left \langle z -  \xi_k , \dd  
  \xi_k  \right \rangle 
 - \frac{1}{\tau_k}
 \int_{t}^{t + \varepsilon}
 \left \langle z -  \xi_k, \dd w \right \rangle 
 \\
 &\geq
 - \int_{t}^{t + \varepsilon}
 \left \langle \pi_{V^+(t)}(\delta_k(t)), \dd  
  \xi_k  \right \rangle 
  \\
  &=  
 \left \langle \pi_{V^+(t)}(\delta_k(t)), \xi_k(t) - \xi_k(t + \varepsilon) \right \rangle =0,
\end{aligned}
\end{equation*}
where, in the last line, we have 
again used that 
$\xi_k(s)  \in V^+(t)^\perp$
holds for all $s \in [t, t+\varepsilon]$.
Thus, $\hat \xi_k := \xi_k + \pi_{V^+(t)}(\delta_k(t))$
indeed solves \eqref{eq:EVI_aux_right_3}
with initial value $\hat \xi_0 := \delta_k(t)$.
Next, we prove that $\hat \xi_k = \zeta_k$
holds for all large enough $k$.
Note that, since $\zeta_k$ solves \eqref{eq:EVI_aux_right_2}
for all large enough $k$, 
since $\ZZ(y(t)) \subset \tilde \ZZ$ holds, 
since $\zeta_k$ and $\hat \xi_k$ have the 
same initial value, 
and since \eqref{eq:EVI_aux_right_2} is uniquely solvable,
to show that $\hat \xi_k$ is identical $\zeta_k$,
it suffices to prove that
$\hat \xi_k(s) \in \ZZ(y(t))$ for all $s \in [t, t+\varepsilon]$.
To see this, we note that $\xi_k \in G([t, t+\varepsilon]; \tilde \ZZ)$
yields
\[
\langle \nu_i, \hat \xi_k(s)\rangle
=
\langle \nu_i, \xi_k(s) + \pi_{V^+(t)}(\delta_k(t))\rangle
=
\langle \nu_i, \xi_k(s)\rangle
\leq 0
\qquad
\forall i \in \AA_\strict^+(t)
\qquad\forall s \in [t, t+\varepsilon]
\]
and that \cref{lemma:the_little_box} and 
the inclusion $\delta_k(t) \in \ZZ(y(t))$
imply
\[
\langle \nu_i, \hat \xi_k(s) \rangle \leq 
\langle \nu_i, \hat \xi_0 \rangle
=
\langle \nu_i, \delta_k(t) \rangle
\leq 
0
\quad \forall s \in [t, t+\varepsilon]
\quad \forall i \in \AA(y(t))\setminus \AA_\strict^+(t).
\]
Thus, we indeed have $\hat \xi_k(s) \in \ZZ(y(t))$ for all $s \in [t, t+\varepsilon]$ and the assertion of the lemma follows. 
\end{proof}

By combining 
\cref{lem_close_right_jump,lem:3.18-2,lem:aux-sol_again_42,lemma:the_little_box,lem:superposition_shift},
we can now prove the main result of this 
subsection. 
Analogously to \cref{th:left_jumps},
we
formulate it 
independently of \cref{ass:subsec:3.4}. 
 \begin{theorem}[jumps from the right]
 \label{th:jump_right}
 For all $t \in [0,T]$ and all $\delta \in \DD$,
 it holds $\delta(t+) = \pi_{V^+(t)}(\delta(t))$.
 \end{theorem}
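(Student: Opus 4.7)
The plan is to exploit the decomposition of the auxiliary solution $\zeta_k$ established in \cref{lem:superposition_shift} and to combine it with the pointwise criticality statement from \cref{th_pointwise_criticality}. The central observation is that, on the interval $[t, t+\varepsilon]$, the continuous function $\zeta_k$ grows only in directions lying in the subspace $V^+(t)^\perp$, while being CBV-close to the difference quotient $\delta_k$.

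For $t = T$, the conventions $V^+(T) = \R^d$ and $\delta(T+) = \delta(T)$ (see \cref{def:temporal_indices} and \cref{subsec:2.1}) make the assertion trivial, so we assume $t \in [0,T)$. We fix an arbitrary $\delta \in \DD$ and a small $\gamma > 0$ and choose $\varepsilon > 0$ and $\{\tau_k\}$ as in \cref{ass:subsec:3.4}. By \cref{lem:superposition_shift}, for all sufficiently large $k$ the solution $\zeta_k$ of \eqref{eq:EVI_aux_right_1} admits the decomposition $\zeta_k(s) = \xi_k(s) + \pi_{V^+(t)}(\delta_k(t))$, where $\xi_k$ solves \eqref{eq:EVI_aux_right_3} with initial value $\xi_0 := \pi_{V^+(t)^\perp}(\delta_k(t))$. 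Applying \cref{lemma:the_little_box} to $\xi_k$ will yield $\xi_k(s) - \xi_0 \in \tilde\ZZ^\circ$, and since \cref{lem:normal_cone_polyhedral} gives $\tilde\ZZ^\circ \subset \spann(\{\nu_i \mid i \in \AA_\strict^+(t)\}) = V^+(t)^\perp$ and $\xi_0 \in V^+(t)^\perp$ by construction, it follows that $\xi_k(s) \in V^+(t)^\perp$ for every $s \in [t, t+\varepsilon]$, and hence
\[
\zeta_k(s) - \zeta_k(t) = \xi_k(s) - \xi_0 \in V^+(t)^\perp
\qquad \forall s \in [t, t+\varepsilon].
\]

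Combining this with the stability estimate $\|\zeta_k - \delta_k\|_{CBV([t, t+\varepsilon];\R^d)} \leq \gamma$ from \cref{lem_close_right_jump} and with $\zeta_k(t) = \delta_k(t)$, we then use that $\pi_{V^+(t)}$ annihilates $V^+(t)^\perp$ and is $1$-Lipschitz to compute
\[
\bigl| \pi_{V^+(t)}(\delta_k(s) - \delta_k(t)) \bigr|
= \bigl| \pi_{V^+(t)}(\delta_k(s) - \zeta_k(s)) \bigr|
\leq \gamma
\qquad \forall s \in [t, t+\varepsilon].
\]
Passing to the pointwise limit $k \to \infty$ along the defining sequence of $\delta$ and then letting $s \to t+$ gives $\bigl| \pi_{V^+(t)}(\delta(t+) - \delta(t)) \bigr| \leq \gamma$. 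Since $\gamma > 0$ is arbitrary, this yields $\pi_{V^+(t)}(\delta(t+)) = \pi_{V^+(t)}(\delta(t))$.

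To close the argument, we invoke that $\delta \in \DD$ is critical by \cref{lem:delta_critical}, so \cref{th_pointwise_criticality} will guarantee the inclusion $\delta(t+) \in V^+(t)$, and hence $\delta(t+) = \pi_{V^+(t)}(\delta(t+)) = \pi_{V^+(t)}(\delta(t))$ as claimed. The hard part of the proof is not this final projection identity but rather the prerequisite decomposition $\zeta_k = \xi_k + \pi_{V^+(t)}(\delta_k(t))$ together with the containment $\xi_k(s) - \xi_0 \in \tilde\ZZ^\circ$ established in \cref{lemma:the_little_box,lem:superposition_shift}; both rest decisively on the non-obtuseness of $Z$ via \cref{lem:nonobtuse_properties}, and without this geometric assumption the superposition principle used here would break down.
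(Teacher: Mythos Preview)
Your proof is correct and follows essentially the same approach as the paper: both arguments rely on the decomposition $\zeta_k = \xi_k + \pi_{V^+(t)}(\delta_k(t))$ from \cref{lem:superposition_shift}, the containment $\xi_k(s) \in V^+(t)^\perp$ obtained from \cref{lemma:the_little_box} and \cref{lem:normal_cone_polyhedral}, the stability bound $\|\delta_k - \zeta_k\|_\infty \leq \gamma$ from \cref{lem_close_right_jump}, and the inclusion $\delta(t+) \in V^+(t)$ from \cref{lem:delta_critical,th_pointwise_criticality}. The only cosmetic difference is that you work with $\pi_{V^+(t)}(\delta_k(s) - \delta_k(t))$ directly, while the paper phrases the same computation as a $C$-norm estimate on $\pi_{V^+(t)}(\delta_k) - \pi_{V^+(t)}(\zeta_k)$; since $\pi_{V^+(t)}$ is linear, these are identical.
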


 \begin{proof}
 For $t = T$, we have $V^+(t) = \R^d$ and $\delta(t+) = \delta(t)$ 
 by definition/convention
 and the 
 assertion is trivial. We may thus assume in the following 
 that $t \in [0,T)$.
 Suppose that $\gamma > 0$ is given and 
 consider the situation in \cref{ass:subsec:3.4}. 
 Let $k$ be large enough and let 
 $\xi_k$ and $\zeta_k$ be as in \cref{lem:superposition_shift}. 
 Then we know from \cref{lem_close_right_jump}
 that $\| \delta_k  -  \zeta_k \|_{CBV([t, t + \varepsilon];\R^d)} \leq \gamma$ holds,
 from \cref{lem:superposition_shift}
 that $\zeta_k  = \xi_k + \pi_{V^+(t)}(\delta_k(t))$
 holds on $[t, t+\varepsilon]$,
 and from \cref{lemma:the_little_box},
 $\xi_0 = \pi_{V^+(t)^\perp}(\delta_k(t))$,
 and \cref{lem:normal_cone_polyhedral} that 
 $\xi_k(s) \in \spann(\{ \nu_i \mid i \in \AA_\strict^+(t)\}) = V^+(t)^\perp$.
 Combining all of this with \eqref{eq:1projLip} yields
 \begin{equation*}
 \begin{aligned}
 \| \pi_{V^+(t)}(\delta_k)  - \pi_{V^+(t)}(\delta_k(t)) \|_{C([t, t + \varepsilon];\R^d)}
 &=
  \| \pi_{V^+(t)}(\delta_k)  - 
  \pi_{V^+(t)}(\delta_k(t)) - \pi_{V^+(t)}(\xi_k) \|_{C([t, t + \varepsilon];\R^d)}
  \\
   &=
  \| \pi_{V^+(t)}(\delta_k)  - 
  \pi_{V^+(t)}(\zeta_k) \|_{C([t, t + \varepsilon];\R^d)}
  \\
  &\leq
  \|  \delta_k - \zeta_k \|_{C([t, t + \varepsilon];\R^d)}
  \\
  &\leq \gamma
 \end{aligned}
 \end{equation*}
 for all large enough $k$ and, after passing to the limit $k \to \infty$,
 \[
    \left | \pi_{V^+(t)}(\delta(s))  - \pi_{V^+(t)}(\delta(t)) \right |
    \leq \gamma \qquad \forall s \in [t, t+\varepsilon].
 \]
 Due to the definition of $\delta(t+)$,
 this shows that $\left | \pi_{V^+(t)}(\delta(t+))  - \pi_{V^+(t)}(\delta(t)) \right |
    \leq \gamma$ and, 
    since $\delta(t+) \in V^+(t)$ holds by \cref{lem:delta_critical,th_pointwise_criticality},
    that $\left | \delta(t+)   - \pi_{V^+(t)}(\delta(t)) \right |
    \leq \gamma$. As $\gamma > 0$ was arbitrary, the assertion of the theorem now follows.
 \end{proof}
 
\subsection{Temporal polyhedricity}
\label{subsec:3.5}

With \cref{th:left_jumps,th:jump_right}
at hand, 
we can revisit the notion of criticality 
from \cref{subsec:3.2}
to arrive at the following 
important corollary. 

\begin{corollary}[criticality of right limits]
\label{cor:right_tangential}
If 
$\delta \in \DD$ holds, 
then $\delta_+$ is a critical direction. 
\end{corollary}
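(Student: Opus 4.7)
The plan is to invoke the pointwise reformulation of criticality from \cref{th_pointwise_criticality}: to show that $\delta_+ \in G([0,T];\R^d)$ is critical it suffices to verify the three pointwise conditions
\[
\delta_+(t) \in \ZZ(y(t)),
\quad
\delta_+(t+) \in V^+(t),
\quad
\delta_+(t-) \in V^-(t)
\qquad \forall t \in [0,T].
\]

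First, I will dispose of the second and third conditions, which are essentially bookkeeping. By \eqref{eq:Monteiro4.1.9} applied to $v = \delta$, we have $\delta_+(t+) = \delta(t+)$ for $t \in [0,T)$ and $\delta_+(t-) = \delta(t-)$ for $t \in (0,T]$. At the remaining endpoints, the conventions give $V^+(T) = \R^d$ and $V^-(0) = \R^d$ (see \cref{def:temporal_indices}), so the corresponding inclusions are trivial. For the interior values, \cref{lem:delta_critical} together with \cref{th_pointwise_criticality} applied to the critical function $\delta$ already yields $\delta(t+) \in V^+(t)$ and $\delta(t-) \in V^-(t)$ for all $t$.

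The main point — and the only place where the non-obtuseness of $Z$ enters — is the first condition $\delta_+(t) \in \ZZ(y(t))$. At $t=T$ we use $\delta_+(T) = \delta(T) \in \ZZ(y(T))$ directly from \cref{lem:delta_critical}. For $t \in [0,T)$ we write $\delta_+(t) = \delta(t+)$ and apply \cref{th:jump_right} to obtain
\[
\delta_+(t) = \delta(t+) = \pi_{V^+(t)}(\delta(t)).
\]
Now I exploit two structural facts. Firstly, by \cref{lem:ZZstandard} the cone $\ZZ(y(t))$ is full-dimensional with standard description $\{(\nu_i,0)\}_{i \in \AA(y(t))}$, and since $\AA(y(t)) \subset I$ this cone inherits non-obtuseness from the standard description $\HH$ of $Z$. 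Secondly, by \cref{def:temporal_indices} we have $V^+(t) = \spann(\{\nu_i \mid i \in \AA_\strict^+(t)\})^\perp$ with $\AA_\strict^+(t) \subset \AA(y(t))$. This is exactly the setup of \cref{lem:nonobtuse_properties}\ref{lem:nonobtuse_properties:iv} applied to the non-obtuse cone $\ZZ(y(t))$, which yields $\pi_{V^+(t)}(x) \in \ZZ(y(t))$ for every $x \in \ZZ(y(t))$. Since $\delta(t) \in \ZZ(y(t))$ by \cref{lem:delta_critical}, we conclude $\delta_+(t) \in \ZZ(y(t))$ as required. Combining the three verified conditions and re-applying \cref{th_pointwise_criticality} completes the proof.
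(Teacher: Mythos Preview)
Your proof is correct and follows essentially the same approach as the paper: both verify the pointwise criticality conditions of \cref{th_pointwise_criticality}, using \eqref{eq:Monteiro4.1.9} and \cref{lem:delta_critical} for the $V^\pm$-inclusions and combining \cref{th:jump_right} with \cref{lem:nonobtuse_properties}\ref{lem:nonobtuse_properties:iv} for the crucial inclusion $\delta_+(t)\in\ZZ(y(t))$.
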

\begin{proof}
Let $\delta \in \DD$ be given. 
We first check that 
$\delta_+(t) \in \ZZ(y(t))$
holds for all $t \in [0,T]$.
For $t=T$, this inclusion is trivial 
since our conventions for the right limit and 
\cref{lem:delta_critical} imply 
$\delta_+(T) = \delta(T+) = \delta(T) \in \ZZ(y(T))$. 
For $t \in [0,T)$, 
we obtain from 
\cref{lem:delta_critical,th:jump_right}
that 
$\delta(t+) = \pi_{V^+(t)}(\delta(t))$
with $\delta(t) \in \ZZ(y(t))$
and 
$V^+(t) := \spann(\{ \nu_i \mid i \in \AA_\strict^+(t)\})^\perp$.
In view of \cref{lem:nonobtuse_properties}\ref{lem:nonobtuse_properties:iv}, 
this implies 
$\delta(t+) \in \ZZ(y(t))$
for all $t \in [0,T)$. We thus indeed have 
$\delta_+(t) \in \ZZ(y(t))$
for all $t \in [0,T]$.
From \eqref{eq:Monteiro4.1.9},
\cref{lem:delta_critical},
and
\cref{th_pointwise_criticality},
we further obtain that 
$\delta_+(t-) = \delta(t-) \in V^-(t)$
for all $t \in (0,T]$
and 
$\delta_+(t+) = \delta(t+) \in V^+(t)$
for all $t \in [0,T)$.
Since $V^-(0) = V^+(T)= \R^d$ holds,
this shows that $\delta_+$
satisfies the conditions in \eqref{eq:pointwise_criticality}.
By invoking
\cref{th_pointwise_criticality},
it now follows
that $\delta_+$ is critical and the proof is complete.
\end{proof}

\Cref{cor:right_tangential} motivates the following definition.

\begin{definition}[critical cone in {$G_r([0,T];\R^d)$}]
\label{def:critical_cone}
We define 
\[
\KK_{G_r}^{\crit}(y,u)
:=
\left \{
\left.
z \in G_r([0,T];\R^d)
~\right |~
z \text{ is a critical direction}
\right \}.
\]
\end{definition}

Note that, by 
\cref{def:critical_direction},
\cref{lem:crit_subinterval},
and
\cref{th_pointwise_criticality},
we have 
\begin{equation}
\label{eq:GR-crit_cone_alts}
\begin{aligned}
\KK_{G_r}^{\crit}(y,u)
&=
\left \{
z \in G_r([0,T]; \R^d)
~\left |~
z(t) \in \ZZ(y(t))
~\forall t \in [0,T]
\text{ and }
\int_0^T \langle z, \dd w \rangle = 0
\right.
\right \}
\\
&=
\left \{
z \in G_r([0,T]; \R^d)
~\left |~
z(t) \in \ZZ(y(t))
~\forall t \in [0,T]
\text{ and }
\int_{s_1}^{s_2} 
\langle z, \dd w \rangle = 0
~\forall\,0 \leq s_1 < s_2\leq T
\right.
\right \}
\\
&=
\left \{
\left.
z \in G_r([0,T]; \R^d)
~\,\right |~
 z(t) \in \ZZ(y(t)),~
z(t+) \in V^+(t),~
z(t-) \in V^-(t)~
\forall t \in [0,T]
\right \}.
\end{aligned}
\end{equation}
From \cref{cor:right_tangential}, we further obtain:

\begin{corollary}[critical cone and right limits]
For every $\delta \in \DD$, it holds $\delta_+ \in \KK_{G_r}^{\crit}(y,u)$.
\label{cor:DDsubsetCritCone}
\end{corollary}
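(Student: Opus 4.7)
The corollary is essentially a bookkeeping consequence of \cref{cor:right_tangential} together with the basic regulated-function calculus collected in \cref{subsec:2.1}, so the plan is very short. Fix $\delta \in \DD$. My first step would be to observe the membership and regularity of $\delta_+$: by \cref{def:DD} we have $\delta \in BV([0,T];\R^d)$, and by the properties of $BV$-functions recalled after \eqref{eq:Monteiro4.1.9} this already gives $\delta_+ \in BV([0,T];\R^d) \subset G([0,T];\R^d)$.

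Next, I would verify that $\delta_+$ is right-continuous. This is precisely the content of the identities in \eqref{eq:Monteiro4.1.9}: for every $t \in [0,T)$ one has $(\delta_+)(t+) = \delta(t+) = \delta_+(t)$, and at $t=T$ the conventions $\delta(T+) := \delta(T)$ together with the same formula force $(\delta_+)(T+) = \delta_+(T)$. Hence $\delta_+ \in G_r([0,T];\R^d)$.

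Finally, \cref{cor:right_tangential} asserts that $\delta_+$ is a critical direction. Combining this with $\delta_+ \in G_r([0,T];\R^d)$ and \cref{def:critical_cone} (or equivalently the first characterization in \eqref{eq:GR-crit_cone_alts}) gives $\delta_+ \in \KK_{G_r}^{\crit}(y,u)$, as claimed. There is no genuine obstacle here: the entire substance of the statement has already been produced in \cref{th:jump_right,cor:right_tangential}, where the non-obtuseness of $Z$ was used through \cref{lem:nonobtuse_properties}\ref{lem:nonobtuse_properties:iv} to guarantee that the jump $\pi_{V^+(t)}(\delta(t))$ stays in $\ZZ(y(t))$, and the corollary merely records that the critical direction one obtains happens to live in the right-continuous subspace $G_r([0,T];\R^d)$ by construction.
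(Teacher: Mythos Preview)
Your proof is correct and matches the paper's approach: the paper states this corollary as an immediate consequence of \cref{cor:right_tangential} without further argument, and your proposal simply spells out the (trivial) verification that $\delta_+ \in G_r([0,T];\R^d)$ via \eqref{eq:Monteiro4.1.9} before invoking that corollary and \cref{def:critical_cone}.
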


The main goal of this subsection is to demonstrate that
elements of the cone $\KK_{G_r}^{\crit}(y,u)$
can be approximated (in an appropriate sense)
by directions that are (a) more regular, (b) critical, and 
(c) admissible in the EVI \eqref{eq:DQ-EVI} satisfied
by the difference quotients $\delta_\tau$. 
As we will see in \cref{subsec:3.6}, this approximation property
is crucial for the derivation of the system \eqref{eq:char_sys_1}
that implies that 
$\DD$ is a singleton and, ultimately,
characterizes the directional derivatives of $\SS$. 
We remark that, in the context of elliptic variational inequalities,
density results of this type are typically referred 
to as \emph{polyhedricity} conditions;
see \cite{ChristofPhd2018,Haraux1977,Harder2017,Wachsmuth2019}
and the references therein. 
Accordingly, we call the resulting
approximation property \emph{temporal polyhedricity}. 
The set of critical radial directions that we 
use for the approximation of the elements 
of $\KK_{G_r}^{\crit}(y,u)$ is defined as follows:

\begin{definition}[critical radial $BV_r$-directions with zero left limits at jump points]
We define 
\begin{equation*}
\begin{aligned}
\KK_{BV_r}^{\rad, \crit, 0}(y,u)
:=
\Big \{ 
z \in 
\KK_{G_r}^{\crit}(y,u)
\cap BV_r([0,T];\R^d)
\,\Big |&
\text{ there exists a number } \tau > 0 \text{ independent of $s$ such that} 
\\ &~y(s) + \tau z(s) \in Z \text{ is true for all } s \in [0,T] \text{ and it holds}
\\
&~z(s-) = 0 \text{ for all $s \in [0,T]$ with } z(s-) \neq z(s)
\Big \}.
\end{aligned}
\end{equation*}
\end{definition}

We can now 
prove the main result of this subsection.

\begin{theorem}[temporal polyhedricity]
\label{theorem:tempoly}%
Let $z \in \KK_{G_r}^{\crit}(y,u)$ be given.
Then there exist 
$z_{k,l}, z_l \in G_r([0,T]; \R^d)$, $k,l \in \mathbb{N}$, such that the following is true:
\begin{equation}
\label{eq:temp_poly}
\begin{gathered}
z_{k,l} \in \KK_{BV_r}^{\rad, \crit, 0}(y,u), 
\qquad \|z_{k,l}\|_\infty \leq \|z\|_\infty~\forall k,l,
\\
z_{l} \in \KK_{G_r}^{\crit}(y,u) \cap BV_r([0,T]; \R^d),
\qquad \|z_{l}\|_\infty \leq \|z\|_\infty~\forall l,
\\
z_{k,l} \to z_l \text{ pointwise in } [0,T] \text{ for } k \to \infty \text{ for all }l,
\\
z_l \to z \text{ uniformly on $[0,T]$ for } l \to \infty.
\end{gathered}
\end{equation}
\end{theorem}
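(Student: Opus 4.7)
The plan is to build the two approximating families sequentially. In a first stage I produce intermediate step functions $z_l \in \KK_{G_r}^{\crit}(y,u) \cap BV_r([0,T];\R^d)$ with $z_l \to z$ uniformly, each carrying only finitely many breakpoints. In a second stage, for each such $z_l$ with breakpoints $0 < s_1 < \ldots < s_N \leq T$ and constant values $z_l(s_{j-1})$ on $[s_{j-1}, s_j)$, I derive $z_{k,l} \in \KK_{BV_r}^{\rad,\crit,0}(y,u)$ by inserting a short linear ramp that brings $z_l$ down to zero immediately before each jump: specifically, $z_{k,l}$ equals $z_l$ on $[s_{j-1}, s_j - 1/k]$ and equals $k(s_j - s) z_l(s_{j-1})$ on $[s_j - 1/k, s_j)$, with $z_{k,l}(s_j) := z_l(s_j)$. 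Stage~2 is essentially mechanical; the technical difficulty is concentrated in stage~1.

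For stage~2, every value of $z_{k,l}$ on $[s_{j-1}, s_j)$ is a nonnegative scalar multiple of $z_l(s_{j-1}) \in \ZZ(y(s))$, so tangentiality is inherited via the cone property; the left-limit at every jump $s_j$ is zero as required by $\KK_{BV_r}^{\rad,\crit,0}$; and a uniform $\tau > 0$ with $y + \tau z_{k,l} \in Z$ exists by continuity of $y$, the finiteness of breakpoints, and the fact that tangentiality makes every active-index constraint automatic. Criticality of $z_{k,l}$ follows from a squeeze based on \cref{lem:nonneg_tang_play}: both $z_{k,l}$ and $z_l - z_{k,l}$ are tangential, so $\int_0^T \langle z_{k,l}, \dd w\rangle$ and $\int_0^T \langle z_l - z_{k,l}, \dd w\rangle$ are each nonpositive but sum to $\int_0^T \langle z_l, \dd w\rangle = 0$, forcing both to vanish. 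Pointwise convergence $z_{k,l}(s) \to z_l(s)$ and the norm bounds are then immediate from the construction.

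For stage~1, given $\varepsilon > 0$ I would choose a finite partition $0 = t_0 < t_1 < \ldots < t_N = T$ satisfying (a) the oscillation of $z$ on each $[t_{i-1}, t_i)$ is below $\varepsilon$, (b) $\AA(y(s)) \subset \AA(y(t_{i-1}))$ on each $[t_{i-1}, t_i)$, and, crucially, (c) the index set $J_i := \{j \in \AA(y(t_{i-1})) : \lambda_j \text{ not constant on } (t_{i-1}, t_i)\}$ coincides with $\AA_\strict^+(t_{i-1})$, where $\lambda_j$ denote the local multipliers of \cref{lemma:monotone_play} at $t_{i-1}$. Setting $z_l(s) := z(t_{i-1})$ on $[t_{i-1}, t_i)$ and $z_l(T) := z(T)$ produces a right-continuous step function in $BV_r$ with $\|z - z_l\|_\infty < \varepsilon$ and $\|z_l\|_\infty \leq \|z\|_\infty$; criticality in the pointwise sense of \cref{th_pointwise_criticality} then follows because (c), together with arguments parallel to \cref{lem:nice_lemma}, implies $\AA_\strict^\pm(t) \subset J_i = \AA_\strict^+(t_{i-1})$ for every $t \in (t_{i-1}, t_i]$, hence $V^+(t) \cap V^-(t) \supset V^+(t_{i-1}) \ni z(t_{i-1})$ by criticality and right-continuity of $z$.

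The main obstacle is showing that a \emph{finite} partition with property (c) exists. For each $t \in [0,T)$ the quantity $\rho^+(t) := \min_{j \in \AA(y(t)) \setminus \AA_\strict^+(t)} \sup\{s > 0 : \lambda_j \text{ is constant on } [t, t+s]\}$ is strictly positive as a minimum over finitely many strictly positive constancy radii, and an analogous $\rho^-(t) > 0$ exists for $t \in (0,T]$. The family $\{(t - \rho^-(t), t + \rho^+(t))\}_{t \in [0,T]}$ is an open cover of $[0,T]$, and a finite subcover---refined further to also accommodate conditions (a) and (b)---assembles into the desired finite partition. Non-obtuseness of $Z$ enters stage~1 only implicitly, through the regularity of $\HH$ guaranteed by \cref{lem:descriptions:iii} that underlies \cref{lemma:monotone_play}, and it is used explicitly in stage~2 via \cref{lem:nonneg_tang_play}. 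Indexing the construction along a sequence $\varepsilon_l \to 0$ and combining both stages then delivers all the properties listed in \eqref{eq:temp_poly}.
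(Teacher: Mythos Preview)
Your Stage~2 is sound, but Stage~1 has a genuine gap: a finite partition satisfying your condition~(b) need not exist. Here is a concrete obstruction. Take $d=2$, $Z=\{x:x_1\le 1,\ x_2\le 1\}$ (non-obtuse), and let $y\in CBV([0,T];\R^2)$ with $y(1)=(1,1)$ be such that, as $s\to 1^-$, $y$ oscillates infinitely often along straight segments between points of the form $(1,1-a_n)$ and $(1-a_n,1)$ with $a_n\downarrow 0$ summable; each segment passes through the interior, so $y$ never hits the corner for $s<1$. With $u:=y$ one has $w=0$, so $y=\SS(u,y_0)$. For every $t_{i-1}<1$ the set $\AA(y(t_{i-1}))$ is $\{1\}$, $\{2\}$, or $\emptyset$, yet on any interval $[t_{i-1},t_i)$ with $t_i>1$ (or $t_i$ close to $1$) both $\{1\}$ and $\{2\}$ occur as active sets. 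Hence (b) fails for every finite partition. Your compactness argument produces a finite \emph{cover} (centered at points where local control is available), but ``refinement'' cannot turn this into a partition with the left-endpoint property~(b); the left endpoint $t_{i-1}$ is simply the wrong reference point. Since you invoke (b) both for the inclusion $\AA_\strict^{\pm}(t)\subset J_i$ and---implicitly---for the tangentiality $z_l(s)\in\ZZ(y(s))$ that \cref{th_pointwise_criticality} requires, the proof of $z_l\in\KK_{G_r}^{\crit}(y,u)$ breaks down.

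The paper circumvents this by \emph{not} using a step function with left-endpoint values. Instead it takes a finite subcover $\{(t_m-\varepsilon_m,t_m+\varepsilon_m)\}$ and a subordinate smooth partition of unity $\{\psi_m\}$, and sets $z_l(s)=\sum_m\psi_m(s)v_m(s)$ where $v_m$ equals $z(t_m-)$ on $[0,t_m)$ and $z(t_m+)$ on $[t_m,T]$. The point is that tangentiality is checked relative to the \emph{center} $t_m$: on $[t_m,t_m+\varepsilon_m]$ one has $\AA(y(s))\subset\AA(y(t_m))$ by continuity, and on $[t_m-\varepsilon_m,t_m)$ one has $\AA(y(s))\subset\AA^-(t_m)$---both achievable at each $t_m$. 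In the example above, the cover element at $t_m=1$ has $\AA^-(1)=\{1,2\}$, and $z(1-)$ (being a limit of tangential values) satisfies $\langle\nu_i,z(1-)\rangle\le 0$ for $i=1,2$, so tangentiality holds on the entire left half. Your step-function idea could be salvaged by inserting an extra breakpoint in each overlap and switching from $z(t_{m-1}+)$ to $z(t_m-)$ there, but as written it does not.
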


\begin{proof}
Let $z \in \KK_{G_r}^{\crit}(y,u)$ be given.
To construct the sequence 
$\{z_l\}$, we proceed along the lines 
of the proof of \cref{th_pointwise_criticality}.
Let $l \in \N$ be arbitrary but fixed.
As $z$ is right-continuous and regulated, 
for every $t \in [0,T]$, there 
exists a number $\varepsilon_t > 0$ such that 
the step function
\[
v_t\colon [0,T] \to \R^d,
\qquad
v_t(s)
:=
\begin{cases}
z(t-) & \text{ if } s \in [0,t),
\\
z(t+) & \text{ if } s \in [t,T],
\end{cases}
\]
satisfies $\| z - v_t\|_{\infty, [t-\varepsilon_t, t + \varepsilon_t] \cap [0,T]} \leq 1/l$, such that the 
assertions of \cref{lemma:monotone_play}
hold for $t$ on 
$[t-\varepsilon_t, t + \varepsilon_t] \cap [0,T]$ with functions 
$\lambda_i^t$, $i \in \AA(y(t))$,
such that 
$\lambda_i^t = \mathrm{const}$ holds on $[t-\varepsilon_t, t]\cap [0,T]$ for all $i \in \BB^-(t)$,
such that 
$\lambda_i^t = \mathrm{const}$ holds on $[t, t+ \varepsilon_t]\cap [0,T]$ for all $i \in \BB^+(t)$,
such that 
$ I \setminus \AA^+(t) \subset \II(y(s))$ holds for all 
$s \in (t, t + \varepsilon_t] \cap [0,T]$,
and such that 
$I \setminus \AA^-(t) \subset \II(y(s))$ holds for all 
$s \in [t - \varepsilon_t, t) \cap [0,T]$.
Due to its compactness,
we can cover the interval $[0,T]$ by finitely many 
of the intervals $(t - \varepsilon_t, t + \varepsilon_t)$.
We denote the times $t$ of this cover and 
the associated $\varepsilon_t$, $v_t$, and $\lambda_i^t$ by 
$0\leq t_1 \leq ... \leq t_M \leq T$
and $\varepsilon_m$, $v_m$, $\lambda_i^m$,
$i \in \AA(y(t_m))$, $m=1,...,M$, $M \in \N$,
respectively,  and again consider a 
smooth partition of unity $\{\psi_m\}_{m=1}^M$ on $[0,T]$ 
subordinate 
to the cover 
$\{(t_m - \varepsilon_m, t_m + \varepsilon_m)\}_{m=1}^M$,
as in \eqref{eq:POU_properties}.
We assume w.l.o.g.\ 
that the points $t_m$ are distinct and that 
$\psi_m(t_m) = 1$ holds for all $m=1,...,M$
and $\psi_m(t_j) = 0$ for all $m \neq j$.
For this choice of the partition of unity, we 
define 
\begin{equation}
\label{eq.zl_def}
z_l(s) := \sum_{m=1}^M \psi_m(s) v_{m}(s)\qquad s \in [0,T].
\end{equation}
The function $z_l$ obtained along these lines
is clearly in $BV_r([0,T]; \R^d)$ and
satisfies 
\[
\int_0^T \langle z_l, \dd w \rangle = 0
\qquad
\text{and}
\qquad
\sup_{s \in [0,T]}
|z_l(s) - z(s)|
\leq\frac1l
\]
by the exact same arguments as in the proof of
\cref{th_pointwise_criticality}. 
From the definition of 
the left and the right limit,
the definitions of $\AA^+(t_m)$ and $\AA^-(t_m)$,
our choice of $\varepsilon_m$,
and $z(t) = z(t+) \in \ZZ(y(t))$ for all $t \in [0,T]$,
it follows further that 
$\langle \nu_i, z(t_m -) \rangle \leq 0$
holds for all $i \in \AA^-(t_m)$,
that
$\langle \nu_i, z(t_m+) \rangle \leq 0$
holds for all $i \in \AA(y(t_m))$,
that $\AA(y(s)) \subset \AA^-(t_m)$
holds for all $s \in [t_m - \varepsilon_m, t_m) \cap [0,T]$,
and  
that $\AA(y(s)) \subset \AA(y(t_m))$
holds for all $s \in [t_m, t_m + \varepsilon_m] \cap [0,T]$.
By combining all of this, we obtain that 
$z(t_m -) \in \ZZ(y(s))$ holds for all
$s \in [t_m - \varepsilon_m, t_m) \cap [0,T]$,
that $z(t_m+) \in \ZZ(y(s))$ holds for all
$s \in [t_m, t_m + \varepsilon_m] \cap [0,T]$,
and---due to the definition of $z_l$
and the properties of $\{\psi_m\}$---that 
$z_l(s) \in \ZZ(y(s))$ holds for all $s \in [0,T]$.
In total and in view of \eqref{eq:GR-crit_cone_alts}, 
this shows that the functions $\{z_l\}$ satisfy
$\{z_l\} \subset \KK_{G_r}^{\crit}(y,u) \cap BV_r([0,T]; \R^d)$
and 
$\| z_l - z \|_\infty \leq 1/l \to 0$
for $l \to \infty$ as desired. 
That $\|z_l\|_\infty \leq \|z\|_\infty$
holds for all $l$ follows immediately 
from \eqref{eq.zl_def}. 
This completes the construction of the sequence 
$\{z_l\}$.

It remains to construct the functions
$\{z_{k,l}\}$. To this end, 
we consider a function $\varphi \in C^\infty(\R)$ 
satisfying 
\[
0 \leq \varphi(s) \leq 1~\forall s \in \R,\quad 
\varphi(s) = 1~\forall s \in (-\infty, -1],\quad 
\varphi(s) = 0~\forall s \in [0,\infty),
\]
and define 
\begin{equation}
\label{eq:randomeq3838dn_zkl}
z_{k,l}(s) 
:= \sum_{m=1}^M 
\left [
\varphi\left ( \frac{s - t_m + 1/k}{1/k} \right )
+ \mathds{1}_{[t_m, T]}(s)
\right ]
\psi_m(s) v_{m}(s)
\qquad s \in [0,T]
\qquad
k,l\in \N.
\end{equation}
Note that these functions
trivially satisfy 
$\|z_{k,l}\|_\infty \leq \|z\|_\infty $
for all $l,k$ and 
$z_{k,l}(s) \to z_l(s)$ for all $s \in [0,T]$
and $k \to \infty$. Moreover, 
it holds $\{z_{k,l}\} \subset \KK_{G_r}^{\crit}(y,u) \cap BV_r([0,T]; \R^d)$
for the exact same reasons as 
in the case of the sequence $\{z_l\}$
and 
$z_{k,l}(s-) = 0$ for all
$s \in [0,T]$ with $z_{k,l}(s-) \neq z_{k,l}(s)$
(namely, the points $t_m$ with $v_m(t_m)\neq 0$)
by construction. 
To see that 
$\{z_{k,l}\} \subset  \KK_{BV_r}^{\rad, \crit, 0}(y,u)$
holds, it remains to show that 
there exists $\tau > 0$
such that 
$y(s) + \tau z_{k,l}(s) \in Z$
holds for all $s \in [0,T]$. 
To this end, let 
$m \in \{1,...,M\}$ be  fixed. 
From our choice of $\varepsilon_m$,
we obtain that 
$\AA(y(s)) \subset \AA(y(t_m))$
holds for all $s \in [t_m, t_m + \varepsilon_m] \cap [0,T]$
and that $z(t_m) = z(t_m+) \in \ZZ(y(t_m))$.
This implies that 
\[
\langle \nu_i, y(s) + \tau z(t_m+) \rangle \leq \alpha_i
\quad \forall i \in \AA(y(t_m))
\quad \forall s \in [t_m, t_m + \varepsilon_m] \cap [0,T]
\quad \forall \tau > 0
\]
and
\[
\langle \nu_i, y(s) \rangle < \alpha_i
\quad \forall i \in \II(y(t_m))
\quad \forall s \in [t_m, t_m + \varepsilon_m] \cap [0,T].
\]
Due to the continuity of $y$, the above shows that 
there exists $\hat \tau_0 > 0$ satisfying 
$y(s) + \tau z(t_m+) \in Z$
for all $0 < \tau < \hat \tau_0$ and 
all $s \in [t_m, t_m + \varepsilon_m] \cap [0,T]$.
Let us next consider the interval 
$[t_m - \varepsilon_m, t_m) \cap [0,T]$.
Due to our choice of $\varepsilon_m$
and the definitions of 
$\AA^-(t_m)$ and $z(t_m -)$,
it holds 
\[
\langle \nu_i, y(s) + \tau z(t_m-) \rangle \leq \alpha_i
\quad \forall i \in \AA^-(t_m)
\quad \forall s \in [t_m - \varepsilon_m, t_m] \cap [0,T]
\quad \forall \tau > 0
\]
and
\[
\langle \nu_i, y(s) \rangle < \alpha_i
\quad \forall i \in I \setminus \AA^-(t_m)
\quad \forall s \in [t_m - \varepsilon_m, t_m - 1/k] \cap [0,T].
\]
Due to the continuity of $y$, this implies 
that 
there exists $\tilde \tau_0 \in (0, \hat \tau_0)$ satisfying 
$y(s) + \tau z(t_m-) \in Z$
for all $0 < \tau < \tilde \tau_0$ and 
all $s \in [t_m - \varepsilon_m, t_m - 1/k] \cap [0,T]$.
In total, we have now shown that 
\[
y(s) + \tau\left (\mathds{1}_{[t_m - \varepsilon_m, t_m - 1/k] \cap [0,T]}(s)z(t_m-) +
\mathds{1}_{[t_m, t_m + \varepsilon_m] \cap [0,T]}(s)z(t_m+)
\right ) \in Z
\qquad \forall s\in [0,T]\qquad \forall \tau \in (0, \tilde \tau_0).
\]
Due to the properties of $\psi_m$ and $\varphi$,
the definition of $v_m$,
and the convexity of $Z$,
this yields
\[
y(s) + \tau
\left [
\varphi\left ( \frac{s - t_m + 1/k}{1/k} \right )
+ \mathds{1}_{[t_m, T]}(s)
\right ]
\psi_m(s) v_{m}(s) \in Z
\qquad \forall s\in [0,T]\qquad \forall \tau \in (0, \tilde \tau_0).
\]
Due to the convexity of $Z$ and the definition \eqref{eq:randomeq3838dn_zkl},
it now follows immediately that
there exists $\tau > 0$
such that 
$y(s) + \tau z_{k,l}(s) \in Z$
holds for all $s \in [0,T]$. 
This completes the proof. 
\end{proof}

\begin{remark}~
\begin{itemize}
\item In the special case $d=1$ and $Z = [-r,r]$, $r>0$, 
a density result
analogous to that in \cref{theorem:tempoly} has already been proven 
in \cite[Theorem 6.5]{BrokateChristof2023} (with roughly the same techniques). Note that, in the latter theorem, 
the approximation is 
even accomplished with elements of the set
$\KK_{BV_r}^{\rad, \crit, 0}(y,u) \cap C^\infty([0,T])$. 
The price that one pays for this improved smoothness 
(that is essential in the optimal control context of 
 \cite{BrokateChristof2023})
 is that density is only obtained for a 
 certain subset of the critical cone $\KK_{G_r}^{\crit}(y,u)$,
 which, however, still contains the right-limit functions 
 of the directional derivatives of $\SS$ in the one-dimensional setting.
 Whether \cref{theorem:tempoly} can be refined in a similar manner for $d>1$
 is an open problem. 
 For further remarks  on this topic, we refer to \cref{sec:5}.
\item As already mentioned, 
in the elliptic setting, density properties 
similar to that in \cref{theorem:tempoly} are typically referred 
to as \emph{polyhedricity}-results and 
well established; see \cite{ChristofPhd2018,Haraux1977,Harder2017,Wachsmuth2019}. 
A major difference between our notion of temporal polyhedricity 
and the classical concept of polyhedricity is that, 
in \eqref{eq:temp_poly}, the approximation is not 
performed w.r.t.\ a mode of convergence 
that is ``natural''
for the underlying EVI \eqref{eq:EVI}
(this would be convergence in $CBV([0,T];\R^d)$ or $C([0,T];\R^d)$ 
as these are the spaces in which Lipschitz estimates are available 
for $\SS$)
but w.r.t.\ pointwise convergence. Working with this weaker 
notion of convergence is crucial, given that the 
directional derivatives of $\SS$ are typically
neither left- nor right-continuous and, thus, 
cannot be approximated with elements 
of the spaces $BV_r([0,T];\R^d)$, 
$CBV([0,T];\R^d)$, or $C([0,T];\R^d)$
when convergence w.r.t.\ the norm $\|\cdot\|_\infty$
is considered.

\end{itemize}
\end{remark}

\subsection{Unique characterization of the limit of the difference quotients}
\label{subsec:3.6}

We are now in the position to prove that the set 
$\DD$ is a singleton. 

\begin{lemma}[EVI for limits of difference quotients]
\label{lem:DD_system}
For every $\delta \in \DD$, it holds
\begin{equation}
\label{eq:DD-VI_prot}
\int_{s_1}^{s_2}
\langle  z , \dd (\delta-h) \rangle
+
\int_{s_1}^{s_2}
\langle \delta, \dd h \rangle
- \frac12 | \delta(s_2) |^2
+ \frac12 | \delta(s_1) |^2
\geq 0
\qquad 
\forall\, 0\leq s_1 < s_2 \leq T
\qquad
\forall z \in \KK_{G_r}^{\crit}(y,u).
\end{equation}
\end{lemma}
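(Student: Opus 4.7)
The plan is to first establish \eqref{eq:DD-VI_prot} for the more regular directions $z \in \KK_{BV_r}^{\rad, \crit, 0}(y,u)$ by passing to the limit $\tau = \tau_k \to 0$ in \eqref{eq:DQ-EVI}, and then extend to arbitrary $z \in \KK_{G_r}^{\crit}(y,u)$ using the temporal polyhedricity result \cref{theorem:tempoly}. For $z \in \KK_{BV_r}^{\rad, \crit, 0}(y,u)$, the definition supplies a $\tau^* > 0$ with $y(s) + \tau^* z(s) \in Z$ for all $s$, so $z$ is admissible in \eqref{eq:DQ-EVI} for all small enough $\tau$. Combining the criticality $\int_{s_1}^{s_2}\langle z, \dd w \rangle = 0$ with \eqref{eq:proto_criticality} shows that the normal-cone term $\tau^{-1}\int_{s_1}^{s_2}\langle z - \delta_\tau, \dd w\rangle$ is nonnegative, so \eqref{eq:DQ-EVI} simplifies to $\int_{s_1}^{s_2}\langle z - \delta_\tau, \dd(\delta_\tau - h)\rangle \geq 0$. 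Expanding and applying \cref{lem:CBV-Stampacchia} to $\frac12|\delta_\tau|^2$ gives
\[
\int_{s_1}^{s_2}\langle z, \dd(\delta_\tau - h)\rangle + \int_{s_1}^{s_2}\langle \delta_\tau, \dd h\rangle - \tfrac12|\delta_\tau(s_2)|^2 + \tfrac12|\delta_\tau(s_1)|^2 \geq 0.
\]

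The main obstacle is now the limit of $\int_{s_1}^{s_2}\langle z, \dd \delta_{\tau_k}\rangle$, whose integrator $\delta_{\tau_k}$ converges only pointwise. I bypass this by integration by parts (\cref{lem:KS-IBP}): since $\delta_{\tau_k} \in CBV$ has no jumps,
\[
\int_{s_1}^{s_2}\langle z, \dd \delta_{\tau_k}\rangle = \langle z(s_2), \delta_{\tau_k}(s_2)\rangle - \langle z(s_1), \delta_{\tau_k}(s_1)\rangle - \int_{s_1}^{s_2}\langle \delta_{\tau_k}, \dd z\rangle,
\]
and every term on the right converges: the boundary ones pointwise and the last by \cref{th:KS-bounded_convergence} with $BV_r$-integrator $z$. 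Applying \cref{lem:KS-IBP} once more to $\int_{s_1}^{s_2}\langle z, \dd\delta\rangle$ (the $G_r$-property of $z$ kills the $J_2$-sum) yields
\[
\lim_{k\to\infty}\int_{s_1}^{s_2}\langle z, \dd\delta_{\tau_k}\rangle = \int_{s_1}^{s_2}\langle z, \dd\delta\rangle - J_1,
\qquad J_1 := \sum_{s_1 < s \leq s_2}\langle z(s) - z(s-),\, \delta(s) - \delta(s-)\rangle.
\]
The remaining terms in the inequality converge by pointwise convergence of $\delta_{\tau_k}(s_i)$ and by \cref{th:KS-bounded_convergence} for $\int_{s_1}^{s_2}\langle \delta_{\tau_k}, \dd h\rangle$ (with $BV$-integrator $h$). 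This produces the target inequality up to the sign of $J_1$.

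The key step is therefore to show $J_1 \geq 0$. At any jump point $s$ of $z$, the condition $z(s-) = 0$ built into $\KK_{BV_r}^{\rad, \crit, 0}(y,u)$ gives $z(s) - z(s-) = z(s) \in \ZZ(y(s))$. On the other hand, \cref{th:left_jumps} yields $\delta(s) = \pi_{\ZZ(y(s))}(\delta(s-))$, so $\delta(s-) - \delta(s)$ belongs to the normal cone $\NN_{\ZZ(y(s))}(\delta(s))$. Because $\ZZ(y(s))$ is a closed convex cone containing $\delta(s)$, this normal cone equals $\ZZ(y(s))^\circ \cap \delta(s)^\perp$, whence $\langle \delta(s-) - \delta(s), v\rangle \leq 0$ for every $v \in \ZZ(y(s))$. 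Taking $v = z(s)$ gives $\langle z(s), \delta(s) - \delta(s-)\rangle \geq 0$, and summing over the (at most countably many) jump points yields $J_1 \geq 0$. This establishes \eqref{eq:DD-VI_prot} for $z \in \KK_{BV_r}^{\rad, \crit, 0}(y,u)$.

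For general $z \in \KK_{G_r}^{\crit}(y,u)$, I invoke \cref{theorem:tempoly} to obtain approximants $z_{k,l} \in \KK_{BV_r}^{\rad, \crit, 0}(y,u)$ with $z_{k,l} \to z_l$ pointwise, $z_l \to z$ uniformly, and $\|z_{k,l}\|_\infty,\|z_l\|_\infty \leq \|z\|_\infty$. Applying the already proved inequality to $z_{k,l}$ and sending $k\to \infty$ (via \cref{th:KS-bounded_convergence} applied to the $BV$-integrator $\delta - h$ on $[s_1,s_2]$) yields the inequality for $z_l$; sending $l \to \infty$ (via the integrator-integrand estimate \cref{lem:KS-int-int-estimate} and uniform convergence) yields \eqref{eq:DD-VI_prot} for $z$. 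The hardest part of the argument is clearly the identification $J_1 \geq 0$, which is precisely where the jump characterization from \cref{subsec:3.3}, the cone structure of $\ZZ(y(s))$, and the zero-left-limit condition of $\KK_{BV_r}^{\rad, \crit, 0}(y,u)$ must combine.
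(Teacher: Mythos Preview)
Your proof is correct and follows essentially the same approach as the paper: both use the admissibility of $\KK_{BV_r}^{\rad,\crit,0}$-directions in \eqref{eq:DQ-EVI}, drop the $w$-term via criticality and \eqref{eq:proto_criticality}, integrate by parts (using the $CBV$-regularity of $\delta_{\tau_k}$) to pass to the limit via bounded convergence, integrate back, and then control the resulting jump sum $J_1$ via the zero-left-limit property of $z$ together with $\delta(s)=\pi_{\ZZ(y(s))}(\delta(s-))$ from \cref{th:left_jumps}; the extension to general $z\in\KK_{G_r}^{\crit}(y,u)$ is done by temporal polyhedricity in both cases. The only difference is cosmetic—the paper carries the approximants $z_{m,l}$ through the whole argument rather than first isolating the result for $\KK_{BV_r}^{\rad,\crit,0}$—and your normal-cone justification of $J_1\ge 0$ is just a rephrasing of the paper's appeal to \eqref{eq:randomeq_z_Zcirc}.
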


\begin{proof}
Let $\delta \in \DD$
be given,
with a sequence 
$\{\tau_k\}$ and difference 
quotients $\delta_{k} := \delta_{\tau_k}$
as in \cref{def:DD}.
Let 
$0\leq s_1 < s_2 \leq T$ and 
$z \in \KK_{G_r}^{\crit}(y,u)$
be fixed. 
Choose sequences 
$\{z_{m,l}\} \subset \KK_{BV_r}^{\rad, \crit, 0}(y,u)$
and
$ \{z_l\} \subset \KK_{G_r}^{\crit}(y,u)$
for $z$ as in \cref{theorem:tempoly}.
Then it follows from the 
definition of 
the set $\KK_{BV_r}^{\rad, \crit, 0}(y,u)$
and the convexity 
of $Z$ that, for all $m,l$,
we have 
$y(t) + \tau_k z_{m,l}(t) \in Z$
for all $t \in [0,T]$
for all sufficiently large $k$. 
This means that $z_{m,l}$
is admissible in the variational inequality 
\eqref{eq:DQ-EVI} for $\delta_k$
for $k$ large enough
and that we have 
\begin{equation}
\label{eq:randomeq2738398-djdje-2}
\int_{s_1}^{s_2}
 \left \langle z_{m,l} - \delta_k, \dd (\delta_k - h) \right \rangle 
 - \frac{1}{\tau_k}
 \int_{s_1}^{s_2}
 \left \langle z_{m,l} - \delta_k, \dd w \right \rangle 
 \geq 0.
\end{equation}
Due to the inclusion
$\KK_{BV_r}^{\rad, \crit, 0}(y,u) \subset \KK_{G_r}^{\crit}(y,u)$,
\eqref{eq:GR-crit_cone_alts},
\cref{lem:CBV-Stampacchia},
\eqref{eq:proto_criticality},
the $CBV$-regularity of $\delta_k$,
and the integration by parts formula 
in \cref{lem:KS-IBP}, \eqref{eq:randomeq2738398-djdje-2} implies 
\begin{equation*}
- \int_{s_1}^{s_2}
\left \langle z_{m,l} - \delta_k, \dd h \right \rangle 
 +
 \left \langle z_{m,l}(s_2), \delta_k(s_2) \right
 \rangle
 -
  \left \langle z_{m,l}(s_1), \delta_k(s_1) \right
 \rangle
 -
 \int_{s_1}^{s_2}
  \left \langle \delta_k, \dd  z_{m,l} \right \rangle 
   -
   \frac12 |\delta_k(s_2)|^2
   +
   \frac12 |\delta_k(s_1)|^2
 \geq 0
\end{equation*}
for all large enough $k$.
Using \cref{th:KS-bounded_convergence},
\cref{lem:EVI_diff_quot}, 
and the pointwise convergence $\delta_k(t) \to \delta(t)$
for all $t \in [0,T]$,
we can pass to the limit $k \to \infty$
in the above. This yields 
\begin{equation*}
- \int_{s_1}^{s_2}
\left \langle z_{m,l} - \delta, \dd h \right \rangle 
 +
 \left \langle z_{m,l}(s_2), \delta(s_2) \right
 \rangle
 -
  \left \langle z_{m,l}(s_1), \delta(s_1) \right
 \rangle
 -
 \int_{s_1}^{s_2}
  \left \langle \delta, \dd  z_{m,l} \right \rangle 
   -
   \frac12 |\delta(s_2)|^2
   +
   \frac12 |\delta(s_1)|^2
 \geq 0.
\end{equation*}
Integrating by parts again 
(keeping in mind that both 
the integrand and the integrator may now possess 
discontinuities), 
it follows that 
\begin{equation*}
\begin{aligned}
&- \int_{s_1}^{s_2}
\left \langle z_{m,l} - \delta, \dd h \right \rangle 
 +
\int_{s_1}^{s_2}
  \left \langle z_{m,l}, \dd \delta  \right \rangle 
   -
   \frac12 |\delta(s_2)|^2
   +
   \frac12 |\delta(s_1)|^2
   \\
   & \qquad\qquad-
  \sum_{s_1 < t \leq s_2}
  \left \langle
  \delta(t) - \delta(t-) 
  ,
  z_{m,l}(t) - z_{m,l}(t-) 
  \right \rangle
  +
  \sum_{s_1 \leq t < s_2}
  \left \langle
   \delta(t) - \delta(t+)
  ,
   z_{m,l}(t) - z_{m,l}(t+)
  \right \rangle
 \geq 0.
 \end{aligned}
\end{equation*}
Since $z_{m,l}$
is right-continuous with 
zero left limits at all points of discontinuity,
the last inequality simplifies to
\begin{equation*}
\begin{aligned}
&- \int_{s_1}^{s_2}
\left \langle z_{m,l} - \delta, \dd h \right \rangle 
 +
\int_{s_1}^{s_2}
  \left \langle z_{m,l}, \dd \delta  \right \rangle 
   -
   \frac12 |\delta(s_2)|^2
   +
   \frac12 |\delta(s_1)|^2
   \\
   & \qquad\qquad-
  \sum_{s_1 < t \leq s_2,~z_{m,l}(t) \neq z_{m,l}(t-)}
  \left \langle
  \delta(t) - \delta(t-) 
  ,
  z_{m,l}(t) 
  \right \rangle
 \geq 0.
 \end{aligned}
\end{equation*}
Recall that $\delta(t) = \pi_{\ZZ(y(t))}(\delta(t-))$
holds for all $t \in [0,T]$ by 
\cref{th:left_jumps} 
and that 
$z_{m,l}(t) \in \ZZ(y(t))$
by the definition of $\KK_{G_r}^{\crit}(y,u)$.
In view of \eqref{eq:randomeq_z_Zcirc},
this yields
\[
\sum_{s_1 < t \leq s_2,~z_{m,l}(t) \neq z_{m,l}(t-)}
  \left \langle
  \delta(t) - \delta(t-) 
  ,
  z_{m,l}(t) 
  \right \rangle
  =
  \sum_{s_1 < t \leq s_2,~z_{m,l}(t) \neq z_{m,l}(t-)}
  \left \langle
  \pi_{\ZZ(y(t))}(\delta(t-)) - \delta(t-) 
  ,
  z_{m,l}(t) 
  \right \rangle
  \geq 0.
\]
We may thus conclude that 
\begin{equation*}
\begin{aligned}
&- \int_{s_1}^{s_2}
\left \langle z_{m,l} - \delta, \dd h \right \rangle 
 +
\int_{s_1}^{s_2}
  \left \langle z_{m,l}, \dd \delta  \right \rangle 
   -
   \frac12 |\delta(s_2)|^2
   +
   \frac12 |\delta(s_1)|^2
 \geq 0.
 \end{aligned}
\end{equation*}
Letting $m$ go to infinity in the above
and then $l$, 
using \cref{th:KS-bounded_convergence}
and the properties of $\{z_{m,l}\}$
and $\{z_l\}$, we arrive 
at the desired variational inequality \eqref{eq:DD-VI_prot}.
\end{proof}

Note that, if $\delta$ is continuous on $[0,T]$,
then \cref{lemma:KS_lin_add,lem:CBV-Stampacchia}
allow to rewrite 
\eqref{eq:DD-VI_prot} as 
\begin{equation}
\label{eq:randome_strong_EVI}
\int_{s_1}^{s_2}
\langle  z -\delta , \dd (\delta-h) \rangle
\geq 0
\qquad 
\forall\, 0\leq s_1 < s_2 \leq T
\qquad
\forall z \in \KK_{G_r}^{\crit}(y,u).
\end{equation}
The EVI \eqref{eq:DD-VI_prot} can thus be 
interpreted as a weak formulation of \eqref{eq:randome_strong_EVI};
cf.\ \cite[Theorem 4.1, System (4.2)]{Christof2019parob}. 
By combining \eqref{eq:DD-VI_prot} 
with the results of the previous subsections, 
we now arrive at:

\begin{theorem}[uniqueness of limits of difference quotients]
\label{th:DD_unique}
The set $\DD$ is a singleton and its sole element 
is uniquely 
characterized by the system 
\begin{equation}
\label{eq:unique_DD_system}
\begin{aligned}
&\delta \in BV([0,T];\R^d),
\qquad
\delta_+ \in \KK_{G_r}^{\crit}(y,u),
\qquad
\delta(0) = h_0,\\
&\delta(t) = \pi_{\ZZ(y(t))}(\delta(t-))
~\forall t \in [0,T],
\qquad
\delta(t+) = \pi_{V^+(t)}(\delta(t))
~\forall t \in [0,T],\\
&\int_{s_1}^{s_2}
\langle  z , \dd (\delta-h) \rangle
+
\int_{s_1}^{s_2}
\langle \delta, \dd h \rangle
- \frac12 | \delta(s_2) |^2
+ \frac12 | \delta(s_1) |^2
\geq 0
\quad 
\forall 0\leq s_1 < s_2 \leq T
\quad
\forall z \in \KK_{G_r}^{\crit}(y,u).
\end{aligned}
\end{equation}
\end{theorem}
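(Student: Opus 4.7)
The plan is to split the proof into two parts: first, show that every $\delta \in \DD$ satisfies the system \eqref{eq:unique_DD_system}; second, show that \eqref{eq:unique_DD_system} has at most one solution in $BV([0,T];\R^d)$. Combined with \cref{lem:DD_nonempty} (which guarantees $\DD \neq \emptyset$), this will imply that $\DD$ is a singleton whose sole element is the unique solution of \eqref{eq:unique_DD_system}. The first part is essentially a bookkeeping exercise: the regularity $\delta \in BV([0,T];\R^d)$ follows from \cref{def:DD}; the initial condition $\delta(0) = h_0$ follows from $\delta_\tau(0) = h_0$ for all $\tau$ and the pointwise convergence in \cref{def:DD}; the inclusion $\delta_+ \in \KK_{G_r}^{\crit}(y,u)$ is \cref{cor:DDsubsetCritCone}; the jump conditions $\delta(t) = \pi_{\ZZ(y(t))}(\delta(t-))$ and $\delta(t+) = \pi_{V^+(t)}(\delta(t))$ are exactly \cref{th:left_jumps} and \cref{th:jump_right}; and the integral inequality is \cref{lem:DD_system}. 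No new work is required here.

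For the uniqueness part, suppose $\delta_1, \delta_2$ are two solutions of \eqref{eq:unique_DD_system}. Since $\delta_{j,+} \in \KK_{G_r}^{\crit}(y,u)$, I can test the integral inequality in \eqref{eq:unique_DD_system} for $\delta_1$ with $z = \delta_{2,+}$ and the one for $\delta_2$ with $z = \delta_{1,+}$. Exploiting that $h \in CBV([0,T];\R^d)$ and \cref{lem:KS-int_limits}, the pairing with $h$ reduces (in both inequalities) to an integral of the right-limit function against $\dd h$, so that after adding the two inequalities every $h$-term cancels. What remains is an inequality of the form
\[
\int_{s_1}^{s_2}\langle \delta_{2,+}, \dd\delta_1\rangle
+ \int_{s_1}^{s_2}\langle \delta_{1,+}, \dd\delta_2\rangle
\geq \tfrac12|\delta_1(s_2)|^2 + \tfrac12|\delta_2(s_2)|^2
- \tfrac12|\delta_1(s_1)|^2 - \tfrac12|\delta_2(s_1)|^2.
\]
I then apply the Kurzweil-Stieltjes integration by parts formula \cref{lem:KS-IBP} to each integral on the left, carefully using the identities $\delta_{j,+}(s-) = \delta_j(s-)$, $\delta_{j,+}(s) = \delta_j(s+)$, and $\delta_{j,+}(s+) = \delta_j(s+)$ from \eqref{eq:Monteiro4.1.9} to simplify the jump-contribution sums. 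Combined with the analogous inequality obtained from diagonal testing $z = \delta_{j,+}$ in the EVI for $\delta_j$ (which yields lower bounds for $\int_{s_1}^{s_2}\langle \delta_{j,+}, \dd\delta_j\rangle$), the result should be an estimate controlling $\tfrac12|\delta_1(s_2) - \delta_2(s_2)|^2$ in terms of $\tfrac12|\delta_1(s_1) - \delta_2(s_1)|^2$ plus jump corrections.

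The final step is to show that these jump corrections have the right sign. At each jump point $t$, the jump conditions give $\delta_1(t) - \delta_2(t) = \pi_{\ZZ(y(t))}(\delta_1(t-)) - \pi_{\ZZ(y(t))}(\delta_2(t-))$ and $\delta_1(t+) - \delta_2(t+) = \pi_{V^+(t)}(\delta_1(t) - \delta_2(t))$ (the second identity uses that $\pi_{V^+(t)}$ is linear as a projection onto a subspace). By \cref{lem:basic_proj_properties}\ref{lem:basic_proj_properties:ii}, both projections are 1-Lipschitz, so
\[
|\delta_1(t) - \delta_2(t)|^2 \le |\delta_1(t-) - \delta_2(t-)|^2,
\qquad
|\delta_1(t+) - \delta_2(t+)|^2 \le |\delta_1(t) - \delta_2(t)|^2.
\]
These contractions allow me to absorb the jump sums appearing after the integration by parts and conclude that the map $t \mapsto \tfrac12 |\delta_1(t) - \delta_2(t)|^2$ is nonincreasing (in the appropriate one-sided-limit sense). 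Since $\delta_1(0) = \delta_2(0) = h_0$, this forces $\delta_1 \equiv \delta_2$, and uniqueness is established. The main obstacle in this argument is the third step: executing the integration by parts in the Kurzweil-Stieltjes setting for two $BV$-functions that are both discontinuous at the same (countably many) jump points requires careful bookkeeping of all the pairwise jump contributions, and one must verify that the sign structure provided by the projection-type jump conditions is exactly what is needed to absorb these contributions. Once this is done, $|\DD| = 1$ follows immediately from \cref{lem:DD_nonempty}, completing the proof.
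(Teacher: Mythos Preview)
Your first part (every $\delta\in\DD$ solves \eqref{eq:unique_DD_system}) is correct and matches the paper exactly: it is just a collection of \cref{lem:DD_nonempty}, \cref{cor:DDsubsetCritCone}, \cref{th:left_jumps}, \cref{th:jump_right}, and \cref{lem:DD_system}.

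For the uniqueness part, your opening move is also the paper's: test the EVI for $\delta_1$ with $\delta_{2,+}$ and vice versa, use \cref{lem:KS-int_limits} to cancel the $h$-terms, and integrate by parts. After choosing $s_1,s_2$ to be joint continuity points, this yields
\[
|\delta_1(s_2)-\delta_2(s_2)|^2 \le |\delta_1(s_1)-\delta_2(s_1)|^2
+ 2\sum_{s_1<t<s_2}\bigl\langle \delta_1(t+)-\delta_1(t-),\,\delta_2(t+)-\delta_2(t-)\bigr\rangle.
\]
The gap is in your next step. You claim that the $1$-Lipschitz contractions $|\delta_1(t+)-\delta_2(t+)|\le|\delta_1(t)-\delta_2(t)|\le|\delta_1(t-)-\delta_2(t-)|$ ``absorb the jump sums.'' They do not: the cross-product $\langle \delta_1(t+)-\delta_1(t-),\delta_2(t+)-\delta_2(t-)\rangle$ is \emph{not} controlled in sign by these inequalities (already in $d=1$, with $K=(-\infty,0]$, $V=\{0\}$, $\delta_1(t-)=1$, $\delta_2(t-)=2$, one gets a strictly positive contribution). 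Your ``diagonal testing'' $z=\delta_{j,+}$ gives only the trivial inequality $\sum_t|\delta_j(t+)-\delta_j(t-)|^2\ge 0$ and does not help.

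The paper handles this by a different mechanism. Given $\gamma>0$, it first selects finitely many points $0=t_0<\dots<t_M=T$ such that $4\|\eta\|_\infty\sum_{t\notin\{t_0,\dots,t_M\}}|\delta(t+)-\delta(t-)|\le\gamma$ (possible since $\delta\in BV$). On each open subinterval it uses the inequality above with the \emph{crude} bound $2\langle\Delta_1,\Delta_2\rangle\le 4\|\eta\|_\infty|\Delta_1|$, passes to the one-sided limits $s_1\to t_{j-1}+$, $s_2\to s-$, and \emph{then} applies your $1$-Lipschitz contractions, but only at the finitely many partition points $t_j$, to telescope across the partition. This gives $|\delta(s-)-\eta(s-)|^2\le\gamma$ for every $s$, and letting $\gamma\to 0$ finishes the argument. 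Your sketch is missing this $\gamma$/partition layer, without which the countably many jump contributions cannot be summed.
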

\begin{proof}
From \cref{lem:DD_nonempty}, 
it follows that $\DD$ is nonempty, 
and 
from 
\cref{th:left_jumps,th:jump_right},
\cref{cor:DDsubsetCritCone}, \eqref{eq:DQ-EVI},
and \cref{lem:DD_system},
we obtain that all elements of $\DD$
satisfy the conditions in \eqref{eq:unique_DD_system}.
To establish the assertion of the 
theorem, it remains to prove that the system 
\eqref{eq:unique_DD_system}
can have at most one solution. 
To this end, let us assume that 
$\delta, \eta \in BV([0,T]; \R^d)$
both solve  \eqref{eq:unique_DD_system}
and that a number $\gamma > 0$ is given. 
As $\delta$ and $\eta$ are of bounded variation, 
we can find points 
$0 =: t_0 < t_1 <...<  t_{M-1} < t_M := T$, $M \in \N$,
such that 
\begin{equation}
\label{eq:randomeq2z8h989ih2w}
4 \|\eta\|_\infty
\sum_{t \in [0,T] \setminus \{t_0,...,t_M\}}
|\delta(t+) - \delta(t-)|
\leq \gamma.
\end{equation}
Let $j \in \{1,...,M\}$ be arbitrary 
and let $s_1, s_2 \in [0,T]$ 
be points such that 
$t_{j-1} < s_1 < s_2 < t_{j}$ holds 
and such that both $\delta$ and $\eta$
are continuous at $s_1$ and $s_2$. 
(Recall that the set of points, where one of the functions $\delta$ and $\eta$ is discontinuous,
is at most countable.)
Then, by choosing $\eta_+$ as the test function 
in the variational inequality for $\delta$,
by choosing $\delta_+$ as the test function 
in the variational inequality for $\eta$,
by adding the resulting inequalities, 
by exploiting the $CBV$-regularity of $h$
and \cref{lem:KS-int_limits}, 
by invoking \cref{lem:KS-sub_int}, 
and by using the integration by parts 
formula in \cref{lem:KS-IBP}, we obtain that
\begin{equation*}
\begin{aligned}
0 &\leq 
\int_{s_1}^{s_2}
\langle  \eta_+ , \dd  \delta  \rangle
- \frac12 | \delta(s_2) |^2
+ \frac12 | \delta(s_1) |^2
+
\int_{s_1}^{s_2}
\langle  \delta_+ , \dd \eta  \rangle
- \frac12 | \eta(s_2) |^2
+ \frac12 | \eta(s_1) |^2
\\
&= 
\int_{s_1}^{s_2}
\langle  \eta_+ , \dd  \delta_+  \rangle
+
\int_{s_1}^{s_2}
\langle  \delta_+ , \dd \eta_+  \rangle
- \frac12 | \delta(s_2) |^2
+ \frac12 | \delta(s_1) |^2
- \frac12 | \eta(s_2) |^2
+ \frac12 | \eta(s_1) |^2
\\
&= 
\langle \delta(s_2), \eta(s_2) \rangle
-
\langle \delta(s_1), \eta(s_1) \rangle
+
\sum_{s_1 < t < s_2}
\left \langle \delta(t+) - \delta(t-), \eta(t+) - \eta(t-) \right \rangle 
\\
&\qquad\qquad
- \frac12 | \delta(s_2) |^2
+ \frac12 | \delta(s_1) |^2
- \frac12 | \eta(s_2) |^2
+ \frac12 | \eta(s_1) |^2.
\end{aligned}
\end{equation*}
Using the binomial identities and trivial estimates in the above yields 
\begin{equation*}
\begin{aligned}
\left | \delta(s_2 ) - \eta(s_2)\right |^2
\leq
\left | \delta(s_1 ) - \eta(s_1)\right |^2
+
4\|\eta\|_\infty
\sum_{s_1 < t < s_2}
\left | \delta(t+) - \delta(t-) \right |
\end{aligned}
\end{equation*}
and, after passing to the limits 
$s_1 \to t_{j-1}$ and $s > s_2 \to s \leq t_j$
for an arbitrary $t_{j-1} < s \leq t_j$
(which is possible due to the density of points of continuity),
\begin{equation}
\label{eq:randomeqw78wh2hoi2w}
\begin{aligned}
\left | \delta(s-) - \eta(s-)\right |^2
\leq
\left | \delta(t_{j-1}+) - \eta(t_{j-1}+)\right |^2
+
4\|\eta\|_\infty
\sum_{t_{j-1} < t < t_{j}}
\left | \delta(t+) - \delta(t-) \right |
\qquad 
\forall s \in (t_{j-1}, t_j].
\end{aligned}
\end{equation}
By plugging the projection identities in 
\eqref{eq:unique_DD_system} into \eqref{eq:randomeqw78wh2hoi2w} and by exploiting 
\eqref{eq:1projLip},
it now follows that 
\begin{equation*}
\begin{aligned}
\left | \delta(s-) - \eta(s-)\right |^2
\leq
\left | \delta(t_{j-1}-) - \eta(t_{j-1}-)\right |^2
+
4\|\eta\|_\infty
\sum_{t_{j-1} < t < t_{j}}
\left | \delta(t+) - \delta(t-) \right |
\quad 
\forall s \in (t_{j-1}, t_j]
\quad
\forall j=1,...,M,
\end{aligned}
\end{equation*}
which, after a trivial induction, yields
\begin{equation*}
\begin{aligned}
\left | \delta(s-) - \eta(s-)\right |^2
\leq
\left | h_0 - h_0\right |^2
+
4\|\eta\|_\infty
\sum_{j=1}^M
\left (
\sum_{t_{j-1} < t < t_{j}}
\left | \delta(t+) - \delta(t-) \right |
\right )
\leq \gamma
\qquad \forall s \in (0, T].
\end{aligned}
\end{equation*}
Here, in the last step, we have used \eqref{eq:randomeq2z8h989ih2w}.
As $\gamma > 0$ was arbitrary, 
the above shows that $\delta_- = \eta_-$ holds in $(0,T]$.
Due to the initial condition and the projection identities in 
\eqref{eq:unique_DD_system}, this 
implies $\eta = \delta$ as desired. 
This shows that $\DD$ is indeed a singleton and 
completes the proof. 
\end{proof}

As already mentioned
in \cref{subsec:3.1}, 
with \cref{th:DD_unique}
at hand, assertion \ref{main_sum:item:I}
of \cref{th:main_summary} follows immediately. 
For the sake of completeness, we state 
this result in the following theorem
in a formulation that is independent of \cref{ass:sec:3}.

\begin{theorem}[point \ref{main_sum:item:I}
of \cref{th:main_summary}]
\label{th:dir_diff_final_contra}
Let $T>0$ be given and 
let $Z \subset \R^d$, $d \in \N$, be a 
full-dimensional convex 
non-obtuse polyhedron.
Then the solution map 
$\SS\colon CBV([0,T];\R^d) \times Z \to CBV([0,T];\R^d)$,
$(u, y_0) \mapsto y$, 
of \eqref{eq:EVI}
is directionally differentiable 
in the sense that, for all tuples 
$(u, y_0) \in CBV([0,T]; \R^d) \times Z$
and all directions 
$(h, h_0) \in CBV([0,T]; \R^d) \times \R^d$ 
satisfying $y_0 + \tau_0 h_0 \in Z$ for 
some $\tau_0 >0$,
there exists a unique 
$\delta := \SS'((u, y_0);(h, h_0))  \in BV([0,T];\R^d)$
satisfying
\begin{equation}
\label{eq:randomeq_diff_quot_convergence-42}
\lim_{(0,\tau_0) \ni \tau \to 0}
\frac{\SS(u + \tau h, y_0 + \tau h_0)(t) - \SS(u, y_0)(t)}{\tau}
= \delta(t)\qquad \forall t \in [0,T].
\end{equation}
Moreover, the
directional derivative 
$\delta$ of $\SS$ at $(u,y_0)$ in direction $(h, h_0)$
is uniquely characterized by the 
system \eqref{eq:unique_DD_system},
with the set $\KK_{G_r}^{\crit}(y,u)$
defined 
by \eqref{eq:GR-crit_cone_alts}
with $y := \SS(u, y_0)$ and $w := \PP(u, y_0)$
and the spaces $V^\pm(t)$ 
defined 
as in \cref{def:temporal_indices}
w.r.t.\ 
an arbitrary but fixed standard description $\{(\nu_i, \alpha_i)\}_{i \in I}$ of $Z$.
\end{theorem}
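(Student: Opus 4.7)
Thanks to the machinery built in Subsections 3.1--3.6, the proof of Theorem 3.27 reduces to a short contradiction argument based on compactness and uniqueness. First, I would note that the statement is invariant under the reductions made in Assumption 3.1: the condition $y_0 + \tau_0 h_0 \in Z$ together with Lemma 2.14(iii) (identifying the linearization cone with the radial cone) gives $h_0 \in \ZZ(y_0)$, so the hypotheses of Assumption 3.1 are met. Next, I would invoke Theorem 3.25 to conclude that the set $\DD$ from Definition 3.2 contains exactly one element, call it $\delta^\star \in BV([0,T];\R^d)$, and that $\delta^\star$ is the unique solution of the system (3.29). Hence both the characterization statement and the well-definedness of the pointwise limit in (3.30) will follow once the pointwise convergence of the whole net of difference quotients is established.

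To prove that convergence, I would argue by contradiction. Suppose there exist $t^\star \in [0,T]$, $\varepsilon > 0$, and a sequence $\tau_k \to 0$ in $(0,\tau_0)$ with
\[
|\delta_{\tau_k}(t^\star) - \delta^\star(t^\star)| \geq \varepsilon \qquad \forall k \in \mathbb{N}.
\]
By Lemma 3.3, the family $\{\delta_\tau\}_{\tau \in (0,\tau_0)}$ is bounded in $CBV([0,T];\R^d)$, and in particular bounded in $BV([0,T];\R^d)$. Applying Helly's selection theorem (Theorem 2.24) to the sequence $\{\delta_{\tau_k}\}$ yields a subsequence $\{\delta_{\tau_{k_l}}\}$ that converges pointwise on $[0,T]$ to some $\tilde\delta \in BV([0,T];\R^d)$. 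By Definition 3.2, this forces $\tilde\delta \in \DD$, and Theorem 3.25 then gives $\tilde\delta = \delta^\star$. Evaluating the pointwise convergence at $t^\star$ contradicts the standing lower bound $|\delta_{\tau_{k_l}}(t^\star) - \delta^\star(t^\star)| \geq \varepsilon$. Hence the pointwise limit in (3.30) exists and equals $\delta^\star$ for every $t \in [0,T]$, as required.

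The main obstacle of the whole argument has already been dispatched before this theorem: the decisive point was the identification of the jump conditions $\delta(t) = \pi_{\ZZ(y(t))}(\delta(t-))$ and $\delta(t+) = \pi_{V^+(t)}(\delta(t))$ in Theorems 3.11 and 3.16 (where non-obtuseness of $Z$ enters through Lemma 2.13), and the derivation of the very-weak variational inequality in Lemma 3.24, which together yield the uniqueness in Theorem 3.25. Given these, the final step for Theorem 3.27 is indeed merely a routine Helly-plus-uniqueness contradiction; no further estimate or approximation is needed.
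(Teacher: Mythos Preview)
Your proposal is correct and follows essentially the same approach as the paper's proof: verify that Assumption~3.1 is satisfied, invoke Theorem~3.25 to obtain that $\DD$ is a singleton uniquely characterized by system~(3.29), and then run the Helly--plus--uniqueness contradiction argument (using the $CBV$-boundedness of the difference quotients from Lemma~3.3) to upgrade subsequential pointwise convergence to convergence of the full net. The paper compresses this into a single sentence referring to ``a trivial contradiction argument based on Theorems~2.24 and~2.29,'' which is exactly what you have spelled out.
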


\begin{proof}
If $(u, y_0)$ and 
$(h, h_0)$ are tuples 
as in the assertion of the theorem, 
then  \cref{ass:sec:3} holds
and it follows from 
the definition of the set $\DD$
in \eqref{eq:DD_def}, a trivial contradiction argument
based on 
\cref{th:HellySelect,th:CBV-Lipschitz},
and \cref{th:DD_unique} that 
$\SS$ is pointwisely directionally differentiable 
at $(u, y_0)$ in direction $(h, h_0)$
in the sense of \eqref{eq:randomeq_diff_quot_convergence-42}. That the directional derivative 
$\SS'((u, y_0);(h, h_0))$ is uniquely characterized by 
the system \eqref{eq:unique_DD_system}
is also a straightforward consequence 
of \cref{th:DD_unique}.
This completes the proof. 
\end{proof}

\subsection{Relation to the known one-dimensional results}
\label{subsec:3.7}

We conclude \cref{sec:3} by 
discussing how 
\cref{th:dir_diff_final_contra} relates 
to the results for the
one-dimensional play and stop
proved
in 
\cite{Brokate2015,Brokate2020,Brokate2021,BrokateChristof2023}.
In all what follows, 
we still assume that 
$d$, $T$, $Z$, $\{(\nu_i, \alpha_i)\}_{i \in I}$,
$y_0$, $u$, $y$, $w$, $h_0$, and $h$
are as in \cref{ass:sec:3}.
Note that, in the case $d=1$, 
every full-dimensional convex polyhedron $Z$
is of the form 
$[a,b]$, $(-\infty, b]$, $[a, \infty)$, or $\R$
with $-\infty < a  < b < \infty$. 
In particular, every such polyhedron is 
non-obtuse with a standard description 
involving (at most) the outer unit normals
$1$ and $-1$; see \cref{def:properties_descript}.
We begin with an observation 
on the jumps of the directional derivatives 
in the one-dimensional case.

\begin{corollary}[jumps in 1d]
\label{cor:1d_jumps}
If $d=1$ holds, then the 
directional derivative 
$\delta := \SS'((u, y_0);(h, h_0))$ satisfies the following
for all $t \in [0,T]$. 
\begin{enumerate}[label=\roman*)]
\item\label{cor:1d_jumps:i}
$\delta(t) \in \{\delta(t+), \delta(t-)\}$;
\item\label{cor:1d_jumps:ii}
$\delta(t+) = 0$ if $\delta(t+) \neq \delta(t-)$;
\item\label{cor:1d_jumps:iii}
$\delta(t+)(\delta(t+) - \delta(t-)) = 
\delta(t+)(\delta(t+) - \delta(t))   = 
\delta(t)(\delta(t) - \delta(t-)) = 
0$;
\item\label{cor:1d_jumps:iv}
$z(\delta(t) - \delta(t-))  \geq 0$ for all $z \in \ZZ(y(t))$;
\item\label{cor:1d_jumps:v}
$z(\delta(t+) - \delta(t))  = 0$ for all $z \in  V^+(t)$;
\item\label{cor:1d_jumps:vi}
$z(\delta(t+) - \delta(t-))  \geq 0$ for all $z \in \ZZ(y(t)) \cap V^+(t)$.
\end{enumerate}
\end{corollary}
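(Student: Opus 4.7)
The plan is to reduce everything to the projection identities
\[
\delta(t) = \pi_{\ZZ(y(t))}(\delta(t-)),
\qquad
\delta(t+) = \pi_{V^+(t)}(\delta(t)),
\]
which are available from \cref{th:dir_diff_final_contra} and the characterizing system \eqref{eq:unique_DD_system}, combined with the very explicit form these objects take when $d=1$. First I would note that, in the scalar case, every full-dimensional convex polyhedron $Z \subset \R$ is an interval, so any standard description uses only normals from $\{-1,+1\}$ and, for each $t\in[0,T]$, the linearization cone $\ZZ(y(t))$ is one of the three sets $\R$, $[0,\infty)$, $(-\infty,0]$, while $V^+(t)$ is either $\R$ (when $\AA_\strict^+(t) = \emptyset$) or $\{0\}$ (when $\AA_\strict^+(t)$ contains the unique active index). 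This classification will drive all subsequent case distinctions.

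The statements \ref{cor:1d_jumps:iv} and \ref{cor:1d_jumps:v} are actually dimension-free and would be handled first, since they fall out of the general polar/orthogonality structure: from \eqref{eq:Z-Zcirc-identity+hom} applied to the cone $\ZZ(y(t))$ we get $\delta(t)-\delta(t-) = -\pi_{\ZZ(y(t))^\circ}(\delta(t-))$, and since $\ZZ(y(t))$ and $\ZZ(y(t))^\circ$ are mutually polar, $\langle z, \delta(t)-\delta(t-)\rangle \ge 0$ for every $z\in \ZZ(y(t))$; likewise the projection onto the subspace $V^+(t)$ gives $\delta(t+)-\delta(t) \in V^+(t)^\perp$, which yields \ref{cor:1d_jumps:v}. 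Point \ref{cor:1d_jumps:vi} is then immediate by writing $\delta(t+)-\delta(t-) = (\delta(t+)-\delta(t)) + (\delta(t)-\delta(t-))$ and applying \ref{cor:1d_jumps:iv}, \ref{cor:1d_jumps:v} to any $z \in \ZZ(y(t)) \cap V^+(t)$.

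For \ref{cor:1d_jumps:i}, \ref{cor:1d_jumps:ii}, \ref{cor:1d_jumps:iii}, I would go through the three possible shapes of $\ZZ(y(t))$. If $\ZZ(y(t)) = \R$, then $\delta(t) = \delta(t-)$, and one checks that \ref{cor:1d_jumps:i}--\ref{cor:1d_jumps:iii} hold trivially (in particular, if additionally $V^+(t)=\{0\}$ then $\delta(t+)=0$). If $\ZZ(y(t)) = [0,\infty)$, then $\delta(t) = \max(0,\delta(t-))$; when $\delta(t-) \ge 0$ we have $\delta(t) = \delta(t-)$ and the argument closes as before, and when $\delta(t-) < 0$ we have $\delta(t) = 0$, so \ref{cor:1d_jumps:iii} immediately gives the two products containing $\delta(t)$, and $\delta(t+) = \pi_{V^+(t)}(0) = 0$ regardless of whether $V^+(t)$ is $\R$ or $\{0\}$, giving both \ref{cor:1d_jumps:i} and the remaining part of \ref{cor:1d_jumps:iii}. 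The case $\ZZ(y(t)) = (-\infty,0]$ is symmetric.

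Finally, for \ref{cor:1d_jumps:ii} I would argue by contrapositive/case split: if $\delta(t+) \neq \delta(t-)$, then either the jump at $t$ from the left is nontrivial ($\delta(t)\neq \delta(t-)$), which by the analysis above forces $\delta(t)=0$ and hence $\delta(t+) = \pi_{V^+(t)}(0) = 0$; or the jump from the right is nontrivial ($\delta(t+)\neq \delta(t)$), which forces $V^+(t)=\{0\}$ and therefore $\delta(t+)=0$. No single step is really the obstacle — the whole corollary is essentially bookkeeping on the two projection identities — but the small subtlety worth flagging is the implicit use of the interaction between the two projections when both the left and the right jumps are active, which is what makes \ref{cor:1d_jumps:i} nontrivial and ultimately forces $\delta(t)\in\{\delta(t+),\delta(t-)\}$ in spite of the fact that $\delta$ may take a value at $t$ that differs from both one-sided limits in higher dimensions (cf.\ \cref{ex:left_righ_jump}).
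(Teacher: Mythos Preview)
Your proposal is correct and follows essentially the same route as the paper: both arguments rest on the projection identities $\delta(t)=\pi_{\ZZ(y(t))}(\delta(t-))$ and $\delta(t+)=\pi_{V^+(t)}(\delta(t))$ together with the observation that in one dimension $\ZZ(y(t))\in\{\R,[0,\infty),(-\infty,0]\}$ and $V^+(t)\in\{\R,\{0\}\}$, so that each projection sends $x$ to either $0$ or $x$ and fixes $0$. The paper compresses the case analysis into that single observation, while you spell it out and additionally note that \ref{cor:1d_jumps:iv}--\ref{cor:1d_jumps:vi} are dimension-free consequences of the polar/orthogonal decompositions; this is a correct and mildly sharper remark, but not a different method.
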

\begin{proof}
For $d=1$, we have 
$\ZZ(y(t)) \in \{(-\infty, 0], [0, \infty), \R\}$
and
$V^+(t) \in \{\{0\}, \R\}$
for all $t \in [0,T]$.
This implies that,
for all $t \in [0,T]$, all $x \in \R$,
and all $F \in \{\pi_{\ZZ(y(t))}, \pi_{V^+(t)}\}$,
we have $F(x) \in \{0, x\}$ and $F(0)=0$.
If we combine these observations with \eqref{eq:randomeq_z_Zcirc}
and the jump 
conditions 
$\delta(t) = \pi_{\ZZ(y(t))}(\delta(t-))$
and
$\delta(t+) = \pi_{V^+(t)}(\delta(t))$
from 
\cref{th:left_jumps,th:jump_right},
then the assertions in
\ref{cor:1d_jumps:i} to 
\ref{cor:1d_jumps:vi}
follow immediately. 
\end{proof}

\Cref{cor:1d_jumps}\ref{cor:1d_jumps:i}
demonstrates that our results are consistent 
with \cite[Theorem 2.1]{Brokate2021} which establishes
that $ \SS'((u, y_0);(h, h_0)) \in BV_{rl}([0,T])$
holds for the one-dimensional stop $\SS$.
Points \ref{cor:1d_jumps:ii},
\ref{cor:1d_jumps:iii},
and \ref{cor:1d_jumps:vi}
of \Cref{cor:1d_jumps}
further reproduce the observations 
made in \cite[Corollary 4.12]{BrokateChristof2023};
cf.\ also the proof of \cite[Theorem 2.1]{Brokate2021}.
We remark that $BV_{rl}$-regularity cannot be expected for 
$\SS'((u, y_0);(h, h_0))$ if $d$ is greater than one;
see \cref{ex:left_righ_jump} below. 
Next, we study how the system 
\eqref{eq:unique_DD_system} is related  
to the EVIs
that have been 
derived for the directional derivatives 
of the one-dimensional stop 
$\SS$ and their right limits 
in \cite[Theorem 2.1]{Brokate2021} and
\cite[Corollary 4.13]{BrokateChristof2023}.

\begin{corollary}[EVI for the right limits of the 
directional derivatives in 1d]
\label{cor:right_lim_EVI}
If $d=1$ holds, then 
the right-limit function 
$\eta := \SS'((u, y_0);(h, h_0))_+ \in BV_r([0,T])$
is the unique solution of the variational inequality  
\begin{equation}
\label{eq:eta_EVI}
\begin{gathered}
\eta \in BV_r([0,T]),\qquad\eta \in \KK_{G_r}^{\crit}(y,u),\qquad \eta(0) = \pi_{V^+(0)}(h_0),
\\
\int_0^T (z  - \eta)\dd(\eta- h) \geq 0 
\quad
\forall z \in \KK_{G_r}^{\crit}(y,u).
\end{gathered} 
\end{equation}
Further, for all $s\in (0,T]$, it holds 
\begin{equation}
\label{eq:eta_EVI_loc}
\int_0^s (z  - \eta)\dd(\eta- h) \geq 0\quad \forall
z \in \KK_{G_r}^{\crit}(y,u).
\end{equation}
\end{corollary}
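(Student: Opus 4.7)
The proof has four natural steps. First, the membership $\eta \in BV_r([0,T])$ is immediate from $\delta \in BV([0,T])$, and $\eta \in \KK_{G_r}^{\crit}(y,u)$ is the content of Corollary~\ref{cor:right_tangential}. The initial condition is read off from the jump identity in the characterization \eqref{eq:unique_DD_system} combined with $\delta(0) = h_0$: $\eta(0) = \delta(0+) = \pi_{V^+(0)}(\delta(0)) = \pi_{V^+(0)}(h_0)$.

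The bulk of the work lies in deriving the local variational inequality \eqref{eq:eta_EVI_loc}; assertion \eqref{eq:eta_EVI} is then the case $s = T$. The plan is to fix $s \in (0, T]$ and $z \in \KK_{G_r}^{\crit}(y,u)$, and to approximate $z$ by critical radial test functions $z_{m,l} \in \KK_{BV_r}^{\rad,\crit,0}(y,u)$ supplied by Theorem~\ref{theorem:tempoly}. For each such $z_{m,l}$ and sufficiently small $\tau$, $z_{m,l}$ is admissible in the EVI of Lemma~\ref{lem:EVI_diff_quot} restricted to $[0,s]$; since $z_{m,l}$ is critical (so $\int_0^s z_{m,l}\,\dd w = 0$ by Lemma~\ref{lem:crit_subinterval}) and $\int_0^s \delta_\tau\,\dd w \leq 0$ by \eqref{eq:proto_criticality}, this EVI collapses to the strong form
\[
\int_0^s (z_{m,l} - \delta_\tau)\,\dd(\delta_\tau - h) \geq 0.
\]
Because $\delta_\tau \in CBV$, the fundamental theorem in $CBV$ (Lemma~\ref{lem:CBV-Stampacchia}) gives $\int_0^s \delta_\tau\,\dd\delta_\tau = \tfrac12\delta_\tau(s)^2 - \tfrac12 h_0^2$, and integrating $\int_0^s z_{m,l}\,\dd\delta_\tau$ by parts (Lemma~\ref{lem:KS-IBP}) produces no jump residues. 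Passing to the limit $\tau \to 0$ via the pointwise convergence of $\delta_\tau$ and the bounded convergence theorem~\ref{th:KS-bounded_convergence}, and then letting $m \to \infty$ and $l \to \infty$ by invoking the approximation properties of Theorem~\ref{theorem:tempoly}, yields an inequality for $z$ and $\delta$. Lemma~\ref{lem:KS-int_limits} (which, by $h \in CBV$, implies $\int_0^s \eta\,\dd h = \int_0^s \delta\,\dd h$) together with the one-dimensional jump identities of Corollary~\ref{cor:1d_jumps}---namely $\delta(t) \in \{\delta(t-), \delta(t+)\}$ and $\delta(t+) = 0$ at every right-jump---then force all boundary values and jump residues to collapse, leaving exactly $\int_0^s (z - \eta)\,\dd(\eta - h) \geq 0$.

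For uniqueness, suppose $\eta_1, \eta_2 \in BV_r([0,T])$ both satisfy \eqref{eq:eta_EVI}--\eqref{eq:eta_EVI_loc}. Testing each EVI on $[0, s]$ against the other and adding yields $\int_0^s (\eta_1 - \eta_2)\,\dd(\eta_1 - \eta_2) \leq 0$. Integration by parts (Lemma~\ref{lem:KS-IBP}) with $f = g = \eta_1 - \eta_2$---where the second-type jump sum vanishes by right-continuity---gives
\[
2\int_0^s (\eta_1 - \eta_2)\,\dd(\eta_1 - \eta_2) = (\eta_1(s) - \eta_2(s))^2 - (\eta_1(0) - \eta_2(0))^2 + \sum_{0 < t \leq s}\bigl((\eta_1 - \eta_2)(t) - (\eta_1 - \eta_2)(t-)\bigr)^2,
\]
and since $\eta_1(0) = \eta_2(0) = \pi_{V^+(0)}(h_0)$ and the sum of squares is nonnegative, $\eta_1(s) = \eta_2(s)$ at every $s \in (0,T]$, i.e., $\eta_1 \equiv \eta_2$.

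The main obstacle is the bookkeeping in the second step: iterating the limits (first $\tau \to 0$, then $m \to \infty$ and $l \to \infty$) while keeping track of the boundary contributions at $0$ and $s$ and of the jump-sum residues generated by the repeated integration by parts. The cancellations that ultimately produce the clean strong form \eqref{eq:eta_EVI_loc} rely entirely on the one-dimensional jump structure encoded in Corollary~\ref{cor:1d_jumps} and, as emphasized in the remarks following Theorem~\ref{th:DD_unique}, have no evident analogue for $d \geq 2$.
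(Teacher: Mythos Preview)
Your existence argument takes a different route from the paper's. Rather than returning to the difference quotients $\delta_\tau$, the paper starts directly from the already-established weak EVI in \eqref{eq:unique_DD_system} (i.e., Lemma~\ref{lem:DD_system}), and the genuine work is the transition from that weak form to the strong form \eqref{eq:eta_EVI_loc}. This is done via a $\gamma$-partition: one chooses finitely many points $0=t_0<\cdots<t_M=s$ capturing most of the jump mass of $\eta$, applies the weak EVI on continuity-point subintervals $[s_1,s_2]\subset(t_{j-1},t_j)$, converts via \cref{lem:KS-int_limits,lem:KS-sub_int} and integration by parts with $f=g=\eta$, passes $s_1\to t_{j-1}$, $s_2\to t_j$, and then uses \cref{cor:1d_jumps}\ref{cor:1d_jumps:iii},\ref{cor:1d_jumps:vi} to show the endpoint correction $\int_{t_{j-1}}^{t_j}(\mathds{1}_{\{t_j\}}+\mathds{1}_{\{t_{j-1}\}})(z-\eta)\,\dd(\eta-h)\geq 0$. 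Summing over $j$ and sending $\gamma\to 0$ gives \eqref{eq:eta_EVI_loc}. Your sketch (``force all boundary values and jump residues to collapse'') does not display this mechanism; in particular, going back to $\delta_\tau$ only reproduces Lemma~\ref{lem:DD_system}, and you still owe the weak-to-strong passage, which is where the $1$d jump structure actually enters.

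Your uniqueness argument has a genuine gap. You assume that an arbitrary solution of \eqref{eq:eta_EVI} also satisfies the \emph{local} inequality \eqref{eq:eta_EVI_loc} and then test ``each EVI on $[0,s]$''. But uniqueness is asserted for \eqref{eq:eta_EVI} alone; the local version is proved only for the specific $\eta=\SS'((u,y_0);(h,h_0))_+$. Your integration-by-parts identity with $s=T$ yields $\eta_1(T)=\eta_2(T)$ and $\eta_1-\eta_2\in CBV([0,T])$, but not $\eta_1=\eta_2$ on $(0,T)$. The paper closes this by exploiting the pointwise description of $\KK_{G_r}^{\crit}(y,u)$ in \eqref{eq:GR-crit_cone_alts}: once $\eta_1-\eta_2$ is known to be continuous, the functions $z:=\mathds{1}_{[0,s)}\eta_2+\mathds{1}_{[s,T]}\eta_1$ and $z:=\mathds{1}_{[0,s)}\eta_1+\mathds{1}_{[s,T]}\eta_2$ are admissible in the \emph{global} EVIs, and testing with them effectively localizes to $[0,s]$ without ever assuming \eqref{eq:eta_EVI_loc} for a generic solution.
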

\begin{proof}
Define $\delta := \SS'((u, y_0);(h, h_0))$
and $\eta := \delta_+$.
We first prove that $\eta$ solves \eqref{eq:eta_EVI} and \eqref{eq:eta_EVI_loc}.
The first line of \eqref{eq:eta_EVI} 
is a straightforward consequence of \eqref{eq:unique_DD_system}.
To establish the second line and \eqref{eq:eta_EVI_loc},
we proceed similarly to the proof of 
\cref{th:DD_unique}. Let $s \in (0,T]$
and $z \in \KK_{G_r}^{\crit}(y,u)$
be given and let $\gamma > 0$ be arbitrarily small.
As $\eta$ has bounded variation, we can
find points 
$0 =: t_0 < t_1 <...<  t_{M-1} < t_M := s$, $M \in \N$,
such that 
\begin{equation}
\label{eq:randomeq2z8h989ih2w-42}
\|\eta\|_\infty
\sum_{t \in [0,s] \setminus \{t_0,...,t_M\}}
|\eta(t) - \eta(t-)|
\leq \gamma.
\end{equation}
Let $j \in \{1,...,M\}$ be arbitrary 
and let $s_1, s_2 \in [0,s]$ 
be points such that 
$t_{j-1} < s_1 < s_2 < t_{j}$ holds 
and such that both $\eta$ and $\delta$
are continuous at $s_1$ and $s_2$.
Then it follows from 
\eqref{eq:unique_DD_system},
\cref{lem:KS-int_limits,lem:KS-sub_int},
the identities 
$\delta(s_1) = \delta(s_1+) = \eta(s_1)$
and
$\delta(s_2) = \delta(s_2+) = \eta(s_2)$,
and 
\cref{lem:KS-IBP} (with $f=g=\eta$)
that
\begin{equation}
\label{eq:randomeq27383i}
\begin{aligned}
0 &\leq 
\int_{s_1}^{s_2}
\langle  z , \dd (\delta-h) \rangle
+
\int_{s_1}^{s_2}
\langle \delta, \dd h \rangle
- \frac12 | \delta(s_2) |^2
+ \frac12 | \delta(s_1) |^2
\\
&=
\int_{s_1}^{s_2}
\langle  z , \dd (\eta-h) \rangle
+
\int_{s_1}^{s_2}
\langle \eta, \dd h \rangle
- \frac12 | \eta(s_2) |^2
+ \frac12 | \eta(s_1) |^2
\\
&=
\int_{s_1}^{s_2}
\langle  z - \eta, \dd (\eta-h) \rangle
+
\frac12 \sum_{s_1 < t < s_2}
|\eta(t) - \eta(t-)|^2
\\
&=
\int_{t_{j-1}}^{t_j}
\mathds{1}_{[s_1, s_2]}
(z - \eta)\dd (\eta-h)
+
\frac12 \sum_{s_1 < t < s_2}
|\eta(t) - \eta(t-)|^2.
\end{aligned}
\end{equation}
As 
$\delta$ and $\eta$ possess at most countably many
points of discontinuity, we may let 
$s_1$ and $s_2$ go to $t_{j-1}$ and $t_j$, respectively,
and invoke the bounded convergence theorem to 
obtain the following from \eqref{eq:randomeq27383i}:
\begin{equation}
\label{eq:randomeq27828sni3i}
0 \leq
\int_{t_{j-1}}^{t_j}
\mathds{1}_{(t_{j-1}, t_j)}
(z - \eta)\dd (\eta-h)
+
\|\eta\|_\infty \sum_{t_{j-1}< t < t_j}
|\eta(t) - \eta(t-)|.
\end{equation}
Note that 
\cref{lem:KS-point-measures}, the right-continuity of 
$\eta$, 
\eqref{eq:Monteiro4.1.9},
points \ref{cor:1d_jumps:iii} and \ref{cor:1d_jumps:vi}
of \cref{cor:1d_jumps},
and the inclusion 
$z(t_j) = z(t_j+) \in \ZZ(y(t_j))\cap V^+(t_j)$
obtained from \eqref{eq:GR-crit_cone_alts}
and $z \in \KK_{G_r}^{\crit}(y,u)$ 
imply
\begin{equation}
\label{eq:important_here_d_1_needed}
\begin{aligned}
\int_{t_{j-1}}^{t_j}
(\mathds{1}_{\{t_{j}\}}
+
\mathds{1}_{\{t_{j-1}\}})
(z - \eta)\dd (\eta-h)
&=
z(t_j)(\eta(t_j) - \eta(t_j-))
-
\eta(t_j)(\eta(t_j) - \eta(t_j-))
\\
&= 
z(t_j)(\delta(t_j+) - \delta(t_j-))
-
\delta(t_j+)(\delta(t_j+) - \delta(t_j-))
\\
&\geq 0.
\end{aligned}
\end{equation}
In combination with \eqref{eq:randomeq27828sni3i}
and \cref{lemma:KS_lin_add}, this yields
\[
0 \leq
\int_{t_{j-1}}^{t_j}
(z - \eta)\dd (\eta-h)
+
\|\eta\|_\infty \sum_{t_{j-1}< t < t_j}
|\eta(t) - \eta(t-)|
\]
and, after summing over $j$ and using \eqref{eq:randomeq2z8h989ih2w-42} and \cref{lemma:KS_lin_add},
\begin{equation}
\label{eq:adding_together_82uhs2ois2}
0 \leq 
\sum_{j=1}^M
\int_{t_{j-1}}^{t_j}
(z - \eta)\dd (\eta-h)
+
\|\eta\|_\infty \sum_{t_{j-1}< t < t_j}
|\eta(t) - \eta(t-)|
\leq
\int_{0}^{s}
(z - \eta)\dd (\eta-h)
+
\gamma.
\end{equation}
Since $\gamma$ was arbitrary, this proves that 
$\eta$ satisfies \eqref{eq:eta_EVI_loc}
for all $s \in (0, T]$ and the second line of 
\eqref{eq:eta_EVI}.

It remains to prove that \eqref{eq:eta_EVI}
has at most one solution. 
To this end, let us assume that $\eta_1$ and $\eta_2$
solve \eqref{eq:eta_EVI}. By choosing $\eta_1$ as the 
test function in the EVI for $\eta_2$ and vice versa,
by adding the resulting inequalities, 
by again invoking \cref{lem:KS-IBP} 
(with $f=g=\eta_1 - \eta_2$), and by exploiting 
that $\eta_1(0) = \eta_2(0)$, we obtain that 
\[
0
\leq
- \int_0^T (\eta_1  - \eta_2)\dd( \eta_1 -\eta_2) 
=
- \frac12 \left (
|(\eta_1  - \eta_2)(T)|^2 + 
\sum_{0 < t \leq T}
| (\eta_1  - \eta_2)(t) - (\eta_1  - \eta_2)(t-)|^2
\right ).
\]
The above implies $\eta_1(T) = \eta_2(T)$ and,
since $\eta_1 - \eta_2$ is in $BV_r([0,T])$, 
$\eta_1 - \eta_2 \in CBV([0,T])$.
Next, we choose 
for an arbitrary but fixed $s \in (0,T)$
the function
$z := \mathds{1}_{[0, s)} \eta_2 + \mathds{1}_{[s, T]}\eta_1$
as the test function in the EVI for $\eta_1$ and 
$z := \mathds{1}_{[0, s)} \eta_1 + \mathds{1}_{[s, T]}\eta_2$
as the test function in the EVI for $\eta_2$
and add the resulting inequalities.
(Note that these functions are indeed elements 
of $\KK_{G_r}^{\crit}(y,u)$, see the last equality in 
\eqref{eq:GR-crit_cone_alts}.)
In combination with 
the continuity of $\eta_1 - \eta_2$,
again \cref{lemma:KS_lin_add,lem:KS-IBP,lem:KS-sub_int},
and the initial conditions,
this yields
\begin{equation*}
\begin{aligned}
0 &\leq
\int_0^T \mathds{1}_{[0, s)} (\eta_2 - \eta_1) \dd(\eta_1- \eta_2)
=
\int_0^s  (\eta_2 - \eta_1) \dd(\eta_1- \eta_2)
=
-\frac12 |\eta_1(s)- \eta_2(s)|^2.
\end{aligned}
\end{equation*}
In combination with what we have proven before and 
the initial condition in \eqref{eq:eta_EVI},
this shows that 
$\eta_1  = \eta_2 $ holds on $[0,T]$.
Thus, \eqref{eq:eta_EVI} has precisely 
one solution, namely
$\SS'((u, y_0);(h, h_0))_+$, and the proof is complete. 
\end{proof}

Note that 
the formulas for $\KK_{G_r}^{\crit}(y,u)$
in
\eqref{eq:GR-crit_cone_alts} and 
\cite[Proposition 4.9]{BrokateChristof2023}
imply that, if 
$d=1$ holds and $K^{\mathrm{ptw}}_{\crit}(y,u) \colon [0, T] \rightrightarrows \R$ is 
defined as in 
\cite[Definition 4.7]{BrokateChristof2023} (i.e., $K^{\mathrm{ptw}}_{\crit}(y,u)(t) := \ZZ(y(t)) \cap V^+(t)$), 
then we have 
\begin{equation}
\label{eq:crit_cone_equivalence}
z \in \KK_{G_r}^{\crit}(y,u)
\quad
\iff
\quad
z \in 
G\left ([0, T]; K^{\ptw}_{\crit}(y,u) \right) \cap 
G_r([0,T]).
\end{equation}
This shows that \cref{cor:right_lim_EVI} 
reproduces the EVI derived in \cite[Corollary 4.13]{BrokateChristof2023},
albeit with the modification that
only right-continuous test functions are considered
and that $h_0$ is also allowed to be nonzero.
(We expect that, by exploiting \eqref{eq:crit_cone_equivalence} and approximation arguments analogous to those in the 
proofs of \cref{th_pointwise_criticality,theorem:tempoly},
one can also get rid of the right-continuity of the test functions  in \cref{cor:right_lim_EVI}. We omit discussing this topic in detail in this paper.)

To obtain a system of EVIs 
that characterizes
the directional derivative 
$\delta := \SS'((u, y_0);(h, h_0))$ itself,
we can exploit the evolutionary nature of 
\eqref{eq:EVI}. This nature 
implies that, by restricting the 
function $\SS(u, y_0)$ to the
interval $[0,s]$, $0 < s < T$, 
we obtain precisely the solution 
of the variational inequality that 
arises when the terminal time $T$
in \eqref{eq:EVI} is replaced by $s$. 
For the directional derivative $\delta$, 
this means that 
$\delta_+$ 
not only satisfies 
the EVI \eqref{eq:eta_EVI} in the case $d=1$,
but also analogue EVIs 
formulated 
on the intervals $[0,s]$,
$0 < s < T$. 
As \eqref{eq:eta_EVI}
uniquely characterizes $\delta_+(T) = \delta(T)$
(due to the conventions for the right limit at the interval endpoint), the corresponding EVIs 
on the intervals 
$[0,s]$ uniquely characterize the values 
$\delta(s)$ for all $0 < s < T$ and, thus, 
$\delta$ in its entirety. Rigorously, we can 
formulate this result as follows:

\begin{corollary}[EVI-system for the 
directional derivatives in 1d]
\label{cor:dir_dif_EVI_1d}
If $d=1$ holds, then 
the directional derivative 
$\delta:= \SS'((u, y_0);(h, h_0))$
is the unique solution of the EVI-system 
\begin{equation}
\label{eq:delta_EVI_1d}
\begin{aligned}
&\delta \in BV([0,T]),
&&
\delta(0) = h_0,
\\
&
\delta_{+,[0,s]} \in \KK_{G_r}^{\crit}(y,u)[0,s],
&&
\int_0^s (z  - \delta_{+,[0,s]})\dd(\delta- h) \geq 0
\quad
\forall z \in \KK_{G_r}^{\crit}(y,u)[0,s]
\quad \forall s \in (0,T].
\end{aligned} 
\end{equation}
Here, $\delta_{+,[0,s]}$ and 
$\KK_{G_r}^{\crit}(y,u)[0,s]$ denote 
the right-limit function and the critical 
cone 
from 
\eqref{eq:GR-crit_cone_alts}
associated with the interval $[0,s]$,
respectively, with the corresponding conventions
for the interval endpoint $s$, i.e., 
\begin{equation}
\label{eq:def_+_subinterval}
\begin{aligned}
\delta_{+,[0,s]}(t)
&:=
\begin{cases}
\delta_+(t) &\text{ if } 0 \leq t < s,
\\
\delta(s) &\text{ if } t=s,
\end{cases}
\\
\KK_{G_r}^{\crit}(y,u)[0,s]
&:=
\left \{
z \in G_r([0,s])
~\left |~
z(t) \in \ZZ(y(t))
~\forall t \in [0,s]
\text{ and }
\int_0^s   z\, \dd w  = 0
\right.
\right \}.
\end{aligned}
\end{equation}
\end{corollary}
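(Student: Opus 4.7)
The plan is to reduce everything to \cref{cor:right_lim_EVI} by exploiting the causal (evolutionary) structure of \eqref{eq:EVI}.

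\emph{Existence.} First, I would verify the restriction property: if $y = \SS(u, y_0)$ solves \eqref{eq:EVI} on $[0,T]$, then, for every $s \in (0,T]$, the restriction $y|_{[0,s]}$ is the stop associated with $(u|_{[0,s]}, y_0)$ on $[0,s]$. This follows immediately from \cref{lem:EVI_G} (by taking test functions supported in $[0,s]$, extended by $y$ on $(s,T]$) together with the uniqueness part of \cref{prop:unique_solvability_stop_play}. Applying the same argument to $\SS(u + \tau h, y_0 + \tau h_0)$ and passing to the limit $\tau \to 0$ in the pointwise convergence \eqref{eq:pointwise_dir_diff} shows that, for each $s \in (0,T]$, the restriction $\delta|_{[0,s]}$ coincides with the pointwise directional derivative of the stop operator on $[0,s]$ at $(u|_{[0,s]}, y_0)$ in direction $(h|_{[0,s]}, h_0)$. \Cref{cor:right_lim_EVI} applied on the interval $[0,s]$ then yields $\delta_{+,[0,s]} \in \KK_{G_r}^{\crit}(y,u)[0,s]$ and the variational inequality in \eqref{eq:delta_EVI_1d}. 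The initial condition $\delta(0) = h_0$ is part of \eqref{eq:unique_DD_system}.

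\emph{Uniqueness.} Suppose $\delta_1, \delta_2 \in BV([0,T])$ both satisfy the system \eqref{eq:delta_EVI_1d}. For every fixed $s \in (0,T]$, both functions $\delta_{i,+,[0,s]}$ lie in $\KK_{G_r}^{\crit}(y,u)[0,s]$ and satisfy the variational inequality on $[0,s]$ with forcing $h|_{[0,s]}$ and initial value $h_0$. The uniqueness assertion in \cref{cor:right_lim_EVI}, applied on $[0,s]$, therefore gives $\delta_{1,+,[0,s]} = \delta_{2,+,[0,s]}$ on $[0,s]$. Evaluating at the right endpoint and invoking the convention in \eqref{eq:def_+_subinterval}, we obtain $\delta_1(s) = \delta_{1,+,[0,s]}(s) = \delta_{2,+,[0,s]}(s) = \delta_2(s)$. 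Since $s \in (0,T]$ was arbitrary and $\delta_1(0) = h_0 = \delta_2(0)$, this gives $\delta_1 = \delta_2$ pointwise on $[0,T]$, as desired.

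\emph{Main obstacle.} The only nontrivial ingredient is the restriction property of $\SS$ used in the existence step; everything else is a direct application of \cref{cor:right_lim_EVI} on subintervals together with the trick of evaluating the ``right limit with endpoint convention'' at the right endpoint $s$ to recover $\delta(s)$ rather than just $\delta(s+)$. This endpoint trick is what allows the family of EVIs parameterized by $s$ to pin down the (possibly left-discontinuous) function $\delta$ and not merely its right-limit $\delta_+$, thereby overcoming the loss of information that would otherwise occur at the countably many jump points of $\delta$.
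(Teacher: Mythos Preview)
Your reduction to \cref{cor:right_lim_EVI} skips over a genuine mismatch of integrators. \cref{cor:right_lim_EVI} applied on $[0,s]$ yields
\[
\int_0^s (z - \delta_{+,[0,s]})\,\dd(\delta_{+,[0,s]} - h) \geq 0,
\]
with integrator $\dd\delta_{+,[0,s]}$, whereas the inequality in \eqref{eq:delta_EVI_1d} has integrator $\dd\delta$. The difference $\delta_{+,[0,s]}-\delta$ is a sum of point spikes $(\delta(t+)-\delta(t))\mathds{1}_{\{t\}}$; by \cref{lem:KS-point-measures} the interior spikes contribute nothing and there is no spike at $t=s$ because of the endpoint convention, but the spike at $t=0$ gives the correction term
\[
\int_0^s (z - \delta_{+,[0,s]})\,\dd(\delta_{+,[0,s]} - \delta)
= \bigl(z(0) - \delta(0+)\bigr)\bigl(h_0 - \delta(0+)\bigr).
\]
The paper shows this vanishes by invoking the jump identity $\delta(0+)=\pi_{V^+(0)}(h_0)$ from \eqref{eq:unique_DD_system} together with $z(0)\in V^+(0)$ (from \cref{th_pointwise_criticality}). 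You do not address this step; without it the existence claim does not follow.

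The same issue breaks your uniqueness argument. To invoke the uniqueness in \cref{cor:right_lim_EVI} you would need $\delta_{i,+,[0,s]}$ to satisfy \eqref{eq:eta_EVI} on $[0,s]$, which means both converting the integrator from $\dd\delta_i$ to $\dd\delta_{i,+,[0,s]}$ \emph{and} verifying the initial condition $\delta_{i,+,[0,s]}(0)=\pi_{V^+(0)}(h_0)$. Neither is available from \eqref{eq:delta_EVI_1d} alone: that system only gives $\delta_i(0)=h_0$ and says nothing directly about $\delta_i(0+)$. The paper avoids this by a direct argument: testing $\hat\delta_{+,[0,s]}$ and $\delta_{+,[0,s]}$ against each other in \eqref{eq:delta_EVI_1d} and using an integration-by-parts estimate \cite[Proposition~6.2]{Brokate2021} gives
\[
0 \le \int_0^s (\hat\delta_{+,[0,s]} - \delta_{+,[0,s]})\,\dd(\delta - \hat\delta) \le -\tfrac12|\hat\delta(s)-\delta(s)|^2,
\]
forcing $\delta(s)=\hat\delta(s)$ for every $s\in(0,T]$.
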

\begin{proof}
From \cref{cor:right_lim_EVI},
the arguments outlined above, 
and \eqref{eq:unique_DD_system},
we obtain that $\delta$
satisfies 
$\delta \in BV([0,T])$,
$\delta(0) = h_0$, and 
\begin{equation*}
\label{eq:randomeq-28727-dj7}
\delta_{+,[0,s]} \in \KK_{G_r}^{\crit}(y,u)[0,s],
\quad
\int_0^s (z  - \delta_{+,[0,s]})\dd(\delta_{+,[0,s]} - h) \geq 0
\quad
\forall z \in \KK_{G_r}^{\crit}(y,u)[0,s]
\quad \forall s \in (0,T].
\end{equation*}
Using 
\cref{lemma:KS_lin_add,lem:KS-point-measures,lem:KS-sub_int},
\eqref{eq:Monteiro4.1.9}, \eqref{eq:def_+_subinterval},
\eqref{eq:proj_VI}, \eqref{eq:eta_EVI},
and \eqref{eq:GR-crit_cone_alts},
one further easily checks that 
\begin{equation*}
\label{eq:randomeq-28727-dj7-2}
\begin{aligned}
\int_0^s (z  - \delta_{+,[0,s]})\dd(\delta_{+,[0,s]} - \delta)
&=
\int_0^s (z  - \delta_{+,[0,s]})
(\mathds{1}_{\{0\}} + \mathds{1}_{\{s\}})\dd(\delta_{+,[0,s]} - \delta)
\\
&=
\left (
z(0)  - \delta_{+,[0,s]}(0)
\right )
\left (
\delta_{+,[0,s]}(0+) - \delta(0+)
-
\delta_{+,[0,s]}(0) + \delta(0)
\right )
\\
&\qquad\qquad
+
\left (
z(s)  - \delta_{+,[0,s]}(s)
\right )
\left (
\delta_{+,[0,s]}(s) - \delta(s)
-
\delta_{+,[0,s]}(s-) + \delta(s-)
\right )
\\
&=
\left (
z(0)  - \delta(0+)
\right )
\left (
\delta(0) - \delta(0+)
\right )
\\
&=
\left (
z(0)  - \pi_{V^+(0)}(h_0)
\right )
\left (
h_0 - \pi_{V^+(0)}(h_0)
\right )
\\
&= 0
\qquad \forall z \in \KK_{G_r}^{\crit}(y,u)[0,s].
\end{aligned}
\end{equation*}
By combining all of the above and by exploiting \cref{lemma:KS_lin_add},
we obtain that $\delta$ is indeed a 
solution of \eqref{eq:delta_EVI_1d}.
To see that the system \eqref{eq:delta_EVI_1d} 
possesses at most one solution, 
we can proceed along the lines of 
the proof of \cref{cor:right_lim_EVI}.
If $\delta$ and $\hat \delta$ both solve \eqref{eq:delta_EVI_1d},
then we trivially have 
$\delta(0) = \hat \delta (0)$
and, for every $s \in (0,T]$,
we obtain from the EVIs satisfied by 
$\delta$ and $\hat \delta$,
the initial condition in \eqref{eq:delta_EVI_1d},
and 
\cite[Proposition 6.2]{Brokate2021}
(which is an easy consequence of 
\cref{lem:KS-point-measures,lem:KS-sub_int,lem:KS-IBP})
that 
\begin{equation}
\label{eq:random_estimate_uniqueness:262gs}
0
\leq
\int_0^s (\hat \delta_{+,[0,s]}  - \delta_{+,[0,s]})\dd(\delta- \hat \delta)
\leq
-\frac{1}{2}| \hat \delta_{+,[0,s]}(s)  - \delta_{+,[0,s]}(s)  |^2
=
-\frac{1}{2}| \hat \delta(s)  - \delta(s)  |^2.
\end{equation}
The above implies $\hat \delta(s)  = \delta(s)$
and, consequently, $\hat \delta = \delta$ on $[0,T]$.
This completes the proof of the corollary. 
\end{proof}

The system \eqref{eq:delta_EVI_1d} corresponds 
to the one in 
\cite[Theorem 4.11]{BrokateChristof2023} and 
\cite[Theorem 2.1, Equation (2.6)]{Brokate2021},
as one may easily check by means of the 
equivalence in \eqref{eq:crit_cone_equivalence}.
We would like to emphasize that one has to 
state the system \eqref{eq:delta_EVI_1d}
very carefully to get a unique characterization 
for $\delta$. In particular, it is not 
enough to merely demand 
$\delta_+ \in \KK_{G_r}^{\crit}(y,u)$
in \eqref{eq:delta_EVI_1d},
analogously to the EVI \eqref{eq:eta_EVI},
as this condition misses the 
information $\delta(s) \in \ZZ(y(s))$
for all $s \in (0,T]$
that is encoded in the inclusions
$\delta_{+,[0,s]} \in \KK_{G_r}^{\crit}(y,u)[0,s]$
for all $s \in (0,T]$ via the endpoint 
convention in \eqref{eq:def_+_subinterval}
and that is essential for the estimate 
\eqref{eq:random_estimate_uniqueness:262gs}.

As a final remark, we would like to point out that 
the proof of \cref{cor:right_lim_EVI}
cannot be extended (easily) to the case 
$d>1$. The problem is that, for $d>1$,
it is not necessarily true that 
the directional derivative
$\delta:= \SS'((u, y_0);(h, h_0))$
satisfies
$\langle \delta(t+), \delta(t+) - \delta(t-) \rangle \leq 0$
for all $t \in [0,T]$, 
as used in the final step of \eqref{eq:important_here_d_1_needed}; 
cf.\ \cref{cor:1d_jumps}\ref{cor:1d_jumps:iii}. This makes 
it (seemingly) impossible to correct 
the function values of the integrand 
in \eqref{eq:randomeq27828sni3i} at the 
right endpoints $t_j$ of the decomposition 
intervals $[t_{j-1}, t_j]$ used in \eqref{eq:randomeq2z8h989ih2w-42}
and, thus, to combine the integrals over the 
subintervals $[t_{j-1}, t_j]$ to a meaningful 
global EVI, as done in \eqref{eq:adding_together_82uhs2ois2}.
Note that this problem is also intrinsically related 
to the fact that the directional derivatives of $\SS$
are typically not in $BV_{rl}([0,T];\R^d)$
if $d>1$ holds. 
In view of these effects, 
it seems likely that the characterization of 
$\delta$ by means of the system 
\eqref{eq:unique_DD_system} is the best that 
one can expect for dimensions $d>1$ and 
that it is not possible in general to 
characterize $\delta$ via a classical EVI
without separate jump conditions. 
We conclude \cref{sec:3} with an example 
which demonstrates that situations with  
$\langle \delta(t+), \delta(t+) - \delta(t-) \rangle > 0$
and 
$\delta \not \in BV_{rl}([0,T];\R^d)$
are indeed possible for $d>1$.

\begin{example}[jumps in a 2d situation]
\label{ex:left_righ_jump}
Consider the case
\begin{equation*}
\label{eq:ex_situation}
\begin{gathered}
d=2,\qquad T=2,\qquad
Z = \{x \in \R^2 \mid 0 \leq x_2 \leq x_1\},
\qquad 
h \equiv 0,
\\
y_0 = \begin{pmatrix}
2 \\ 1
\end{pmatrix},
\qquad
h_0 = \begin{pmatrix}
0 \\ 2
\end{pmatrix},
\qquad
u \in W^{1,\infty}((0,2);\R^2),
\quad 
u(t) = 
\mathds{1}_{[0,1]}(t)
\begin{pmatrix}
2 \\ 1
\end{pmatrix}
(1-t)
+
\mathds{1}_{(1,2]}(t)
\begin{pmatrix}
1 \\ -1
\end{pmatrix}
(t-1).
\end{gathered}
\end{equation*}
 For the above data, one easily verifies that 
 the conditions in 
 \cref{ass:sec:3} hold with $\tau_0 = 1/2$ 
 and that 
 the solution $y := \SS(u, y_0)$
 of \eqref{eq:EVI} and the associated 
 play $w := u - y$ are given by 
 \[
y, w \in W^{1,\infty}((0,2);\R^2),
\qquad
y(t)
=
\mathds{1}_{[0,1]}(t)
\begin{pmatrix}
2 \\ 1
\end{pmatrix}
(1-t)
+
\mathds{1}_{(1,2]}(t)
\begin{pmatrix}
1 \\ 0
\end{pmatrix}
(t-1),
\qquad
w(t)
=
\mathds{1}_{(1,2]}(t)
\begin{pmatrix}
0\\ - 1
\end{pmatrix}
(t-1).
 \]
Using the calculus rules from \cref{subsec:2.4}, 
one also easily checks that the system 
\eqref{eq:unique_DD_system} is solved by the function
\begin{equation}
\label{eq:randomeq2828ej}
\delta(t)
:= 
\mathds{1}_{[0,1)}(t)
\begin{pmatrix}
0 \\ 2
\end{pmatrix}
+
\mathds{1}_{\{1\}}(t)
\begin{pmatrix}
1 \\ 1
\end{pmatrix}
+
\mathds{1}_{(1,2]}(t)
\begin{pmatrix}
1 \\ 0
\end{pmatrix}
\end{equation}
in this situation.
As \eqref{eq:unique_DD_system} 
is uniquely solvable by \cref{th:DD_unique},
this implies $\delta = \SS'((u, y_0);(h, h_0))$.
As $\delta$ is clearly 
neither left- nor right-continuous at $t=1$,
this demonstrates that there are indeed 
situations with $\SS'((u, y_0);(h, h_0)) \not \in BV_{rl}([0,T];\R^d)$.
From \eqref{eq:randomeq2828ej}, we further obtain that 
$
\langle \delta(1+), \delta(1+) - \delta(1-) \rangle
= 1 > 0.
$
The inequality 
$\langle \delta(t+), \delta(t+) - \delta(t-) \rangle \leq 0$
that is valid for $d=1$
by \cref{cor:1d_jumps}\ref{cor:1d_jumps:iii}
and that entered the proof 
of \cref{cor:right_lim_EVI} crucially in \eqref{eq:important_here_d_1_needed} is thus indeed 
in general wrong for $d>1$, as mentioned after 
\cref{cor:dir_dif_EVI_1d}.
\end{example}

\section{Proof of \ref{main_sum:item:II}: non-differentiability in the obtuse case}\label{sec:4}

It remains to prove part \ref{main_sum:item:II}
of \cref{th:main_summary}. To establish this second main 
result of our analysis, we 
use \cref{lem_counterexample}
to reduce the situation to two dimensions and 
then explicitly construct a counterexample.

\begin{theorem}[point \ref{main_sum:item:II}
of \cref{th:main_summary}]\label{th:non_dir_diff}
Let $T>0$ be given and 
let $Z \subset \R^d$, $d \in \N$, be a 
full-dimensional convex polyhedron
that is not non-obtuse. 
Let $\vartheta \in (0,T)$ be an arbitrary number. 
Then there exist
$y_0 \in Z$, $u \in W^{1,\infty}((0,T);\R^d)$,
$h_0 \in \ZZ(y_0)$, $\tau_0 > 0$, and $h \in C^\infty([0,T];\R^d)$
such that 
$y_0 + \tau h_0 \in Z$ holds for all $ \tau \in [0,\tau_0]$,
such that the solution operator 
$\SS\colon CBV([0,T];\R^d) \times Z \to CBV([0,T];\R^d)$
of \eqref{eq:EVI} satisfies 
\[
\left. \frac{\SS(u, y_0 + \tau h_0)  - \SS(u, y_0) }{\tau}\right |_{[\vartheta, T]}
=
\left. \frac{\SS(u + \tau h, y_0) - \SS(u, y_0) }{\tau} \right |_{[\vartheta, T]}
= \mathrm{const}
= c_\tau \qquad \forall\,0 < \tau \leq \tau_0
\]
for some $c_\tau \in \R^d$,
and such that the limit $\lim_{(0, \tau_0) \ni \tau \to 0} c_\tau$
does not exist.
\end{theorem}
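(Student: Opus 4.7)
My strategy is to first reduce the problem to a two-dimensional obtuse-wedge situation via \cref{lem_counterexample}, and then to construct an explicit counterexample there. Applying that proposition to $Z$, I obtain $\bar w \in Z$ and indices $i_1, i_2 \in I$ with $\AA(\bar w) = \{i_1, i_2\}$ and $c := \langle \nu_{i_1}, \nu_{i_2}\rangle > 0$. After translating so that $\bar w = 0$, I restrict attention to a small neighborhood of $0$ in the plane $V := \spann\{\nu_{i_1}, \nu_{i_2}\}$ (extended trivially along $V^\perp$ if $d > 2$); by continuity of $y$, only the two faces with outward normals $\nu_{i_1}, \nu_{i_2}$ will be active there, so in the analysis $Z$ may effectively be replaced by the obtuse wedge $K := \{x \in \R^d \mid \langle \nu_{i_j}, x\rangle \le 0,\ j=1,2\}$ with vertex at $0$.

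The construction then proceeds as follows. Take $y_0 := 0$ and choose $u \in W^{1,\infty}((0,T);\R^d)$ so that $\dot u(t) \in N_Z(0) = \R_+ \nu_{i_1} + \R_+\nu_{i_2}$ throughout $[0,T]$; this forces $y := \SS(u,y_0) \equiv 0$. Specifically, on $[0,\vartheta]$ let $\dot u$ alternate between the two extreme rays $\R_+\nu_{i_1}$ and $\R_+\nu_{i_2}$ on a sequence of sub-intervals $J_1,J_2,\dots \subset [0,\vartheta]$ (mollified so that $u \in W^{1,\infty}$) whose lengths accumulate at $\vartheta$ in a geometrically controlled fashion, while on $[\vartheta, T]$ take $\dot u$ proportional to $\nu_{i_1} + \nu_{i_2}$, which lies strictly inside $N_Z(0)$. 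Next, pick $h_0 \in V \cap \interior(K)$ transverse to both rays $\R_+\nu_{i_1}, \R_+\nu_{i_2}$ and a smooth cutoff $\chi \in C^\infty([0,T])$ with $\chi \equiv 0$ near $0$ and $\chi \equiv 1$ on $[\vartheta, T]$, and define $h(t) := h_0 \chi(t)$. Because $h|_{[\vartheta, T]} \equiv h_0$ is constant and $\SS(u + c, y_0) = \SS(u, y_0)$ for any constant $c \in \R^d$ (the EVI \eqref{eq:EVI_again} depends on $u$ only through $\dd u$), the restriction of $\tilde y_\tau := \SS(u + \tau h, y_0)$ to $[\vartheta, T]$ evolves under the forcing $u|_{[\vartheta,T]}$ from the initial value $\tilde y_\tau(\vartheta)$; since $\dot u|_{[\vartheta,T]}$ has been arranged to lie in the normal cone at every point of $K$ sufficiently near $0$ on either face, $\tilde y_\tau$ stays frozen at $\tilde y_\tau(\vartheta)$, so $c_\tau := \tilde y_\tau(\vartheta)/\tau$ is constant in $t$ on $[\vartheta, T]$. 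The widths of the $J_k$ are then chosen so that, as $\tau \to 0^+$, the perturbed velocity $\dot u(t) + \tau h_0 \dot\chi(t)$ tips out of $N_Z(0)$ through the ray $\R_+\nu_{i_1}$ on intervals of one parity and through $\R_+ \nu_{i_2}$ on those of the opposite parity; by tuning the geometry, two sequences $\tau_k^{(1)}, \tau_k^{(2)} \to 0$ can be produced for which the accumulated drift of $\tilde y_\tau$ at time $\vartheta$ lies on the face $F_1 := \{x \in K : \langle \nu_{i_1}, x\rangle = 0\}$ respectively on $F_2 := \{x \in K : \langle \nu_{i_2}, x\rangle = 0\}$, giving two distinct accumulation points for $c_\tau$ so that $\lim_{\tau \to 0^+} c_\tau$ does not exist.

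The equality of the two difference quotients stated in the theorem is obtained by using the same $h_0$ for the initial-value perturbation: since $h(t) \equiv h_0$ on $[\vartheta, T]$, a direct path-comparison combined with the rate-independence of $\SS$ and with the constant-shift invariance $\SS(u + c, y_0) = \SS(u, y_0)$ shows that $\SS(u + \tau h, y_0)(\vartheta) = \SS(u, y_0 + \tau h_0)(\vartheta)$, and thereafter the two perturbed trajectories coincide on $[\vartheta, T]$ by the semigroup property of the stop. The main obstacle in this argument is the precise quantitative choice of the widths $|J_k|$ and the direction $h_0$ needed to produce two distinct accumulation points for $c_\tau$; this relies crucially on the failure of the additivity identity \eqref{eq:norm_proj} in \cref{lem:nonobtuse_properties}\ref{lem:nonobtuse_properties:i} when $c > 0$, which prevents the projections onto the two faces from combining linearly and forces the face-selection mechanism at the obtuse vertex to depend non-convergently on the $\tau$-scale. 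Indeed, in the non-obtuse setting of \cref{th:main_summary}\ref{main_sum:item:I} this additivity provides the coherent jump rule $\delta(t) = \pi_{\ZZ(y(t))}(\delta(t-))$, whereas in the obtuse case no such single limit direction exists, as the explicit example witnesses.
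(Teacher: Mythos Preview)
Your outline shares the paper's first step (reduce via \cref{lem_counterexample} to an obtuse two-face situation near the vertex), but thereafter it diverges substantially and contains genuine gaps.

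\textbf{The freezing argument on $[\vartheta,T]$ is wrong.} You claim that taking $\dot u|_{[\vartheta,T]}$ proportional to $\nu_{i_1}+\nu_{i_2}$ makes the perturbed trajectory $\tilde y_\tau$ stay frozen because this vector ``lies in the normal cone at every point of $K$ sufficiently near $0$ on either face''. That is false: at a point in the relative interior of the face $F_1=\{x:\langle\nu_{i_1},x\rangle=0\}$ the normal cone is $\R_+\nu_{i_1}$, which does \emph{not} contain $\nu_{i_1}+\nu_{i_2}$ (the normals are linearly independent). Hence if $\tilde y_\tau(\vartheta)$ lands on $F_1\setminus\{0\}$, the forcing $\dot u|_{[\vartheta,T]}$ will move it along $F_1$ (and similarly on $F_2$), so $c_\tau$ is not constant in $t$. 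The paper avoids this by taking $\dot u\equiv 0$ after the critical time, which is the only choice that freezes the trajectory regardless of which face it sits on.

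\textbf{The identity of the two difference quotients is not justified.} Your appeal to ``rate-independence'' and ``constant-shift invariance'' does not yield $\SS(u+\tau h,y_0)(\vartheta)=\SS(u,y_0+\tau h_0)(\vartheta)$: the perturbation $\tau h_0\dot\chi$ acts throughout the support of $\dot\chi$ and interacts with the face-switching of $\dot u$, whereas the initial-value perturbation is instantaneous. The paper handles this differently: it introduces a parameter $\gamma\in[0,1]$ interpolating between pure initial-value and pure forcing perturbation, computes $y^\tau=\SS(u+\tau h,y_0+\tau h_0)$ explicitly for every $\gamma$, and observes that the formula for $c_\tau$ is independent of $\gamma$.

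\textbf{The non-convergence mechanism is not established.} You assert that, by tuning interval widths, two subsequences of $c_\tau$ land on $F_1$ and $F_2$ respectively, but no computation is given, and the heuristic invocation of the failure of \eqref{eq:norm_proj} is not a proof. The paper's mechanism is quite different and fully explicit: it starts from $y_0=\varepsilon e_1$ on the face $F_1$ (not the vertex), drives $y$ along $F_1$ to the vertex at time $\varepsilon$ while applying a normal force $\beta_1(t)\nu_1$ with $\beta_1\in L^\infty$, and then shows that the perturbed solution reaches the vertex at the earlier time $\varepsilon-\tau$ and subsequently slides along $F_2$ by an amount proportional to $\tfrac{1}{\tau}\int_{\varepsilon-\tau}^{\varepsilon}\beta_1$. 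Choosing $\beta_1$ so that this Lebesgue-type limit does not exist produces the divergence of $c_\tau$.
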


\begin{proof}
Let $T$ and $Z$ 
be as in the theorem, let $\{(\nu_i, \alpha_i)\}_{i \in I}$
be a standard description of $Z$,
and let $\vartheta \in (0,T)$ be given.
From \cref{lem_counterexample}, we obtain that 
there exists $w \in Z$ such that $\AA(w) = \{i_1, i_2\}$ holds for some 
$i_1, i_2\in I$ with $\langle \nu_{i_1}, \nu_{i_2}\rangle > 0$.
We assume w.l.o.g.\ that $w=0$, $\alpha_{i_1} = \alpha_{i_2} = 0$,
and $i_1 = 1$, $i_2 = 2$.
(This can always be achieved by 
translating $Z$ and reordering $\{(\nu_i, \alpha_i)\}_{i \in I}$.)
As $\AA(0) = \{1,2\}$, we can find $\varepsilon \in (0, \vartheta)$
such that $\langle \nu_i, x\rangle < \alpha_i$ holds for all
$i \in I \setminus \{1,2\}$ and all $x \in B_\varepsilon(0)$. 
Since $\langle \nu_{1}, \nu_{2}\rangle > 0$ and 
$\nu_1 \neq \nu_2$, there further exist
unique $e_1, e_2 \in \R^d$ satisfying 
$|e_1| = |e_2| = 1$, 
$e_1, e_2 \in \spann(\{\nu_1, \nu_2\})$,
$\langle \nu_1, e_1\rangle = \langle \nu_2, e_2 \rangle = 0$,
$\langle \nu_1, e_2\rangle < 0$, $\langle \nu_2, e_1\rangle < 0$,
and $\langle e_1, e_2\rangle < 0$
(cf.\ \cref{lem:ZZproperties}\ref{lem:ZZproperties:iv}).
Consider now 
an arbitrary but fixed 
number $\gamma \in [0,1]$
and choose functions 
$\varphi \in C_c^\infty(\R)$
and 
$\beta_1 \in L^\infty(0,T)$
such that
\begin{equation*}
\begin{gathered}
\supp(\varphi) \subset \left (0, \frac\varepsilon2\right ),
\qquad
\varphi \geq 0,
\qquad
\int_0^T \varphi\,\dd s = 1,
\qquad
\beta_1 = 0 \text{ a.e.\ in }(\varepsilon, T),
\qquad
0 \leq
\beta_1 
\leq 
\frac{\langle e_1, e_2\rangle}{\langle \nu_1, e_2\rangle}
\text{ a.e.\ in } (0, \varepsilon),
\end{gathered}
\end{equation*}
and such that the limit 
\begin{equation}
\label{eq:beta1limit}
\lim_{(0, \varepsilon) \ni \tau \to 0}
\frac1\tau
\int_{\varepsilon - \tau}^\varepsilon
\beta_1(s)
\dd s
\end{equation}
does not exist. (Functions
$\varphi$ and $\beta_1$ with these properties 
can be 
constructed easily.) We define 
\begin{gather*}
y_0 := \varepsilon e_1,
\qquad
\tau_0 := \frac{\varepsilon}{2},
\qquad h_0 := -\gamma e_1,
\\
\lambda_1(t) := 
\int_{0}^t
\beta_1(s) \dd s,
\qquad
u(t)
:= 
 \max(0, \varepsilon - t) 
 e_1
+
\lambda_1(t) \nu_1,
\qquad h(t) := 
-(1-\gamma)\int_0^t \varphi(s) \dd s\,e_1.
\end{gather*}
For this data, 
one easily checks that 
$y_0 + \tau h_0 \in Z$
for all $\tau \in [0, \tau_0]$,
$u \in W^{1,\infty}((0,T);\R^d)$,
and $h \in C^\infty([0,T];\R^d)$.
We claim that
$\SS(u + \tau h, y_0 + \tau h_0)$
is equal to the function
\begin{equation}
\label{eq:ytaudef}
y^\tau(t) 
:=
\begin{cases}
\displaystyle
(\varepsilon  -\gamma \tau - t) 
e_1
+
\tau h(t)
& \text{ if } t \in [0, \varepsilon -\tau],
\\
\displaystyle
\int_{\varepsilon - \tau}^t 
\left \langle \dot u(s), e_2 \right \rangle 
\dd s\, e_2
& \text{ if } t \in  (\varepsilon - \tau, T],
\end{cases}
\end{equation}
for all $\tau \in [0,\tau_0]$.
To see this, 
let $\tau \in [0,\tau_0]$ be fixed. 
Due to \eqref{eq:ytaudef}
and $0 \leq \tau \leq \tau_0 < \varepsilon$,
we trivially have $y^\tau(0) = 
(\varepsilon  -\gamma \tau) 
e_1 = y_0 + \tau h_0$.
From the properties 
of $\varphi$ and the definition
$\tau_0:= \varepsilon/2$,
we further obtain that 
\begin{equation*}
\begin{aligned}
(\varepsilon  -\gamma \tau - (\varepsilon - \tau))
e_1
+
\tau h(\varepsilon - \tau)
&=
(1-\gamma)\tau e_1
-(1-\gamma) \tau\int_0^{\varepsilon - \tau} \varphi(s) \dd s\,e_1
\\
&=
(1-\gamma)\tau e_1
-(1-\gamma) \tau\int_0^{\varepsilon/2} \varphi(s) \dd s\,e_1
=
0
=
\int_{\varepsilon - \tau}^{\varepsilon - \tau} 
\left \langle \dot u(s), e_2 \right \rangle 
\dd s\, e_2.
\end{aligned}
\end{equation*}
In combination with \eqref{eq:ytaudef},
this yields $y^\tau \in W^{1,\infty}((0,T);\R^d)$.
Similarly, one also checks that 
\[
0 \leq 
\varepsilon  -\gamma \tau -t 
 - (1-\gamma) \tau \int_0^t \varphi(s)\dd s
 \leq 
 \varepsilon - \gamma \tau \leq \varepsilon 
 \qquad 
 \forall t \in [0, \varepsilon - \tau]
 \]
and, thus, 
\[
y^\tau(t)
=
 \left (\varepsilon  -\gamma \tau - t\right ) e_1
+
\tau h(t)
=
 \left ( \varepsilon  -\gamma \tau - t
 - (1-\gamma)\tau  \int_0^t \varphi(s)\dd s
 \right ) e_1
 \in \conv(\{0, \varepsilon e_1\})
 \subset Z
\qquad
\forall t \in [0, \varepsilon - \tau].
\]
For the derivative $\dot y^\tau$, we obtain 
\[
\dot y^\tau(t) - (\dot u + \tau \dot h)(t)
= 
-e_1 + \tau \dot h(t)
-
\left (-e_1 + \dot \lambda_1(t)\nu_1 
+
\tau 
\dot h(t)
\right )
=
-\beta_1(t) \nu_1
\in
-\NN_Z(y^\tau(t))
\text{ for a.a.\ } t \in (0, \varepsilon - \tau).
\]
Thus,  
$y^\tau(0) = y_0 + \tau h_0$,
$y^\tau(t) \in Z$ for all $t \in [0, \varepsilon-\tau]$,
and 
$\dot y^\tau(t) - (\dot u + \tau \dot h)(t)
 \in -\NN_Z(y^\tau(t))$
for a.a.\ $t \in (0, \varepsilon-\tau)$.
To prove that
$y^\tau = \SS(u + \tau h, y_0 + \tau h_0)$ holds,
it remains to check that the last
inclusions also hold on the interval $(\varepsilon - \tau,T]$; see 
\cref{prop:unique_solvability_stop_play}.
To this end, we note that 
our assumptions on $\beta_1$ yield
\begin{equation}
\label{eq:randome2329839e3jp-2}
\left \langle \dot u(t), e_2 \right \rangle 
=
\mathds{1}_{[0, \varepsilon]}(t)
\left \langle 
-  e_1
, e_2 \right \rangle 
+
\beta_1(t)
\left \langle 
 \nu_1
, e_2 \right \rangle 
=
\left (\mathds{1}_{[0, \varepsilon]}(t)
-
\frac{\left \langle 
 \nu_1
, e_2 \right \rangle }{\left \langle 
e_1
, e_2 \right \rangle } \beta_1(t) \right )
\left \langle 
-  e_1
, e_2 \right \rangle 
\in [0,1]
~
\text{ for a.a.\ } t \in (\varepsilon - \tau, T).
\end{equation}
This implies 
\begin{equation}
\label{eq:randomeq8hw2982wh-z6}
\begin{aligned}
y^\tau(t)
=
\int_{\varepsilon - \tau}^t 
\left \langle \dot u(s), e_2 \right \rangle 
\dd s\, e_2
=
\int_{\varepsilon - \tau}^{\min(\varepsilon, t)} 
\left \langle \dot u(s), e_2 \right \rangle 
\dd s\, e_2
 \in \conv(\{0, \tau e_2\})
 \subset \conv(\{0, \varepsilon e_2\})
\subset Z
\quad 
\forall t \in (\varepsilon - \tau, T].
\end{aligned}
\end{equation}
Similarly, we may also compute that 
\[
\left \langle 
\dot u(t), \nu_2
\right \rangle
=
\mathds{1}_{[0, \varepsilon]}(t)
\left \langle 
-e_1, \nu_2
\right \rangle
+
\beta_1(t)
\left \langle 
 \nu_1, \nu_2
\right \rangle
\geq 0
\text{ for a.a.\ } t \in (\varepsilon - \tau, T).
\]
In combination with the 
properties of $\varphi$,
the estimate $\varepsilon - \tau \geq \varepsilon/2$,
the inclusion 
$y^\tau(t) \in \conv(\{0, \varepsilon e_2\})$
for all $t \in (\varepsilon - \tau, T]$ in 
\eqref{eq:randomeq8hw2982wh-z6},
and the fact that $e_2$ and $\nu_2$
form an orthonormal basis of 
$\spann(\{e_1, e_2\}) = \spann(\{\nu_1, \nu_2\})$,
this implies 
\begin{equation*}
\dot y^\tau(t) - (\dot u + \tau \dot h)(t)
=
\dot y^\tau(t) - \dot u(t)
=
\left \langle \dot u(t), e_2 \right \rangle e_2
- \dot u(t)
=
-\left \langle \dot u (t), \nu_2 \right \rangle \nu_2
\in 
- \NN_Z(y^\tau(t))
\text{ for a.a.\ } t \in (\varepsilon - \tau, T).
\end{equation*}
We thus indeed have
$y^\tau(0) = y_0 + \tau h_0$,
$y^\tau(t) \in Z$ for all $t \in [0, T]$,
and 
$\dot y^\tau(t) - (\dot u + \tau \dot h)(t) \in -\NN_Z(y^\tau(t))$
for a.a.\ $t \in (0, T)$
and, as a consequence, 
$y^\tau = \SS(u + \tau h, y_0 + \tau h_0)$ as claimed. 
Note that, from \eqref{eq:ytaudef}, \eqref{eq:randome2329839e3jp-2}, 
and
$\beta_1 = 0$ a.e.\ in $(\varepsilon, T)$,
we obtain that 
\begin{equation}
\label{eq:randomeq3636-111}
\left. \frac{y^\tau - y^0}{\tau} \right |_{[\varepsilon, T]}
=
\left. \frac{\SS(u + \tau h, y_0 + \tau h_0)  - \SS(u, y_0) }{\tau}\right |_{[\varepsilon, T]}
= \mathrm{const}
\qquad \forall\,0 < \tau \leq \tau_0
\end{equation}
and
\begin{equation}
\label{eq:randomeq3636-222}
\frac{y^\tau(\varepsilon) - y^0(\varepsilon)}{\tau}
=
\frac1\tau
\int_{\varepsilon - \tau}^\varepsilon
\left \langle \dot u(s), e_2 \right \rangle 
\dd s\, e_2
=
\left \langle 
-  e_1
, e_2 \right \rangle e_2
+
\frac1\tau
\int_{\varepsilon - \tau}^\varepsilon
\beta_1(s)
\dd s\,\left \langle 
 \nu_1
, e_2 \right \rangle  e_2
\qquad \forall\,0 < \tau \leq \tau_0. 
\end{equation}
As $\beta_1$ was chosen such that the limit 
in \eqref{eq:beta1limit} does not exist,
since $\left \langle 
 \nu_1
, e_2 \right \rangle \neq 0$ and $0 < \varepsilon < \vartheta$,
and since, for $\gamma \in \{0,1\}$,
we obtain situations in which only the 
initial value $y_0$ and the forcing term $u$
are perturbed, the assertion of the theorem 
now follows immediately
from \eqref{eq:ytaudef},
\eqref{eq:randomeq3636-111}, and 
\eqref{eq:randomeq3636-222}.
\end{proof}
\smallskip

\begin{remark}~
\begin{itemize}
\item
The construction in the 
proof of \cref{th:non_dir_diff}
suggests that it might be possible 
to obtain directional differentiability results 
for the operator $\SS$ in the presence of 
obtuse interior angles
if one 
restricts the attention to 
situations in which the 
time-derivative of the function $\PP(u, y_0)\colon [0,T] \to \R^d$
possesses left and right limits everywhere 
in $[0,T]$. We leave this topic for future research. 

\item 
As \eqref{eq:randomeq3636-222}
shows, in the situation considered 
in the proof of \cref{th:non_dir_diff},
limits (of subsequences) of difference quotients 
of $\SS$ do not only depend on 
where the multiplier maps $\lambda_i$
in \eqref{eq:mon_play} are nonconstant,
but also on how fast the maps $\lambda_i$
grow. This is a major difference to the 
non-obtuse case; see
\eqref{eq:unique_DD_system},
\eqref{eq:GR-crit_cone_alts},
and \cref{def:temporal_indices}.
In the elliptic setting, 
dependencies of this type typically occur 
when the admissible set of the 
considered variational inequality 
(interpreted as a subset of the underlying function space)
is not (extended) polyhedric
but 
possesses proper nonzero curvature at its boundary;
cf.\ \cite[Lemmas 5.3, 5.8]{Christof2018SSC}
and the references therein. 
The observations in 
\cref{sec:3} and the proof of \cref{th:non_dir_diff}
thus suggest that
the admissible set 
$\{v \in CBV([0,T];\R^d) \mid v(0) = y_0,\;v(t) \in Z \text{ for all }t\in [0,T]\}$
of the EVI \eqref{eq:EVI},
considered as a subset of the space $BV([0,T];\R^d)$,
possesses proper nonzero curvature 
in the sense of the second-order epi-derivatives 
studied in \cite{Christof2018SSC}
when $Z$ has an obtuse ridge 
and is 
extended polyhedric (in a suitably defined sense) when $Z$ is non-obtuse. 
Note that such structural differences 
between non-obtuse and  obtuse polyhedra
are completely unheard of in the 
context of elliptic variational inequalities; see \cite{Wachsmuth2018pointwiseElliptic}. 
We again leave this topic for future research. 
\end{itemize}
\end{remark}

\section{Remarks on applications in optimal control}
\label{sec:5}

We conclude this paper 
by briefly commenting on applications of \cref{th:main_summary}
in the field of optimal control.
For illustration purposes, 
we restrict our attention to the following 
model problem governed by \eqref{eq:EVI}:
\begin{equation*}
\label{eq:P}
\tag{P}
	\begin{aligned}
		\text{Minimize} 
		\quad &  \JJ(y, y(T), u)  \\
        \text{w.r.t.}
        \quad &y \in CBV([0, T];\R^d), \quad  u \in U,\\
		\text{s.t.}
		\quad & 
  \left \{
\begin{aligned}
&y(t) \in Z\quad \forall t \in [0, T], && y(0) = y_0,
\\
&\int_0^T \left \langle v - y, \dd (y - u) \right \rangle \geq 0 &&\forall v \in C([0, T]; Z).
\end{aligned} 
\right.
\end{aligned}
\end{equation*}
Our standing assumptions on the quantities in \eqref{eq:P} are as follows
(cf.\ \cite[Assumption 3.1, Corollary 5.1]{BrokateChristof2023}):
\medskip

\begin{assumption}[standing assumptions for \cref{sec:5}]~\label{ass:standing:sec:5}
\begin{enumerate}[label=\roman*)]
\item $T>0$ and $d \in \N$ are given and fixed.
\item 
$(U, \|\cdot\|_U)$ is a reflexive real Banach space
such that  $U \subset CBV([0,T];\R^d)$ holds and 
such that $U$ is continuously, densely, and compactly embedded into 
$C([0,T];\R^d)$.
\item $\JJ\colon L^\infty((0, T);\R^d) \times \R^d \times U \to \R$ 
is a function with the following properties:
\begin{enumerate}
\item 
$\JJ$ is Fr\'{e}chet differentiable
and 
$\partial_1 \JJ(y,  y(T), u) \in L^1((0, T);\R^d)
$ for all $(y, u) \in CBV([0,T];\R^d) \times U$. 
Here, $ L^1((0, T);\R^d)$ is identified with a subset 
of $L^\infty((0,T);\R^d)^*$ in the canonical way.
\item $\JJ$ is lower semicontinuous 
in the sense that, for 
all $\{(y_k, z_k, u_k) \} \subset C([0, T];\R^d) \times \R^d \times U$
satisfying $y_k \to y$ in $C([0, T];\R^d)$, 
$z_k \to z$ in $\R^d$, and $u_k \weakly u$ in $U$,
we have 
\[
\liminf_{k \to \infty} \JJ(y_k, z_k, u_k) \geq \JJ(y,z,u).
\]
\item $\JJ$ is radially unbounded in the sense that 
there exists a function $\rho\colon [0, \infty) \to \R$
satisfying  
\[
\qquad\qquad\qquad\rho(s) \to \infty \text{ for } s \to \infty
\qquad
\text{and}
\qquad
\JJ(y, z, u) \geq \rho\left ( \|u\|_U \right ) \quad \forall (y, z, u) \in C([0, T];\R^d) \times \R^d \times U.
\]
\end{enumerate}
\item 
$Z\subset \R^d$ is a full-dimensional convex non-obtuse polyhedron.
\item $y_0 \in Z$ is a given and fixed starting value.
\end{enumerate}
\end{assumption}

A prototypical example covered by the 
setting in \cref{ass:standing:sec:5} is 
\[
U := H^1((0,T);\R^d),
\qquad 
\JJ(y, z, u)
:=
\frac{\omega_1}2
\|y  - y_D \|_{L^2((0,T);\R^d)}^2
+
\frac{\omega_2}2
|z - y_T |^2
+
\frac{\omega_3}{2}
\|u\|_{H^1((0,T);\R^d)}^2,
\]
with given $\omega_1,\omega_2,\omega_3 > 0$,
$y_D \in L^2((0,T);\R^d)$,
and $y_T \in \R^d$. From standard arguments, 
we obtain:

\begin{theorem}[solvability of \eqref{eq:P}]
Problem \eqref{eq:P} possesses
a solution $(\bar u, \bar y)
\in U \times CBV([0,T];\R^d)$.
\end{theorem}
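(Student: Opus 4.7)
The plan is to apply the direct method of the calculus of variations. The admissible set of \eqref{eq:P} is nonempty since, for any $u \in U$, the pair $(u, \SS(u, y_0))$ satisfies the EVI by \cref{prop:unique_solvability_stop_play}, so $\inf \eqref{eq:P} < \infty$. I would select a minimizing sequence $\{(u_k, y_k)\} \subset U \times CBV([0,T];\R^d)$ with $y_k = \SS(u_k, y_0)$ and $\JJ(y_k, y_k(T), u_k) \to \inf \eqref{eq:P}$, and first use the radial unboundedness from \cref{ass:standing:sec:5} to obtain $\rho(\|u_k\|_U) \leq \JJ(y_k, y_k(T), u_k)$, whose right-hand side is uniformly bounded above. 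Since $\rho(s) \to \infty$ as $s \to \infty$, this forces $\{u_k\}$ to be bounded in $U$.

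By the reflexivity of $U$, I would then extract a subsequence (not relabeled) with $u_k \weakly \bar u$ in $U$ for some $\bar u \in U$. The compact embedding $U \hookrightarrow C([0,T];\R^d)$ assumed in \cref{ass:standing:sec:5} upgrades this to strong convergence $u_k \to \bar u$ in $C([0,T];\R^d)$. To transfer this convergence to the state variables, the key ingredient is the $C$-to-$C$ Lipschitz estimate \eqref{eq:C_S_continuity} borrowed from \cite[Theorems 7.1, 7.2]{Krejci1999} in the proof of \cref{th:CBV-Lipschitz}; namely,
\[
\|\SS(u_k, y_0) - \SS(\bar u, y_0)\|_{C([0,T];\R^d)} \leq L_1 \|u_k - \bar u\|_{C([0,T];\R^d)} \to 0.
\]
Setting $\bar y := \SS(\bar u, y_0)$, this yields $y_k \to \bar y$ in $C([0,T];\R^d)$ and, in particular, $y_k(T) \to \bar y(T)$ in $\R^d$. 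The limit pair $(\bar u, \bar y)$ is automatically admissible for \eqref{eq:P} by construction.

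Finally, the lower semicontinuity of $\JJ$ with respect to strong convergence in $C([0,T];\R^d) \times \R^d$ combined with weak convergence in $U$, stipulated in \cref{ass:standing:sec:5}, gives
\[
\JJ(\bar y, \bar y(T), \bar u) \leq \liminf_{k \to \infty} \JJ(y_k, y_k(T), u_k) = \inf \eqref{eq:P},
\]
so $(\bar u, \bar y)$ is a minimizer. The argument contains no genuinely hard step; the only point requiring care is the passage to the limit in the state variable, where one has to remember that the relevant Lipschitz bound for $\SS$ is the $C$-to-$C$ estimate from \cite{Krejci1999} (and not the stronger CBV-to-CBV estimate of \cref{th:CBV-Lipschitz}), because the compact embedding assumption on $U$ only delivers strong convergence of the controls in $C([0,T];\R^d)$.
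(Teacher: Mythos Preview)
Your proposal is correct and follows exactly the approach the paper outlines: the direct method of the calculus of variations combined with the properties of $U$ and $\JJ$ from \cref{ass:standing:sec:5} and the $C$-to-$C$ Lipschitz estimate \eqref{eq:C_S_continuity}. The paper's proof is a one-sentence sketch citing precisely these ingredients, and your write-up simply fills in the standard details.
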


\begin{proof}
The solvability of \eqref{eq:P} follows 
straightforwardly from the properties of $U$ and $\JJ$
in \cref{ass:standing:sec:5}, the Lipschitz estimate 
\eqref{eq:C_S_continuity}, and the direct method of the calculus of 
variations. 
\end{proof}

The main insight that 
\cref{th:main_summary} provides in the context of the 
minimization problem \eqref{eq:P} is that 
it is possible to formulate a classical Bouligand stationarity condition 
for local minimizers, i.e., a standard first-order necessary 
optimality condition in terms of the directional
derivatives of the reduced objective function
$U \ni u \mapsto \JJ(\SS(u, y_0), \SS(u, y_0)(T), u) \in \R$.

\begin{corollary}[Bouligand stationarity condition]
\label{cor:bouligand_condition}
Suppose that $\bar u \in U$ is a locally optimal 
control of \eqref{eq:P} with associated 
state $\bar y := \SS(\bar u, y_0)$. 
Then the map $\SS$ is directionally differentiable 
at $(\bar u, y_0)$ in the sense of 
part \ref{main_sum:item:I} of 
\cref{th:main_summary} and it holds 
\begin{equation}
\label{eq:Bouligand_condition}
\begin{aligned}
\int_0^T
\langle 
\partial_1 \JJ(\bar y, \bar y(T), \bar u),
\SS'((\bar u, y_0);(h, 0))
\rangle \dd s
&+
\left\langle
\partial_2 \JJ(\bar y, \bar y(T), \bar u)
,
\SS'((\bar u, y_0);(h, 0))(T)\right \rangle
\\
&+
\left \langle \partial_3 \JJ(\bar y, \bar y(T), \bar u), h \right \rangle_{U^*,U}
\geq 
0\quad 
\forall h \in U.
\end{aligned}
\end{equation}
\end{corollary}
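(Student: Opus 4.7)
The plan is to obtain the directional differentiability statement directly from part \ref{main_sum:item:I} of \cref{th:main_summary} and then to derive \eqref{eq:Bouligand_condition} by applying the chain rule to the reduced objective $j\colon U \to \R$, $j(u) := \JJ(\SS(u, y_0), \SS(u, y_0)(T), u)$, at $\bar u$. First I would note that, by \cref{ass:standing:sec:5}, $Z$ is non-obtuse, and that for every $h \in U$ the direction $(h, 0)$ is admissible in the sense of \cref{th:dir_diff_final_contra} since $y_0 + \tau_0 \cdot 0 = y_0 \in Z$ for every $\tau_0 > 0$. \Cref{th:dir_diff_final_contra} therefore yields a function $\delta_h := \SS'((\bar u, y_0); (h,0)) \in BV([0,T]; \R^d)$ with
\begin{equation*}
\delta_\tau(t) := \frac{\SS(\bar u + \tau h, y_0)(t) - \bar y(t)}{\tau} \to \delta_h(t) \qquad \forall t \in [0,T], \quad\text{as } \tau \to 0^+.
\end{equation*}
By the Lipschitz estimate of \cref{th:CBV-Lipschitz}, the family $\{\delta_\tau\}$ is bounded in $CBV([0,T]; \R^d)$ and in particular uniformly bounded in $L^\infty((0,T); \R^d)$.

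Next I would verify that $U$ embeds continuously into $CBV([0,T]; \R^d)$: this follows from the closed graph theorem because $U \hookrightarrow C([0,T]; \R^d)$ and $CBV([0,T]; \R^d) \hookrightarrow C([0,T]; \R^d)$ are continuous and $U \subset CBV([0,T]; \R^d)$ by hypothesis. Consequently, the Lipschitz estimate \eqref{eq:BV_Lipschitz} gives  $\|\SS(\bar u + \tau h, y_0) - \bar y\|_{L^\infty} \leq L\|\tau h\|_{CBV} = O(\tau)$ and $|\SS(\bar u + \tau h, y_0)(T) - \bar y(T)| = O(\tau)$. With these estimates, the Fr\'{e}chet differentiability of $\JJ$ on $L^\infty((0,T); \R^d) \times \R^d \times U$ yields
\begin{equation*}
\frac{j(\bar u + \tau h) - j(\bar u)}{\tau}
= \int_0^T \!\langle \partial_1 \JJ, \delta_\tau \rangle \dd s
+ \langle \partial_2 \JJ, \delta_\tau(T) \rangle
+ \langle \partial_3 \JJ, h \rangle_{U^*, U}
+ o(1),
\end{equation*}
where the partial derivatives are evaluated at $(\bar y, \bar y(T), \bar u)$. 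Since $\partial_1 \JJ \in L^1((0,T); \R^d)$ by hypothesis and $\{\delta_\tau\}$ is uniformly bounded in $L^\infty$ with $\delta_\tau(t) \to \delta_h(t)$ pointwise, the bounded convergence theorem implies that the first integral converges to $\int_0^T \langle \partial_1 \JJ, \delta_h \rangle \dd s$; similarly, $\delta_\tau(T) \to \delta_h(T)$ by the pointwise convergence at $t = T$. Hence $j$ is directionally differentiable at $\bar u$ in every direction $h \in U$, with derivative given by the right-hand side of \eqref{eq:Bouligand_condition}.

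Finally, local optimality of $\bar u$ in \eqref{eq:P} gives $j(\bar u + \tau h) \geq j(\bar u)$ for all sufficiently small $\tau > 0$ and all $h \in U$; dividing by $\tau$ and sending $\tau \to 0^+$ yields $j'(\bar u; h) \geq 0$, which is precisely \eqref{eq:Bouligand_condition}. The main technical point in this argument is the compatibility between the pointwise (Helly-type) mode of convergence provided by \cref{th:main_summary}\ref{main_sum:item:I} and the $L^\infty$-Fr\'{e}chet differentiability of $\JJ$; this is bridged by the Lipschitz estimate \eqref{eq:BV_Lipschitz} combined with bounded convergence. Everything else is a straightforward chain rule computation.
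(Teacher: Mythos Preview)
Your proof is correct and follows essentially the same approach as the paper: both use local optimality to obtain a nonnegative difference, expand via Fr\'{e}chet differentiability of $\JJ$, divide by $\tau$, and pass to the limit using the pointwise convergence from \cref{th:main_summary}\ref{main_sum:item:I} together with dominated convergence (justified by the Lipschitz estimate and $\partial_1\JJ \in L^1$). Your closed-graph argument for the continuous embedding $U \hookrightarrow CBV$ is a nice addition but not strictly needed---for a fixed $h \in U \subset CBV$, the quantity $\|h\|_{CBV}$ is simply a finite constant, which already suffices to bound $\{\delta_\tau\}$ uniformly in $L^\infty$.
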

\begin{proof}
If $\bar u$ is a locally optimal control of 
the problem \eqref{eq:P}
with state $\bar y := \SS(\bar u, y_0)$ and $h \in U$ is fixed, then 
our assumptions,
\cref{th:main_summary},
the Lipschitz estimate \eqref{eq:C_S_continuity},
and the local optimality of $\bar u$
imply that $\SS$ is directionally differentiable 
at $(\bar u, y_0)$ in the sense of \eqref{eq:pointwise_dir_diff} and that 
\begin{equation*}
\begin{aligned}
0
&\leq \JJ(\SS(\bar u + \tau h, y_0), \SS(\bar u + \tau h, y_0)(T), \bar u + \tau h)
- \JJ(\bar y, \bar y(T), \bar u)
\\
&=
\int_0^T
\langle 
\partial_1 \JJ(\bar y, \bar y(T), \bar u),
 \SS(\bar u + \tau h, y_0) - \bar y 
\rangle \dd s
+
\left\langle
\partial_2 \JJ(\bar y, \bar y(T), \bar u),
\SS(\bar u + \tau h, y_0)(T) - \bar y(T)
\right \rangle
\\
&\hspace{6.8cm}+
\left \langle \partial_3 \JJ(\bar y, \bar y(T), \bar u), \tau h \right \rangle_{U^*,U}
+ o(\tau)
\end{aligned}
\end{equation*}
holds for all sufficiently small $\tau > 0$, 
where the Landau notation refers to the limit $\tau \to 0$.
If we divide by 
$\tau$ in the above and pass to the limit $\tau \to 0$, 
using \eqref{eq:Lipschitz_estimate_42},
the pointwise convergence in \eqref{eq:pointwise_dir_diff},
and the dominated convergence theorem, then the assertion 
of the corollary follows.
\end{proof}

Note that it is typically 
hard to use the Bouligand stationarity 
condition \eqref{eq:Bouligand_condition}
directly as a point of departure for the design of numerical solution algorithms.
The main appeal of \eqref{eq:Bouligand_condition} is that 
it constitutes the most natural first-order necessary optimality condition 
for the problem \eqref{eq:P} and, thus, 
can be used as a point of reference when comparing 
stationarity systems that have been derived 
via regularization or discretization techniques;
see \cite{Adam2014,Cao2016,Colombo2020,Cao2019,Colombo2016,Henrion2023,Cao2022,Arroud2018,dePinho2019,Stefanelli2017,Brokate2013,Herzog2022,Nour2022}
and the comparisons in \cite{Harder2017}. 
A very interesting question in this context is whether 
the characterization of
the directional derivatives of $\SS$
by means of \eqref{eq:char_sys_1} 
can be used 
to reformulate the  Bouligand stationarity 
condition \eqref{eq:Bouligand_condition} 
as an equivalent primal-dual optimality system in qualified form. 
In the case $d=1$ and $Z = [-r,r]$, $r>0$, it has been demonstrated 
in \cite{BrokateChristof2023}
that 
this is indeed possible and that
a control $\bar u \in U$ with state $\bar y := \SS(\bar u, y_0)$ 
satisfies \eqref{eq:Bouligand_condition} if and only if 
there exist an adjoint state 
$\bar p \in BV([0,T])$ and a multiplier $\bar \mu \in G_r([0,T])^*$
such that the following \emph{strong stationarity system}
is satisfied:
\begin{equation}
\label{eq:strongstatsys}
\begin{gathered}
\bar p(0) = \bar p(T) = 0,
\qquad 
\bar p(t) = \bar p(t-)~\forall t \in [0,T),
\qquad 
\bar p(t-) \in K^\ptw_{\crit}(\bar y, \bar u)(t)~\forall t \in [0,T],
\\
\left \langle \bar \mu, z \right \rangle_{G_r([0,T])^*,G_r([0,T])} \geq 0\quad \forall z \in \KK_{G_r}^{\red,\crit}(\bar y, \bar u)  ,
\qquad 
\int_0^T h \,\dd \bar p = \left \langle \partial_3 \JJ(\bar y, \bar y(T), \bar u), h \right \rangle_{U^*,U} ~\forall h \in U,
\\
-\int_0^T z \,\dd \bar p  
=
\int_0^T \partial_1 \JJ(\bar y, \bar y(T), \bar u) z \, \dd s
+
\partial_2 \JJ(\bar y, \bar y(T), \bar u)z(T)
-
\left \langle \bar \mu, z \right \rangle_{G_r([0,T])^*,G_r([0,T])}\qquad \forall z \in G_r([0,T]).
\end{gathered}
\end{equation}
Here, $K^{\mathrm{ptw}}_{\crit}(\bar y,\bar u) \colon [0, T] \rightrightarrows \R$ is the same map as in \eqref{eq:crit_cone_equivalence}
and $\KK_{G_r}^{\red,\crit}(\bar y, \bar u) $ denotes a 
suitably defined subset of the critical cone 
$\KK_{G_r}^{\crit}(\bar y,\bar u)$;
see \cite[Definitions 4.7, 6.1]{BrokateChristof2023}.
The main instrument for establishing \eqref{eq:strongstatsys}
in \cite{BrokateChristof2023} is a version 
of \cref{theorem:tempoly} 
which shows that
the space $BV_r([0,T])$
in the first line of \eqref{eq:temp_poly} 
can be replaced by $C^\infty([0,T])$
when functions $z \in \KK_{G_r}^{\red,\crit}(\bar y, \bar u) $ 
are considered and $d=1$ holds; see \cite[Theorem 6.5]{BrokateChristof2023}.
Whether it is possible to refine the temporal polyhedricity 
result in \cref{theorem:tempoly} 
in a similar way for $d>1$ 
and to obtain a strong stationarity system for \eqref{eq:P}
analogous to \eqref{eq:strongstatsys} for arbitrary dimensions 
is an open problem.


\bibliographystyle{plain}
\bibliography{references}

\begin{thebibliography}{10}

\bibitem{Acary2010}
V.~Acary, O.~Bonnefon, and B.~Brogliato.
\newblock {\em Nonsmooth Modeling and Simulation for Switched Circuits}.
\newblock Lecture Notes in Electrical Engineering. Springer, 2010.

\bibitem{Adam2014}
L.~Adam and J.~Outrata.
\newblock On optimal control of a sweeping process coupled with an ordinary
  differential equation.
\newblock {\em Discrete Contin. Dyn. Syst. Ser. B}, 19(9):2709--2738, 2014.

\bibitem{Adly2018}
S.~Adly and L.~Bourdin.
\newblock Sensitivity analysis of variational inequalities via twice
  epi-differentiability and proto-differentiability of the proximity operator.
\newblock {\em SIAM J. Optim.}, 28(2):1699--1725, 2018.

\bibitem{Arroud2018}
C.~E. Arroud and G.~Colombo.
\newblock A maximum principle for the controlled sweeping process.
\newblock {\em Set-Valued Var. Anal.}, 26(3):607--629, 2018.

\bibitem{Attouch2006}
H.~Attouch, G.~Buttazzo, and G.~Michaille.
\newblock {\em Variational Analysis in {S}obolev and {BV} Spaces}.
\newblock SIAM, Philadelphia, 2006.

\bibitem{Berger2009}
M.~Berger, M.~Cole, and S.~Levy.
\newblock {\em Geometry II}.
\newblock Universitext. Springer, Berlin, Heidelberg, 2009.

\bibitem{Betz2019}
L.~Betz.
\newblock Strong stationarity for optimal control of a nonsmooth coupled
  system: Application to a viscous evolutionary variational inequality coupled
  with an elliptic {PDE}.
\newblock {\em SIAM J. Optim.}, 29(4):3069--3099, 2019.

\bibitem{BonnansCominettiShapiro1998}
J.~F. Bonnans, R.~Cominetti, and A.~Shapiro.
\newblock Sensitivity analysis of optimization problems under second order
  regular constraints.
\newblock {\em Math. Oper. Res.}, 23(4):806--831, 1998.

\bibitem{BonnansShapiro2000}
J.~F. Bonnans and A.~Shapiro.
\newblock {\em Perturbation Analysis of Optimization Problems}.
\newblock Springer Series in Operations Research. Springer, New York, 2000.

\bibitem{Brokate2020}
M.~Brokate.
\newblock {N}ewton and {B}ouligand derivatives of the scalar play and stop
  operator.
\newblock {\em Math. Model. Nat. Phenom.}, 15, 2020.
\newblock Art.~51.

\bibitem{BrokateChristof2023}
M.~Brokate and C.~Christof.
\newblock Strong stationarity conditions for optimal control problems governed
  by a rate-independent evolution variational inequality.
\newblock {\em SIAM J. Control Optim.}, 61(4):2222--2250, 2023.

\bibitem{Brokate2013}
M.~Brokate and P.~Krej\v{c}\'{\i}.
\newblock Optimal control of {ODE} systems involving a rate independent
  variational inequality.
\newblock {\em Discrete Contin. Dyn. Syst. Ser. B}, 18(2):331--348, 2013.

\bibitem{Brokate2015}
M.~Brokate and P.~Krej\v{c}\'{\i}.
\newblock Weak differentiability of scalar hysteresis operators.
\newblock {\em Discrete Contin. Dyn. Syst.}, 35(6):2405--2421, 2015.

\bibitem{Brokate2021}
M.~Brokate and P.~Krej\v{c}\'{\i}.
\newblock A variational inequality for the derivative of the scalar play
  operator.
\newblock {\em J. Appl. Numer. Optim.}, 3(2):263--283, 2021.

\bibitem{Bruns2009}
W.~Bruns and J.~Gubeladze.
\newblock {\em Polytopes, Rings, and K-Theory}.
\newblock Springer Monographs in Mathematics. Springer, New York, 2009.

\bibitem{Cao2022}
T.~H. Cao, N.~T. Khalil, B.~S. Mordukhovich, D.~Nguyen, T.~Nguyen, and F.~L.
  Pereira.
\newblock Optimization of controlled free-time sweeping processes with
  applications to marine surface vehicle modeling.
\newblock {\em IEEE Control Syst. Lett.}, 6:782--787, 2022.

\bibitem{Cao2016}
T.~H. Cao and B.~S. Mordukhovich.
\newblock Optimal control of a perturbed sweeping process via discrete
  approximations.
\newblock {\em Discrete Contin. Dyn. Syst. Ser. B}, 21(10):3331--3358, 2016.

\bibitem{Cao2019}
T.~H. Cao and B.~S. Mordukhovich.
\newblock Optimal control of a nonconvex perturbed sweeping process.
\newblock {\em J. Differential Equations}, 266(2):1003--1050, 2019.

\bibitem{Cao2022-2}
T.~H. Cao, B.~S. Mordukhovich, D.~Nguyen, and T.~Nguyen.
\newblock Applications of controlled sweeping processes to nonlinear crowd
  motion models with obstacles.
\newblock {\em IEEE Control Syst. Lett.}, 6:740--745, 2022.

\bibitem{ChristofPhd2018}
C.~Christof.
\newblock {\em Sensitivity Analysis of Elliptic Variational Inequalities of the
  First and the Second Kind}.
\newblock PhD thesis, Technische Universit{\"a}t Dortmund, 2018.

\bibitem{Christof2019parob}
C.~Christof.
\newblock Sensitivity analysis and optimal control of obstacle-type evolution
  variational inequalities.
\newblock {\em SIAM J. Control Optim.}, 57(1):192--218, 2019.

\bibitem{Christof2022}
C.~Christof, C.~Meyer, B.~Schweizer, and S.~Turek.
\newblock Strong stationarity for optimal control of variational inequalities
  of the second kind.
\newblock In M.~Hinterm{\"u}ller, R.~Herzog, C.~Kanzow, M.~Ulbrich, and
  S.~Ulbrich, editors, {\em Non-Smooth and Complementarity-Based Distributed
  Parameter Systems: Simulation and Hierarchical Optimization}, pages 307--327.
  Springer, Cham, 2022.

\bibitem{Christof2018SSC}
C.~Christof and G.~Wachsmuth.
\newblock No-gap second-order conditions via a directional curvature
  functional.
\newblock {\em SIAM J. Optim.}, 28(3):2097--2130, 2018.

\bibitem{ChristofWachsmuth2020}
C.~Christof and G.~Wachsmuth.
\newblock Differential sensitivity analysis of variational inequalities with
  locally {L}ipschitz continuous solution operators.
\newblock {\em Appl. Math. Optim.}, 81:23--62, 2020.

\bibitem{ChristofQVI2023}
C.~Christof and G.~Wachsmuth.
\newblock Lipschitz stability and {H}adamard directional differentiability for
  elliptic and parabolic obstacle-type quasi-variational inequalities.
\newblock {\em SIAM J. Control Optim.}, 60(6):3430--3456, 2022.

\bibitem{Colombo2016}
G.~Colombo, R.~Henrion, D.~H. Nguyen, and B.~S. Mordukhovich.
\newblock Optimal control of the sweeping process over polyhedral controlled
  sets.
\newblock {\em J. Differential Equations}, 260(4):3397--3447, 2016.

\bibitem{Colombo2020}
G.~Colombo, B.~S. Mordukhovich, and D.~Nguyen.
\newblock Optimization of a perturbed sweeping process by constrained
  discontinuous controls.
\newblock {\em SIAM J. Control Optim.}, 58(4):2678--2709, 2020.

\bibitem{dePinho2019}
M.~d.~R. de~Pinho, M.~M.~A. Ferreira, and G.~V. Smirnov.
\newblock Optimal control involving sweeping processes.
\newblock {\em Set-Valued Var. Anal.}, 27:523--548, 2019.

\bibitem{Desch1999}
W.~W. Desch and J.~Turi.
\newblock The stop operator related to a convex polyhedron.
\newblock {\em J. Differential Equations}, 157(2):329--347, 1999.

\bibitem{Do1992}
C.~N. Do.
\newblock Generalized second-order derivatives of convex functions in reflexive
  {B}anach spaces.
\newblock {\em Trans. Amer. Math. Soc.}, 334(1):281--301, 1992.

\bibitem{Dupuis1991}
P.~Dupuis and H.~Ishii.
\newblock On {L}ipschitz continuity of the solution mapping to the {S}korokhod
  problem, with applications.
\newblock {\em Stochastics and Stochastic Reports}, 35(1):31--62, 1991.

\bibitem{Evans2010}
L.~C. Evans.
\newblock {\em Partial Differential Equations}.
\newblock AMS, Providence, RI, 2nd edition, 2010.

\bibitem{Faigle2002}
U.~Faigle, W.~Kern, and G.~Still.
\newblock {\em Algorithmic Principles of Mathematical Programming}.
\newblock Texts in the Mathematical Sciences. Springer, 2002.

\bibitem{FitzpatrickPhelps1982}
S.~Fitzpatrick and R.~R. Phelps.
\newblock Differentiability of the metric projection in {H}ilbert space.
\newblock {\em Trans. Amer. Math. Soc.}, 270(2):483--501, 1982.

\bibitem{Haraux1977}
A.~Haraux.
\newblock How to differentiate the projection on a convex set in {H}ilbert
  space. {S}ome applications to variational inequalities.
\newblock {\em J. Math. Soc. Japan}, 29(4):615--631, 1977.

\bibitem{Harder2017}
F.~Harder and G.~Wachsmuth.
\newblock Comparison of optimality systems for the optimal control of the
  obstacle problem.
\newblock {\em GAMM-Mitt.}, 40(4):312--338, 2018.

\bibitem{Henrion2023}
R.~Henrion, A.~Jourani, and B.~S. Mordukhovich.
\newblock Controlled polyhedral sweeping processes: Existence, stability, and
  optimality conditions.
\newblock {\em J. Differential Equations}, 366:408--443, 2023.

\bibitem{Herzog2022}
R.~Herzog, D.~Knees, C.~Meyer, M.~Sievers, A.~St{\"o}tzner, and S.~Thomas.
\newblock Rate-independent systems and their viscous regularizations: Analysis,
  simulation, and optimal control.
\newblock In M.~Hinterm{\"u}ller, R.~Herzog, C.~Kanzow, M.~Ulbrich, and
  S.~Ulbrich, editors, {\em Non-Smooth and Complementarity-Based Distributed
  Parameter Systems: Simulation and Hierarchical Optimization}, pages 121--144.
  Springer, Cham, 2022.

\bibitem{Hintermueller2009}
M.~Hinterm\"{u}ller and I.~Kopacka.
\newblock Mathematical programs with complementarity constraints in function
  space: {C-} and strong stationarity and a path-following algorithm.
\newblock {\em SIAM J. Optim.}, 20(2):868--902, 2009.

\bibitem{Jarusek2003}
J.~Jaru\v{s}ek, M.~Krbec, M.~Rao, and J.~Soko\l{}owski.
\newblock Conical differentiability for evolution variational inequalities.
\newblock {\em J. Differential Equations}, 193(1):131--146, 2003.

\bibitem{Krasnoselskii2012}
M.~A. Krasnosel'skii and A.~V. Pokrovskii.
\newblock {\em Systems with Hysteresis}.
\newblock Springer, Berlin, Heidelberg, 1st edition, 1989.

\bibitem{Krejci1996}
P.~Krej\v{c}\'{\i}.
\newblock {\em Hysteresis, Convexity and Dissipation in Hyperbolic Equations},
  volume~8 of {\em Mathematical Sciences and Applications}.
\newblock Gakk\={o}tosho, Tokyo, 1996.

\bibitem{Krejci1999}
P.~Krej\v{c}\'{\i}.
\newblock Evolution variational inequalities and multidimensional hysteresis
  operators.
\newblock In P.~Dr\'{a}bek, P.~Krej\v{c}\'{\i}, and P.~Tak\'{a}\v{c}, editors,
  {\em Nonlinear Differential Equations}, volume 404 of {\em Research Notes in
  Mathematics}, pages 47--110. Chapman \& Hall/CRC, London, 1999.

\bibitem{Krejci2014Compare}
P.~Krej\v{c}\'{\i} and V.~Recupero.
\newblock Comparing {BV} solutions of rate independent processes.
\newblock {\em J. Convex Anal.}, 21(1):121--146, 2014.

\bibitem{Mangasarian1994}
O.~L. Mangasarian.
\newblock {\em Nonlinear Programming}.
\newblock Number~10 in SIAM's Classics in Applied Mathematics. SIAM, 2nd
  edition, 1994.

\bibitem{Mielke2015}
A.~Mielke and T.~Roub\'{\i}\v{c}ek.
\newblock {\em Rate-Independent Systems}.
\newblock Number 193 in Applied Mathematical Sciences. Springer, New York,
  2015.

\bibitem{Mignot1976}
F.~Mignot.
\newblock Contr\^{o}le dans les in\'{e}quations variationelles elliptiques.
\newblock {\em J. Funct. Anal.}, 22:130--185, 1976.

\bibitem{MignotPuel1984}
F.~Mignot and J.~P. Puel.
\newblock Optimal control in some variational inequalities.
\newblock {\em SIAM J. Control Optim.}, 22(3):466--476, 1984.

\bibitem{Monteiro2019}
G.~A. Monteiro, A.~Slav\'{\i}k, and M.~Tvrd\'{y}.
\newblock {\em Kurzweil-Stieltjes Integral: Theory and Applications}.
\newblock Number~15 in Series in Real Analysis. World Scientific, Singapore,
  2019.

\bibitem{Moreau1973}
J.-J. Moreau.
\newblock Probl\`eme d'\'{e}volution associ\'{e} \`a un convexe mobile d'un
  espace hilbertien.
\newblock {\em C. R. Acad. Sci. Paris S\'{e}r. A-B}, 276:A791--A794, 1973.

\bibitem{Moreau1977}
J.-J. Moreau.
\newblock Evolution problem associated with a moving convex set in a {H}ilbert
  space.
\newblock {\em J. Differential Equations}, 26(3):347--374, 1977.

\bibitem{Moreau2011}
J.-J. Moreau.
\newblock On unilateral constraints, friction and plasticity.
\newblock In G.~Capriz and G.~Stampacchia, editors, {\em New Variational
  Techniques in Mathematical Physics}, pages 171--322. Springer, Berlin,
  Heidelberg, 2011.

\bibitem{Noll1995}
D.~Noll.
\newblock Directional differentiability of the metric projection in {H}ilbert
  space.
\newblock {\em Pacific J. Math.}, 170(2):567--592, 1995.

\bibitem{Nour2022}
C.~Nour and V.~Zeidan.
\newblock Optimal control of nonconvex sweeping processes with separable
  endpoints: Nonsmooth maximum principle for local minimizers.
\newblock {\em J. Differential Equations}, 318:113--168, 2022.

\bibitem{Shapiro1994}
A.~Shapiro.
\newblock Existence and differentiability of metric projections in {H}ilbert
  spaces.
\newblock {\em SIAM J. Optim.}, 4(1):130--141, 1994.

\bibitem{Siddiqi2002}
A.~H. Siddiqi, P.~Manchanda, and M.~Brokate.
\newblock On some recent developments concerning {M}oreau's sweeping process.
\newblock In A.~H. Siddiqi and M.~Ko{\v{c}}vara, editors, {\em Trends in
  Industrial and Applied Mathematics: Proceedings of the 1st International
  Conference on Industrial and Applied Mathematics of the Indian Subcontinent},
  pages 339--354. Springer, Boston, MA, 2002.

\bibitem{Stefanelli2017}
U.~Stefanelli, D.~Wachsmuth, and G.~Wachsmuth.
\newblock Optimal control of a rate-independent evolution equation via viscous
  regularization.
\newblock {\em Discrete Contin. Dyn. Syst. Ser. S}, 10(6):1467--1485, 2017.

\bibitem{Wachsmuth2018pointwiseElliptic}
G.~Wachsmuth.
\newblock Pointwise constraints in vector-valued {S}obolev spaces: with
  applications in optimal control.
\newblock {\em Appl. Math. Optim.}, 77(3):463--497, 2018.

\bibitem{Wachsmuth2019}
G.~Wachsmuth.
\newblock A guided tour of polyhedric sets: basic properties, new results on
  intersections and applications.
\newblock {\em J. Convex Anal.}, 26(1):153--188, 2019.

\bibitem{Zarantonello1971}
E.~H. Zarantonello.
\newblock Projections on convex sets in {H}ilbert space and spectral theory {I}
  and {II}.
\newblock In {\em Contributions to nonlinear functional analysis ({P}roc.
  {S}ympos., {M}ath. {R}es. {C}enter, {U}niv. {W}isconsin, {M}adison, {W}is.,
  1971)}, pages 237--424. Academic Press, New York-London, 1971.

\end{thebibliography}
\end{document}